\documentclass[11pt,reqno]{amsart}

\usepackage{geometry}
\geometry{margin=1.2in}

\usepackage{amsthm,amsmath,amsfonts,amssymb}
\usepackage{natbib}
\usepackage[hidelinks]{hyperref}
\usepackage{graphicx, xcolor}

\theoremstyle{plain}

\newtheorem{theorem}{Theorem}[section]
\newtheorem{lemma}[theorem]{Lemma}
\newtheorem{corollary}[theorem]{Corollary}
\newtheorem{proposition}[theorem]{Proposition}

\theoremstyle{definition}
\newtheorem{definition}[theorem]{Definition}

\newtheorem{assumption}[theorem]{Assumption}
\newtheorem{remark}[theorem]{Remark}

\def\what#1{\widehat{#1}}

\theoremstyle{remark}

\newcommand{\E}{\mathbb{E}}

\renewcommand{\Pr}{\mathbb{P}}
\newcommand{\R}{\mathbb{R}}

\newcommand{\G}{\mathbb{G}}
\newcommand{\bbG}{\mathbb{G}}
\newcommand{\bbP}{\mathbb{P}}

\newcommand{\calF}{\mathcal{F}}
\newcommand{\calE}{\mathcal{E}}
\newcommand{\calG}{\mathcal{G}}
\newcommand{\calX}{\mathcal{X}}

\newcommand{\calI}{\mathcal{I}}

\newcommand{\sfA}{\mathsf{A}}
\newcommand{\sfV}{\mathsf{V}}
\newcommand{\brf}{\check{f}}
\newcommand{\brg}{\check{g}}
\newcommand{\brh}{\check{h}}

\newcommand{\kernel}{w}

\newcommand{\classkernel}{\mathcal{K}}
\newcommand{\classemp}{\mathcal{V}}
\newcommand{\empkernel}{v}

\newcommand{\studclass}{{\classkernel}^{(n)}}

\newcommand{\studkernel}{f^{(n)}}

\newcommand{\discmeas}{\check R}

\newcommand{\bbPcheck}{\bbP_{\check \alpha}} 

\newcommand{\upperpif}{u(\calF, \pi)}

\newcommand{\Var}{\mathrm{Var}}
\newcommand{\Cov}{\mathrm{Cov}}

\begin{document}
\title[Wild regenerative block bootstrap for Harris recurrent Markov chains]{Wild regenerative block bootstrap for Harris recurrent Markov chains}

\date{First version: January 30, 2023. This version: \today}

\author[K. Choi]{Kyuseong Choi$^{*}$}

\address[K. Choi$^{*}$]{Statistics and Data Science, Cornell Tech, Cornell University.}
\email{kc728@cornell.edu}

\author[G. Ciolek]{Gabriella Ciolek}
\address[G. Ciolek]{Mathematics Department, University of Luxembourg.}
\email{gabrielaciolek@gmail.com}

\begin{abstract}

We consider Gaussian and bootstrap approximations for the supremum of additive functionals of aperiodic Harris recurrent Markov chains. The supremum is taken over a function class that may depend on the sample size, which allows for non-Donsker settings; that is, the empirical process need not have a weak limit in the space of bounded functions.
We first establish a non-asymptotic Gaussian approximation error, which holds at rates comparable to those for sums of high-dimensional independent or one-dependent vectors. Key to our derivation is the Nummelin splitting technique, which enables us to decompose the chain into either independent or one-dependent random blocks.  Additionally, building upon the Nummelin splitting, we propose a Gaussian multiplier bootstrap for practical inference and establish its finite-sample guarantees in the strongly aperiodic case.  Finally, we apply our bootstrap to construct a uniform confidence band for an invariant density within a certain class of diffusion processes. 
\end{abstract}

\keywords{Harris recurrent Markov chain, Nummelin splitting, Gaussian multiplier block bootstrap, Uniform confidence band}

\subjclass{}

\maketitle

\section{Introduction}

Markov chains constitute a rich class of stochastic processes. Examples include discrete observations from diffusion processes, commonly used in quantitative finance, and Markov decision processes appearing in reinforcement learning, among others. Additionally, in Bayesian statistics, Markov chain Monte Carlo methods are routinely used to approximately sample from Bayesian posterior distributions. Motivated by these diverse applications of Markov chains, this work considers the Gaussian and bootstrap approximations for the supremum of additive functionals of Harris recurrent Markov chains, where the supremum is taken over a function class that may depend on the sample size, which allows for non-Donsker settings; that is, the empirical process need not have a weak limit in the space of bounded functions.



Specifically, let $X_0, X_1, ..., X_{n-1}$ be sampled from an aperiodic Harris recurrent Markov chain with values in a state space $E$ having $\pi$ as its invariant probability measure. For a given class of $\pi$-integrable functions on $E$, $\calF = \calF_n$, which is allowed to depend on the sample size $n$, we are interested in approximating the distribution of the supremum of the empirical process,
\begin{equation}
\bbG_n(f) = n^{-1/2}\sum_{i=0}^{n-1}\{f(X_i) - \pi(f)\}, \quad f \in \calF,
\label{eq: empirical process}
\end{equation}
where $\pi (f) = \int_E f \, d\pi$.
Our first goal is to approximate the distribution of $\bbG_{n, \calF}^\vee := \sup_{f \in \calF} \bbG_n(f)$ with that of a suitable Gaussian analog. Namely, 
let $\{ G(f) \}_{f \in \calF}$ be a suitable version of a centered Gaussian process whose covariance function approximates that of $\bbG_n$, and we establish finite sample upper bounds for the Kolmogorov distance, 
\[
\rho_n := \sup_{t \in \mathbb{R}} \Big| \bbP\big(\bbG_{n,\calF}^\vee\leq t\big) - \bbP\big(G_{\calF}^{\vee} \leq t\big)\Big|,
\]
where $G_{\calF}^{\vee} := \sup_{f \in \calF}G(f)$.

Key to the derivation of our Gaussian approximation results is Nummelin splitting technique \cite{Nummelin1978PTRF,Nummelin1984,Meyn2012}, which enables decomposition of the Markov chain into either independent or one-dependent random blocks, where the construction of such blocks relies on successive stopping times. We then combine the recent results of high-dimensional central limit theorems \cite{chernozhuokov2022improved,ChangChen2024} applied to the discretized empirical process and upper bound the discretization errors. To achieve the latter, we derive new maximal inequalities applicable to Harris recurrent Markov chains, which may be of independent interest.  

Still, the above Gaussian approximation result is not directly applicable to practical inference, as the covariance function of the approximating Gaussian process $\{G(f)\}_{f \in \calF}$ is unknown and, in fact, appears to be nontrivial to estimate analytically. To address this, we propose a novel bootstrap, which we term the \textit{Gaussian wild regenerative block bootstrap},  tailored to Harris recurrent chains and establish its validity for the strongly aperiodic case (i.e., when the Markov chain can be decomposed into independent blocks). The random blocks appearing resulting from Nummelin splitting rely on the transition kernel, which is unknown in practice, so we estimate the transition kernel to construct an approximate split chain, from which we obtain estimated blocks \cite{BertailClemencon2006Bernoulli}. The proposed method then applies a Gaussian wild bootstrap to the estimated blocks. For this new bootstrap, we establish finite-sample guarantees under the Kolmogorov distance.

Finally, we apply our bootstrap method to construct a uniform confidence band for the stationary density of a certain class of diffusion processes~\cite{nickl2017nonparametric}. The proposed confidence band has a coverage guarantee polynomially decaying with the sample size, which should be contrasted with the confidence band constructed using an extreme value limit \cite{bickel1973some} whose coverage guarantee is known to be logarithmically slow. We borrow anti-concentration results for Gaussian processes from \cite{CCK2014bAoS} to 
bypass any need for a limiting distribution.

\subsection{Literature review}

There is now a rich literature on non-asymptotic Gaussian approximation and its corresponding bootstrap techniques for high-dimensional data. For independent $p$-dimensional data, Berry-Esseen type results, consistent when $p \leq O(n^c)$ for some constant $c > 0$, were established in \cite{Bentkus1986, Bentkus2003}, while the seminal paper \cite{CCK2013AoS} improved the rate so as to allow consistency when $p$ can be as large as $O\big(\exp(Cn^c)\big)$ for some constants $c, C > 0$. Substantial improvements in this direction were made on varying assumptions~\cite{CCK2017AoP,chernozhuokov2022improved,lopes2020bootstrapping,lopes2022central,deng2020beyond,fang2021high,chernozhukov2023nearly}, while \cite{chen2018gaussian,chen2020jackknife,song2019approximating} further accommodate U-statistics with non-linear kernels. Bootstrap techniques established in the aforementioned literature help construct uniform confidence bands for densities \cite{CCK2014bAoS} and simultaneous confidence set for high dimensional parameters~\cite{belloni2018uniformly}, test conditional or unconditional many moment inequalities~\cite{chang2017testing,chen2018gaussian,chernozhukov2019inference, koning2019exact} and design change point detection algorithms for high-dimensional vectors~\cite{yu2021finite,yu2022robust}. 

For dependent and time series data, Gaussian approximations and related bootstrap techniques were established in \cite{ZhangWu2017,ZhangCheng2017,kurisu2023gaussian,ChangChen2024}. Specifically, strongly mixing and $m$-dependent high-dimensional data are considered in \cite{ChangChen2024}. Here we focus on Harris recurrent Markov chains~(hence strongly mixing) and provide an improved rate for Gaussian approximation over infinite-dimensional data points, where the dimensionality is characterized by a reasonably sized function class of interest. Notably our result provides a sharper approximation compared to that of \cite{ChangChen2024}, with rates even comparable to those for independent and $m$-dependent data points~\cite{chernozhuokov2022improved,ChangChen2024}. A key analytical tool for achieving a sharper rate is the Nummelin splitting technique~\cite{Nummelin1984}, which splits the chain into independent or one-dependent identically distributed random blocks via successive regeneration times. 




Markov chains are a popular modeling choice as they constitute a substantial portion of stochastic processes; examples include Markov decision processes~\cite{sutton1998reinforcement,kallenberg2002finite,bubeck2012regret} and time series models~\cite{fan2021hoeffding}. Some sampling algorithms~\cite{dalalyan2017theoretical} show non-asymptotic convergence of the discrete samples from continuous time diffusion processes~(also forming Markov chains) to its stationary distribution. Here we focus on a class of diffusion models and provide a uniform confidence band for its stationary distribution using the discrete observations. The approximate block bootstrap technique~\cite{BertailClemencon2006Bernoulli,ciolek2016bootstrap} is one of the key features in our proposed bootstrap procedure. Our uniform inference procedure does not rely on the Smirnov-Bickel-Rosenblatt condition~\cite{gine2009exponential}, thereby allowing the confidence level of uniform confidence band to approach the pre-specified confidence level at a polynomial rate~\cite{CCK2014bAoS}.


%
%
%

\subsection{Organization}

Section \ref{sec:gaussian-approximation} reviews the Nummelin splitting technique for Markov chains satisfying a minorization condition and discusses the setting. The rest of Section \ref{sec:gaussian-approximation} establishes a Gaussian approximation result for the Kolmogorov-type metric $\rho_n$, under both independent and one-dependent random blocks scenario. Section \ref{sec:bootstrap} reviews the approximate Nummelin splitting technique (to construct an approximate split chain from estimated transition kernel). The rest of Section \ref{sec:bootstrap} establishes a finite sample guarantee of our proposed Gaussian wild regenerative block bootstrap procedure for the strongly aperiodic case (i.e., independent blocks). In Section \ref{sec:application}, we apply our bootstrap consistency result to construct a uniform confidence band for the stationary density of certain class of diffusion process. The Appendix contains all the proofs presented in our main text. 

\subsection{Notation}
We use $\mathbb N$ and $\mathbb N_0 = \mathbb N \cup \{ 0 \}$ to denote the set of positive integers and the set of nonnegative integers, respectively. 
For $a,b \in \R$, we use the notation $a \vee b = \max \{a,b \}$ and $a \wedge b = \min \{ a,b \}$. 
For $a  > 0$, let $\lfloor a \rfloor$ denote the largest integer smaller than or equal to $a$. 
For a probability measure $Q$ on a measurable space $(E,\calE)$ and a measurable function $f: E \to \R$, we use the notation $Q f = \int_{E} f dQ$ whenever the latter integral is well-defined. 
In addition, for $q \in [1,\infty)$, let $\| \cdot \|_{Q,q}$ denote the $L^{q}(Q)$-seminorm, i.e., $\| f \|_{Q,q} = (Q(|f|^{q}))^{1/q}$ and for $\alpha \geq 1$, let $\| \cdot \|_{Q, \psi_\alpha}$ denote the Orlicz $\psi_\alpha$ norm with respect to $Q$. For a nonempty set $A \subset \R$ and $\delta > 0$, $A^{\delta}$ denote the $\delta$-enlargement of $A$, i.e., $A^{\delta} = \{ x \in \R: \inf_{y \in \R} |x-y| \le \delta \}$. For a stochastic process $\{ X_t \}_{t \in T}$, define $\|X\|_T := \sup_{t \in T}|X_t|$ and $X_T^\vee := \sup_{t \in T}X_t$. For a given set $S$, let  $\mathbf 1_S$ denote the indicator function of $S$, i.e, $\mathbf 1_{S}(x) =1$ if $x \in S$ and $=0$ otherwise. We write $a \lesssim b$ if $a$ is less or equal to $b$ up to some numerical constant and $a \asymp b$ if $a\lesssim b$ and $b \lesssim a$.

\section{Gaussian Approximation}
\label{sec:gaussian-approximation}

\subsection{Nummelin splitting technique}\label{sec: setting}

We start with reviewing Nummelin splitting technique, which plays a crucial role in our theoretical development. We refer the reader to \cite{Nummelin1984} and Chapter 17 in \cite{Meyn2012} for further details. 

Let $\{X_t\}_{t \in \mathbb N_0}$ be a Markov chain with values in a countably generated measurable space $(E,\calE)$, transition probability $P(x,dy)$ and initial distribution $\lambda$. 
We assume that the chain $\{ X_{t} \}_{t \in \mathbb N_0}$ is aperiodic and Harris recurrent with (unique) invariant probability measure $\pi$. Then, there exists a positive integer $m$, a set $S \in \calE$~(referred to as a \textit{small set}), a probability measure $\nu$ supported on $S$, and some constant $\theta  > 0$ such that 
\begin{equation}
P^{m}(x,A) \geq \theta \nu (A)\quad \ \text{for all} \ x \in S \ \text{and} \ A \in \calE,
\label{eq:minorization}
\end{equation}
where $P^{m}$ denotes the $m$-th iterate of $P$; cf. Proposition 5.4.5 in \cite{Meyn2012}. 

Nummelin splitting technique~\cite{Nummelin1978PTRF} embeds\footnote{$\bbP$ is a probability measure over the split chain with initial distribution $X_0 \sim \lambda$, so marginalizing it over $Y_t$ results in the probability measure of the original chain.} the original chain $\{ X_{t} \}_{t \in \mathbb N_0}$ into 
a split chain $\{ (X_{tm},Y_{t})  \}_{t \in \mathbb N_0}$, which takes values in $E \times \{ 0,1 \}$ and is defined by the conditional probabilities
\begin{equation}\label{eq:split-chain}
    \begin{aligned}
        &\bbP (Y_{t}=1, X_{tm+1} \in dx_{1},\dots,X_{(t+1)m-1} \in dx_{m-1},X_{(t+1)m} \in dy \mid Y_{0}^{t-1}, X_{0}^{tm-1}; X_{tm} = x) \\
        &= \bbP (Y_{t}=1, X_{tm+1} \in dx_{1},\dots,X_{(t+1)m-1} \in dx_{m-1},X_{(t+1)m} \in dy \mid X_{tm} = x) \\
        &= \bbP (Y_{0} = 1, X_{1} \in dx_{1},\dots,X_{m-1} \in dx_{m-1},X_{m} \in dy \mid X_{0}=x) \\
        &=\theta \frac{\mathbf 1_{S}(x)\nu (dy)}{P^{m}(x,dy)} P(x,dx_{1}) \cdots P(x_{m-1},dy).
    \end{aligned}
\end{equation}
Integrating the conditional probabilities with respect to $x_{1},\dots,x_{m-1}$ yields
\begin{equation*}
    \begin{split}
        &\bbP (Y_{t}=1, X_{(t+1)m} \in dy \mid Y_{0}^{t-1},X_{0}^{tm-1}; X_{tm}=x) = \theta \mathbf 1_{S}(x) \nu(dy).
\end{split}
\end{equation*}
By the Bayes rule, it follows that
\[
\begin{split}
&\bbP (Y_{t}=1 \mid Y_{0}^{t-1},X_{0}^{tm-1}; X_{tm}=x) = \theta \mathbf 1_{S}(x), \\
&\bbP (X_{(t+1)m} \in dy \mid Y_{0}^{t-1},X_{0}^{tm-1}; Y_{t}=1,X_{tm}=x) = \nu (dy), \quad \text{for} \ x \in S.
\end{split}
\]
This implies that, given $Y_t=1$,
\[
\text{$\{ X_{i},Y_{j} : i \le tm, j \le t \}$ is independent of $\{X_{i},Y_{j} : i \ge (t+1)m, j \ge t+1 \}$},
\]
Additionally, the distribution of the post $(t+1)m$ process is the same as the distribution of $\{ (X_t,Y_t) \}_{t \in \mathbb{N}_0}$ initialized at $(X_0,Y_0) \sim \nu \otimes \mathsf{Bern}(\theta)$, where $\mathsf{Bern}(\theta)$ denotes the Bernoulli distribution with success probability $\theta$. 

Set $\check{\alpha} = S \times \{ 1 \}$, which is an atom for the split chain $\{ (X_{tm},Y_t) \}_{t \in \mathbb N_0}$. Now, define the stopping times $0 \le \sigma_{\check{\alpha}}(0) < \sigma_{\check{\alpha}}(1) < \cdots$ as
\begin{equation}
\sigma_{\check \alpha} =  \sigma_{\check{\alpha}}(0) := \min \{ t \ge 0 : Y_{t} = 1 \}, \ \sigma_{\check{\alpha}}(i) := \min \{ t > \sigma_{\check{\alpha}}(i-1) : Y_{t} = 1 \}, 
\label{eq:block-stopping-times}
\end{equation}
and the corresponding random blocks
\begin{equation*}
    B_{i} := \big( X_{m(\sigma_{\check{\alpha}}(i-1)+1)},\dots,X_{m(\sigma_{\check{\alpha}}(i)+1)-1} \big), \quad i \in \mathbb N.
\end{equation*}
We think of the blocks $\{ B_i \}_{i \in \mathbb N}$ as random variables taking values in a torus $\check{E} = \bigcup_{k=1}^{\infty} E^{k}$~($E^k$ is the $k$-fold product space of $E$), and $\check E$ is equipped with the smallest $\sigma$-field $\check{\calE}$ containing the $\sigma$-fields $\calE^{k}$ of $E^k$ for $k\in \mathbb N$~(see \cite{Levental1988PTRF} for details). The following fact follows from the preceding discussion (see also Theorem 17.3.1 in \cite{Meyn2012}):
\medskip

\textbf{Fact}: The sequence of random blocks $\{ B_i \}_{i \in \mathbb N}$ is stationary, and one-dependent\footnote{By one-dependence, we mean that $\{ B_{i}\}_{i \leq k-2}$ and $\{ B_{i}\}_{i \geq k }$ are independent for every $k \geq 3$.} if $m >1$ and independent if $m =1$.
Set $\tau_{\check{\alpha}} := \min \{ t \geq 1 : Y_{t}  = 1 \}$, then the stationary distribution, which we denote by $\check{Q}$, agrees with the $\bbP_{\check \alpha}$-distribution\footnote{We use $\bbP_{\check{\alpha}}$ and $\E_{\check{\alpha}}$ to denote the conditional probability and expectation with respect to the split chain given $(X_0,Y_0) \in \check{\alpha}$. This is to be compared to $\bbP$ and $\E$, the probability and expectation of the split chain when $X_0 \sim \lambda$.} of $(X_{m},\dots,X_{m(\tau_{\check{\alpha}}+1)-1})$. 
\medskip

Despite the start and the end of the blocks being random, the fact that they are initiated from the regenerative distribution allows characterizing their distributions by the conditional distribution $\bbP_{\check \alpha}$ (and $\tau_{\check \alpha}$) where the start is fixed. In particular, $\E[\ell(B_i)] = m \E_{\check \alpha}[\tau_{\check \alpha}]$.

For notational convenience, the moments expressed by \textit{norms} of $\check f(B_1)$ will be denoted as $\| \check f \|_{\check Q, q}$ or $\|\check f\|_{\check Q, \psi_1}$, instead of $\|\check f(B_1)\|_{\bbP, q}$ or $\|\check f(B_1)\|_{\bbP, \psi_1}$; for the expectations and the variance of $\check f (B_1)$, we integrate over the probability $\bbP$, meaning that we write $\E[\check f(B_1)]$ or $\Var(\check f(B_1))$. Lastly, $\tau_{\check \alpha}$ need be integrated over probability $\bbPcheck$ as it is defined upon the conditional split chain.

We will use the following notation: for any function $f$ on $E$ and for any block $B=(x_{1},\dots,x_{k}) \in E^{k} \subset \check{E}$, define $\check{f}(B) := \sum_{t=1}^{k} f(x_{k})$.
Also, for each $B \in \check{E}$, let $\ell (B)$ denote the block length. Finally, the invariant  probability measure $\pi$ has the representation
\begin{equation*}
    \pi (A) = \frac{1}{m \E_{\check{\alpha}}[\tau_{\check{\alpha}}]} \E_{\check{\alpha}} \Bigg [ \sum_{t=m}^{m(\tau_{\check{\alpha}}+1)-1} \mathbf 1_{A}(X_{t}) \Bigg ], \quad A \in \calE.
\end{equation*}
See Theorem 10.0.1 in \cite{Meyn2012}; this implies that for any $\pi$-integrable function $f$ on $E$, 
\begin{equation}
    \E \big [ \check f (B_i) \big] = m\E_{\check \alpha}[ \tau_{\check \alpha} ]\pi f.
    \label{eq:block-mean-pi}
\end{equation}

\subsection{Gaussian approximation}
\label{sec : Gaussian approximation}

Suppose that we observe $X_{0},\dots,X_{n-1}$ from the Markov chain described in Section \ref{sec: setting}. Let $\calF = \calF_n$ be a class of $\pi$-integrable functions from $E$ into $\R$, and consider the empirical process
$\bbG_n =\{\bbG_n(f) \}_{f \in \calF}$ defined in (\ref{eq: empirical process}). 
 We assume that the function class $\calF$ admits a finite envelope $F$; that is, $F$ is a finite nonnegative measurable function on $E$ such that $F(x) \geq \sup_{f \in \calF} |f(x)|$ for all $x \in E$.  We are interested in approximating the distribution of the supremum of the empirical process, $\G_{n,\calF}^{\vee}$, by that of a suitable Gaussian process. We will assume a certain measurability condition on $\calF$ to ensure that $\G_{n,\calF}^{\vee}$ is a proper random variable. 

\newcommand{\cf}{f^\circ}
\newcommand{\ccf}{\check{f}^\circ}
\newcommand{\cg}{g^\circ}
\newcommand{\ccg}{\check{g}^\circ}

Define
\begin{equation*}
    \beta := m\E_{\check{\alpha}} [\tau_{\check{\alpha}}]\quad \text{and} \quad \cf := f - \pi f \quad \text{for every} \quad f \in \calF.    
\end{equation*}
Note that (\ref{eq:block-mean-pi}) yields $\E[\ccf(B_1)] = m\E_{\check{\alpha}}[\tau_{\check{\alpha}}] \cdot \pi  (f-\pi f) = 0$. Let $G = \{ G(f) \}_{f \in \calF}$ be a centered Gaussian process indexed by $\calF$ with covariance function given by: for all $f,g \in \calF$,
\begin{equation}\label{eq:covariance}
        \Sigma(f, g) = 
        \beta^{-1}\big \{ \Cov \big(\ccf(B_1), \ccg(B_1)\big) + \Cov \big(\ccf(B_1), \ccg(B_2)\big) + \Cov \big(\ccf(B_2), \ccg(B_1)\big) \big \}.
\end{equation}
When $m=1$, block independence yields $\Sigma (f,g) = \beta^{-1}\E[\ccf(B_1)\ccg(B_2)] = \beta^{-1}\E[ (\brf(B_1) - \ell(B_1) \pi f)(\brg(B_1) - \ell(B_1) \pi g) ]$. Such a Gaussian process exists by the Kolmogorov consistency theorem. In fact, it suffices to verify:
\begin{equation*}
    \textrm{for any $f_{1},\dots,f_{N} \in \calF$, the matrix $\big( \Sigma(f_i,f_j)\big)_{1 \le i,j, \le N}$ is positive semidefinite.}
\end{equation*}
This follows from the fact that the matrix coincides with the limit as $\ell \to \infty$ of the covariance matrix of the vector $\beta^{-1/2}(\ell^{-1/2} \sum_{i=1}^{\ell} \check{f}^\circ_{j}(B_{i}))_{1 \le j \le N}$.

In this paper, we focus on the case where the function class $\calF$ is {VC type}, whose formal definition is given below.

\begin{definition}[VC type class]\label{def:VC}
Let $\calF$ be a class of functions from $E$ into $\R$ with finite envelope $F$. 
We say that the class $\calF$ is \textit{VC type} with characteristics $(\sfA,\sfV)$ for some positive constants $\sfA$ and $\sfV$ if $\sup_{R} N(\calF,\| \cdot \|_{R,2}, \varepsilon \| F \|_{R,2}) \le (\sfA/\varepsilon)^{\sfV}$ for all $0 < \varepsilon \le 1$, where $\sup_R$ is taken over finitely discrete distributions $R$.
\end{definition}


Now, we make the following assumption. 
\begin{assumption} \label{assump : Gauss approx} ~
\begin{enumerate}
\item[(A1)] The function class $\calF$ is \textit{pointwise measurable}; that is, there exists a countable subclass $\calG \subset \calF$ such that for any $f \in \calF$ there exists a sequence $g_{k} \in \calG$ with $g_{k} \to f$ pointwise. 
\item[(A2)] The function class $\calF = \calF_n$ is VC type with characteristics $(\sfA_n,\sfV_n)$ for some constants {$\sfA_n \ge 4$ and $\sfV_n \ge 1$} that may increase with the sample size $n$.
\item[(A3)] The stopping times satisfy the moment conditions $\| \sigma_{\check \alpha} \|_{\bbP, \psi_1}<\infty$ and $ \|\tau_{\check{\alpha}}\|_{\bbP_{\check \alpha}, \psi_1} < \infty$. Furthermore, there exists a finite constant $\underline{\sigma} > 0$ and a (polynomially growing) sequence $D_n \ge 1$ such that
\begin{equation*}
    \begin{aligned}
       &\max\Bigg\{\| \check{F}  \|_{\check Q,\psi_1},\ \E \Bigg [ \sum_{t=0}^{m(\sigma_{\check{\alpha}}(0)+1)-1} F(X_{t}) \Bigg] \Bigg\} \leq D_n, \\
        &\inf_{f \in \calF} \big \{ \Var \big( \ccf(B_1)\big) + 2\Cov \big (\ccf(B_1),\ccf(B_2)\big) \big \}\ge \underline{\sigma}^{2}.
    \end{aligned}
\end{equation*}
\end{enumerate}
\end{assumption} 

Condition (A1) guarantees that the supremum $\bbG_{n,\calF}^\vee$ is a proper random variable.
See Example 2.3.4 in \cite{VW1996} for further discussion. 
Condition (A2) is concerned with the \textit{complexity} of the function class $\calF = \calF_n$. 
A similar condition is assumed in \cite{chernozhukov2016empirical} for the i.i.d. case. 
In principle, we can work with a more general complexity assumption, but for the sake of concise presentation, we shall keep Condition (A2). 

The moment conditions for the stopping times are typically ensured by certain drift conditions; see Section \ref{sec : drift cond} ahead for details. The variance  lower bound $\underline \sigma$ guarantees anti-concentration of the Gaussian supremum \cite{CCK2014bAoS,CCK2015PTRF}. 

Define the centered function class $\check \calF^\circ := \{ \ccf: f \in \calF \}$ with envelope $\check F (\cdot) + \ell(\cdot) \pi F$; then (\ref{eq:block-mean-pi}) yields $\pi F \leq C \| \check F\|_{\check Q, \psi_1}$ for some constant $C$ depending only on $m$ and $\|\tau_{\check \alpha}\|_{\bbPcheck, \psi_1}$, so Condition (A3) implies that the $\|\cdot\|_{\bbPcheck, \psi_1}$ norm of the envelope of the function class $\check \calF^\circ$ is bounded by $D_n$ up to a constant, again depending on $m$ and $\|\tau_{\check \alpha}\|_{\bbPcheck, \psi_1}$. Similarly we have $\E\big[ \sum_{t = 0}^{m(\sigma_{\check \alpha}(0) + 1) - 1} \{F(X_t) + \pi F\} \big] \leq CD_n$ for a positive constant $C$ that depends only on $m$ and $\| \sigma_{\check \alpha} \|_{\bbP, \psi_1}$. 

Finally, define the shorthand
\begin{equation*}
    K_{n} := \mathsf{V}_n \log \big({\mathsf{A}_n} \vee n \big).
\end{equation*}
We are ready to present the non-asymptotic Gaussian approximation of $\bbG_{n, \calF}^\vee$ under the Kolmogorov distance.

\begin{theorem}[Gaussian approximation]
\label{thm:Gaussian-approximation}
Suppose the chain $\{X_t\}_{t \in \mathbb N_0}$ and the function class $\calF$ satisfy Assumption \ref{assump : Gauss approx}. Set
\begin{equation}
\eta_n = 
    \begin{cases}
        \left( \frac{D_n^4K_n^5}{n} \right)^{1/4}, \quad &\textrm{when $m = 1$}, \\
        \left(\frac{D_n^6K_n^7m^4}{n}\right)^{1/6}, \quad &\textrm{when $m > 1$}.
    \end{cases}
    \label{def : cck rate}
\end{equation}
Given positive constants $\underline c, \underline \gamma > 0$, let $\delta_n$ be a nonincreasing sequence and $M_n$ a nondecreasing sequence satisfying $M_n \geq n^{\underline \gamma/2}$ and
\begin{equation}
    \begin{aligned}
        \frac{n^{1/4}\delta_n}{\sqrt{M_n}D_n K_n} \geq n^{\underline c}.
        \label{assumption for Gauss approx}
    \end{aligned}
\end{equation}
Then, for each value of $m$, there exists a zero-mean Gaussian process $G$  with covariance function $\Sigma$ in (\ref{eq:covariance}) that is a tight random variable in $\ell^\infty(\calF)$, such that
\begin{equation}
\rho_n \leq C (\eta_n + \xi_n),
\label{eqn : Gauss approx bound}
\end{equation}
where 
\begin{equation*}
    \begin{aligned}
    \xi_n :=& \delta_n \sqrt{K_n} +  \frac{D_n K_n}{\delta_n\sqrt{n}} + e^{-cn^{\underline \gamma\wedge (\underline c/2)}}.
    \end{aligned}
\end{equation*}
The constants $c, C > 0$ depend only on $\|\sigma_{\check \alpha}\|_{\bbP, \psi_1}, \|\tau_{\check \alpha}\|_{\bbPcheck, \psi_1}, \underline{\sigma}$, and $m$. 

\end{theorem}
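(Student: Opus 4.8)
The plan is to transfer the problem from the Markov chain to a sum of (one-)dependent i.i.d.\ blocks via the Nummelin splitting, then invoke the known high-dimensional CLT for such block sums, and finally control the discretization error incurred by passing from the supremum over $\calF$ to a finite approximating net. First I would use the regeneration structure: with $\hat\imath_n := \max\{i\ge 0: \sigma_{\check\alpha}(i)\le n^\sharp\}$ and $n^\sharp := \lfloor n/m - 1\rfloor$, decompose $n^{1/2}\bbG_n(f)$ into (i) a boundary/initial term coming from the observations before the first regeneration time $\sigma_{\check\alpha}(0)$, (ii) a "bulk" term $\sum_{i=1}^{\hat\imath_n}\ccf(B_i)$ over the complete regeneration blocks, and (iii) a remainder from the incomplete last block. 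The initial and last block contributions are controlled uniformly over $f\in\calF$ using Condition (A3) (the $\psi_1$-bounds on $\sum_{t=0}^{m(\sigma_{\check\alpha}(0)+1)-1}F(X_t)$ and on $\check F$) together with the new maximal inequalities alluded to in the introduction; this is where the error term $\delta_n\sqrt{K_n} + D_nK_n/(\delta_n\sqrt n)$ (optimized over the net radius $\delta_n$) and the exponentially small probability $e^{-cn^{\underline\gamma\wedge(\underline c/2)}}$ enter—the latter absorbing the event that $\hat\imath_n$ deviates from its mean $\asymp n/\beta$, which happens with exponentially small probability by the $\psi_1$-tail of $\tau_{\check\alpha}$.

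Next I would handle the bulk term. After conditioning on (or concentrating) $\hat\imath_n$ around $N_n \asymp n/\beta$, the quantity $n^{-1/2}\sum_{i=1}^{N_n}\ccf(B_i)$ is a normalized sum of one-dependent (independent when $m=1$) identically distributed random vectors indexed by $\calF$, with $\E[\ccf(B_i)]=0$ by \eqref{eq:block-mean-pi}, envelope $\psi_1$-norm $\lesssim D_n$, and per-block variance bounded below by $\underline\sigma^2$ via (A3). Discretize $\calF$ at resolution $\delta_n$ in the $\|\cdot\|_{\check Q,2}$-type seminorm; by the VC-type assumption (A2) the net has cardinality $\le(\sfA_n/\delta_n)^{\sfV_n}$, so $\log$ of it is $\lesssim K_n$ (up to the $\log(1/\delta_n)$ factor, which is polynomial in $n$ under \eqref{assumption for Gauss approx} and hence harmless). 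On this net I would apply the high-dimensional CLT for sums of independent vectors \cite{chernozhuokov2022improved} (for $m=1$) or for $m$-dependent vectors \cite{ChangChen2024} (for $m>1$); with $p = \#(\text{net}) \le \exp(CK_n)$, $\log p \lesssim K_n$, and the moment/variance bounds above, these theorems deliver a Gaussian coupling on the net with Kolmogorov error exactly of the order $(D_n^4K_n^5/n)^{1/4}$ (resp.\ $(D_n^6K_n^7 m^4/n)^{1/6}$), i.e.\ $\eta_n$. One has to be slightly careful: the relevant CLTs bound the Kolmogorov distance on hyperrectangles, so taking the max-coordinate functional gives the bound for the discretized supremum; the covariance of the limiting Gaussian vector is exactly the finite-dimensional restriction of $\Sigma$ in \eqref{eq:covariance} (in the limit $N_n\to\infty$ the one-dependent covariance collapses to the three-term expression, as noted after \eqref{eq:covariance}).

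Then I would stitch the net back to the full supremum on the Gaussian side using anti-concentration: the approximating Gaussian process $G$ is tight in $\ell^\infty(\calF)$ (continuity of sample paths follows from the VC-type entropy bound and the variance proxy, by Dudley's theorem), so $\sup_{f\in\calF}G(f)$ is a proper random variable, and the difference between it and the max over the $\delta_n$-net is $O_{\bbP}(\delta_n\sqrt{K_n})$ in a suitable sense; combining this with the Gaussian anti-concentration inequality of \cite{CCK2014bAoS,CCK2015PTRF} (which needs the variance lower bound $\underline\sigma^2$) converts the maximal-inequality bound on the net discretization into a bound on $\rho_n$. The two discretization contributions—one from the empirical side (handled by the maximal inequality for block sums, giving $D_nK_n/(\delta_n\sqrt n)$ via a Fuk--Nagaev / Bernstein argument at resolution $\delta_n$) and one from the Gaussian side ($\delta_n\sqrt{K_n}$)—together form $\xi_n$; optimizing over $\delta_n$ subject to \eqref{assumption for Gauss approx} (which ensures the net is not so fine that $\log(1/\delta_n)$ spoils $K_n$, and not so coarse that the coupling is lost) yields the stated bound.

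The main obstacle I anticipate is the uniform (in $f\in\calF$) control of the discretization and boundary errors for the \emph{dependent} block sums—i.e.\ proving the maximal inequalities for Harris chains that the introduction flags as being "of independent interest." Unlike the i.i.d.\ case, the blocks have random, heavy-ish (only $\psi_1$) lengths, so one needs a truncation-plus-Fuk--Nagaev argument that simultaneously handles (a) the one-dependence (via a coupling into independent super-blocks, or a direct $m$-dependent Bernstein), (b) the fluctuation of the random index $\hat\imath_n$, and (c) the supremum over the VC-type class with only $\psi_1$ envelope control rather than a bounded or sub-Gaussian envelope. Getting the $D_n$-dependence sharp (linear powers in $\eta_n$, matching the independent case up to the exponent $1/4$ vs.\ $1/6$) requires using the $\psi_1$ envelope bound rather than a crude $L^q$ bound, and requires the exponential concentration of $\hat\imath_n$ to beat the polynomial factors—this is the technically delicate synthesis step; the rest is assembling off-the-shelf high-dimensional CLT and anti-concentration results.
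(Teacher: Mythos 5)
Your proposal follows essentially the same route as the paper's proof: a Nummelin-splitting decomposition into the bulk block sum plus the initial and incomplete-block boundary terms, replacement of the random block count by its deterministic center $n^*$ with Montgomery--Smith plus Bernstein-type block-count concentration absorbing the deviation event, a VC-net discretization on which the high-dimensional CLTs of \cite{chernozhuokov2022improved} (for $m=1$) and \cite{ChangChen2024} (for $m>1$) are applied, Gaussian comparison and Nazarov/anti-concentration to pass back to the full supremum, and Fuk--Nagaev-type concentration combined with block maximal inequalities to control the empirical and Gaussian discretization errors. The only cosmetic difference is that the paper fixes the net radius at $\varepsilon_n = n^{-1/2}$ and uses $\delta_n$ purely as the smoothing/threshold parameter appearing in (\ref{assumption for Gauss approx}), rather than discretizing at resolution $\delta_n$ itself.
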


    

Note that the term $\xi_n$ in (\ref{eqn : Gauss approx bound}) is independent of the value of $m$, which implies that $\eta_n$ is dominant when $m > 1$. By letting $D_n, K_n$ grow sufficiently slowly, we can achieve a near $n^{-1/4}$ rate when $m = 1$ that is comparable to \cite{chernozhuokov2022improved}. The following corollary presents an interpretable version of the Gaussian approximation when $m = 1$.

\begin{corollary}
\label{cor:simple-Gauss-approx}
Consider the setting in Theorem \ref{thm:Gaussian-approximation} under the strongly aperiodic case $m = 1$~(i.e. independent blocks). Fix a constant $\varepsilon \in (0, 1/4)$. Assume logarithmic growth of $K_n$ and let the sequence $D_n$ satisfy $D_n \leq n^{\varepsilon-\underline{\gamma}/4}$. Then the bound (\ref{eqn : Gauss approx bound}) reduces to
\begin{equation*}
    \rho_n \leq \frac{C\log^c (n)}{n^{1/4 - \varepsilon}}
\end{equation*}
where $c, C > 0$ are constants depending on $\|\sigma_{\check \alpha}\|_{\bbP, \psi_1}, \|\tau_{\check \alpha}\|_{\bbPcheck, \psi_1}$ and $\underline \sigma$.

\label{cor: Simplify Theorem 1}
\end{corollary}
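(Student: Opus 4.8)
The plan is to simply specialize Theorem \ref{thm:Gaussian-approximation} with $m=1$ and make a judicious choice of the free sequence $\delta_n$ (and confirm a compatible choice of $M_n$). First I would record the $m=1$ rate $\eta_n = (D_n^4 K_n^5/n)^{1/4}$ and, assuming $K_n = O(\log^{c_0} n)$ for some constant $c_0$ and $D_n \le n^{\varepsilon - \underline\gamma/4}$, note that $D_n^4 K_n^5 \le n^{4\varepsilon - \underline\gamma}\log^{5c_0} n$, so $\eta_n \le n^{-1/4 + \varepsilon - \underline\gamma/4}\log^{O(1)} n \le C\log^c(n)/n^{1/4-\varepsilon}$, which is already within the claimed bound (indeed with room to spare because of the $-\underline\gamma/4$). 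This disposes of the $\eta_n$ contribution.

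Next I would handle $\xi_n = \delta_n\sqrt{K_n} + D_nK_n/(\delta_n\sqrt n) + e^{-cn^{\underline\gamma\wedge(\underline c/2)}}$. The natural move is to balance the first two terms: set $\delta_n \asymp \sqrt{D_n K_n}\, n^{-1/4}$, which gives $\delta_n\sqrt{K_n} \asymp D_n^{1/2}K_n^{1/2}\sqrt{K_n}\,n^{-1/4} = \sqrt{D_n}\,K_n\, n^{-1/4}$ and the same for the second term. With $D_n \le n^{\varepsilon-\underline\gamma/4}$ and $K_n$ polylogarithmic, $\sqrt{D_n}\le n^{\varepsilon/2}$, so both terms are $\le \log^{O(1)}(n)\, n^{-1/4+\varepsilon/2} \le C\log^c(n)/n^{1/4-\varepsilon}$. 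The exponential term $e^{-cn^{\underline\gamma\wedge(\underline c/2)}}$ decays faster than any polynomial, hence is also $\le C\log^c(n)/n^{1/4-\varepsilon}$ for large $n$ (and can be absorbed into $C$ for small $n$). I would also need $\delta_n$ to be nonincreasing, which holds for $\delta_n = c'\sqrt{D_nK_n}\,n^{-1/4}$ up to adjusting constants since $\sqrt{D_nK_n} = o(n^{1/4})$; if $D_n$ is not itself monotone one can replace it by its running maximum, which only improves the bound on $\eta_n$ and $\xi_n$ in the same way.

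The one genuine check — and the step I expect to be the mild obstacle — is verifying the admissibility constraint (\ref{assumption for Gauss approx}), namely that there exist constants $\underline c,\underline\gamma>0$ and a nondecreasing $M_n \ge n^{\underline\gamma/2}$ with $n^{1/4}\delta_n/(\sqrt{M_n}\,D_n K_n) \ge n^{\underline c}$. With the chosen $\delta_n \asymp \sqrt{D_nK_n}\,n^{-1/4}$ the left side is $\asymp 1/(\sqrt{M_n}\,\sqrt{D_n}\,\sqrt{K_n})$, so I would take $M_n = n^{\underline\gamma/2}$ (the minimal allowed choice) and need $\underline\gamma/4 + \tfrac12\log_n D_n + \tfrac12\log_n K_n + \underline c \le 0$ asymptotically; since $\log_n D_n \le \varepsilon - \underline\gamma/4$ and $\log_n K_n \to 0$, the left side is at most $\underline\gamma/4 + (\varepsilon-\underline\gamma/4)/2 + o(1) + \underline c = \underline\gamma/8 + \varepsilon/2 + \underline c + o(1)$, which is \emph{not} automatically $\le 0$. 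This tension is resolved by reading the hypotheses in the right order: $\varepsilon\in(0,1/4)$ and $\underline\gamma$ are given \emph{first} by the statement of Theorem \ref{thm:Gaussian-approximation}'s setup, and the inequality $D_n \le n^{\varepsilon-\underline\gamma/4}$ forces $\underline\gamma < 4\varepsilon$; then I should instead choose $\delta_n$ slightly larger than the balanced value, say $\delta_n = n^{-1/4+\kappa}$ for a small $\kappa \in (0,\varepsilon)$ with $\kappa$ chosen so that both $\delta_n\sqrt{K_n} \le C\log^c(n) n^{-1/4+\varepsilon}$ holds (needs $\kappa \le \varepsilon$) and the ratio in (\ref{assumption for Gauss approx}) becomes $\asymp n^{\kappa}/(\sqrt{M_n}D_nK_n) \ge n^{\kappa - \underline\gamma/4 - (\varepsilon - \underline\gamma/4) - o(1)} = n^{\kappa - \varepsilon - o(1)}$, which fails for $\kappa<\varepsilon$ — so in fact one takes $M_n \equiv n^{\underline\gamma/2}$ with $\underline\gamma$ \emph{small} relative to the slack, i.e. one first shrinks $\underline\gamma$ (permissible, as $\underline\gamma$ is a free parameter of the theorem that we get to pick), picks $\delta_n = D_nK_n n^{-1/4+\underline c'}$ for suitable small $\underline c' = \underline c + \underline\gamma/4$, and checks $\delta_n\sqrt{K_n} \le n^{\varepsilon - \underline\gamma/4 - 1/4 + \underline c'}\log^{O(1)} n$, which is $\le C\log^c(n)n^{-1/4+\varepsilon}$ provided $\underline c' \le \underline\gamma/4$, i.e. $\underline c \le 0$ — forcing us back to the balanced choice. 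The clean way to finish, which I would adopt in the write-up, is: fix $\underline\gamma \in (0, 2\varepsilon)$ and $\underline c \in (0, (\varepsilon - \underline\gamma/2)/2)$ first, set $M_n = n^{\underline\gamma/2}$, set $\delta_n = D_n K_n \sqrt{M_n}\, n^{-1/4+\underline c}$ (the borderline choice that makes (\ref{assumption for Gauss approx}) an equality), and then verify directly that $\delta_n \sqrt{K_n} \le n^{\varepsilon - \underline\gamma/4 + \underline\gamma/4 - 1/4 + \underline c}\log^{O(1)}(n) = n^{-1/4 + \varepsilon + \underline c}\log^{O(1)}(n)$ and $D_nK_n/(\delta_n\sqrt n) = 1/(M_n n^{1/4 + \underline c}) \le n^{-1/4-\underline c}$, both of which are $\le C\log^c(n)/n^{1/4-\varepsilon}$ once $\underline c < \varepsilon/2$, say; the remaining constants $c,C$ then depend only on $c_0$, $\varepsilon$, $\underline\gamma$, $\underline c$, and the constants from Theorem \ref{thm:Gaussian-approximation}, i.e. on $\|\sigma_{\check\alpha}\|_{\bbP,\psi_1}$, $\|\tau_{\check\alpha}\|_{\bbPcheck,\psi_1}$, and $\underline\sigma$ (with $m=1$ fixed). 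I would double-check the monotonicity of this $\delta_n$ (it is $\propto D_nK_n\, n^{\underline\gamma/4 - 1/4 + \underline c}$, decreasing as soon as the polynomial exponent is negative, which holds for $\underline\gamma$ small and $\underline c$ small) and the constraint $M_n \ge n^{\underline\gamma/2}$ (met with equality), completing the reduction.
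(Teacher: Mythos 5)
Your exploratory analysis correctly identifies the crux, but the argument you finally commit to has a genuine gap at its last step. With $M_n = n^{\underline\gamma/2}$ and the borderline choice $\delta_n = D_nK_n\sqrt{M_n}\,n^{-1/4+\underline c}$, you obtain $\delta_n\sqrt{K_n} \le n^{-1/4+\varepsilon+\underline c}\log^{O(1)}(n)$ and then assert that this is $\le C\log^c(n)/n^{1/4-\varepsilon}$ once $\underline c<\varepsilon/2$. That inequality is false for every fixed $\underline c>0$: the left side exceeds the right by the factor $n^{\underline c}$. So your construction only delivers $\rho_n \lesssim \log^{O(1)}(n)\,n^{-(1/4-\varepsilon-\underline c)}$, strictly weaker than the stated conclusion, and since $\underline c$ must be a fixed positive constant (it feeds into the exponential term and the constants of Theorem \ref{thm:Gaussian-approximation}), you cannot let it shrink with $n$ to close the gap.

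The tension you wrestled with is real, and the paper resolves it by in effect working under the slightly stronger hypothesis $D_n\le n^{\varepsilon-\underline\gamma/2}$ (this is what its proof actually invokes, despite the exponent $\varepsilon-\underline\gamma/4$ in the statement): it sets $M_n=n^{\underline\gamma/2}$ and $\delta_n=n^{-1/4+\varepsilon}$, so that $n^{1/4}\delta_n/(\sqrt{M_n}D_nK_n)\ge n^{\underline\gamma/4}/K_n\ge n^{\underline c}$ for, say, $\underline c=\underline\gamma/8$ and $n$ large (using that $K_n$ is logarithmic), and then $\delta_n\sqrt{K_n}\le n^{-1/4+\varepsilon}\sqrt{K_n}$, $D_nK_n/(\delta_n\sqrt n)\le K_n n^{-1/4-\underline\gamma/2}$, and $\eta_n\le D_nK_n^{5/4}n^{-1/4}\le n^{-1/4+\varepsilon-\underline\gamma/2}K_n^{5/4}$ are all within the claimed rate. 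Your own intermediate computation (the ratio $n^{\kappa-\varepsilon}/K_n$ at $\kappa=\varepsilon$) correctly shows that with the exponent $\varepsilon-\underline\gamma/4$ taken literally there is no admissible $\delta_n\lesssim n^{-1/4+\varepsilon}$ at all, so no choice along your route can recover the stated rate; the repair is to take the extra $n^{-\underline\gamma/4}$ of slack in $D_n$ as the paper's proof does and then make the simple choice $\delta_n=n^{-1/4+\varepsilon}$, after which your treatment of $\eta_n$ and of the exponential term goes through verbatim.
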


In Corollary \ref{cor: Simplify Theorem 1}, the constant $\underline\gamma$ introduced in Theorem \ref{thm:Gaussian-approximation} can be arbitrarily small as it contributes to the bound of $\rho_n$ via $\exp(-n^{\underline \gamma})$. A smaller constant $\varepsilon$ would allow a near $n^{-1/4}$ rate at the cost of restricting the growth of the parameter $D_n$.

\begin{remark}[Comparison of rates for strongly mixing chains]
Notably, Harris recurrent Markov chains are strongly mixing~(i.e. $\alpha$-mixing; see \cite{bradley2005basic}). \cite{ChangChen2024} considers Gaussian approximation under the Kolmogorov metric for finite high-dimensional vectors that are strongly mixing; the work presumes identical moment conditions as stated in Assumption \ref{assump : Gauss approx}. On the other hand, here we do not necessarily require a geometric mixing condition. Further we are interested in an infinite-dimensional characterized by VC-type parameters $(\sfA_n, \sfV_n)$, which are allowed to grow with sample size. Notably, under the simplified setting stated in Corollary \ref{cor:simple-Gauss-approx}, our approximation rate achieves a near $O(n^{-1/4})$. This is to be compared to the rate achieved in \cite{ChangChen2024}, that scales as $O(n^{-1/9})$.
\end{remark}




The general strategy for finding an upper bound for $\rho_n$ consists of three steps. First, we decompose $\sum_{i = 0}^{n-1}f(X_i)$ into a sum of blocks, i.e., $\sum_{i = 1}^{n^*} \check{f}(B_i)$ for some fixed index $n^*$, and truncated remainder terms~(incomplete blocks). Second, we discretize the function class into finite number of functions $\{ f_j \}_{j \in [N]}\subset \calF$, which again induces remainder terms. Then high-dimensional Gaussian approximation results \cite{chernozhuokov2022improved}, \cite{ChangChen2024} are applied to $\sum_{i =1 }^{n^*}\check{f}_j(B_i)$, $j \in [N]$, while the remainder terms from decomposition and discretization are controlled through careful concentration inequalities. 


\section{Wild regenerative block bootstrap}
\label{sec:bootstrap}

\subsection{Approximate Nummelin splitting technique}
\label{sec:approximate-chain}
We review the approximate Nummelin splitting technique for strongly aperiodic Harris recurrent Markov chains (i.e., independent random blocks). We use this technique to show non-asymptotic consistency of our proposed Gaussian wild regenerative block bootstrap procedure. We refer the reader to \cite{BertailClemencon2006Bernoulli} for further details on approximate Nummelin splitting.

For simplicity, let the step transition probabilities $\{ P (x,dy)\}_{x \in E}$ and the initial distribution $\lambda$ be dominated by a $\sigma$-finite reference measure $\mu$, so that $\lambda(dy) = \lambda'(y)\mu(dy)$ and $P(x,dy) = p(x,y)\mu(dy)$ for all $x \in E$, where $\lambda'$ and $p$ are densities. Note that the minorization condition implies that
$\nu$ is also absolutely continuous with respect to $\mu$, and
that $p(x,y)\geq\theta \nu'(y),$ $\mu(dy)$ a.s. for every $x\in S$, with
$\nu(dy)=\nu'(y)\mu(dy)$.

The split chain constructed in Section \ref{sec: setting} satisfies a tensorization property; that is, the distribution of $Y_{0}^{n}=(Y_{0},\cdots, Y_{n})$
conditioned on $X_{0}^{n + 1}= x_{0}^{n + 1} $ is the product of
Bernoulli distributions. For $b_{0 }^{ n}=(b_{0}
,...,b_{n})\in\left\{  0,1\right\}  ^{n + 1},$ we have
\[
\mathbb{P}\left(  Y^{n}_0=b_0^n\mid X_0^{n+1}=x_0^{n+1}\right)
=\prod_{t=0}^{n}\mathbb{P}\big(Y_{t}=b_{t}\mid X_{tm}=x_{tm},X_{(t+1)m}=x_{(t+1)m}\big),
\]
with, for $t= 0, 1, ..., n$ and transition density $p$,

\begin{equation}
\begin{cases}
\mathbb{P}(Y_{t}=1\mid X_{t}=x_{t}, X_{t+1}=x_{t+1})=\theta & \text{if $x_{t}\notin S$ } \\
\mathbb{P}(Y_{t}=1\mid X_{t}=x_{t},X_{t+1}=x_{t+1})=\theta\nu'(x_{t+1})/p(x_{t},x_{t+1}) &\text{if $x_{t}\in S$}.
\end{cases}
\label{eq:tensor}
\end{equation}

The \textit{approximate Nummelin split chain} is a simulated chain $\{(X_t, \widehat Y_t)\}_{t \in \mathbb N_0}$, where $\widehat{Y}_t$ is sampled from (\ref{eq:tensor}) with an estimated transition density $\hat p$. Given data $X_0^{n + 1} = x_0^{n + 1}$, we first construct an estimator $\hat p$ of the transition density $p$ that satisfies
  \begin{equation}
   \hat p(x,y) \geq \theta \nu'(y), \  \mu (dy) \text{-a.s. \quad and}\quad \hat p(x_{t}, x_{t+1})>0
   \label{cond : transition estimate}
  \end{equation}
for $t = 0, 1, ..., n$. Transition density $p$ in (\ref{eq:tensor}) is replaced with $\hat p$ and samples $\widehat{Y}_t$ are drawn from the distribution $\mathsf{Bern}(\theta\nu'(x_{t + 1})/\hat p(x_t, x_{t + 1}))$ whenever $x_t \in S$ and from $\mathsf{Bern}(\theta)$ otherwise. 

Approximate stopping times\footnote{For notational convenience we omit the subscript $\check \alpha$ that was present in $\sigma_{\check \alpha}(i)$ for the original split chain.} are given by
\begin{equation*}
    \hat{\sigma}(0) := \min \{ t \geq 0 : \widehat{Y}_{t} = 1 \} \text{\;\;and\;\;} \hat{\sigma}(i) := \min \{ t > \hat{\sigma}(i-1) : \widehat{Y}_{t} = 1 \}.
\end{equation*}
Using the approximate stopping times, the approximate random blocks are constructed as follows
\begin{equation*}
    \widehat B_{i} := \big(X_{\hat{\sigma}(i-1)+1}, X_{\hat{\sigma}(i-1)+2},\cdots, X_{\hat{\sigma}(i)}\big), \quad i \in \mathbb N.
\end{equation*}

\subsection{Gaussian multiplier bootstrap consistency}
\label{sec : pivotal stat}
For the strongly aperiodic chain of interest, we propose a bootstrap-based procedure to approximate the covariance function of the reference Gaussian process $G$, a target that is difficult to learn analytically.

Given a function $f:E \to \R$ and a probability measure $\pi$, define the sample mean
\begin{equation}\label{eq:emp-mean}
    \what{\pi}f := n^{-1}\sum_{i = 0}^{n-1}f(X_i);
\end{equation}
$\what{\pi}f$ is an estimator of the mean $\pi f = \int f d\pi$. Let $\zeta_1, \zeta_2, ...$ be i.i.d. standard Gaussian random variables independent of the data and let $\hat i_n := \max \{i\geq 0: \hat \sigma(i) \leq n-1\}$ be the count of approximate random blocks. We introduce the Gaussian wild regenerative block bootstrap statistics,
\begin{equation}
    \bbG_n^\zeta(f) := \frac{1}{\sqrt{n}} \sum_{i = 1}^{\hat{i}_n} \zeta_i \cdot \Big\{ \check{f}(\widehat{B}_i) - \ell(\widehat{B}_i)\widehat{\pi} f\Big\}, \quad \text{for} \  f \in \calF.
    \label{eq:multplier-boot}
\end{equation}
Notably, the statistics (\ref{eq:multplier-boot}) is computable from data as long as the function class $\calF$ is computable. Our goal is to distributionally approximate $\bbG_{n, \calF}^{\zeta, \vee} = \sup_{f \in \calF} \bbG_n^\zeta(f)$ to the Gaussian process $G$ under the Kolmogorov metric 
\begin{equation}\label{eq:boot-metric}
    \rho_n^* := \sup_{t \in \R} \Big| \Pr\big( \G_{n, \calF}^{\zeta, \vee} \leq t | X_0^{n-1} \big) - \Pr\big( G_{\calF}^{\vee} \leq t \big) \Big| 
\end{equation}
with high probability.

\begin{remark}[Multiplier bootstrap for Markov chain data]
The bootstrap statistics (\ref{eq:multplier-boot}) is centered by an estimator of $\ell(B_i) \pi f$, and this is to mimic the covariance of the reference Gaussian process $\E[G(f)G(g)] = \beta^{-1} \E[(\brf(B_1) - \ell(B_1) \pi f)(\brg(B_1) - \ell(B_1) \pi g)]$. Further note that the covariance of the Gaussian process is a natural result of using Nummelin splitting technique; see the proof of Theorem \ref{thm:Gaussian-approximation} for details. See \cite{CCK2017AoP,deng2020beyond} to compare how our block bootstrap procedure for Markov chain data differs from the typical Gaussian multiplier bootstrap techniques designed for independent data.
\end{remark}

Define a stopping time of the original chain $\{X_t\}_{t \in \mathbb N_0}$ on the small set $S$,
\begin{equation}\label{eq:stopping_original}
    \tau_S := \min\{ t \geq 1 : X_{t} \in S \}.
\end{equation}
Unlike the stopping time $\tau_{\check \alpha}$ of the split chain, the stopping time $\tau_S$ of the original chain is not regenerative, so the initial measure matters throughout. 

We now make a second set of assumptions, that ensures bootstrap consistency. 



\begin{assumption}\label{assump : bootstrap}  ~
    \begin{enumerate}
    \item[(B1)] There exists a constant $\underline{\sigma} > 0$ and a (polynomially growing) sequence $D_n \geq 1$ such that
    \begin{equation*}
        \begin{aligned}
            \sup_{x \in E} F(x)  \leq D_n, \quad \inf_{f \in \calF} \Var \big( \ccf(B_1)\big) \ge \underline{\sigma}^{2}.
        \end{aligned}
    \end{equation*}
    \item [(B2)] The moments of stopping times $\sup_{x \in S}\mathbb{E}_{x}[\tau^4_S], \| \tau_{\check \alpha} \|_{\bbPcheck, \psi_1}, \| \sigma_{\check \alpha}\|_{\bbP, \psi_1} $
    are finite\footnote{The expectation and probability $\E_x$ and $\bbP_x$ is with respect to the split chain conditioned on $X_0 = x$.}.
    \item [(B3)] Estimator $\hat p(x,y)$ of the transition density $p(x, y)$ satisfies (\ref{cond : transition estimate}). There exists a constant $\theta' > 0$ and a non-increasing sequence $\alpha_n \geq 0$ such that
    \begin{equation*}
    \mathbb{E}\bigg[\sup_{(x,y) \in (S \times S)}\big|\hat p(x,y) - p(x,y)\big|^2 \bigg] \leq \theta' \cdot\alpha_n.    
    \end{equation*}
    \item [(B4)] The density $\nu' = d\nu/d\mu$ satisfies $\inf_{x \in S} \nu'(x) > 0.$
    \item [(B5)] There exists some positive finite constant $R$ such that
    \begin{equation*}
        \sup_{x, y \in S}\big(|p(x, y)| \vee |\hat p(x,y)| \big) < R.
    \end{equation*}
\end{enumerate}
\end{assumption}

Conditions (B1) and (B2) collectively imply $\| \check F\|_{\check Q, \psi_1} \leq \| F\|_\infty \| \tau_{\check \alpha} \|_{\bbPcheck, \psi_1}  \leq C D_n$ and $\E[\sum_{t = 0}^{\sigma_{\check \alpha}(0)}F(X_t)] \leq C'D_n$ where $C, C'$ depend on $\|\tau_{\check \alpha}\|_{\bbPcheck, \psi_1}$ and $\|\sigma_{\check \alpha}\|_{\bbP, \psi_1}$ respectively. So (B1) and (B2) together imply condition (A3) in Assumption \ref{assump : Gauss approx} when $m = 1$. The moment condition of stopping times in (B2) are again ensured by specific drift conditions; see Section \ref{sec : drift cond} ahead for details. The variance lower bound $\underline \sigma$ in (B1) guarantees anti-concentration of the Gaussian supremum \cite{CCK2014bAoS,CCK2015PTRF}.


The sequence $\alpha_n$ of the estimator $\hat p$ satisfying (B3) depends on the smoothness of the transition density $p$; see \cite{lacour2007adaptive, clemenccon2000adaptive} for further details. {It is known that $s$-Holder transition densities induce an order $\alpha_n = O\big((\log n/n)^{s/(s + 1)}\big)$; see Remark 3.1 of \cite{BertailClemencon2006Bernoulli}}. Condition {(B4)} along with (\ref{cond : transition estimate}) subtly impose a strict positivity condition for the estimator $\hat p$, meaning that
for consistency to hold~(that is, $\alpha_n \downarrow 0$), lower bound $\inf_{x \in S}p(x) >0$ is necessary. Condition (B5) is usually satisfied when transition density $p$ is bounded above. 

\begin{remark}[Assumption \ref{assump : bootstrap}]\label{rem:bertail}
    Assumption \ref{assump : bootstrap} allows a (carefully designed) coupling of the split chain $(X_i, Y_i)$ to well approximate that of the approximate split chain $(X_i, \hat{Y}_i)$. Specifically, the absolute difference $\big|\sum_{i = 1}^{\hat i_n}\brf^2(\what B_i) - \sum_{i = 1}^{i_n} \brf^2(B_i) \big|$ can be bounded by the transition density estimation error $O(\alpha_n)$; see Theorem 3.1 of \cite{BertailClemencon2006Bernoulli} for details.
\end{remark}


For notational simplicity, we introduce shorthands.
The collection of parameters appearing in Assumption \ref{assump : bootstrap}, are collected into a single vector\footnote{$\sup_{x \in S}\E_x[\tau_S^4] < \infty$, along with $\lambda$-integrability imply $\E[\tau_S^4] < \infty$. }
\begin{equation*}
    \vartheta_B := (q, \theta', \sup_{x \in S}\E_x[\tau_S^2], \sup_{x \in S}\E_x[\tau_S^4], \E[\tau_S^4]) \quad \text{where} \quad q := \theta \inf_{x \in S}\nu'(x)/R.
\end{equation*}
Further define $\upperpif := \sup_{f \in \calF} | \pi f |$, which may depend on sample size $n$ through the function class $\calF = \calF_n$. 

From here on and after, we always assume point-wise measurability and the VC-type properties, (A1) and (A2) of Assumption \ref{assump : Gauss approx} respectively, of the function class $\calF$. So Assumption \ref{assump : Gauss approx} follows from Assumption \ref{assump : bootstrap}. We now present the non-asymptotic bootstrap consistency of our Gaussian wild regenerative block bootstrap statistics under the Kolmogorov metric.

\begin{theorem}[Bootstrap consistency] 

Suppose the chain $\{X_t\}_{t \in \mathbb N_0}$, the function class $\calF$ and the estimator $\hat p$ satisfy Assumption \ref{assump : bootstrap}. Given positive constants $\underline c, \underline \gamma, \gamma, \overline \gamma > 0$  such that $\underline \gamma < \gamma < \overline \gamma < 1/2$, let $M_n$ be a sequence satisfying $M_n \geq n^{\underline \gamma/2}$ and
\begin{equation}
     \frac{n^\gamma}{M_n D_n^2 K_n} \geq n^{\underline{c}}.
    \label{m1-bootstrap-cond}
\end{equation}
If $K_n \leq n^{\underline c/2}$ and  $u(\calF, \pi) \leq D_n$, then with probability at least $1 - C\cdot \xi_n^*$ we have
\begin{equation}\label{eq:bootstrap-bound}
    \rho_n^* \leq C'\cdot \bigg\{\frac{\log N}{n^{1/4 - \overline{\gamma}/2}} + \frac{D_n K_n}{n^{1/4 + \overline{\gamma}/2}} +  \exp\big( -c'n^{(1/2 - \overline \gamma)\wedge (\overline \gamma - \gamma)} \big) \bigg\}
\end{equation}
where
\begin{equation*}
    \xi_n^* := \frac{\sqrt{K_n}}{n^{1/4 - \overline \gamma/2}} + \frac{D_nK_n^{5/4}}{n^{1/4 + \overline \gamma/2}} + n^{1/2 - \gamma} D_n^2 \alpha_n  + \exp(-c n^{\underline{\gamma} \wedge (\underline{c}/8)}).
\end{equation*}
The constants $c,c', C, C' > 0$ depend only on $\beta$, $\underline \sigma$, $\underline{c}, \vartheta_B, \|\sigma_{\check \alpha}\|_{\bbP, \psi_1}, \|\tau_{\check \alpha}\|_{\bbPcheck, \psi_1}$. 
\label{thm: bootstrap consistency}
\end{theorem}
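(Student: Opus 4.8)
The plan is to prove Theorem~\ref{thm: bootstrap consistency} by interposing a sequence of intermediate distributions between the conditional law of $\bbG_{n,\calF}^{\zeta,\vee}$ and the law of $G_\calF^\vee$, bounding each transition under the Kolmogorov metric and then invoking the Gaussian anti-concentration inequality of \cite{CCK2014bAoS} to convert these into bounds on $\rho_n^*$. Concretely, I would consider the following chain of approximations: (i) the true Gaussian supremum $G_\calF^\vee$, with covariance $\Sigma$; (ii) a finite-dimensional Gaussian vector $(G(f_j))_{j\in[N]}$ obtained by discretizing $\calF$ into a $\delta_n$-net $\{f_j\}$ of cardinality $N \le (\sfA_n/\delta_n)^{\sfV_n}$; (iii) the Gaussian supremum $\max_{j} n^{-1/2}\sum_{i=1}^{i_n}\zeta_i\{\ccf_j(B_i)\}$ built from the \emph{true} (unobserved) regenerative blocks $B_i$ and the true centering; (iv) the supremum $\max_j n^{-1/2}\sum_{i=1}^{\hat i_n}\zeta_i\{\brf_j(\what B_i) - \ell(\what B_i)\what\pi f_j\}$ built from the \emph{estimated} blocks $\what B_i$ and the empirical centering, which is exactly $\bbG_{n,[N]}^{\zeta,\vee}$; and finally (v) the full bootstrap supremum $\bbG_{n,\calF}^{\zeta,\vee}$ over all of $\calF$. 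Steps (i)$\to$(ii) and (iv)$\to$(v) are discretization errors controlled by the maximal inequalities for Markov chains developed for Theorem~\ref{thm:Gaussian-approximation} (applied conditionally on the data for the bootstrap process, where the multiplier structure makes the increments sub-Gaussian given the blocks). Step (ii)$\to$(iii) is a Gaussian comparison step: conditionally on the data, the vector in (iii) is exactly Gaussian with covariance matrix $\what\Sigma_{jk} := n^{-1}\sum_{i=1}^{i_n}\brf_j^\circ(B_i)\brf_k^\circ(B_i)$ (recall that for $m=1$ the blocks are independent, so conditioning on $\{B_i\}$ and taking the $\zeta_i$-variance gives this empirical covariance), and one compares $\what\Sigma$ to $\Sigma$ entrywise using the Gaussian comparison lemma (e.g.\ Lemma~3.1 of \cite{CCK2015PTRF} / Chernozhukov--Chetverikov--Kato), so that $\rho$ is bounded by $(\max_{j,k}|\what\Sigma_{jk}-\Sigma_{jk}|)^{1/3}$ times a logarithmic factor in $N$, which in turn invokes anti-concentration.

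The core of the argument — and the step I expect to be the main obstacle — is (iii)$\to$(iv), where one must pass from the idealized blocks $B_i$ with population centering $\ell(B_i)\pi f$ to the \emph{estimated} blocks $\what B_i$ with empirical centering $\ell(\what B_i)\what\pi f$. This decomposes into three sub-errors. First, the centering error: $\what\pi f - \pi f$ is itself $O_\Pr(n^{-1/2})$ uniformly over $\calF$ by the (already-established) Gaussian approximation together with the maximal inequality, and since $\sum_i \ell(\what B_i) \le n$, its contribution to the bootstrap covariance is $O((\what\pi f - \pi f)(\what\pi g - \pi g) \cdot n / n) = O_\Pr(n^{-1})$ in the variance scale, i.e.\ negligible, after cross-terms are handled by Cauchy--Schwarz. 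Second, and most delicate, the block-estimation error: one must show $|\sum_{i=1}^{\hat i_n}\brf^2(\what B_i) - \sum_{i=1}^{i_n}\brf^2(B_i)| = O_\Pr(n \cdot (\text{something}\to 0))$ uniformly in $f$, which is where Theorem~3.1 of \cite{BertailClemencon2006Bernoulli} and the coupling described in Remark~\ref{rem:bertail} enter: the approximate split chain $(X_i,\what Y_i)$ differs from a genuine split chain $(X_i,Y_i)$ only on a random set of time indices whose size is controlled by $\sum_t \mathbf 1\{X_t\in S\}|\hat p(X_t,X_{t+1})-p(X_t,X_{t+1})|/p(\cdot,\cdot)$, and under (B3)--(B5) this has expectation $O(n\alpha_n^{1/2})$ (using $\E[\sup_{S\times S}|\hat p - p|^2]\le \theta'\alpha_n$, the lower bound $q = \theta\inf_S\nu'/R$ on the Bernoulli parameters, and the bound $\sup_{x\in S}\E_x[\tau_S^4]<\infty$ to control how block sums respond to perturbing a bounded number of regeneration flags). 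Cauchy--Schwarz in $f,g$ then upgrades the diagonal control to entrywise control of $\what\Sigma^\zeta - \what\Sigma$, and the $n^{1/2-\gamma}D_n^2\alpha_n$ term in $\xi_n^*$ is exactly the resulting bound after dividing by $n$ and accounting for the envelope $D_n$ and the number of affected blocks.

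Having assembled the entrywise covariance bound $\max_{j,k}|\what\Sigma^\zeta_{jk} - \Sigma_{jk}| \lesssim$ (centering error) $+$ (coupling error) $+$ (empirical vs.\ population covariance error for the true blocks), I would then convert it to a Kolmogorov bound via the Gaussian comparison inequality, pulling in the cube-root and the $\log N = \sfV_n\log(\sfA_n/\delta_n)$ factor; the empirical-vs-population covariance error for the genuine blocks $\max_{j,k}|\what\Sigma_{jk}-\Sigma_{jk}|$ is handled by a Bernstein/Fuk--Nagaev maximal inequality over the $N$ pairs using $\|\check F\|_{\check Q,\psi_1}\le CD_n$ (this is where $n^{-1/4+\overline\gamma/2}$ and $n^{-1/4-\overline\gamma/2}$ arise, with $\overline\gamma$ tuning the truncation level of the block sums, exactly as in the proof of Theorem~\ref{thm:Gaussian-approximation}, and where the restriction $i_n \asymp n/\beta$ holds on an event of probability $1 - \exp(-cn^{\underline\gamma})$ by the $\psi_1$ control on $\sigma_{\check\alpha}$). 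The bookkeeping requires choosing $\delta_n \asymp n^{-1/4-\overline\gamma/2}$ (so that the discretization terms match the displayed bound), verifying that conditions \eqref{m1-bootstrap-cond}, $K_n\le n^{\underline c/2}$ and $u(\calF,\pi)\le D_n$ suffice for all the intermediate events to hold simultaneously, and collecting the failure probabilities of the various high-probability events into $\xi_n^*$; none of this is conceptually hard but it is where most of the length of the proof will go. The dependence of the constants on $\beta, \underline\sigma, \underline c, \vartheta_B, \|\sigma_{\check\alpha}\|_{\bbP,\psi_1}, \|\tau_{\check\alpha}\|_{\bbPcheck,\psi_1}$ is tracked through: $\underline\sigma$ enters via anti-concentration, $\beta$ via the block-count normalization, $\vartheta_B$ via the coupling error, and the $\psi_1$ norms via the concentration of $i_n$ and the block sums.
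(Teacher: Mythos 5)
Your overall architecture coincides with the paper's: conditionally on the data the multiplier process is exactly Gaussian, so after discretizing the class the problem reduces to an entrywise comparison of the bootstrap covariance $\widehat{\Gamma}(\ccf,\ccg)$ with $\beta^{-1}\Cov\big(\ccf(B_1),\ccg(B_1)\big)$, and this comparison error is split into the empirical-versus-population covariance over the true blocks, the random block count (Montgomery--Smith), the estimated-versus-true blocks via the Bertail--Cl\'emen\c{c}on coupling, and the centering error driven by $\what{\pi}f-\pi f$, all assembled on high-probability events and converted to a Kolmogorov bound by Nazarov/anti-concentration. The paper packages the covariance part into $\Delta(\check\calF^\circ)$ and Lemma \ref{lem:cov-est} rather than routing through your intermediate ``oracle-block'' multiplier bootstrap, but that is an organizational rather than substantive difference.

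Two quantitative steps, however, would not deliver the stated rates as written. First, you invoke a cube-root Gaussian comparison, bounding the Kolmogorov distance by $\big(\max_{j,k}|\widehat{\Sigma}_{jk}-\Sigma_{jk}|\big)^{1/3}$ times logarithmic factors. On the event $\Delta(\check\calF^\circ)\le n^{-1/2+\gamma}$ this yields a term of order $n^{-1/6+\gamma/3}$, which for general admissible $\gamma<\overline\gamma<1/2$ is not dominated by the claimed leading term $\log N\cdot n^{-(1/4-\overline\gamma/2)}$ (it is only when $\overline\gamma\ge(1+4\gamma)/6$). The proof needs the sharper comparison of Lemma \ref{lem:Gaussian-comparison}(i) (Proposition 2.1 of \cite{chernozhuokov2022improved}), which gives $(\Delta\log^2 N)^{1/2}=\Delta^{1/2}\log N\le n^{-1/4+\gamma/2}\log N$; this, together with the variance floor $\underline\sigma$ from (B1), is what makes (\ref{eq:bootstrap-bound}) attainable. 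Second, your heuristic for the block-estimation error---an expected $O(n\alpha_n^{1/2})$ perturbed regeneration flags obtained essentially by Jensen from (B3)---would produce a term of order $n^{1/2-\gamma}D_n^2\alpha_n^{1/2}$ in $\xi_n^*$, strictly worse than the stated $n^{1/2-\gamma}D_n^2\alpha_n$ whenever $\alpha_n\to0$. The ingredient actually used is the moment bound of Theorem 3.1/Lemma 4.2 of \cite{BertailClemencon2006Bernoulli}, quoted in (\ref{eq:bertail-coupling}) and Remark \ref{rem:bertail}, namely $\E\big[\sup_{\check h}\big|n^{-1}\sum_{i\le i_n}\check h(B_i)-n^{-1}\sum_{i\le \hat i_n}\check h(\what B_i)\big|\big]\le CD_n^2\alpha_n$, followed by Markov's inequality at level $n^{-1/2+\gamma}$. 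Both defects are repairable by substituting these two auxiliary results, but as proposed your argument establishes a weaker bound than Theorem \ref{thm: bootstrap consistency}.
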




\begin{remark}[Theorem \ref{thm: bootstrap consistency}] 
Note that $n^{1/4 - \overline{\gamma}/2}$ is the dominant term as long as $D_n \sqrt{K_n} \leq n^{\overline{\gamma}}.$ So $\overline{\gamma}$ is the budget for the growth of sequences $D_n$, $K_n$. Larger budgets yields slower dominating rate, hence inducing a tradeoff. 
\end{remark}



\subsubsection{Technical tool: Covariance function approximation}
The process (\ref{eq:multplier-boot}) is a Gaussian process when conditioned on data as $\{\zeta_i\}_{i \in [n]}$ are exogenous~(i.e. independent of the data) standard Gaussian random variables. So conditioned on data, bounding the metric $\rho_n^*$ reduces to the comparison of two Gaussian distributions.

After discretizing over the class of functions, comparison of two finite-dimensional Gaussian distributions amounts to the comparison of their covariance matrices~(see Lemma \ref{lem:Gaussian-comparison}). Accordingly, we define the empirical covariance function and its distance from the covariance function: 
\begin{equation}\label{eq:emp-covariance}
    \begin{aligned}
            \widehat{\Gamma}(\brf, \brg) &:= \frac{1}{n}\sum_{i =1 }^{\hat{i}_n} \brf (\widehat{B}_i)\brg (\widehat{B}_i) \quad \text{for $f \in \calF$ and} \\
        \Delta(\check{\calF}) &:= \sup_{\brf, \brg \in \check{\calF}} \Big| \beta^{-1} \Cov\big(\brf(B_1), \brg(B_1)\big) - \widehat{\Gamma}(\brf, \brg) \Big|.
    \end{aligned}
\end{equation}
For the centered function class $\calF^\circ:=\{f^\circ : f \in \calF\}$, the following Lemma controls $\Delta(\check \calF^\circ)$, which serves as an important technical tool for Theorem \ref{thm: bootstrap consistency}\footnote{Notably, $\what{\Gamma}(\ccf, \ccg)$ in $\Delta(\check\calF^\circ)$ is not computable from data as $\pi f$ and $\pi g$ are present.}. 
\begin{lemma} 
Suppose the chain $\{X_t\}_{t \in \mathbb N_0}$, the function class $\calF$ and the estimator $\hat p$ satisfy Assumption \ref{assump : bootstrap}. 
Fix positive constants $\underline c, \underline \gamma, \gamma > 0$ such that $\underline \gamma < \gamma < 1/2$. Let $M_n$ be a non-decreasing seqeuence satisfying $M_n \geq n^{\underline \gamma/2}$ and
\begin{equation*}
    \frac{n^\gamma}{M_n D_n^2 K_n} \geq n^{\underline c}.
\end{equation*}
If $K_n \leq n^{\underline c/2}$ and $\upperpif \leq D_n$, then we have
\begin{equation}
\bbP\big(\Delta (\check \calF^\circ) > n^{-1/2 + \gamma}\big) \leq C\cdot \big\{ n^{1/2 - \gamma} {D}_n^2 \alpha_n + \exp(-c n^{\underline{\gamma} \wedge \underline{c}}) \big\}.
\label{lem 3: Delta bound}
\end{equation}
The constants $c$ and $C$ depend only on $\beta, \underline{c}, \vartheta_B, \|\sigma_{\check \alpha}\|_{\bbP, \psi_1}, \|\tau_{\check \alpha}\|_{\bbPcheck, \psi_1}$.
\label{lem:cov-est}
\end{lemma}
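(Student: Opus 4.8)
The plan is to control $\Delta(\check\calF^\circ)$ by a triangle-inequality decomposition into three pieces: (i) the difference between the true covariance $\beta^{-1}\Cov(\ccf(B_1),\ccg(B_1))$ and its empirical analog computed from the \emph{exact} split-chain blocks, $\tfrac1n\sum_{i=1}^{i_n}\ccf(B_i)\ccg(B_i)$; (ii) the difference between that exact-block empirical covariance and the one built from the \emph{approximate} blocks $\widehat B_i$; and (iii) the effect of replacing the unknown centering $\pi f$ inside $\ccf = f - \pi f$ by its estimator $\widehat\pi f$, since $\widehat\Gamma(\ccf,\ccg)$ as written in \eqref{eq:emp-covariance} uses the true $\pi f$ — here one must be careful about exactly which centering is used, and reconcile $\widehat\Gamma(\ccf,\ccg)$ with the data-computable object $\widehat\Gamma(\brf,\brg)$ via $\ccf = \brf - \ell(\cdot)\pi f$ type identities. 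Throughout, the supremum over $f,g$ is handled by a uniform (VC-type) control, which is where the factor $K_n = \sfV_n\log(\sfA_n\vee n)$ enters.

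For step (i), the strategy is to apply the new maximal inequalities for Harris chains together with a Fuk–Nagaev type bound to the i.i.d.\ (since $m=1$) centered products $\ccf(B_i)\ccg(B_i) - \E[\ccf(B_1)\ccg(B_1)]$, whose envelope is controlled by $(\check F(\cdot)+\ell(\cdot)\pi F)^2$, itself $\psi_{1/2}$ under the $\psi_1$ envelope bound $D_n$ from (B1)–(B2); one also pays a negligible price for the randomness of the block count $i_n$ versus its mean $\asymp n/\beta$, handled by the $\psi_1$ moment of $\sigma_{\check\alpha}$ and a concentration bound on $\hat i_n$ (or $i_n$). This yields a deviation of order roughly $D_n^2\sqrt{K_n/n}$ with exponential tail, which after the threshold $n^{-1/2+\gamma}$ and the condition $n^\gamma/(M_nD_n^2K_n)\ge n^{\underline c}$ is absorbed into the stated bound. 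Step (iii) is comparatively soft: $|\widehat\pi f - \pi f|$ is uniformly small (again by a Harris-chain maximal/Fuk–Nagaev argument, of order $D_n\sqrt{K_n/n}$), and since $|\pi f|\le u(\calF,\pi)\le D_n$ and block lengths have exponential moments, the induced perturbation of the quadratic form is of the same order up to polynomial factors in $D_n,K_n$, again absorbed by \eqref{m1-bootstrap-cond}.

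Step (ii) is the main obstacle, and it is precisely where Assumption~\ref{assump : bootstrap} and the approximate Nummelin machinery of \cite{BertailClemencon2006Bernoulli} are needed. The idea is to invoke the coupling of the exact split chain $(X_i,Y_i)$ and the approximate one $(X_i,\widehat Y_i)$ (Remark~\ref{rem:bertail} / Theorem~3.1 of \cite{BertailClemencon2006Bernoulli}): the two disagree only at time steps where $X_t\in S$ and the Bernoulli parameters $\theta\nu'(X_{t+1})/p(X_t,X_{t+1})$ vs.\ $\theta\nu'(X_{t+1})/\hat p(X_t,X_{t+1})$ differ, an event whose probability is governed by $\sup_{S\times S}|\hat p - p|$; using (B3), (B4), (B5) this total-variation-type discrepancy over $n$ steps is $O(n\,\alpha_n^{1/2})$ in expectation (or, more carefully, one controls the expected number of ``mismatched'' regeneration times and the $\tau_S$-moments $\sup_{x\in S}\E_x[\tau_S^4]$ bound the length of affected blocks and hence the size of $|\brf^2(\widehat B_i)-\brf^2(B_i)|$ summed over mismatched blocks). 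The delicate points are: making this uniform over the VC class $\check\calF^\circ$ (so the coupling error must be bounded in terms of the envelope $\check F$, not function-by-function), carrying the fourth-moment $\tau_S$ conditions through to bound the \emph{squared} block sums, and tracking the scaling $n^{1/2-\gamma}D_n^2\alpha_n$ that appears in the final probability bound — which forces the threshold to be $n^{-1/2+\gamma}$ rather than something sharper. Once these three steps are in place, a union bound over the $\lesssim(\sfA_n/\varepsilon)^{\sfV_n}$ discretization points of the VC class, with $\varepsilon$ chosen polynomially small in $n$, combines them into \eqref{lem 3: Delta bound}.
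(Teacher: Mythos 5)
Your overall architecture coincides with the paper's: (a) population versus exact-block empirical covariance, controlled through the block maximal inequality together with an Adamczak/Fuk--Nagaev bound applied to the product class with $\psi_{1/2}$ envelope of size $D_n^2$; (b) the random block index, handled by a Montgomery--Smith type maximal bound over the window $[i_n\wedge n^*, i_n\vee n^*]$ plus concentration of the block count (this is where $M_n\ge n^{\underline\gamma/2}$ enters and produces the $\exp(-cn^{\underline\gamma\wedge\underline c})$ term); and (c) exact versus approximate blocks via the Bertail--Cl\'emen\c{c}on coupling followed by Markov's inequality at the threshold $n^{-1/2+\gamma}$, with the centering handled through $\upperpif\le D_n$. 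Two points in your sketch, however, would not deliver the bound as stated.

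First, your step (iii) is aimed at the wrong quantity: in $\Delta(\check\calF^\circ)$ the functions $\check f^\circ$ are centered with the \emph{true} $\pi f$ (the paper's footnote stresses that $\what\Gamma(\check f^\circ,\check g^\circ)$ is not computable from data), so no uniform control of $|\what\pi f-\pi f|$ is needed anywhere in this lemma. What is needed is only the bilinear expansion $\check f^\circ=\check f-\ell(\cdot)\pi f$, which generates cross terms of the form $\what\Gamma(\check f,\check{\mathbf 1})$ and $\what\Gamma(\check{\mathbf 1},\check{\mathbf 1})$ weighted by $|\pi f|,|\pi g|\le\upperpif\le D_n$; these are treated exactly as in (a)--(c) with the classes $\check\calF\cdot\check{\mathbf 1}$ and $\{\check{\mathbf 1}\}$. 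Centering at $\what\pi f$ instead would change the statement being proved. Second, your per-step heuristic for the coupling error ("mismatch probability governed by $\sup_{S\times S}|\hat p-p|$, total $O(n\,\alpha_n^{1/2})$") is too lossy: condition (B3) bounds the \emph{squared} error by $\theta'\alpha_n$, and the $\sqrt{\alpha_n}$ route would give, after Markov at level $n^{-1/2+\gamma}$, a term of order $n^{1/2-\gamma}D_n^2\sqrt{\alpha_n}$, strictly weaker than the claimed $n^{1/2-\gamma}D_n^2\alpha_n$ in (\ref{lem 3: Delta bound}). The argument must instead import the expectation bound of Theorem 3.1/Lemma 4.2 of Bertail--Cl\'emen\c{c}on, which under (B3)--(B5) and the $\tau_S$ moment conditions yields $\E\big[\sup_{\check h}\big|n^{-1}\sum_{i\le i_n}\check h(B_i)-n^{-1}\sum_{i\le\hat i_n}\check h(\what B_i)\big|\big]\le C D_n^2\alpha_n$ after pulling out the envelope, and only then does Markov's inequality produce the stated rate. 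With these two corrections your outline reduces to the paper's proof.
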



\begin{remark}[Lemma \ref{lem:cov-est}]\label{rem:lem-cov} The variance lower bound in (B1) of Assumption \ref{assump : bootstrap} is not utilized here, so the constants in the bound (\ref{lem 3: Delta bound}) do not depend on $\underline \sigma$. Further, for a transition density $p$ that is $s$-Holder smooth~\cite{lacour2007adaptive,clemenccon2000adaptive}, we may construct an estimator $\hat p$~(see \cite{BertailClemencon2006Bernoulli}) that yields order $\alpha_n = O\big((\log n/n)^{s/(s + 1)}\big)$. In such case, we can simplify the probability bound (\ref{lem 3: Delta bound}) to
\begin{equation*}
  \bbP\big(\Delta (\check \calF^\circ) > n^{-1/2 + \gamma}\big) \leq O\bigg( \frac{D_n^2 ( \log n )^{s/2(s + 1)}}{ n^{s/2(s + 1) -1/2 + \gamma} } \bigg).  
\end{equation*}
\end{remark}


\subsection{Drift condition for controlling moments of stopping times}
\label{sec : drift cond}
The moments conditions of stopping times in Assumptions \ref{assump : Gauss approx} and \ref{assump : bootstrap} can be conveniently verified using drift conditions. Here we provide the precise drift condition that is sufficient for the verification of moments conditions. We refer the reader to \cite{adamczak} for further details. In this section we only consider the strongly aperiodic Markov chains. 

Recall the stopping time $\tau_S = \min\{ t \geq 1 : X_{t} \in S \}$ of the original chain $\{X_t\}_{t \in \mathbb N_0}$. Consider the following two conditions.
\begin{enumerate}
    \item[(C1)] There exists some constant $c > 0$ such that
    \begin{equation*}\label{eq:drift-cond-one}
    \begin{aligned}
        d_0 : = \sup_{x \in S} \E_x[ \exp ( 2\tau_S / c ) ]< \infty , \quad \text{and} \quad \forall x \in E, \quad \|( 2\tau_S/c )\|_{\bbP_x, \psi_1} < \infty.
    \end{aligned}
\end{equation*}
    \item[(C2)] For some constant $c > 0$, there exists $b, \overline V > 0$ and a function $V : E \to \R_+$ with $\|V\|_\infty \leq \overline V$ such that
\begin{equation}\label{eq:drift-cond-two}
    \exp\{-V(x)\} P(\exp V) (x) \leq \exp \left\{ -\frac{2}{c} + b \mathbf 1_S(x) \right\},
\end{equation}
where $P f(x) = \int f(y) dP(x, dy)$.
\end{enumerate}
Condition (C2) is referred to as the \textit{geometric drift condition}~\cite{adamczak}. The following Lemma states that the moment conditions of $\tau_S$ in {(C1)} and the drift condition {(C2)} imply one another.

\begin{lemma}[Geometric drift condition \cite{adamczak}]\label{lem:geom-drift}
Consider a strongly aperiodic Harris recurrent chain $\{X_t\}_{t \in \mathbb N_0}$ defined in Section \ref{sec: setting}.

\begin{enumerate}
    \item[(i)] Condition {(C2)} implies condition {(C1)}, and furthermore,
    \[
    \sup_{x \in S} \| \tau_S \|_{\bbP_x, \psi_1} \leq \max\{ 1, (b + \overline V)/\log 2 \} \cdot c.
    \]
    \item[(ii)] Condition {(C1)} implies condition {(C2)} with $V(x) = \log ( G_S(x, c) )$, where
    \[
    G_S(x, c) := \E_x[ \exp\{ 2(\sigma_S + 1)/c \} ],
    \]
    with $b = 2\log d_0, \overline V = \log d_0$ and $\sigma_S := \inf\{ t \geq 0 : X_{t} \in S \}$.
\end{enumerate}
\end{lemma}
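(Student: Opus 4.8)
The plan is to treat the two implications separately, following the standard Foster--Lyapunov machinery as adapted to $\psi_1$ (sub-exponential) control of the return time $\tau_S$; the arguments are essentially those in \cite{adamczak}, so I would aim to reproduce them with the explicit constants claimed in the statement.

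\emph{Part (i): (C2) $\Rightarrow$ (C1).} Starting from the geometric drift inequality \eqref{eq:drift-cond-two}, the first step is to turn it into a one-step supermartingale-type bound for the process $M_t := \exp\{V(X_t) + (2/c)(t\wedge \tau_S)\}$ (up to the small-set correction). Concretely, for $x \notin S$ the drift inequality gives $P(\exp V)(x) \le e^{-2/c}\exp V(x)$, so that $\E_x[\exp\{V(X_1)\}\mathbf 1_{\{\tau_S > 1\}}] \le e^{-2/c}\exp V(x)$, and iterating via the Markov property along $\{\tau_S > t\}$ yields $\E_x[\exp\{V(X_t) + (2t/c)\}\mathbf 1_{\{\tau_S > t\}}] \le \exp V(x)$ for every $x$ and $t \ge 1$; a single entrance into $S$ contributes the factor $e^{b}$, which is where the $b\mathbf 1_S$ term enters when $x \in S$. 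Since $V \ge 0$ and $\|V\|_\infty \le \overline V$, this gives $\E_x[\exp\{2\tau_S/c\}] \le e^{b+\overline V}$ for $x \in S$ (dropping the terminal $V(X_{\tau_S})$ from the left, keeping $\exp V(x) \le e^{\overline V}$ on the right and absorbing the small-set excursion factor $e^b$), hence $\|\tau_S\|_{\bbP_x,\psi_1} \le \max\{1,(b+\overline V)/\log 2\}\cdot c$ by the elementary equivalence between a bound on $\E e^{2\tau_S/c}$ and the $\psi_1$ norm (using that $e^{(2/c)\cdot t}\ge e^{t/(\kappa c)}$ type comparisons, with the $\max\{1,\cdot\}$ and the $\log 2$ coming from requiring $\E[\exp(\tau_S/\|\tau_S\|_{\psi_1})] \le 2$). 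The second display in (C1), finiteness of $\|2\tau_S/c\|_{\bbP_x,\psi_1}$ for \emph{every} $x \in E$, follows the same way without the small-set restriction, using $\lambda$-integrability / the assumed initial regularity so that the first passage from an arbitrary $x$ to $S$ also has sub-exponential tails.

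\emph{Part (ii): (C1) $\Rightarrow$ (C2).} Here the natural Lyapunov function is the one proposed in the statement, $V(x) = \log G_S(x,c)$ with $G_S(x,c) = \E_x[\exp\{2(\sigma_S+1)/c\}]$ and $\sigma_S = \inf\{t\ge 0: X_t \in S\}$. The key computation is a one-step identity for $G_S$: by conditioning on $X_1$ and using the strong Markov property, for $x \notin S$ one has $\sigma_S(X_1) = \sigma_S(x) - 1$ in distribution under $\bbP_x(\cdot\mid X_1)$, so $P(G_S(\cdot,c))(x) = \E_x[\exp\{2(\sigma_S+1-1)/c\}\cdot \text{(shift)}]$, which works out to $e^{-2/c}G_S(x,c)$ for $x \notin S$; for $x \in S$ one has $\sigma_S(x) = 0$ so $G_S(x,c) = e^{2/c}$ is constant, and $P(G_S(\cdot,c))(x) = \E_x[G_S(X_1,c)]$ is bounded using $\sup_{x\in S}\E_x[\exp\{2\tau_S/c\}] = d_0$ (note $\sigma_S(X_1)+1 \le \tau_S$ under $\bbP_x$ for $x\in S$), giving the factor $e^{b}$ with $b = 2\log d_0$. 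Dividing through by $G_S(x,c) = e^{V(x)}$ produces exactly \eqref{eq:drift-cond-two}. Finiteness and boundedness of $V$: the upper bound $\overline V = \log d_0$ comes from $G_S(x,c) \le e^{2/c} d_0$ up to constants (again via $d_0$ and the sub-exponential tails), while $V \ge 0$ is immediate since $\sigma_S \ge 0$ forces $G_S \ge e^{2/c} > 1$; I would need to double-check the precise bookkeeping that makes $\overline V = \log d_0$ rather than $\log d_0 + 2/c$, which may require a mild renormalization or simply folding a $c$-dependent constant into $\overline V$ as the theorem's constants are allowed to depend on $c$.

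\emph{Main obstacle.} The conceptual content is routine Foster--Lyapunov, so the real work is bookkeeping: getting the \emph{explicit} constants $\max\{1,(b+\overline V)/\log 2\}\cdot c$, $b = 2\log d_0$, $\overline V = \log d_0$ exactly as stated, rather than up to unspecified multiplicative factors. In particular, the passage between "$\E_x[\exp(2\tau_S/c)]$ is bounded" and "$\|\tau_S\|_{\psi_1}$ is bounded by a specific multiple of $c$" has to be done carefully with the normalization $\E[\exp(\tau_S/\|\tau_S\|_{\psi_1})]\le 2$ built into the definition of the Orlicz norm, and the $e^b$ small-set excursion factor must be tracked through the iteration in Part (i) and through the one-step identity for $G_S$ in Part (ii) without slack. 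I expect this constant-chasing — together with verifying the $X_1$-conditioning identity $\sigma_S\circ\theta = \sigma_S - 1$ on $\{X_0\notin S\}$ that drives Part (ii) — to be the only genuinely delicate point; everything else reduces to iterating the drift inequality and invoking $\lambda$-integrability for the "for all $x\in E$" clauses.
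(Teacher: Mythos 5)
You should first note that the paper itself does not prove Lemma \ref{lem:geom-drift}: it is imported from \cite{adamczak} (and enters the paper only through the proof of Proposition \ref{prop:geom-drift}), so there is no in-paper argument to compare against. Your sketch follows the standard Foster--Lyapunov route that the cited source uses, and in substance it is correct: for (i), optional stopping applied to the stopped supermartingale $\exp\{2(t\wedge\tau_S)/c + V(X_{t\wedge\tau_S})\}$ gives $\E_x[e^{2\tau_S/c}]\le e^{V(x)+b\mathbf 1_S(x)}\le e^{b+\overline V}$, and Jensen ($y\mapsto y^{1/\lambda}$) together with the normalization $\E[\exp(\tau_S/\|\tau_S\|_{\bbP_x,\psi_1})]\le 2$ yields exactly the constant $\max\{1,(b+\overline V)/\log 2\}\cdot c$; for (ii), the identities $\sigma_S\circ\theta_1=\sigma_S-1$ on $\{X_0\notin S\}$ and $\sigma_S\circ\theta_1+1=\tau_S$ on $\{X_0\in S\}$ are the right one-step computation and give the drift (\ref{eq:drift-cond-two}) already with $b=\log d_0$, hence a fortiori with $b=2\log d_0$.

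Two points to tighten. First, in (i) the displayed iteration $\E_x[e^{V(X_t)+2t/c}\mathbf 1_{\{\tau_S>t\}}]\le e^{V(x)+b\mathbf 1_S(x)}$ only yields tail bounds $\bbP_x(\tau_S>t)\lesssim e^{-2t/c}$, which cannot be summed against $e^{2t/c}$ to recover $\E_x[e^{2\tau_S/c}]<\infty$; the moment bound must come from the optional-stopping form of the stopped process you introduce at the outset (keeping $e^{2\tau_S/c+V(X_{\tau_S})}$ inside the expectation and then discarding $V(X_{\tau_S})\ge 0$), so make that the operative step. Relatedly, the ``for all $x\in E$'' clause of (C1) needs no appeal to $\lambda$-integrability: since $\|V\|_\infty\le\overline V$, the same bound $\E_x[e^{2\tau_S/c}]\le e^{b+\overline V}$ holds uniformly in $x\in E$. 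Second, in (ii) your justification of $\|V\|_\infty\le\log d_0$ via ``$G_S(x,c)\le e^{2/c}d_0$'' is valid only for $x\in S$ (where $G_S(x,c)=e^{2/c}\le e^{2/c}d_0$); for $x\notin S$ one has $G_S(x,c)=e^{2/c}\E_x[e^{2\tau_S/c}]$, and $d_0$ controls only starting points in $S$, while the pointwise $\psi_1$ clause of (C1) does not even guarantee $\E_x[e^{2\tau_S/c}]<\infty$ at the scale $c$. So boundedness of $V$ off $S$ does not follow from (C1) as literally stated; this soft spot mirrors an imprecision in the transcribed statement rather than a repairable slip in your argument (in the paper's application the small set is the whole state space, so $V$ is constant and the issue is moot), but a self-contained proof would need either a uniform-in-$x$ exponential moment at scale $c$ in (C1) or a correspondingly weakened boundedness claim in (C2).
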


We are ready to provide a drift condition that suffices for the moment conditions of the stopping times $\tau_S$ and $\tau_{\check \alpha}$ in Assumptions \ref{assump : Gauss approx} and \ref{assump : bootstrap}.
\begin{proposition}\label{prop:geom-drift}
    Suppose the strongly aperiodic Harris recurrent Markov chain $\{X_t\}_{t \in \mathbb N_0}$ satisfies the geometric drift condition {(C2)}. Then the moment conditions of stopping times $\tau_S$ and $\tau_{\check \alpha}$ in Assumptions \ref{assump : Gauss approx} and \ref{assump : bootstrap} hold. 
\end{proposition}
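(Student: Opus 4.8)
The plan is to split the claim into two parts: (i) the moment conditions on $\tau_S$ appearing in (B2), namely finiteness of $\sup_{x \in S}\E_x[\tau_S^2]$ and $\sup_{x \in S}\E_x[\tau_S^4]$ (and hence, by $\lambda$-integrability, of $\E[\tau_S^4]$), and (ii) the $\psi_1$-moment conditions on the split-chain stopping times, $\|\sigma_{\check\alpha}\|_{\bbP,\psi_1} < \infty$ and $\|\tau_{\check\alpha}\|_{\bbPcheck,\psi_1} < \infty$. For part (i), the geometric drift condition (C2) immediately gives, via Lemma \ref{lem:geom-drift}(i), the uniform exponential-moment bound $\sup_{x \in S}\|\tau_S\|_{\bbP_x,\psi_1} \le \max\{1,(b+\overline V)/\log 2\}\cdot c =: \kappa < \infty$. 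Since for any $q \ge 1$ there is a universal constant $c_q$ with $\|Z\|_{\bbP_x,q} \le c_q \|Z\|_{\bbP_x,\psi_1}$, this yields $\sup_{x\in S}\E_x[\tau_S^q] \le (c_q \kappa)^q < \infty$ for $q = 2$ and $q=4$; the statement about $\E[\tau_S^4]$ then follows from the decomposition over the first visit to $S$ together with the assumed (or easily arranged) $\lambda$-integrability, exactly as flagged in the footnote defining $\vartheta_B$.

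For part (ii), the point is to pass from the hitting time of the small set $S$ for the \emph{original} chain to the hitting time of the atom $\check\alpha = S\times\{1\}$ for the \emph{split} chain. Here I would use the standard fact that each successive visit of $\{X_{tm}\}$ to $S$ is an independent $\mathsf{Bern}(\theta)$ trial for the event $Y_t = 1$, so that $\tau_{\check\alpha}$ (resp. $\sigma_{\check\alpha}$) is a geometric-like sum of the return times to $S$: writing $N$ for a $\mathrm{Geom}(\theta)$ number of excursions and $T_1, T_2,\dots$ for the successive (sub-sampled, over multiples of $m$) return times of the chain to $S$, one has $\tau_{\check\alpha} \le m\sum_{j=1}^{N} T_j + m$ in distribution under $\bbPcheck$, with the $T_j$ having uniformly bounded $\psi_1$-norm by part (i) (note that a visit to $S$ by $\{X_{tm}\}$ corresponds to a visit within $m$ steps of the original chain, which only inflates the norm by a factor depending on $m$). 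A geometric sum of independent random variables with uniformly bounded $\psi_1$-norm again has finite $\psi_1$-norm — this is a routine computation with the moment generating function of the geometric distribution, choosing the Orlicz parameter small enough that $\E[e^{\lambda T_j}]^{?}$ stays below $1/(1-\theta)$ — so $\|\tau_{\check\alpha}\|_{\bbPcheck,\psi_1} < \infty$. The argument for $\sigma_{\check\alpha}$ is identical except that the number of excursions before hitting $S$ and the first return are governed by the initial distribution $\lambda$ rather than by the regenerative measure, which is where the second, $\lambda$-dependent half of condition (C1) (the statement $\|2\tau_S/c\|_{\bbP_x,\psi_1} < \infty$ for all $x\in E$, equivalently uniform control after integrating against $\lambda$) enters.

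I expect the main obstacle to be the careful bookkeeping in part (ii): relating $\tau_{\check\alpha}$ and $\sigma_{\check\alpha}$ (defined on the $m$-skeleton split chain) to $\tau_S$ and $\sigma_S$ (defined on the original chain) cleanly enough that the $\psi_1$-bounds transfer with explicit constants depending only on $m$, $\theta$, and $\kappa$, and then verifying that a $\mathrm{Geom}(\theta)$-indexed sum of $\psi_1$-bounded independent blocks is $\psi_1$-bounded. The latter is where one must be slightly careful: $\psi_1$ (sub-exponential) norms are not sub-additive in a way that survives a random number of summands for free, so the cleanest route is to bound the moment generating function $\E_{\check\alpha}[e^{\lambda \tau_{\check\alpha}}]$ directly by conditioning on $N$ and summing the geometric series, which converges precisely when $\lambda$ is below a threshold determined by $\kappa$ and $\theta$; finiteness of this m.g.f. in a neighbourhood of $0$ is equivalent to finiteness of the $\psi_1$-norm. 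Everything else is an appeal to Lemma \ref{lem:geom-drift} and standard Orlicz-norm inequalities.
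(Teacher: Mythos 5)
Your overall skeleton matches the paper's: both arguments push (C2) through Lemma \ref{lem:geom-drift} to obtain the uniform bound $\sup_{x\in S}\|\tau_S\|_{\bbP_x,\psi_1}<\infty$, deduce from it the polynomial moments $\sup_{x\in S}\E_x[\tau_S^2]$, $\sup_{x\in S}\E_x[\tau_S^4]$ and, after integrating the initial law $\lambda$ (using the ``for all $x\in E$'' half of (C1)), $\E[\tau_S^4]<\infty$, and then transfer the $\psi_1$ control from $\tau_S$ to the split-chain times $\sigma_{\check\alpha}$ and $\tau_{\check\alpha}$. The difference is the transfer step: the paper does not reprove it but cites Propositions 1 and 2.2 of \cite{adamczak}, which give $\|\sigma_{\check\alpha}+1\|_{\bbP_x,\psi_1}\le C\{\sup_{x\in S}\|\tau_S\|_{\bbP_x,\psi_1}+1\}$ and $\|\tau_{\check\alpha}\|_{\bbPcheck,\psi_1}\le r_\theta\{\sup_{x\in S}\|\tau_S\|_{\bbP_x,\psi_1}+1\}$, together with the elementary observation that an Orlicz norm is bounded by the supremum of the conditional Orlicz norms, used to marginalize over the initial distribution. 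You instead re-derive these transfer bounds from scratch by the geometric-number-of-excursions/MGF computation. That is a legitimate, self-contained alternative that makes the constants' dependence on $\theta$ and the drift parameters explicit; also note that Section \ref{sec : drift cond} restricts to strongly aperiodic chains, so $m=1$ and the $m$-skeleton bookkeeping you anticipate is vacuous.

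One point in your from-scratch derivation needs more care than your sketch gives it. The coin at a visit to $S$ is indeed $\mathsf{Bern}(\theta)$ independently of the past, but conditionally on a \emph{failed} regeneration at $x\in S$ the next excursion is launched from the residual kernel $(P(x,\cdot)-\theta\nu(\cdot))/(1-\theta)$, not from $P(x,\cdot)$; hence the number of excursions and their lengths are not independent, and the inter-visit times between failed flips are not distributed as $\tau_S$ under $\bbP_x$. If you repair this by simply dominating the residual kernel by $P(x,\cdot)/(1-\theta)$, the factor $1/(1-\theta)$ per failed visit exactly cancels the geometric weight $(1-\theta)^{k-1}$ and the series no longer converges, so the criterion you state ($\E[e^{\lambda T_j}]$ below $1/(1-\theta)$) does not fall out of the computation as written. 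The standard fix is to keep the subtracted $\theta\nu$ term, e.g.\ via the recursion $u(x)=\theta+(1-\theta)\E^{\mathrm{res}}_x[e^{\lambda T}u(X_T)]$ for the conditional MGF at a visit, which yields finiteness of $\E_{\check\alpha}[e^{\lambda\tau_{\check\alpha}}]$ once $\sup_{x\in S}\E_x[e^{\lambda\tau_S}]$ is close enough to one; this holds for $\lambda$ small because the uniform $\psi_1$ bound forces $\sup_{x\in S}\E_x[e^{\lambda\tau_S}]\to1$ as $\lambda\downarrow0$. With that repair your argument is complete and amounts to a proof of the results the paper imports from \cite{adamczak}: the citation route is shorter, yours is self-contained.
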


\section{Application : Uniform confidence band for diffusion processes}
\label{sec:application}

\subsection{Samples from a regular diffusion process}\label{sec : intro of diff process}

Here we review a class of regular diffusion processes; we refer the reader to \cite{nickl2017nonparametric} for further details on the diffusion process of interest. 

The measurable function $b: [0, 1] \to \R$ is bounded, $\varrho : [0, 1] \to (0, \infty)$ is continuous and $q: [0, 1] \to \R$ satisfies $q(0) = 1, q(1) = -1$. The diffusion process $\{X_t\}_{t \geq 0}$ solves the following stochastic differential equation,
\begin{equation}
   dX_t = b(X_t) dt  + \varrho(X_t) dW_t + q(X_t) dL_t \quad \text{for} \quad t \geq 0;
   \label{eq:diffusion}
\end{equation}
$W_t$ is a standard Brownian motion and $L_t$ a non-anticipative continuous non-decreasing process which increases only when $X_t \in \{ 0, 1 \}$~(see Section 2.1 of \cite{nickl2017nonparametric}). We are interested in a diffusion process that further satisfies the following regularity condition: there exist constants $\overline B, \underline{\varrho} > 0$ such that $(\varrho, b) \in  \Theta$ where
\begin{equation}
    \begin{aligned}
        \Theta := \Big\{ (\varrho, b) &:  b', \varrho', \varrho'' \ \text{exists,} \ b(0) = b(1) = \varrho'(0) = \varrho'(1) = 0, \\
        &\max ( \| b\|_\infty, \|b'\|_\infty, \|\varrho\|_\infty, \|\varrho'\|_\infty, \|\varrho''\|_\infty ) \leq \overline B, \ \inf_{x \in [0, 1]} \varrho^2(x) \geq \underline \varrho \Big\}.
    \end{aligned}
    \label{eq:reg_diff}
\end{equation}
A diffusion process satisfying (\ref{eq:reg_diff}) has a unique stationary distribution~(see \cite{bass1998diffusions}), and the stationary density has a closed form
\begin{equation}\label{eq:diff-stationary-density}
    \pi(x) = \frac{1}{C_0\varrho^2(x)}\exp \bigg( \int_0^x \frac{2b(y)}{\varrho^2(y)} dy \bigg) \quad \text{for} \quad x \in [0, 1],
\end{equation}
with a normalizing constant $C_0$. Notably, (\ref{eq:reg_diff}) and (\ref{eq:diff-stationary-density}) jointly imply (i) the existence of constants $0 < \pi_\ell \leq \pi_u <\infty$ depending on $\underline \varrho$, $\overline B$ such that $\pi \in [\pi_\ell, \pi_u]$ and that (ii) $\pi$ is differentiable up to second order and that $\| \pi '' \|_\infty < \infty$. See \cite{nickl2017nonparametric,bass1998diffusions} for details.

We consider a low-frequency regime, meaning that the accessible data are the discrete samples $X_{\Delta 0}, X_{\Delta 1}, ..., X_{\Delta (n-1)}$ of (\ref{eq:diffusion}) where $\Delta>0$ is a fixed constant. From here on, we omit the subscript $\Delta$ for the low-frequency samples $X_{\Delta t}$ and simply write them as $X_t$ for $t = 0, 1, 2, ..., n-1$. Notably the process $\{X_t\}_{t \in \mathbb N_0}$ is ergodic and form a strongly aperiodic Markov chain~\cite{bass1998diffusions,nickl2017nonparametric}. Proposition 9 of \cite{nickl2017nonparametric} immediately yields the minorization condition for $\{X_t\}_{t \in [n]}$,
as its transition density $p_\Delta(x,y) = p_{\Delta, \varrho, b}(x, y)$~(absolutely continuous with respect to the Lebesgue measure on $\R$) is lower bounded; that is, there exist constants $K', K$ such that $0 < K' < K < \infty$ and $K' \leq p_\Delta(x, y) \leq K$ for any $x, y \in [0, 1]$. Further $p_\Delta$ is differentiable\footnote{i.e. smoothness parameter $s\geq 1$ appearing Remark \ref{rem:lem-cov}}; see Proposition 9 of \cite{nickl2017nonparametric} for more details on $p_\Delta$. The small set is $S = [0, 1]$ and for any measurable subset $A\subset[0, 1]$ and for a uniform measure $\nu$ on $[0, 1]$, we have
\begin{equation}\label{eq:minorization-diff}
    \int_{A} p_\Delta(x, y)dy \geq K' \nu(A) \quad \text{for all $x \in S$.} 
\end{equation}
Thus, the minorization condition (\ref{eq:minorization}) holds for the low-frequency samples with small set $S = [0, 1]$ and parameters $\theta = K'$, $m = 1$.

\subsection{Uniform confidence band}\label{sec : confidence band construction}
Here we introduce a confidence band which (i) simultaneously covers the stationary density $\pi$ of (\ref{eq:diffusion}) over a compact subset and (ii) whose confidence converges polynomially to the desired level. The multiplier bootstrap technique introduced in Section \ref{sec:bootstrap} is key to our proof. 

Our approach differs from the classical asymptotic approach which resorts to the Smirnov-Bickel-Rosenblatt~(SBR) condition~\cite{gine2009exponential}, a (often regarded stringent) condition that ensures the existence of limiting distributions for certain empirical processes. Careful use of Gaussian anti-concentration and concentration inequalities allow us to derive results in a fully non-asymptotic manner, enabling a polynomial convergence to the desired confidence level under less stringent conditions. 

Consider a (strictly smaller) compact subset $\mathcal X \subset [0, 1]$. Fix a symmetric and Lipschitz continuous kernel function $\kernel : \R \to [0, \infty)$, that satisfies $\int \kernel(x) dx = 1$ and $\sup_{x \in \R}|\kernel(x)| < \infty$. For a bandwidth $h = h_n > 0$ that depends on $n$, define a kernel function scaled by $h$ and centered by $x$,
\begin{equation}\label{eq:scale-center-kernel}
    \kernel_{x, h}(\cdot) := h^{-1} \kernel\Big( \frac{\cdot - x}{h} \Big).
\end{equation}


The kernel density estimator~(KDE) of $\pi(x)$ and its point-wise standard deviation $\hat \sigma_n(x)$ are respectively defined as\footnote{Recall the definition (\ref{eq:emp-mean}).} 
\begin{equation*}
    \what{\pi} \kernel_{x, h} = n^{-1} \sum_{i = 0}^{n-1} \kernel_{x, h}(X_i) \quad \text{and} \quad \hat \sigma_n^2(x) := \E\big[ \{ \bbG_n^\zeta(\kernel_{x, h}) \}^2 \big] = \frac{1}{n} \sum_{i = 1}^{\hat i_n} \Big\{ \check \kernel_{x, h}(\what B_i) - \ell(\what B_i) \what{\pi} \kernel_{x, h} \Big\}^2.
\end{equation*}

The key quantity of our confidence band is the following quantile
\begin{equation*}
\hat{c}_n(\alpha) := \inf \big\{ z : \bbP( \| \bbG^\zeta_n\|_{\classkernel^{(n)}} \geq z ) \leq \alpha \big\} \quad \text{where} \quad \studclass := \Big\{ \studkernel_{x, h} = \kernel_{x, h}/\hat \sigma_n(x): \ x \in \calX \Big\};
\end{equation*}
$\classkernel^{(n)}$ is the \textit{studentized} kernel class.
We now introduce our proposed confidence band for $\pi(x)$ over $x \in \calX$:
\begin{equation}\label{eq:unif-ci}
    \calI_\alpha(x) := \Big[ \what{\pi} \kernel_{x, h} - \hat c_n(\alpha) \hat{\sigma}_n(x)/\sqrt{n}, \ \what{\pi}\kernel_{x, h} + \hat c_n(\alpha) \hat{\sigma}_n(x)/\sqrt{n} \Big].
\end{equation}





\begin{remark}[Studentization]\label{rem:studentize}
    The estimator $\hat \sigma_n(x)$ is the appropriate term for studentizing, as it approximates the standard deviation of the scaled KDE, $\sqrt{n}\what \pi \kernel_{x, h}$, in a uniform manner (see Lemma \ref{lem:studentizing-rel-error}); refer to the proof of Theorem \ref{thm:Gaussian-approximation} and observe that the variance of scaled KDE, $\Var( \sqrt{n}\what{\pi}\kernel_{x, h} )$ approximates the term\footnote{Note that blocks are independent for the low-frequency samples and $n^*$ represents the average number of blocks appearing until the $n$th sample.}
    \begin{equation*}
    \Var\Big( n^{-1/2}\sum_{i = 1}^{n^*}\check \kernel_{x, h}(B_i) \Big) = \beta^{-1}\sigma^2(x) \quad \text{where $n^* := \beta^{-1}n$ and $\sigma^2(x) := \Var(\check \kernel_{x, h}(B_1))$.}
    \end{equation*}
\end{remark}



We now provide a polynomial coverage guarantee of $\mathcal I_\alpha(x)$. Whenever the estimator $\hat p_\Delta$ for the transition density $p_\Delta$ satisfies
\begin{equation}\label{eq:transition-est}
    \E\bigg[ \sup_{x, y \in [0, 1]} | \hat{p}_\Delta(x, y) - p_\Delta(x, y) |^2 \bigg] = O(\alpha_n),
\end{equation}
we have the following.

\begin{theorem}[Uniform confidence band]\label{thm : uniform confidence}
Suppose the confidence band $\mathcal I_\alpha(x)$ is constructed from the low-frequency samples $X_0, X_1, ..., X_{n-1}$ of (\ref{eq:diffusion}) with $(b, \varrho) \in \Theta$. The estimator $\hat{p}_{\Delta}$ satisfy conditions (\ref{cond : transition estimate}), (\ref{eq:transition-est}). Given positive constants $n_0, c, C$, for any $n \geq n_0$ we have
\begin{equation}\label{eq:unif-cover}
    \begin{aligned}
    \bbP\big( \forall x \in \mathcal{X}, \ \pi(x) \in \calI_\alpha(x) \big) \geq 1 - \alpha - Cn^{-c}.
    \end{aligned}
\end{equation}
The constants $n_0, C$ depend only on $\calX, \pi, \kernel, \theta, \vartheta_B, \|\sigma_{\check \alpha}\|_{\bbP, \psi_1}, \|\tau_{\check \alpha}\|_{\bbPcheck, \psi_1}$, whereas $c$ is a universal constant.
\end{theorem}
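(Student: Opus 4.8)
The plan is to recast the coverage event as a comparison between the supremum of the studentized empirical process over the kernel class and the bootstrap quantile $\hat c_n(\alpha)$, and then to feed Theorems \ref{thm:Gaussian-approximation} and \ref{thm: bootstrap consistency} (for the strongly aperiodic case $m=1$, which holds here by the minorization established in Section \ref{sec : intro of diff process}) into a Gaussian anti-concentration argument in the spirit of \cite{CCK2014bAoS}.

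\emph{Step 1 (reduction to a supremum statement).} First note that $\{\forall x\in\calX,\ \pi(x)\in\calI_\alpha(x)\}$ is exactly $\{\widehat T_n\le\hat c_n(\alpha)\}$ with $\widehat T_n:=\sqrt n\sup_{x\in\calX}|\widehat\pi\kernel_{x,h}-\pi(x)|/\hat\sigma_n(x)$. Writing $\sqrt n(\widehat\pi\kernel_{x,h}-\pi(x))=\bbG_n(\kernel_{x,h})+\sqrt n\{\pi\kernel_{x,h}-\pi(x)\}$ gives $\widehat T_n\le\|\bbG_n\|_{\classkernel^{(n)}}+\widehat b_n$, where $\widehat b_n:=\sqrt n\sup_{x}|\pi\kernel_{x,h}-\pi(x)|/\hat\sigma_n(x)$. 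By symmetry of $\kernel$ and $\|\pi''\|_\infty<\infty$ (guaranteed by $(b,\varrho)\in\Theta$ via \eqref{eq:diff-stationary-density}), the kernel bias is $O(h_n^2)$ uniformly in $x$; combined with the lower bound $\inf_{x\in\calX}\hat\sigma_n(x)\gtrsim h_n^{-1/2}$ from Step 2, this makes $\widehat b_n\lesssim\sqrt n\,h_n^{5/2}$ on the event of Step 2, which is $O(n^{-c})$ for an undersmoothing polynomial bandwidth $h_n$.

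\emph{Step 2 (studentization and verification of Assumption \ref{assump : bootstrap}).} Put $\bar\sigma_n^2(x):=\Sigma(\kernel_{x,h},\kernel_{x,h})=\beta^{-1}\E[(\check\kernel_{x,h}(B_1)-\ell(B_1)\pi\kernel_{x,h})^2]$ and introduce the \emph{deterministic} proxy class $\bar{\classkernel}^{(n)}:=\{\kernel_{x,h}/\bar\sigma_n(x):x\in\calX\}$. Standard kernel/location-family arguments show $\bar{\classkernel}^{(n)}$ is VC type with characteristics that are polynomial in $n$ and constant respectively, so $K_n\asymp\log n$, and with envelope of order $D_n\asymp h_n^{-1/2}$ — the last point requiring the uniform-in-$x$ lower bound $\Sigma(\kernel_{x,h},\kernel_{x,h})\gtrsim h_n^{-1}$ on the regeneration-block variance, which follows from the minorization structure (with $S=[0,1]=E$, so $\tau_S\equiv1$ and $\tau_{\check\alpha}$ is geometric) together with $\pi\ge\pi_\ell>0$; the variance lower bound in (B1) for $\bar{\classkernel}^{(n)}$ then holds with $\underline\sigma^2=\beta$, and the remaining hypotheses of Assumption \ref{assump : bootstrap} come from $K'\le p_\Delta\le K$, $\nu$ uniform, $m=1$, and the estimator hypothesis \eqref{eq:transition-est}. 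Next, Lemma \ref{lem:cov-est} applied to the centered class $\check{\bar{\classkernel}}^{(n),\circ}$, after reconciling the computable centering by $\widehat\pi\kernel_{x,h}$ in $\hat\sigma_n^2(x)$ with the (non-computable) centered empirical covariance $\widehat\Gamma$ and the block count $\hat i_n$, yields the uniform relative studentization error $\sup_{x\in\calX}|\hat\sigma_n(x)/\bar\sigma_n(x)-1|\le\varepsilon_n$ with probability $1-Cn^{-c}$, $\varepsilon_n=O(n^{-1/2+\gamma})$ — this is the content of Lemma \ref{lem:studentizing-rel-error}. In particular $\bigl|\,\|\bbG_n\|_{\classkernel^{(n)}}-\|\bbG_n\|_{\bar{\classkernel}^{(n)}}\,\bigr|\le\varepsilon_n\|\bbG_n\|_{\bar{\classkernel}^{(n)}}$, and the analogous inequality holds for $\bbG_n^\zeta$ conditionally on the data.

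\emph{Step 3 (assembling).} Applying Corollary \ref{cor:simple-Gauss-approx} and Theorem \ref{thm: bootstrap consistency} to the symmetrized class $\bar{\classkernel}^{(n)}\cup(-\bar{\classkernel}^{(n)})$ (so that the one-sided suprema in those statements become the two-sided $\|\cdot\|$), the law of $\|\bbG_n\|_{\bar{\classkernel}^{(n)}}$ is within $\rho_n^{\mathrm{GA}}=O(n^{-1/4+o(1)})$ of that of $\|G\|_{\bar{\classkernel}^{(n)}}$, and on an event of probability $1-Cn^{-c}$ the conditional law of $\|\bbG_n^\zeta\|_{\bar{\classkernel}^{(n)}}$ is within $\rho_n^{\mathrm{BC}}=O(n^{-1/4+o(1)})$ of that of $\|G\|_{\bar{\classkernel}^{(n)}}$, both in Kolmogorov distance. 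Combining the bootstrap bound with the studentization comparison of Step 2 and the anti-concentration inequality for the standardized Gaussian supremum from \cite{CCK2014bAoS} gives, on that event, $\hat c_n(\alpha)\ge c_G(\alpha+\bar r_n)$, where $c_G$ denotes the relevant quantile of $\|G\|_{\bar{\classkernel}^{(n)}}$ and $\bar r_n=O(n^{-c})$ absorbs $\rho_n^{\mathrm{BC}}$ and $\varepsilon_n\sqrt{\log n}$. Then
\[
\Pr\bigl(\widehat T_n>\hat c_n(\alpha)\bigr)\le\Pr\bigl(\|\bbG_n\|_{\bar{\classkernel}^{(n)}}>c_G(\alpha+\bar r_n)-\widehat b_n-C\varepsilon_n\sqrt{\log n}\bigr)+Cn^{-c},
\]
and applying the Gaussian approximation followed once more by anti-concentration (to move the threshold back to $c_G(\alpha)$, using $\|\bbG_n\|_{\bar{\classkernel}^{(n)}}=O(\sqrt{\log n})$ with high probability) bounds the right-hand side by $\alpha+Cn^{-c}$, which is \eqref{eq:unif-cover}. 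The main obstacle is Step 2: upgrading the absolute covariance-estimation bound of Lemma \ref{lem:cov-est} to a uniform \emph{relative} error requires both the genuine uniform-in-$x$ lower bound of order $h_n^{-1}$ on the regeneration-block variance of the kernels and a careful treatment of the gap between the computable $\hat\sigma_n^2(x)$ (centered by $\widehat\pi\kernel_{x,h}$) and the centered empirical covariance appearing in Lemma \ref{lem:cov-est}, all uniformly over the bandwidth-dependent class.
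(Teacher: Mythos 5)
Your proposal is correct in substance and reaches \eqref{eq:unif-cover}, but the final assembly is genuinely different from the paper's. You and the paper share the same three preparatory ingredients: the bias term ($\widehat b_n$, the paper's $c_n'$) controlled by $\|\pi''\|_\infty<\infty$ and undersmoothing, the verification of Assumptions \ref{assump : Gauss approx}--\ref{assump : bootstrap} for the kernel class with $D_n\asymp h^{-1/2}$, $K_n\asymp\log n$ via the block-variance bounds of Proposition \ref{prop:block-var}, and the uniform relative studentization error, which you correctly identify as the main obstacle and delegate to Lemma \ref{lem:studentizing-rel-error}. Where you diverge is the last step: you apply the Kolmogorov-distance results (Corollary \ref{cor:simple-Gauss-approx} and Theorem \ref{thm: bootstrap consistency}) to the symmetrized class $\bar{\classkernel}^{(n)}\cup(-\bar{\classkernel}^{(n)})$ and convert every threshold shift (bias, relative studentization error, quantile perturbation) into a level perturbation by repeated Gaussian anti-concentration; this is legitimate, since symmetrization turns the one-sided suprema of those theorems into the required two-sided ones ($G(-f)=-G(f)$ a.s.\ because $\Sigma$ is bilinear, and $\bbG_n^\zeta$ is linear in $f$), and the hypotheses (envelope, VC type, variance lower bound) are unchanged under symmetrization. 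The paper instead never invokes its own Kolmogorov bounds here: it builds two explicit couplings --- $W_0\stackrel{d}{=}\|G\|_{\classkernel}$ coupled to the block sum $\beta^{-1/2}\|\bbG_n^{(1)}\|_{\check{\classkernel}^\circ}$ via Theorem A.1 of \cite{CCK2014bAoS} with the incomplete-block remainders $R(\delta_n)$ handled separately, and $W_1\stackrel{d}{=}\|G\|_{\classkernel}$ coupled to $\|\bbG_n^\zeta\|_{\classkernel^{(n)}}$ on the event $\calE'$ via Gaussian comparison, the smoothing device of Lemma \ref{lem : soft approx} and Strassen's theorem --- which yields the quantile bracketing (\ref{eq:quantile-est}) and then a single application of Levy's anti-concentration (Lemma \ref{lem:levy}). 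Your route is more economical in that it reuses the stated theorems directly and avoids constructing couplings; the paper's route handles the two-sided norms and the threshold shifts inside the coupled events without symmetrization. Two points you should still make explicit to match the paper's level of rigor: the comparison between the data-dependent class $\classkernel^{(n)}$ and your deterministic proxy $\bar{\classkernel}^{(n)}$ must be done both for $\bbG_n$ and, conditionally, for $\bbG_n^\zeta$ (using $\|\bbG_n^\zeta\|_{\bar{\classkernel}^{(n)}}=O(\sqrt{\log n})$ on the covariance event), and the rate bookkeeping --- the paper's choice $h=n^{-1/5-\eta}$, the constraint (\ref{m1-bootstrap-cond}) with $\gamma$ exceeding the bandwidth exponent, and the admissible $\alpha_n$ --- must be carried out to confirm all error sequences are simultaneously $O(n^{-c})$, as done in Step 2 of the paper's proof.
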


\begin{remark}[Theorem \ref{thm : uniform confidence}]
Notably, (\ref{eq:unif-cover}) implies that the confidence level of the band $\calI_\alpha$ converges to the pre-specified  $\alpha$ value at a polynomial rate. This is in contrast to the confidence band relying on the SBR condition~\cite{gine2009exponential}, which achieves a logarithmic convergence to $\alpha$ by relying on the existence of a limiting distribution of an empirical process.
\end{remark}

\subsubsection{Intuition to justify confidence band}
Here we provide a high-level idea on why our confidence band (\ref{eq:unif-ci}) works. 

The equivalence $\pi(x) \in \calI_\alpha(x) \iff \big| \sqrt{n} ( \what \pi \kernel_{x, h} - \pi(x) ) \big| \leq \hat c_n(\alpha) \hat{\sigma}_n(x)$ and the uniform approximation $\hat \sigma_n(x) \approx \beta^{-1/2}\sigma(x)$~(see Remark \ref{rem:studentize}), followed by undersmoothing the KDE so as to neglect bias, yields the approximation
\begin{equation*}
        \bbP(\forall x \in \calX:\pi(x) \in \calI_\alpha(x)) \approx \bbP( \|\bbG_n\|_{\classkernel} \leq \hat c_n(\alpha) );
\end{equation*}
the function class $\classkernel := \big\{ f_{x, h} = \kernel_{x, h}/(\beta^{-1/2}\sigma(x)) : \ x \in \mathcal{X} \big\}$ is the population version of the studentized class $\studclass$. So \textit{if} the low-frequency samples $\{X_t\}_{t \in \mathbb N_0}$ satisfy Assumptions \ref{assump : Gauss approx} and \ref{assump : bootstrap} over the function class $\classkernel$, then we may apply Gaussian approximation~(Theorem \ref{thm:Gaussian-approximation}) and bootstrap consistency~(Theorem \ref{thm: bootstrap consistency}) to observe the approximations
\begin{equation*}
        \bbP( \|\bbG_n\|_{\classkernel} \leq \hat c_n(\alpha) ) \approx \bbP( \| G \|_\classkernel \leq \hat c_n(\alpha) ) \quad \textrm{and}\quad \bbP( \|G\|_{\classkernel} \leq \hat c_n(\alpha) ) \approx \bbP( \| \bbG_n^\zeta \|_{\classkernel^{(n)}} \leq \hat c_n(\alpha) ).
\end{equation*}
Lastly, apply the definition that $\hat c_n(\alpha)$ is the quantile of $\| \bbG_n^\zeta \|_{\classkernel^{(n)}}$. 

Hence in this application, it is crucial to check whether the Markov chain $\{X_t\}_{t \in \mathbb N_0}$ from (\ref{eq:diffusion}) satisfies Assumptions \ref{assump : Gauss approx} and \ref{assump : bootstrap} over the function class $\classkernel$; the following Lemma shows that this is indeed the case.

\begin{lemma}\label{lem:diff-cond-check}
    Let $\{X_t\}_{t \in \mathbb N_0}$ be the low-frequency samples from (\ref{eq:diffusion}) with $(\varrho, b)\in \Theta$ and assume the transition density is estimated by $\hat p_\Delta$ that satisfies (\ref{cond : transition estimate}) and (\ref{eq:transition-est}) for a non-increasing sequence $\alpha_n$.
    Let the bandwidth be a sequence $h_n = n^{-a}$ for some constant $a> 0$. Then there exists a constant $n_0$, such that for $n\geq n_0$~(i.e., small enough bandwidth), the chain $\{X_t\}_{t \in \mathbb N_0}$ from (\ref{eq:diffusion}) with $(b, \varrho)\in\Theta$ satisfies the Assumptions \ref{assump : Gauss approx} and \ref{assump : bootstrap} over the function class $\classkernel$. The constant $n_0$ depends on $\beta, \pi, \kernel, \theta, \|\tau_{\check\alpha}\|_{\bbPcheck, \psi_1}, a$ and $\calX$.
\end{lemma}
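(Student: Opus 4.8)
The plan is to check the conditions of Assumptions~\ref{assump : Gauss approx} and~\ref{assump : bootstrap} for the population studentized class $\classkernel = \{f_{x,h} = \kernel_{x,h}/(\beta^{-1/2}\sigma(x)): x\in\calX\}$ one by one, leaning on the structure recorded in Section~\ref{sec : intro of diff process}: the small set is the whole state space, $S=E=[0,1]$, the regeneration measure $\nu$ is uniform on $[0,1]$, $m=1$, $\theta = K'$, and $K'\le p_\Delta\le K$ on $[0,1]^2$. Since $S=E$, the stopping time $\tau_S\equiv 1$ (so $\sup_{x\in S}\E_x[\tau_S^4]=1$), and $\bbP(Y_t=1\mid Y_0^{t-1},X_0^{t-1}; X_t=x) = \theta\mathbf 1_S(x) = \theta$ for all $x$; hence, under $\bbP$ and under $\bbPcheck$ respectively, $\sigma_{\check\alpha}$ and $\tau_{\check\alpha}$ are exactly $\mathrm{Geometric}(\theta)$, so all their moments and $\psi_1$-norms are finite and controlled by $\theta=K'$ alone, and $\beta = m\E_{\check\alpha}[\tau_{\check\alpha}] = 1/K'$. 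This settles (B2) and the stopping-time part of (A3). Condition (B4) holds since $\nu'\equiv 1 > 0$; (B3) is part of the hypothesis; and (B5) may be arranged, with $R$ a suitable multiple of $K$, by replacing $\hat p_\Delta$ with $\hat p_\Delta\wedge(2K)$, which preserves (\ref{cond : transition estimate}) and (\ref{eq:transition-est}) because $p_\Delta\le K$. Condition (A1) holds because $x\mapsto \kernel_{x,h}$ is sup-norm continuous (as $\kernel$ is Lipschitz), so $\{f_{x,h}: x\in\calX\cap\mathbb Q\}$ is a countable pointwise-dense subclass.

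The substantive part is the envelope and variance bounds for $\classkernel$, which all reduce to controlling $\sigma^2(x) := \Var(\check\kernel_{x,h}(B_1))$ uniformly over $x\in\calX$. Expanding $\check\kernel_{x,h}(B_1) = \sum_{t=1}^{\tau_{\check\alpha}}\kernel_{x,h}(X_t)$ (under $\bbPcheck$) and squaring, the ``diagonal'' contribution is $\beta\,\pi(\kernel_{x,h}^2) = \beta h^{-1}\int \kernel^2(u)\,\pi(x+hu)\,du$ by (\ref{eq:block-mean-pi}); because $\kernel\ge 0$ the remaining cross terms are nonnegative, and $\E[\check\kernel_{x,h}(B_1)] = \beta\,\pi(\kernel_{x,h})\le\beta\pi_u$ since $\int\kernel = 1$ and $\pi\le\pi_u$. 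As $\calX$ is a strictly interior compact subset of $[0,1]$ and $\pi\ge\pi_\ell$ there, for $n\ge n_0$ (i.e.\ $h = h_n = n^{-a}$ small) the boundary loss is negligible and $\int\kernel^2(u)\pi(x+hu)\,du\ge\tfrac12\pi_\ell\int\kernel^2(u)\,du$, whence $\sigma^2(x)\ge\tfrac14\beta\pi_\ell\big(\int\kernel^2(u)\,du\big)\,h^{-1}$ uniformly in $x\in\calX$. For the matching upper bounds on the cross terms and on $\Cov(\check\kernel_{x,h}(B_1),\ell(B_1))$ I would use the tensorization of the split chain: $\bbP(Y_j=0\mid X_j,X_{j+1})\le 1-\theta/K =: r<1$ for each $j$, the iterated transition densities are bounded by $K$, and $\int\kernel_{x,h} = 1$, which together (with the geometric tails of $\tau_{\check\alpha}$) yield that both quantities are $O(1)$ uniformly in $x$, with constants depending only on $K,K',\beta$. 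From the lower bound on $\sigma^2$ we get the envelope bound $\|F_n\|_\infty = \sup_{x\in\calX}\|f_{x,h}\|_\infty\le \|\kernel\|_\infty h^{-1}/(\beta^{-1/2}\min_{x\in\calX}\sigma(x))\lesssim h^{-1/2} = n^{a/2}$, so we may take the polynomially growing sequence $D_n := C n^{a/2}\ge 1$ for a large enough $C$; moreover $\|\check F_n\|_{\check Q,\psi_1}\le\|F_n\|_\infty\|\tau_{\check\alpha}\|_{\bbPcheck,\psi_1}\lesssim D_n$ and $\E\big[\sum_{t=0}^{\sigma_{\check\alpha}(0)}F_n(X_t)\big]\le\|F_n\|_\infty(1+1/\theta)\lesssim D_n$, giving the envelope part of (A3) and the first half of (B1); and $u(\classkernel,\pi) = \sup_{x\in\calX}|\pi\kernel_{x,h}|/(\beta^{-1/2}\sigma(x))\lesssim \pi_u h^{1/2}\to 0\le D_n$, which we also record for later use.

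It remains to supply the variance lower bound and the VC-type property. Since $m=1$ the cross-block covariance in (A3) vanishes, so (A3) and (B1) both ask for $\inf_{x\in\calX}\Var(\check f_{x,h}^{\circ}(B_1))\ge\underline\sigma^2$ with a fixed $\underline\sigma>0$, where $\check f_{x,h}^{\circ}(B_1) = \check f_{x,h}(B_1) - \ell(B_1)\,\pi f_{x,h}$ and $\check f_{x,h}(B_1) = \check\kernel_{x,h}(B_1)/(\beta^{-1/2}\sigma(x))$. By construction $\Var(\check f_{x,h}(B_1)) = \beta$ exactly; and since $\pi f_{x,h} = O(h^{1/2})$, $\Cov(\check f_{x,h}(B_1),\ell(B_1)) = \Cov(\check\kernel_{x,h}(B_1),\ell(B_1))/(\beta^{-1/2}\sigma(x)) = O(h^{1/2})$ (the $O(1)$ bound above divided by $\sigma(x)\gtrsim h^{-1/2}$), and $\Var(\ell(B_1)) = \Var(\tau_{\check\alpha})<\infty$, the two centering corrections are $O(h)$, so $\Var(\check f_{x,h}^{\circ}(B_1)) = \beta + O(h)\ge\beta/2$ uniformly over $\calX$ for $n\ge n_0$; (A3) and (B1) then hold with $\underline\sigma^2 = \beta/2$. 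For (A2), I would invoke the standard fact that the class of scaled translates $\{\kernel_{x,h}: x\in\calX\}$ of a fixed Lipschitz, integrable kernel is VC type with characteristics at worst polynomial in $1/h = n^a$, and observe that multiplying each member by the index-dependent factor $\beta^{1/2}/\sigma(x)$---continuous in $x$ and confined to a fixed bounded ratio around the common scale $h^{-1/2}$---changes the characteristics only by absolute multiplicative constants; hence $\classkernel$ is VC type with characteristics $(\sfA_n,\sfV_n)$ of polynomial order, and $K_n\asymp\log n$.

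The main obstacle is the pair of uniform variance estimates: the lower bound $\sigma^2(x)\gtrsim h^{-1}$ and the uniform-in-$x$ statement $\Var(\check f_{x,h}^{\circ}(B_1))\to\beta$ as $h_n\downarrow 0$. These are not soft and are the crux of the lemma: they rest on the block second-moment identity $\E[\check g(B_1)] = \beta\,\pi g$ applied to $g=\kernel_{x,h}^2$, on the nonnegativity of $\kernel$ to discard the cross terms in the lower bound, and on the two-sided control $K'\le p_\Delta\le K$ together with the tensorization structure of the split chain to bound the cross terms and $\Cov(\check\kernel_{x,h}(B_1),\ell(B_1))$ from above uniformly. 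Everything else---(B2)--(B5), (A1), the moment parts of (A3), and (A2)---is either a standard property of kernel classes or an immediate consequence of the fact that, for this diffusion, the entire state space is a small set with a uniform regeneration measure, so that $\tau_S\equiv1$ and the regeneration times are exactly geometric.
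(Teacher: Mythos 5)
Your proposal is correct, but it reaches the conclusion by a noticeably different route than the paper, and the comparison is instructive. For the stopping-time moments you exploit the special structure $S=E=[0,1]$ directly: $\tau_S\equiv 1$ and, by the tower property, $\sigma_{\check\alpha}$ and $\tau_{\check\alpha}$ are exactly geometric with parameter $\theta=K'$ (your marginal statement $\bbP(Y_t=1\mid\text{past},X_t=x)=\theta$ is the right one to iterate, even though jointly $Y_t$ is \emph{not} independent of $X_{t+1}$); the paper instead verifies the geometric drift condition (C2) with a constant $V$ and invokes Lemma \ref{lem:geom-drift}/Proposition \ref{prop:geom-drift}. Your route is more elementary and gives explicit constants in terms of $K'$; the paper's reuses machinery it needs anyway. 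For the crucial variance bound you essentially reprove Step 1 of Proposition \ref{prop:block-var} inline, lower-bounding $\E[\check\kernel_{x,h}^2(B_1)]$ by the diagonal term $\beta\,\pi(\kernel_{x,h}^2)$ via (\ref{eq:block-mean-pi}) and nonnegativity of $\kernel$, where the paper keeps only the first, $\nu$-distributed coordinate; both give $\sigma^2(x)\gtrsim h^{-1}$ uniformly on $\calX$, which is all this lemma needs (the paper's Step 2 upper bound, via Baxendale's spectral gap, is not used here, and your tensorization computation would in fact recover it more cheaply if needed). For the centered-class variance, the paper simply writes $\Var(\check f_{x,h}(B_1))=\beta$ and applies Cauchy--Schwarz to the centering corrections, getting $\beta-O(h^{1/2})$; your direct bound $\Cov(\check\kernel_{x,h}(B_1),\ell(B_1))=O(1)$ via the conditional bound $\bbP(\tau_{\check\alpha}>k\mid X)\le(1-\theta/K)^k$ and $p_\Delta\le K$ is correct and even slightly sharper ($\beta-O(h)$), but is more work than necessary. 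The one thin spot is (A2): you appeal to a ``standard fact'' that translates of a Lipschitz kernel are VC type with characteristics polynomial in $1/h$, and for your conclusion $K_n\asymp\log n$ you implicitly need the exponent $\sfV$ to stay bounded (only $\sfA_n$ may grow polynomially); the paper instead uses the symmetry of $\kernel$, a monotone decomposition, and the VW stability lemmas (2.6.15, 2.6.18) to get characteristics independent of $n$, and handles the index-dependent studentizing factor by treating $\{x\mapsto 1/(\beta^{-1/2}\sigma(x))\}$ as a VC class of constants and taking pointwise products — you should make the same product-of-classes step explicit rather than asserting that the multiplier ``changes the characteristics only by constants.'' With that point tightened, your argument is a complete and self-contained alternative proof.
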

\begin{remark}[Lemma \ref{lem:diff-cond-check}]
The uniform lower bound on the transition density for the process (\ref{eq:diffusion}), i.e. $\inf_{x, y}p_\Delta(x, y) \geq K' > 0$, enables the construction of an estimator $\hat{p}_\Delta$ that satisfies (\ref{cond : transition estimate}) and (B5) of Assumption \ref{assump : bootstrap} simultaneously. Also, the order of $\alpha_n$ depends on the smoothness of the transition density $p_{\Delta}$, and its smoothness (up to second order) is guaranteed as we consider a regular diffusion characterized by $\Theta$ in (\ref{eq:reg_diff})~(see \cite{nickl2017nonparametric} for details). 
\end{remark}

\section{Acknowledgments}
The authors are grateful to Professor Kengo Kato for invaluable technical discussions and guidance.


\bibliographystyle{alpha} 

\bibliography{WRBB}       

\newpage


\appendix

\section{Technical tools}
\label{sec:appendix}

This section collects several technical tools that will be used in the following proofs. 

\subsection{Useful established technical tools}

Here we introduce several established technical tools that will be used throughout the Appendix.

\begin{lemma}[Gaussian-to-Gaussian comparison]
\label{lem:Gaussian-comparison}
Let $Y = (Y_1, ..., Y_N)$, $Z = (Z_1, ..., Z_N)$ be centered Gaussian random vectors with covariance matrices $\Sigma_Y, \Sigma_Z$ respectively. Define the maximal entry-wise difference, $\Delta = | \Sigma_Y - \Sigma_Z |_\infty$. 

\begin{enumerate}
    \item[(i)] If $\min_{1\leq j\leq N}\mathrm{Var}(Z_j) \geq \underline{\sigma}^2$ for some $\underline{\sigma} > 0$, then
    \begin{equation*}
    \sup_{y \in \R^N} \Big| \bbP( Z_1 \leq y ) - \bbP( Z_2 \leq y ) \Big| \leq C \big( \Delta \log^2N \big)^{1/2}    
    \end{equation*}
    where $C$ only depends on $\underline{\sigma} > 0$.
    \item[(ii)] For every $f\in \mathcal{C}_2$,
    \begin{equation*}
    \left|\E\left[ f\Big(\max_{1 \leq j \leq N} Y_j\Big) \right] - \E\left[ f\Big(\max_{1 \leq j \leq N} Z_j \Big)\right]\right| \leq \|f''\|_\infty \Delta/2 + 2\| f'\|_\infty \sqrt{2 \Delta\log N}.        
    \end{equation*}
\end{enumerate}
\end{lemma}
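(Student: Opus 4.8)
The plan is to handle both parts by the Slepian--Stein (``smart path'') Gaussian interpolation, proving (ii) directly and then bootstrapping (i) from (ii) by smoothing the indicator and invoking a Gaussian anti-concentration inequality; the variance floor $\underline\sigma$ enters only through anti-concentration, which is why (ii) carries no such assumption.

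For (ii): fix $\beta>0$ and introduce the soft-max $F_\beta(z)=\beta^{-1}\log\sum_{j=1}^{N}e^{\beta z_j}$, which satisfies $0\le F_\beta(z)-\max_j z_j\le\beta^{-1}\log N$, $\partial_jF_\beta=\pi_j\ge0$ with $\sum_j\pi_j=1$, and $\partial_{jk}F_\beta=\beta(\pi_j\delta_{jk}-\pi_j\pi_k)$, so that $\sum_{j,k}\partial_jF_\beta\,\partial_kF_\beta=1$ and $\sum_{j,k}|\partial_{jk}F_\beta|\le2\beta$. Set $g=f\circ F_\beta$, take $Y$ and $Z$ independent, and interpolate along $W(t)=\sqrt t\,Y+\sqrt{1-t}\,Z$. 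Differentiating $\Psi(t)=\E[g(W(t))]$ and applying Gaussian integration by parts coordinatewise gives $\Psi'(t)=\tfrac12\sum_{j,k}\big((\Sigma_Y)_{jk}-(\Sigma_Z)_{jk}\big)\,\E[\partial_{jk}g(W(t))]$; since $\partial_{jk}g=f''(F_\beta)\,\partial_jF_\beta\,\partial_kF_\beta+f'(F_\beta)\,\partial_{jk}F_\beta$, the identities above give $\sum_{j,k}|\partial_{jk}g|\le\|f''\|_\infty+2\beta\|f'\|_\infty$, hence $|\Psi(1)-\Psi(0)|\le\tfrac12\Delta\,(\|f''\|_\infty+2\beta\|f'\|_\infty)$. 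Combining this with the soft-max replacement error $|f(\max_j W_j(s))-g(W(s))|\le\|f'\|_\infty\beta^{-1}\log N$ at $s\in\{0,1\}$ yields, for $\Delta>0$,
\[
\Big|\E f\big(\max_j Y_j\big)-\E f\big(\max_j Z_j\big)\Big|\;\le\;\tfrac12\|f''\|_\infty\Delta+\|f'\|_\infty\beta\Delta+2\|f'\|_\infty\beta^{-1}\log N,
\]
and taking $\beta=\sqrt{2\log N/\Delta}$ (the case $\Delta=0$ being trivial) gives exactly the stated bound.

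For (i): I would first reduce to maxima, using $\bbP(Y\le y)=\bbP(\max_j(Y_j-y_j)\le0)$ and the fact that the shift leaves the variances unchanged, so it suffices to compare $\bbP(\max_j\widetilde Y_j\le 0)$ with $\bbP(\max_j\widetilde Z_j\le 0)$ for shifted copies whose covariances still differ by at most $\Delta$ entrywise; I may also assume $\Delta\log^2 N$ is below a fixed absolute constant (else the asserted bound exceeds $1$), so that $\min_j\Var(Y_j)\ge\underline\sigma^2/2$ as well. Now take a smooth $f_\epsilon$ with $\mathbf 1\{\cdot\le0\}\le f_\epsilon\le\mathbf 1\{\cdot\le\epsilon\}$, with $f_\epsilon',f_\epsilon''$ supported in $[0,\epsilon]$ and $\|f_\epsilon'\|_\infty\lesssim\epsilon^{-1}$, $\|f_\epsilon''\|_\infty\lesssim\epsilon^{-2}$, and rerun the interpolation of (ii) with $g=f_\epsilon\circ F_\beta$. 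The essential refinement is that the integrands $f_\epsilon''(F_\beta(W(t)))$ and $f_\epsilon'(F_\beta(W(t)))$ vanish unless $\max_j W_j(t)$ lies in an interval of length $O(\epsilon+\beta^{-1}\log N)$; bounding the probability of that event by the Gaussian anti-concentration inequality $\sup_x\bbP\big(|\max_j W_j(t)-x|\le\ell\big)\le C_{\underline\sigma}\,\ell\sqrt{\log N}$ \cite{CCK2014bAoS,CCK2015PTRF} (valid since $\Var(W_j(t))\ge\underline\sigma^2/2$) --- rather than by $\|f_\epsilon''\|_\infty$ --- controls both the interpolation increment and the soft-max replacement error, and the passage from $f_\epsilon$ back to the indicator costs $\bbP(\max_j\widetilde Z_j\in[0,\epsilon])\le C_{\underline\sigma}\epsilon\sqrt{\log N}$. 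Up to $C_{\underline\sigma}$ factors the resulting bound on $\bbP(\max_j\widetilde Y_j\le0)-\bbP(\max_j\widetilde Z_j\le0)$ is a constant multiple of $\epsilon\sqrt{\log N}+\Delta\beta\sqrt{\log N}+\epsilon^{-1}\Delta(\log N)^{3/2}$ plus strictly lower-order terms, which with the choice $\epsilon\asymp\sqrt{\Delta\log N}$, $\beta\asymp\sqrt{\log N/\Delta}$ becomes $O\big((\Delta\log^2 N)^{1/2}\big)$; the reverse inequality is obtained symmetrically, using the variance floor for $W(t)$ once more. (Alternatively, (i) may simply be quoted from the Gaussian comparison bounds for hyperrectangles in \cite{CCK2015PTRF,CCK2014bAoS}.)

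The genuine obstacle is confined to (i): estimating the second-derivative term crudely by $\|f_\epsilon''\|_\infty\lesssim\epsilon^{-2}$ forces $\epsilon\gtrsim\sqrt\Delta$ and only yields a fourth-root rate, so one must exploit that $f_\epsilon$ is active only on a thin slab around the maximum and charge that slab to anti-concentration; once that is in place, what remains is the routine bookkeeping of keeping the two smoothing scales $(\epsilon,\beta)$ tuned so that every error term lands at the common order $(\Delta\log^2 N)^{1/2}$. Part (ii), needing no anti-concentration, is a direct one-parameter Slepian--Stein computation.
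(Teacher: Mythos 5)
Your proposal is correct in substance, but it takes a genuinely different route from the paper: the paper does not prove Lemma \ref{lem:Gaussian-comparison} at all, it simply quotes item (i) from Proposition 2.1 of \cite{chernozhuokov2022improved} and item (ii) from Theorem 1 and Comment 1 of \cite{CCK2015PTRF}, whereas you reconstruct both statements. Your part (ii) is exactly the standard Slepian--Stein computation with the soft-max $F_\beta$, and the bookkeeping ($\sum_{j,k}|\partial_{jk}(f\circ F_\beta)|\le \|f''\|_\infty+2\beta\|f'\|_\infty$, soft-max error $\beta^{-1}\log N$, then $\beta=\sqrt{2\log N/\Delta}$) reproduces the stated constants. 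Your part (i) is the more interesting contribution: the naive combination of (ii) with a smoothed indicator only yields a $\Delta^{1/3}$-type Kolmogorov rate, and you correctly identify that the improvement to $(\Delta\log^2 N)^{1/2}$ comes from localizing $f_\epsilon'$ and $f_\epsilon''$ to the slab $\{F_\beta\in[0,\epsilon]\}$ and charging that slab to Nazarov-type anti-concentration \emph{inside} the interpolation integrand; with $\beta^{-1}\log N\asymp\epsilon\asymp\sqrt{\Delta\log N}$ all error terms balance at order $\sqrt{\Delta}\,\log N$, which is indeed how the improved comparison is obtained in the literature. What each approach buys: the paper's citation is short and inherits the sharp constants; yours is self-contained and makes transparent that $\underline{\sigma}$ enters only through anti-concentration, exactly as you say.

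Two corrections you should make. First, in the reduction of (i) to maxima, do not interpolate between the shifted copies $Y-y$ and $Z-y$: the path $W(t)=\sqrt{t}(Y-y)+\sqrt{1-t}(Z-y)$ has the $t$-dependent mean $-(\sqrt{t}+\sqrt{1-t})y$, and Stein's identity then produces an extra drift term beyond the covariance-difference term. Keep the interpolation between the centered vectors and absorb the shift into the test function, i.e.\ work with $w\mapsto f_\epsilon\big(F_\beta(w-y)\big)$; the slab event becomes $\{\max_j(W_j(t)-y_j)\in[-\beta^{-1}\log N,\epsilon]\}$ and its probability is controlled by Nazarov's inequality (the paper's Lemma \ref{lem: Nazarov}, which allows non-centered coordinates with a variance floor; note $\min_j\Var(W_j(t))\ge\underline{\sigma}^2-\Delta$, covered by your reduction to $\Delta$ small, whose threshold may depend on $\underline{\sigma}$ since $C$ does). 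With that change your computation goes through verbatim. Second, your parenthetical fallback that (i) ``may simply be quoted'' from \cite{CCK2015PTRF,CCK2014bAoS} is inaccurate: those papers only give the cube-root-type comparison bound; the $(\Delta\log^2N)^{1/2}$ bound under a variance lower bound is the improved result and should be cited from \cite{chernozhuokov2022improved}, as the paper does.
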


\begin{proof}[Proof of Lemma \ref{lem:Gaussian-comparison}]

For item (i), see Proposition 2.1 of \cite{chernozhuokov2022improved}. For item (ii), see Theorem 1 and its corresponding Comment 1 in \cite{CCK2015PTRF}.

\end{proof}

\begin{lemma}[Levy's anti-concentration]\label{lem:levy}
Consider a separable centered Gaussian process $G(t), t \in T$  for $t$ in a semi-metric space $T$ with $\sigma_t^2 = \E[G^2(t)]$. Suppose $\underline{\sigma} = \inf_{t \in T} \sigma_t \leq \sup_{t \in T} \sigma_t =: \overline{\sigma}$. Then for all $\varepsilon > 0$, and for some constant $C> 0$, the following holds,
\begin{equation}
        \sup_{x \in \R} \bbP\big( | G_T^\vee - x |\big) \leq C\frac{\overline{\sigma}}{\underline{\sigma}^2} \varepsilon\Big\{ \E[\sup_{t\in T}|G(t)/\sigma_t|] + \sqrt{1 \vee \log (\underline{\sigma}/\varepsilon)} \Big\}.
\end{equation}
\end{lemma}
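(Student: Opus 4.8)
The plan is to reduce the claimed process bound to the classical finite-dimensional Gaussian anti-concentration inequality and then quote the latter. (I read the left-hand side as $\sup_{x\in\R}\bbP(|G_T^\vee-x|\le\varepsilon)$, the inequality $\le\varepsilon$ being dropped by a typo.) First I would standardize $G$ pointwise, setting $\bar G(t):=G(t)/\sigma_t$, and invoke separability to choose a countable $T_0=\{t_1,t_2,\dots\}\subseteq T$ with $G_T^\vee=\sup_i G(t_i)$ and $\sup_{t\in T}|\bar G(t)|=\sup_i|\bar G(t_i)|$ almost surely, and with $\inf_i\sigma_{t_i}=\underline\sigma$, $\sup_i\sigma_{t_i}=\overline\sigma$ (using continuity of $t\mapsto\sigma_t$ in the intrinsic metric $d(s,t)=\|G(s)-G(t)\|_{2}$). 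Writing $M_k:=\max_{i\le k}G(t_i)$, we have $M_k\uparrow G_T^\vee$, and each $M_k$ as well as $G_T^\vee$ has a continuous distribution function (a maximum of nondegenerate Gaussians has no atoms), so for every $x$,
\[
\bbP(x-\varepsilon\le G_T^\vee\le x+\varepsilon)=\lim_{k\to\infty}\bbP(x-\varepsilon\le M_k\le x+\varepsilon).
\]
Thus it suffices to prove the bound for each finite subvector $(G(t_1),\dots,G(t_k))$ with constants independent of $k$ and let $k\to\infty$, noting $\max_{i\le k}\sigma_{t_i}\le\overline\sigma$, $\min_{i\le k}\sigma_{t_i}\downarrow\underline\sigma$, and $\E[\max_{i\le k}\bar G(t_i)]\uparrow\E[\sup_i\bar G(t_i)]\le\E[\sup_{t\in T}|\bar G(t)|]$, so that the $k\to\infty$ limit of the finite-dimensional bounds is at most the asserted one.

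The core step is the finite-dimensional estimate: for a centered Gaussian vector $(X_1,\dots,X_N)$ with $\sigma_j^2=\E X_j^2$, $\underline\sigma_N:=\min_j\sigma_j$, $\overline\sigma_N:=\max_j\sigma_j$, $a_N:=\E[\max_j X_j/\sigma_j]$,
\[
\sup_{x\in\R}\bbP\bigl(\bigl|\max_j X_j-x\bigr|\le\varepsilon\bigr)\le C\,\varepsilon\,\frac{\overline\sigma_N}{\underline\sigma_N^{2}}\Bigl(a_N+\sqrt{1\vee\log(\underline\sigma_N/\varepsilon)}\Bigr).
\]
After rescaling $X_j\mapsto X_j/\underline\sigma_N$ to reduce to minimal variance $1$, this is precisely the Nazarov-type anti-concentration inequality of \cite{CCK2015PTRF}; see also \cite{CCK2014bAoS}. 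I would simply cite it, but for completeness its proof combines three ingredients: (i) the one-dimensional bound $\bbP(x\le\xi\le x+2\varepsilon)\le\sqrt{2/\pi}\,\varepsilon$ for $\xi\sim N(0,1)$ together with its location-dependent refinement $\bbP(x\le\xi\le x+2\varepsilon)\lesssim\varepsilon\,e^{-(|x|-2\varepsilon)_+^{2}/2}$; (ii) a decomposition of $\bbP(x\le\max_j X_j\le x+2\varepsilon)$ according to which coordinate attains the maximum, followed by conditioning on its value; and (iii) Gaussian concentration (Borell--Sudakov--Tsirelson) of $\max_j X_j$ around its mean at scale $\overline\sigma_N$, combined with $\E\max_j X_j\lesssim\overline\sigma_N(a_N+1)$, to restrict $x$ to a bounded window and thereby replace the crude factor $N$ arising from a naive union bound in (ii) by the (possibly much smaller) quantity $a_N$.

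I expect the main obstacle to be exactly this finite-dimensional bound with the refined dependence on $a_N=\E[\max_j X_j/\sigma_j]$ rather than on $N$: a straightforward union bound over coordinates is proportional to $N$ and hence useless for large (or, in the process case, infinite) index sets, so one genuinely needs the concentration-based localization of ingredient (iii). A secondary but routine point is the monotonicity bookkeeping in the limit of Step~1 — one must check that $\overline\sigma_N$, $\underline\sigma_N$ and the $\log(\underline\sigma_N/\varepsilon)$ term behave well enough in $k$ that passing to the limit does not spoil the bound, which follows since $\min_{i\le k}\sigma_{t_i}\downarrow\underline\sigma$ and $\log$ is continuous.
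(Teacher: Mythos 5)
Your proposal is correct and follows essentially the same route as the paper: both reduce via separability (as in the proof of Theorem 2.1 of \cite{CCK2014bAoS}) to the finite-dimensional Nazarov-type anti-concentration bound of Theorem 3 in \cite{CCK2015PTRF}, tracking that its constant can be taken of the form $C\,\overline{\sigma}/\underline{\sigma}^2$. Your additional sketch of the internals of the finite-dimensional inequality and the limiting bookkeeping is fine but not needed beyond citing those results, which is exactly what the paper does.
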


\begin{proof}[Proof of Lemma \ref{lem:levy}]

For an $N$-dimensional centered Gaussian vector $G(t_j)$, $j \in [N]$, when $\underline \sigma^2 \leq \min_{j \in [N]} \sigma_j^2 \leq \max_{j \in [N]}\sigma_j^2 \leq \overline \sigma^2$. Then the constant $C$ specified in item (ii) of Theorem 3 in \cite{CCK2015PTRF}, whenever $\min_{j \in [N]} \sigma_j^2 < \max_{j \in [N]}\sigma_j^2 $, has the closed form $C'{\overline\sigma/\underline \sigma^2}$ for some universal constant $C ' > 0$. Then exploit separability of the Gaussian process to finalize the result---see proof of Theorem 2.1 in \cite{CCK2014bAoS}.
    
\end{proof}

\begin{lemma}[Nazarov's inequality]
 \label{lem: Nazarov}
Let $W = (W_1,\dots,W_N)^T$ be a (not necessarily centered) Gaussian random vector such that $\min_{1 \le j \le N} \Var (W_j) \ge \underline{\sigma}^2$ for some $\underline{\sigma} > 0$. Then, for $W^{\vee} = \max_{1 \le j \le N}W_j$, we have
\begin{equation}
\Pr \big(t < W^{\vee} \le t+\delta \big) \le  \frac{\delta}{\underline{\sigma} } (\sqrt{2\log N}+2), \  t \in \R, \delta > 0. 
\label{eq: nazarov}
\end{equation}
\end{lemma}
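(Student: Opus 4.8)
The plan is to reduce the problem to the standard Nazarov anti-concentration inequality for the maximum of a centered Gaussian vector and then handle the non-centered case by a simple translation argument. First I would recall the precise classical statement: if $Z = (Z_1,\dots,Z_N)^T$ is a \emph{centered} Gaussian vector with $\min_j \Var(Z_j) \ge \underline\sigma^2 > 0$, then for all $t \in \R$ and $\delta > 0$ one has $\Pr(t < \max_j Z_j \le t+\delta) \le \delta\,\underline\sigma^{-1}(\sqrt{2\log N} + 2)$. This is the content of, e.g., the Nazarov inequality as stated in \cite{CCK2017AoP} (see also the version in \cite{chernozhuokov2022improved}); I would simply cite it, since the paper is clearly in the regime of quoting such tools (as it does for Lemmas~\ref{lem:Gaussian-comparison}--\ref{lem:levy}).

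The only additional content here is the "not necessarily centered" clause. For this I would write $W_j = \mu_j + Z_j$ where $\mu_j = \E[W_j]$ and $Z_j = W_j - \mu_j$, so that $Z = (Z_1,\dots,Z_N)^T$ is centered Gaussian with $\Var(Z_j) = \Var(W_j) \ge \underline\sigma^2$. The key observation is that a location shift in each coordinate does \emph{not} reduce to a clean single-variable shift of the maximum, so one cannot directly translate $\max_j W_j$ by a constant. Instead I would invoke the standard fact that Nazarov's inequality holds verbatim for non-centered Gaussian vectors — the proof of the classical inequality (via the log-concavity of Gaussian measure and a Gaussian isoperimetric/Ehrhard-type argument bounding the Gaussian surface area of the set $\{x : \max_j(\mu_j + x_j) \le t\}$, equivalently $\bigcap_j \{x_j \le t - \mu_j\}$, which is still a convex set) never uses centering; only the variance lower bound enters. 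So the cleanest route is to cite a version of Nazarov's inequality already stated for arbitrary (possibly non-centered) Gaussian vectors, e.g.\ \cite[Lemma~A.1]{CCK2017AoP} or the formulation in \cite{chernozhuokov2022improved}, and note that the hypotheses match exactly.

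Concretely, the steps in order are: (1) set $Z_j := W_j - \E[W_j]$ and record that $\Var(Z_j)=\Var(W_j)\ge\underline\sigma^2$; (2) observe $\{t < W^\vee \le t+\delta\} = \{t < \max_j(\E[W_j] + Z_j) \le t+\delta\}$; (3) apply the (non-centered) Nazarov inequality to the convex set $A_t := \{x \in \R^N : \max_j(\E[W_j] + x_j) \le t\} = \bigcap_j\{x_j \le t - \E[W_j]\}$ and its enlargement $A_{t+\delta} = A_t^{\,\delta'}$ in the appropriate (coordinatewise) sense, obtaining the bound $\delta\,\underline\sigma^{-1}(\sqrt{2\log N}+2)$; (4) conclude.

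The main obstacle is essentially bookkeeping rather than mathematics: being careful that the translated-set argument in step (3) genuinely produces the \emph{coordinatewise} $\delta$-shift that Nazarov's inequality consumes, and pinning down a reference whose statement already allows a nonzero mean vector (so that no re-derivation is needed). If no such off-the-shelf non-centered statement is convenient, the fallback is to reprove it: write $\Pr(t < W^\vee \le t+\delta) = \gamma(A_{t+\delta}) - \gamma(A_t)$ for the standard Gaussian measure $\gamma$ on $\R^N$ (after rescaling each coordinate by its standard deviation, which only helps since all standard deviations are $\ge\underline\sigma$), use that $A_{t+\delta}$ contains the Euclidean $\delta/\underline\sigma$-enlargement... no, more precisely the half-space intersection shifts, and bound the increment by $\delta/\underline\sigma$ times the Gaussian perimeter of $A_t$, which for an intersection of $N$ half-spaces is at most $\sqrt{2\log N}+2$ by the one-dimensional Gaussian tail estimate summed over faces. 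I expect citing the existing literature to suffice, so the written proof should be three or four lines.
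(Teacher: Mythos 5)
Your proposal is correct and takes essentially the same route as the paper, which states this lemma as a quoted standard tool without proof: the vector-threshold form of Nazarov's inequality (e.g.\ Lemma A.1 of \cite{CCK2017AoP}), applied to the centered vector $Z = W - \E[W]$ at the thresholds $t\mathbf{1} - \E[W]$ and $(t+\delta)\mathbf{1} - \E[W]$, yields the non-centered statement immediately, exactly as in your step (3). No gap.
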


\begin{lemma}[Montgomery-Smith inequality]
\label{lem: Montgomery-Smith}
Let $X_{1},\dots,X_{n}$ be i.i.d. random variables taking values in a measurable space $E$, and let $\calF$ be a class of measurable real-valued functions on $E$. Then for every $t > 0$, and for some constant $C > 0$,
\[
\Pr \left ( \max_{1 \le k \le n} \left \| \sum_{i=1}^{k} f(X_{i}) \right \|_{\calF}^{*} > t \right ) \le C \Pr \left (\left \| \sum_{i=1}^{n} f(X_{i}) \right \|_{\calF}^{*} > \frac{t}{C} \right ),
\]
where $\| \cdot \|_{\calF}^{*}$ denotes the measurable cover. 
\end{lemma}

\begin{proof}[Proof of Lemma \ref{lem: Montgomery-Smith}]
See Corollary 4 in \cite{MontgomerySmith1993PMS}, or Theorem 1.1.5 together with Remark 1.1.8 in \cite{delaPenaGine1999}. 
\end{proof}

 \begin{lemma}[Concentration inequality]\label{thm : talagrand}
Let $X_1,\dots,X_n$ be i.i.d. random variables taking values in a measurable space $E$ with common distribution $P$. 
 Let $\calF$ be a pointwise measurable class of functions $f : E \to \R$ with measurable envelope $F$ with $\| F \|_{P, \psi_a} < \infty$ for some $a \in (0, 1]$. Suppose that $P f = 0$ for all $f \in \calF$. Let $Z := \| \sum_{i = 1}^{n} f(X_i)\|_{\calF}$ and $\sigma^2  > 0$ be any positive constant such that $\sigma^2 \geq \sup_{f \in \calF}Pf^2$. Then for every $\alpha \in (0, 1)$ and $ x > 0$, 
 \begin{equation}
    \bbP\big ( Z \geq (1 + \alpha)\E[Z] + x \big)\leq \exp \left(-\frac{cx^2}{n\sigma^2} \right) + 3\exp \left(-c\cdot\bigg(\frac{x}{\|F\|_{P, \psi_a} \cdot \psi_a^{-1}(n) } \bigg)^a \right),
 \end{equation}
where $c$ is a positive constant that depends on $\alpha$ and $a$.

 \end{lemma}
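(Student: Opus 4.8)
The plan is to derive this as a corollary of Adamczak's tail inequality for suprema of unbounded empirical processes (the same reference \cite{adamczak} invoked for Lemma~\ref{lem:geom-drift}), combined with a standard Orlicz maximal inequality. Adamczak's result states: for independent $X_1,\dots,X_n$, a countable centered class $\calG$ with measurable envelope, $\widetilde Z := \sup_{g \in \calG}\sum_{i=1}^n g(X_i)$, any $\widetilde\sigma^2 \ge \sup_{g}\sum_i \E g(X_i)^2$, and $M := \max_{i\le n}\sup_{g\in\calG}|g(X_i)|$, one has, for every $\alpha \in (0,1)$ and $x>0$,
\[
\bbP\big(\widetilde Z \ge (1+\alpha)\E\widetilde Z + x\big) \le \exp\!\Big(-\frac{x^2}{(2+\alpha)\widetilde\sigma^2}\Big) + 3\exp\!\Big(-\Big(\frac{x}{K_\alpha \|M\|_{\bbP,\psi_a}}\Big)^{a}\Big),
\]
with $K_\alpha$ depending on $\alpha$ and $a$. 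I would apply this to the symmetrized class $\calG := \calF \cup (-\calF)$ (still pointwise measurable, same envelope $F$, same variances), so that $\widetilde Z = Z$, passing from the countable subclass furnished by pointwise measurability to $\calF$ itself by a version argument (cf. Example~2.3.4 in \cite{VW1996}). In the i.i.d.\ case $\sum_i \E g(X_i)^2 = nPg^2 \le n\sigma^2$, so one may take $\widetilde\sigma^2 = n\sigma^2$, and $M \le \max_{i\le n}F(X_i)$; thus it remains only to bound $\|\max_{i\le n}F(X_i)\|_{\bbP,\psi_a}$. The tail estimate $\bbP(F(X_1) > t) \le 2\exp(-(t/\|F\|_{P,\psi_a})^a)$ gives $\bbP(\max_{i\le n}F(X_i) > t) \le 2n\exp(-(t/\|F\|_{P,\psi_a})^a)$, and integrating against $\psi_a$ yields $\|\max_{i\le n}F(X_i)\|_{\bbP,\psi_a} \le C_a\,\psi_a^{-1}(n)\,\|F\|_{P,\psi_a}$ (a version of Lemma~2.2.2 in \cite{VW1996} valid for $a\in(0,1]$). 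Substituting these two bounds and relabelling the absolute constant as $c=c(\alpha,a)$ yields exactly the claimed inequality.

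Should a self-contained argument be preferred, I would reproduce the truncation scheme underlying Adamczak's proof. Fix $x>0$, set $\tau := \tau_0 \vee (x/K)$ with $\tau_0 \asymp_a \psi_a^{-1}(n)\|F\|_{P,\psi_a}$ (large multiplicative constant) and $K$ absolute, and, using $Pf = 0$, decompose $\sum_i f(X_i) = S_1(f) + S_2(f)$ with $S_1(f) = \sum_i\{f(X_i)\mathbf 1_{\{F(X_i)\le\tau\}} - P(f\mathbf 1_{\{F\le\tau\}})\}$ and $S_2(f) = \sum_i\{f(X_i)\mathbf 1_{\{F(X_i)>\tau\}} - P(f\mathbf 1_{\{F>\tau\}})\}$, so $Z \le \|S_1\|_\calF + \|S_2\|_\calF$. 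The class underlying $S_1$ is uniformly bounded by $2\tau$, centered, with variances $\le Pf^2 \le \sigma^2$, so the classical Talagrand/Bousquet concentration inequality in its multiplicative form gives $\bbP(\|S_1\|_\calF \ge (1+\alpha/2)\E\|S_1\|_\calF + x/2) \le \exp(-c_\alpha x^2/(n\sigma^2 + \tau x))$, whose exponent is $\gtrsim \min(x^2/(n\sigma^2),\, x/\tau_0,\, 1)$ by the choice of $\tau$; the passage from $\E\|S_1\|_\calF$ to $\E Z$ costs only the additive $\E\|S_2\|_\calF \le 2nP(F\mathbf 1_{\{F>\tau\}})$, which by the $\psi_a$-tail of $F$ and the size of $\tau_0$ is $o(1)$ and absorbable into the $x$ and $\alpha\E Z$ budgets. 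For the unbounded part, $\|S_2\|_\calF \le \sum_i F(X_i)\mathbf 1_{\{F(X_i)>\tau\}} + nP(F\mathbf 1_{\{F>\tau\}})$; since $\tau \ge x/K$ the deterministic term is $\le x/4$ (and negligible for small $x$), while $\bbP(\sum_i F(X_i)\mathbf 1_{\{F(X_i)>\tau\}} > x/4)$ is controlled by the event that some $F(X_i)$ is atypically large, of probability $\lesssim \exp(-c(x/(\psi_a^{-1}(n)\|F\|_{P,\psi_a}))^a)$ by the maximal inequality above.

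The conceptual content is therefore modest --- Talagrand for the bounded part plus a crude Weibull tail for the exceptional events --- and \textbf{the only real difficulty is the bookkeeping}: one must keep the centering terms consistent so the final estimate is phrased around $(1+\alpha)\E Z$ rather than $(1+\alpha/2)\E\|S_1\|_\calF$; choose the truncation level to grow like $\psi_a^{-1}(n)\|F\|_{P,\psi_a}$ with a constant large enough that $nP(F>\tau_0)$ stays summable, yet at least $x/K$ so that $\tau x$ does not dominate $n\sigma^2$ in the Bernstein denominator; and reconcile the constants across the small-$x$ and large-$x$ regimes so that the two displayed exponential terms genuinely dominate. Since these are precisely the steps carried out in \cite{adamczak}, I would invoke that theorem directly rather than re-derive it.
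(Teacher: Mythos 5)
Your proposal is correct and matches the paper's approach: the paper's entire proof is a citation of Theorem 4 in \cite{adamczak2008tail}, which is exactly the Adamczak tail inequality you invoke in your first paragraph. The only substantive content you add — passing from $\|\max_{i\le n}\sup_{f}|f(X_i)|\|_{\psi_a}$ in Adamczak's statement to $\psi_a^{-1}(n)\|F\|_{P,\psi_a}$ via the Orlicz maximal inequality — is a detail the paper leaves implicit, and you handle it correctly.
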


 \begin{proof}[Proof of Theorem \ref{thm : talagrand}]
     See Theorem 4 of \cite{adamczak2008tail}.
 \end{proof}

\begin{theorem}[Strassen]\label{thm: Strassen}

Let $\mu, \nu$ be Borel probability measures on $\R$. Let $\varepsilon, \delta> 0$ and suppose $\mu(A) \leq \nu(A^\delta) + \varepsilon$ for every Borel subset $A \subset \R$. For a given random variable $V \sim \mu$, one can construct a random variable $W \sim \nu$ such that
\[ 
\bbP(|V -W| > \delta ) < \varepsilon. 
\]

\end{theorem}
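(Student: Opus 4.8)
The final statement is the classical Strassen coupling theorem specialized to the real line, so in a finished write-up one could simply cite it (e.g.\ Dudley, \emph{Real Analysis and Probability}, Theorem~11.6.2, or Strassen's 1965 paper); here I outline a self-contained argument. The plan is to first produce a \emph{coupling}: a Borel probability measure $\gamma$ on $\R\times\R$ with marginals $\mu$ and $\nu$ such that $\gamma(\{(x,y):|x-y|>\delta\})\le\varepsilon$. Granting such a $\gamma$, I would conclude as follows. Since $\R$ is Polish, $\gamma$ disintegrates as $\gamma(dx,dy)=\mu(dx)\,K(x,dy)$ for a Markov kernel $K$; enlarging the probability space carrying the prescribed $V\sim\mu$ by an independent uniform variable and taking the conditional law of $W$ given $V$ to be $K(V,\cdot)$ yields $(V,W)\sim\gamma$, hence $W\sim\nu$ and $\bbP(|V-W|>\delta)\le\varepsilon$. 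To recover the strict inequality in the statement it suffices that the hypothesis hold with uniform strict slack, i.e.\ $\sup_A(\mu(A)-\nu(A^\delta))<\varepsilon$ (the situation in all our applications), in which case one reruns the construction with $\varepsilon$ replaced by a slightly smaller number; I would flag this minor point rather than belabor it.

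To construct $\gamma$ I would discretize. Fix a mesh $\eta>0$, partition $\R$ into half-open intervals $\{I_k\}_{k\in\Z}$ of length $\eta$, let $\mu^\eta,\nu^\eta$ be the induced laws on $\Z$, and declare $j\sim k$ iff $\mathrm{dist}(\overline{I_j},\overline{I_k})\le\delta$, with $R(A):=\{k:\exists\,j\in A,\ j\sim k\}$. Two one-sided facts drive the argument. First, the hypothesis transfers to the discrete level with the \emph{same} $\varepsilon$: for any $A\subseteq\Z$ one has $\big(\bigcup_{j\in A}I_j\big)^\delta\subseteq\bigcup_{k\in R(A)}I_k$, so $\mu^\eta(A)=\mu\big(\bigcup_{j\in A}I_j\big)\le\nu\big(\big(\bigcup_{j\in A}I_j\big)^\delta\big)+\varepsilon\le\nu^\eta(R(A))+\varepsilon$. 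The finite Strassen/Hall marriage theorem (equivalently max-flow--min-cut on the bipartite graph of $\sim$) then produces a coupling of $\mu^\eta$ and $\nu^\eta$ placing mass at least $1-\varepsilon$ on $\{j\sim k\}$. Second, I lift this discrete coupling to $\R^2$ by using inside each cell pair $(I_j,I_k)$ the product of the normalized restrictions $\mu|_{I_j}/\mu(I_j)$ and $\nu|_{I_k}/\nu(I_k)$; the resulting $\gamma_\eta$ has marginals $\mu,\nu$, and since $j\sim k$ forces $|x-y|\le\mathrm{dist}(\overline{I_j},\overline{I_k})+\mathrm{diam}\,\overline{I_j}+\mathrm{diam}\,\overline{I_k}\le\delta+2\eta$ for all $x\in I_j$, $y\in I_k$, we obtain $\gamma_\eta(\{|x-y|\le\delta+2\eta\})\ge1-\varepsilon$.

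Finally I would pass to the limit. Along $\eta_n\downarrow0$ the family $\{\gamma_{\eta_n}\}$ is tight (its marginals are the fixed tight measures $\mu,\nu$), so Prokhorov gives a subsequence with $\gamma_{\eta_n}\Rightarrow\gamma$, and $\gamma$ still has marginals $\mu,\nu$. For each fixed $\eta>0$ the set $\{|x-y|>\delta+2\eta\}$ is open, and $\gamma_{\eta_n}(\{|x-y|\le\delta+2\eta\})\ge\gamma_{\eta_n}(\{|x-y|\le\delta+2\eta_n\})\ge1-\varepsilon$ once $\eta_n\le\eta$, so the portmanteau theorem gives $\gamma(\{|x-y|>\delta+2\eta\})\le\liminf_n\gamma_{\eta_n}(\{|x-y|>\delta+2\eta\})\le\varepsilon$; letting $\eta\downarrow0$ and using monotone convergence yields $\gamma(\{|x-y|>\delta\})\le\varepsilon$, the required coupling.

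I expect the main obstacle to be the combinatorial core — the finite Strassen/marriage (max-flow--min-cut) step — together with steering the $O(\eta)$ bookkeeping in the right direction: the hypothesis must transfer \emph{up} through $\big(\bigcup_{j\in A}I_j\big)^\delta\subseteq\bigcup_{k\in R(A)}I_k$, while the conclusion must transfer \emph{back down} through $j\sim k\Rightarrow|x-y|\le\delta+2\eta$. The remaining ingredients (disintegration on a Polish space, tightness, portmanteau, and the monotone limit in $\eta$) are routine, and in the actual paper one would most plausibly just invoke Strassen's theorem and omit the discretization entirely.
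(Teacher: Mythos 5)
Your outline is correct, but it takes a very different route from the paper: the paper's entire ``proof'' is a citation to Lemma 4.1 of \cite{CCK2014aAoS}, which is itself just the classical Strassen coupling theorem on $\R$, whereas you reconstruct that theorem from scratch. Your reconstruction is the standard one and the steps all check out: the transfer $\big(\bigcup_{j\in A}I_j\big)^{\delta}\subseteq\bigcup_{k\in R(A)}I_k$ pushes the hypothesis to the discrete level with the same $\varepsilon$ (the infimum defining the $\delta$-enlargement is effectively attained because only finitely many cells lie within distance $\delta$ of a given point); the defect form of Hall's theorem / max-flow--min-cut gives the discrete sub-coupling of mass $\ge 1-\varepsilon$ on $\{j\sim k\}$; the product-of-normalized-restrictions lift, Prokhorov, portmanteau on the open sets $\{|x-y|>\delta+2\eta\}$, the monotone limit $\eta\downarrow 0$, and the disintegration-plus-independent-uniform construction of $W$ from a prescribed $V$ are all sound. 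What the citation buys the paper is brevity; what your argument buys is self-containedness and, more importantly, it surfaces a real imprecision in the statement as written: with only the non-strict hypothesis $\mu(A)\le\nu(A^{\delta})+\varepsilon$ one can in general only conclude $\bbP(|V-W|>\delta)\le\varepsilon$, not the strict inequality displayed (take $\mu=\delta_0$, $\nu=\delta_{2\delta}$, $\varepsilon=1$). You flag this correctly; the fix is either to weaken the conclusion to $\le\varepsilon$ (as in the cited lemma) or to assume uniform strict slack, and in the applications in this paper the distinction is immaterial. The one step you should not wave at too quickly if you were to write this out in full is the countable (rather than finite) marriage/max-flow step, which needs a finite truncation and limiting argument of its own.
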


\begin{proof}[Proof of Theorem \ref{thm: Strassen}]
    See Lemma 4.1 of \cite{CCK2014aAoS}.
\end{proof}

\begin{lemma}
Fix $\beta > 0$ and $\delta > 1/\beta$. Then for every Borel subset $A \subset \R$, there exists a smooth function $g = g_A:\R \to \R$ and a universal constant $C > 0$ so that $\|g'\|_\infty \leq \delta^{-1}$, $\| g''\|_\infty \leq C \beta\delta^{-1}, \|g'''\|_\infty \leq C \beta^2 \delta^{-1}$ and
\[ (1 - \varepsilon) 1_{A} (t) \leq g(t) \leq \varepsilon + (1 - \varepsilon)1_{ A^{3\delta}}(t)\]
where $\varepsilon = \varepsilon_{\beta, \delta} := \sqrt{e^{-\alpha}(1 + \alpha)}$ and $\alpha := \beta^2 \delta^2 - 1$. 
    \label{lem : soft approx}
\end{lemma}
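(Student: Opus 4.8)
The plan is to obtain $g$ by Gaussian mollification of a piecewise-linear plateau function at scale $\delta$, using mollification bandwidth $1/\beta$. Write $d(x):=\inf_{a\in A}|x-a|$ (the case $A=\emptyset$ being trivial: take $g\equiv 0$, which works since then $A^{3\delta}=\emptyset$ and $\varepsilon\in(0,1]$), and set
\[
r(x):=\max\bigl\{0,\ \min\{1,\ 2-d(x)/\delta\}\bigr\},
\]
so that $r\equiv 1$ on $A^{\delta}$, $r\equiv 0$ on $\{d\ge 2\delta\}$, $0\le r\le 1$, and $r$ is $(1/\delta)$-Lipschitz (because $d$ is $1$-Lipschitz). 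Let $\phi_{\sigma}$ be the $N(0,\sigma^{2})$ density with $\sigma:=1/\beta<\delta$, and define $g:=r*\phi_{\sigma}$, i.e.\ $g(t)=\E[r(t-\sigma Z)]$ for $Z\sim N(0,1)$. The bandwidth $\sigma=1/\beta$ is forced by the derivative requirements: the Lipschitz bound on $r$ governs $\|g'\|_\infty$ directly, while each additional derivative lands on $\phi_{\sigma}$ at cost $\|\phi_{\sigma}^{(k)}\|_{1}\asymp\sigma^{-k}=\beta^{k}$, reproducing the powers of $\beta$ in the statement.

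For the derivative bounds I would record that $g\in C^{\infty}$ and that $g'=r'*\phi_{\sigma}$, $g''=r'*\phi_{\sigma}'$, $g'''=r'*\phi_{\sigma}''$ — exactly one derivative moved onto $r$, the rest onto the smooth kernel. Young's inequality then gives
\[
\|g'\|_\infty\le\|r'\|_\infty\|\phi_\sigma\|_{1}=\delta^{-1},\qquad
\|g''\|_\infty\le\|r'\|_\infty\|\phi_\sigma'\|_{1}=c_1\beta\delta^{-1},\qquad
\|g'''\|_\infty\le\|r'\|_\infty\|\phi_\sigma''\|_{1}=c_2\beta^{2}\delta^{-1},
\]
with the absolute constants $c_1=\|\phi_1'\|_{1}=\sqrt{2/\pi}$ and $c_2=\|\phi_1''\|_{1}=\int|u^{2}-1|\phi_1(u)\,du$; taking $C:=\max\{1,c_1,c_2\}$ (indeed $c_1,c_2<1$, so $C=1$ works) yields the claimed bounds.

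For the sandwich, put $a:=\beta\delta$, so $a>1$ by the hypothesis $\delta>1/\beta$; then $\alpha=a^{2}-1$ and hence $\varepsilon=\sqrt{e^{-\alpha}(1+\alpha)}=a\,e^{-(a^{2}-1)/2}$. I would distinguish three cases. First, $0\le r\le 1$ forces $0\le g\le 1$, which handles the lower bound for $t\notin A$ (it is $0$) and the upper bound for $t\in A^{3\delta}$ (it is $\varepsilon+(1-\varepsilon)=1$). Second, for $t\in A$: if $|Z|\le a$ then $d(t-\sigma Z)\le\sigma|Z|\le\delta$, so $r(t-\sigma Z)=1$, hence $g(t)\ge\Pr(|Z|\le a)=1-\Pr(|Z|>a)$. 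Third, for $t\notin A^{3\delta}$ (i.e.\ $d(t)>3\delta$): if $r(t-\sigma Z)\neq 0$ then $d(t-\sigma Z)<2\delta$, while $d(t-\sigma Z)\ge d(t)-\sigma|Z|>3\delta-\sigma|Z|$, forcing $|Z|>\delta/\sigma=a$, hence $g(t)\le\Pr(|Z|>a)$. Finally I would verify $\Pr(|Z|>a)\le\varepsilon$: the Mills-ratio bound $\Pr(|Z|>a)\le\frac{2}{a\sqrt{2\pi}}e^{-a^{2}/2}$ together with $\frac{2}{a\sqrt{2\pi}}\le a\,e^{1/2}$ (equivalently $a^{2}\ge 2/\sqrt{2\pi e}\approx 0.48$, true since $a>1$) gives $\Pr(|Z|>a)\le a\,e^{-(a^{2}-1)/2}=\varepsilon$, which closes the two nontrivial cases.

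The only point requiring care is the differentiation identity in the second paragraph: since $r$ is merely Lipschitz, $r'$ exists only a.e., so the identity $g'=r'*\phi_\sigma$ (rather than $g'=r*\phi_\sigma'$, which would give the much weaker constant $\|\phi_\sigma'\|_1=\sqrt{2/\pi}\,\beta$ in place of $\delta^{-1}$) needs a brief justification — e.g.\ differentiating $g(t)=\int r(t-y)\phi_\sigma(y)\,dy$ under the integral via the a.e.\ derivative of $r$, Fubini, and the fundamental theorem of calculus, or approximating $r$ uniformly by $C^{\infty}$ ramps with the same $(1/\delta)$-Lipschitz bound and passing to the limit. The remaining pieces — the plateau/enlargement bookkeeping and the elementary tail estimate matching the exact form of $\varepsilon$ — are routine.
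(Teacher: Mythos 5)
Your construction is correct, and it is essentially self-contained where the paper gives no argument at all: the paper simply cites Lemma 4.2 of \cite{CCK2014aAoS}, and your plateau-plus-Gaussian-mollification proof is in effect the standard construction behind that cited result (ramp at scale $\delta$ convolved with an $N(0,\beta^{-2})$ kernel, with one derivative placed on the ramp and the rest on the kernel). All the key checks go through: $\|g'\|_\infty\le\delta^{-1}$ exactly, $\|\phi_\sigma'\|_1=\sqrt{2/\pi}\,\beta$ and $\|\phi_\sigma''\|_1=4\phi(1)\beta^2$ give the stated $\beta\delta^{-1}$ and $\beta^2\delta^{-1}$ bounds with a universal $C$; the three-case sandwich argument using $d(t-\sigma Z)\le\sigma|Z|$ on $A$ and $d(t-\sigma Z)\ge d(t)-\sigma|Z|$ off $A^{3\delta}$ is exactly right; and your Mills-ratio verification that $\Pr(|Z|>a)\le a e^{-(a^2-1)/2}=\varepsilon$ for $a=\beta\delta>1$ is valid. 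One cosmetic remark: the exact form $\varepsilon=\sqrt{(1+\alpha)e^{-\alpha}}$ is precisely the Chernoff bound $\Pr(Z^2>a^2)\le\inf_{\lambda\in(0,1)}e^{-\lambda a^2/2}(1-\lambda)^{-1/2}=a\,e^{-(a^2-1)/2}$, so using that bound directly would make the tail step an identity-level computation valid for all $a\ge 1$ without the auxiliary numerical check; and the differentiation identity $g'=r'*\phi_\sigma$ that you flag is indeed justified by the Fubini/fundamental-theorem argument you sketch, since $r$ is Lipschitz hence absolutely continuous with a.e. derivative bounded by $\delta^{-1}$.
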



\begin{proof}[Proof of Lemma \ref{lem : soft approx}]
    See Lemma 4.2 of \cite{CCK2014aAoS}
\end{proof}

\subsection{Preliminary results for random blocks}
Maximal inequality and concentration inequality for random blocks are introduced here. 

\subsubsection{Maximal inequalities for random blocks}
\label{sec: maximal inequality}
Recall the setting of Section \ref{sec: setting}.
Define the \textit{occupation measure} $M(B,dy)$ by
\[
M(B,dy) = \sum_{t=1}^{k} \delta_{x_{t}}(dy), \ B=(x_{1},\dots,x_{k}) \in E^{k} \subset \check{E}. 
\]
We associate to any function $f: E \to \R$ a function $\check{f}$ on $\check{E}$ defined by 
\[
\check{f}(B) = \int_{E} f(y) M(B,dy).
\]
Recall for each $B \in \check{E}$, we let $\ell (B)$ denote its size:
\begin{equation}\label{eq:block-length}
    \ell(B) = \int_{E} M(B,dy).
\end{equation}
For example, if $B=(x_{1},\dots,x_{k}) \in E^{k}$, then $\check{f}(B) = \sum_{t=1}^{k} f(x_{k})$ and $\ell (B) = k$. 

Recall that $\check{\calE}$ denotes the smallest $\sigma$-algebra containing 
$\calE^{k}, k=1,2,\dots$. 
 If $B(\omega)$ is a random variable with distribution $\check{Q}$ on $(\check{E},\check{\calE})$, then $M(B(\omega),dy)$ is a random measure; that is, $M(B(\omega),
dy )$ is a (counting) measure on $(E,\mathcal{E})$
and for every $A \in \calE$, $M(B(\omega),A)$ is a random variable (with values in $\mathbb{N}$).

\begin{lemma}
\label{lem: covering number}
Let $\calF$ be a class of real-valued functions on $E$ with finite envelope $F$, and set the function class $\check{\calF} = \{ \check{f} : f \in \calF \}$. Then, the following hold. 

\begin{enumerate}
\item[(i)] For every probability measure $\check{R}$ on $(\check{E},\check{\calE})$ with $\check{R}(\ell) < \infty$, we have 
\[
N(\check{\calF},\| \cdot \|_{\check{R},1},\varepsilon \| \check{F} \|_{\check{R},1}) \leq N(\calF,\| \cdot \|_{R,1},\varepsilon \| F \|_{R,1})
\]
for all $0 < \varepsilon \leq 1$, where 
\[
R(A) = \frac{1}{\check{R}(\ell)} \int_{\check{E}} M(B,A) \, \check{R}(dB), \ A \in \calE.
\]
\item[(ii)]  For all $0 < \varepsilon \leq 1$,
\[
\sup_{\check R} N(\check{\calF},\| \cdot \|_{\check{R},2},\varepsilon \| \check{F} \|_{\check{R},2}) \leq \sup_{R} N(\calF,\| \cdot \|_{R,2},(\varepsilon/4)^{2} \| F \|_{R,2}),
\]
where $\sup_{\check{R}}$ and $\sup_{R}$ are taken over all finitely discrete distributions on $\check{E}$ and $E$, respectively. In particular, if $\calF$ is VC type with characteristics $(\sfA,\sfV)$, then $\check{\calF}$ is VC type with characteristics $(4\sqrt{\sfA},2\sfV)$ for envelope $\check{F}$. 
\end{enumerate}
\end{lemma}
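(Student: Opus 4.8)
The plan is to reduce the statement to standard covering-number manipulations, exploiting the elementary pointwise bound $|\check f(B)-\check g(B)|\le\int_E|f-g|\,M(B,dy)$ together with an envelope-reweighting (change-of-measure) argument. For part (i), I would first record that $\check F$ is a genuine envelope for $\check\calF$, since $|\check f(B)|\le\int_E|f|\,M(B,dy)\le\int_E F\,M(B,dy)=\check F(B)$. Then, by Tonelli and the definition of $R$, for any measurable $h\ge 0$ on $E$ one has $\int_{\check E}\big(\int_E h\,M(B,dy)\big)\,\check R(dB)=\check R(\ell)\,\|h\|_{R,1}$; taking $h=F$ gives $\|\check F\|_{\check R,1}=\check R(\ell)\,\|F\|_{R,1}$, and taking $h=|f-g|$ together with the pointwise bound gives $\|\check f-\check g\|_{\check R,1}\le\check R(\ell)\,\|f-g\|_{R,1}$. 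Hence an $\varepsilon\|F\|_{R,1}$-net $\{g_j\}$ of $\calF$ in $L^1(R)$ pulls back to the $\varepsilon\|\check F\|_{\check R,1}$-net $\{\check g_j\}$ of $\check\calF$ in $L^1(\check R)$, which is exactly the asserted inequality; finiteness of $\check R(\ell)$ is precisely what makes $R$ a probability measure here.

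For part (ii), the naive route --- Cauchy--Schwarz within a block, $|\check f(B)-\check g(B)|^2\le\ell(B)\int_E(f-g)^2\,M(B,dy)$, then integrate --- does \emph{not} give a bound uniform in $\check R$, because the block lengths $\ell(B)$ are unbounded; the detour through $L^1$ below is what fixes this. Fix a finitely discrete $\check R$ on $\check E$ (discarding blocks with $\check F(B)=0$, which affect none of the relevant norms) and argue in three steps. Step 1: from $(\check f(B)-\check g(B))^2\le 2\check F(B)\,|\check f(B)-\check g(B)|$ and the reweighted (still finitely discrete) measure $\check R':=\check F\,\check R/\|\check F\|_{\check R,1}$, one gets $\|\check f-\check g\|_{\check R,2}^2\le 2\|\check F\|_{\check R,1}\,\|\check f-\check g\|_{\check R',1}$ and $\|\check F\|_{\check R',1}=\|\check F\|_{\check R,2}^2/\|\check F\|_{\check R,1}$, whence $N(\check\calF,\|\cdot\|_{\check R,2},\varepsilon\|\check F\|_{\check R,2})\le N(\check\calF,\|\cdot\|_{\check R',1},\tfrac{\varepsilon^2}{2}\|\check F\|_{\check R',1})$. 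Step 2: apply part (i) with the measure $\check R'$ to replace $\check\calF$ by $\calF$ and $\check R'$ by the induced finitely discrete measure $R'$ on $E$. Step 3: invoke the standard change-of-measure fact that for any finitely discrete $Q$ on $E$ and $\delta\in(0,1]$, $N(\calF,\|\cdot\|_{Q,1},\delta\|F\|_{Q,1})\le\sup_{Q''}N(\calF,\|\cdot\|_{Q'',2},\delta\|F\|_{Q'',2})$ (reweight $Q$ by $F$, then by $F^{-2}$, and apply Jensen; restricting to $\{F>0\}$ keeps every integral finite since $Q$ is finitely discrete).

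Chaining Steps 1--3 with $\delta=\varepsilon^2/2$ and using $\tfrac{\varepsilon^2}{2}\ge(\varepsilon/4)^2$ --- so that the covering number only increases when the radius is shrunk to $(\varepsilon/4)^2\|F\|_{R,2}$ --- yields $\sup_{\check R}N(\check\calF,\|\cdot\|_{\check R,2},\varepsilon\|\check F\|_{\check R,2})\le\sup_R N(\calF,\|\cdot\|_{R,2},(\varepsilon/4)^2\|F\|_{R,2})$, which is the claim. If in addition $\calF$ is VC type with characteristics $(\sfA,\sfV)$, then since $(\varepsilon/4)^2\le 1$ for $0<\varepsilon\le 1$, Definition \ref{def:VC} bounds the right-hand side by $(\sfA/(\varepsilon/4)^2)^{\sfV}=(4\sqrt{\sfA}/\varepsilon)^{2\sfV}$, so $\check\calF$ is VC type with characteristics $(4\sqrt{\sfA},2\sfV)$ and envelope $\check F$.

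The main obstacle is conceptual rather than computational: one has to recognize that a block-wise Cauchy--Schwarz is too lossy to remain uniform over $\check R$ (because of unbounded $\ell(B)$), and that the right fix is to pass through the $L^1$ metric via envelope reweighting. Once that is in place, the remaining work --- keeping track of the radii and exponents across the three reductions, and checking that each reweighted measure stays finitely discrete so that part (i) and Definition \ref{def:VC} continue to apply --- is routine.
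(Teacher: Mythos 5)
Your proof is correct and follows essentially the same route as the paper: part (i) via the pointwise bound plus Tonelli and the induced measure $R$, and part (ii) via the standard $L^2\to L^1$ envelope-reweighting reduction, an application of part (i), and a change-of-measure back to $L^2$ --- the paper simply cites the relevant van der Vaart--Wellner results (proof of Theorem 2.6.7, Problems 2.5.1 and 2.10.4) where you write the reweighting arguments out explicitly. Your bookkeeping even yields a slightly sharper intermediate radius ($\varepsilon^2/2$) before relaxing to $(\varepsilon/4)^2$, so the stated bound follows.
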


\begin{proof}[Proof of Lemma \ref{lem: covering number}] 
\underline{Item (i)}. Let $\{ f_{1},\dots,f_{N} \}$ be an $(\varepsilon \| F \|_{R,1})$-net for $(\calF,\| \cdot \|_{R,1})$, so that for any $f \in \calF$ there exists $j=1,\dots,N$ such that $\| f - f_{j} \|_{R,1} \le \varepsilon \| F \|_{R,1}$. Then
\[
\| \check{f} - \check{f}_{j} \|_{\check{R},1} \le \int_{\check{E}} \int_{E} |f(y)-f_{j}(y)| M(B,dy) \check{R}(dB) \le \check{R}(\ell) \varepsilon \| F \|_{R,1} = \varepsilon \| \check{F} \|_{\check{R},1}.
\]
This implies the desired conclusion. 

\underline{Item (ii)}.  By Part (i) and Problem 2.5.1 in \cite{VW1996}, for any given probability measure $\check{R}$ on $(\check{E},\check{\calE})$ with $\check{R}\ell < \infty$ (see definition (\ref{eq:block-length})), we have
\[
N(\check{\calF},\| \cdot \|_{\check{R},1},\varepsilon \| \check{F} \|_{\check{R},1}) \leq \sup_{R} N(\calF,\| \cdot \|_{R,1},(\varepsilon/4) \| F \|_{R,1}).
\]
The right-hand side does not decrease if we replace the $L^{1}(R)$ norm with the $L^{2}(R)$ norm by Problem 2.10.4 in \cite{VW1996}. 
In addition, by the second part of the proof of Theorem 2.6.7 in \cite{VW1996}, we have 
\[
\sup_{\check{R}}N(\check{\calF},\| \cdot \|_{\check{R},2},\varepsilon \| \check{F} \|_{\check{R},2}) \le \sup_{\check{R}} N(\check{\calF},\| \cdot \|_{\check{R},1},(\varepsilon/2)^{2} \| \check{F} \|_{\check{R},1}),
\]
This completes the proof.
\end{proof}

The lemma above and Corollary 5.1 in \cite{CCK2014aAoS} immediately imply the following proposition (precisely,
we decompose the blocks into odd and even blocks, and apply Corollary 6.1 in \cite{CCK2014aAoS} to the odd
and even blocks, respectively; recall that, as the blocks are one-dependent, the odd blocks are
independent, and so are the even blocks).

\begin{lemma}[Maximal inequality for blocks]
\label{lem: maximal inequality}
Let $\calF$ be a countable class of $\pi$-integrable functions on $E$ with finite envelope $F$ such that $ \pi f = $ for all $ f \in \calF$.
Suppose $\calF$ is VC type with characteristics $(\sfA,\sfV)$ with $\sfA \geq 4$ and $\sfV \geq 1$. In addition, let $\overline{\sigma}$ be any positive constant such that $\sup_{f \in \calF} \| \check{f} \|_{\check Q,2} \leq \overline{\sigma} \leq \| \check{F} \|_{\check Q,2}$. Then,
\[
\begin{split}
&\E \left [ \left \| \frac{1}{\sqrt{n}} \sum_{i=1}^{n}  \check{f}(B_{i}) \right \|_{\calF} \right ] \\
&\quad\lesssim \overline{\sigma}\sqrt{ \sfV \log \big(\sqrt{\sfA}\| \check{F} \|_{\check Q,2}/\overline{\sigma}\big)} + \frac{\| \max_{1 \leq i \leq n} \check{F}(B_{i}) \|_{\bbP,2}}{\sqrt{n}} \sfV \log \big (\sqrt{\sfA}\| \check{F} \|_{\check Q,2}/\overline{\sigma} \big),
\end{split}
\]
up to a universal constant. 
\end{lemma}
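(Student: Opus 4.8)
The plan is to reduce the maximal inequality for the one-dependent block sequence $\{B_i\}$ to the standard maximal inequality for \emph{independent} processes indexed by a VC-type class, as stated in Corollary 5.1 (and its proof machinery, Corollary 6.1) of \cite{CCK2014aAoS}. The first step is to handle the one-dependence: split the index set $\{1,\dots,n\}$ into odd indices $I_{\mathrm{odd}}$ and even indices $I_{\mathrm{even}}$. By the one-dependence property recorded in the \textbf{Fact} of Section \ref{sec: setting} (together with stationarity), $\{B_i\}_{i \in I_{\mathrm{odd}}}$ is an i.i.d.\ sequence with common law $\check Q$, and likewise $\{B_i\}_{i \in I_{\mathrm{even}}}$. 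Writing $\sum_{i=1}^n \check f(B_i) = \sum_{i \in I_{\mathrm{odd}}} \check f(B_i) + \sum_{i \in I_{\mathrm{even}}} \check f(B_i)$ and applying the triangle inequality for $\|\cdot\|_\calF$ inside the expectation, it suffices to bound each of the two i.i.d.\ sums separately; each has at most $\lceil n/2 \rceil$ terms, which only affects universal constants.

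The second step applies the covering-number transfer from Lemma \ref{lem: covering number}(ii): since $\calF$ is VC type with characteristics $(\sfA,\sfV)$ and $\sfA \ge 4$, the induced block class $\check\calF = \{\check f : f \in \calF\}$ is VC type with characteristics $(4\sqrt{\sfA}, 2\sfV)$ relative to the envelope $\check F$, uniformly over finitely discrete distributions on $\check E$. The class $\check\calF$ is centered under $\check Q$ by \eqref{eq:block-mean-pi} (using $\pi f = 0$), and it is countable because $\calF$ is. We then invoke the i.i.d.\ maximal inequality (Corollary 5.1 of \cite{CCK2014aAoS}) applied to $\{B_i\}_{i\in I_{\mathrm{odd}}}$ with the function class $\check\calF$, envelope $\check F$, weak variance proxy $\overline\sigma$ satisfying $\sup_{f\in\calF}\|\check f\|_{\check Q,2} \le \overline\sigma \le \|\check F\|_{\check Q,2}$, and $\check Q(\check F^2)$-bound in place. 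This yields
\[
\E\!\left[\left\|\frac{1}{\sqrt{|I_{\mathrm{odd}}|}}\sum_{i\in I_{\mathrm{odd}}}\check f(B_i)\right\|_\calF\right]
\lesssim \overline\sigma\sqrt{\sfV\log\!\big(\sqrt{\sfA}\,\|\check F\|_{\check Q,2}/\overline\sigma\big)} + \frac{\big\|\max_{i\in I_{\mathrm{odd}}}\check F(B_i)\big\|_{\bbP,2}}{\sqrt{|I_{\mathrm{odd}}|}}\,\sfV\log\!\big(\sqrt{\sfA}\,\|\check F\|_{\check Q,2}/\overline\sigma\big),
\]
where the $2\sfV$ and $4\sqrt{\sfA}$ from Lemma \ref{lem: covering number}(ii) are absorbed into the universal constant and the logarithm (note $\log(4\sqrt{\sfA}\,c) \lesssim \log(\sqrt{\sfA}\,c)$ for $\sfA\ge 4$). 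The same bound holds for the even sum, and $\max_{i\in I_{\mathrm{odd}}}\check F(B_i) \le \max_{1\le i\le n}\check F(B_i)$.

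The third step is bookkeeping: combine the two halves, use $|I_{\mathrm{odd}}|, |I_{\mathrm{even}}| \asymp n$ to rescale $1/\sqrt{|I_{\mathrm{odd}}|}$ to $1/\sqrt{n}$ at the cost of a universal constant, and collect terms to arrive at the stated bound. The main obstacle I anticipate is purely administrative rather than conceptual: making sure the hypotheses of the cited i.i.d.\ maximal inequality in \cite{CCK2014aAoS} are met verbatim — in particular that the envelope moment condition there is phrased in terms of $\|\check F\|_{\check Q,2}$ and $\|\max_i \check F(B_i)\|_{\bbP,2}$ exactly as in the statement, and that centering $\check Q\check f = 0$ (not merely $\pi f = 0$) is what the cited result uses. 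One must also double-check that the passage from the $L^2(\check R)$ covering numbers of $\check\calF$ over \emph{all} finitely discrete $\check R$ (as delivered by Lemma \ref{lem: covering number}(ii)) is the object the cited maximal inequality consumes, which it is, since that inequality is stated for VC-type classes in the uniform-entropy sense. No genuinely hard estimate is needed beyond these citations and the odd/even decoupling.
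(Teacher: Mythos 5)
Your proposal is correct and follows essentially the same route as the paper: the paper also obtains this lemma by combining the covering-number transfer of Lemma \ref{lem: covering number} with the i.i.d.\ maximal inequality of \cite{CCK2014aAoS} (Corollary 5.1/6.1), applied separately to the odd and even blocks, which are independent by one-dependence. Your bookkeeping about the $(4\sqrt{\sfA},2\sfV)$ characteristics, the centering $\E[\check f(B_i)]=\beta\,\pi f=0$, and the rescaling $|I_{\mathrm{odd}}|\asymp n$ matches what the paper's (one-line) argument relies on.
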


Recall that the $\bbP_{\check \alpha}$-distribution of $(X_{m},\dots,X_{m(\tau_{\check{\alpha}}+1)-1})$ is denoted as $\check{Q}$. For any function $\check f$ on $\check E$ and stationary distribution $\check Q$ of random blocks $B_i$, we use $\| \check f \|_{\check Q, q}$ instead of $\| \check f(B_i) \|_{\check Q, q}$; the same goes for $\psi_1$ norm. We also make clear that the probability $\bbP$, not $\check Q$, is used for the norm expression of moments $\| \max_{1 \leq i \leq n} \check{F}(B_{i}) \|_{\bbP,2}$, as all the blocks are considered.

Choosing $\overline{\sigma} = \sup_{f \in \calF} \| \check{f} \|_{\check Q,2} \vee (\| \check{F} \|_{\check Q,2}/\sqrt{n})$ in Proposition \ref{lem: maximal inequality}, we also obtain the following corollary, which is more useful in our applications. 

\begin{corollary}[Simplified maximal inequality for blocks]
\label{cor: maximal inequality}
Under the setting of Proposition \ref{lem: maximal inequality}, we have 
\[
\E \left [ \left \| \frac{1}{\sqrt{n}} \sum_{i=1}^{n}  \check{f}(B_{i}) \right \|_{\calF} \right ]  \lesssim \overline{\sigma}\sqrt{\sfV \log \big(\sqrt{\sfA} \vee n \big)} + \frac{\| \max_{1 \leq i \leq n} \check{F}(B_{i}) \|_{\bbP,2}}{\sqrt{n}} \sfV \log \big(\sqrt{\sfA} \vee n\big),
\]
up to a universal constant. 
\end{corollary}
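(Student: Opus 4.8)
The plan is that this corollary is a direct specialization of Lemma~\ref{lem: maximal inequality}: one substitutes the prescribed value of $\overline\sigma$ and simplifies the logarithmic factors, so essentially no new argument is needed beyond bookkeeping. Write $s := \sup_{f \in \calF}\|\check f\|_{\check Q,2}$ and $\Phi := \|\check F\|_{\check Q,2}$, so that the prescribed choice is $\overline\sigma := s \vee (\Phi/\sqrt n)$. We may assume $0 < \Phi < \infty$: if $\Phi = \infty$ the claimed right-hand side is infinite and there is nothing to prove, while if $\Phi = 0$ then $\check F = 0$ and hence $\check f = 0$ a.s.\ for every $f \in \calF$, so both sides vanish.

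First I would verify that this $\overline\sigma$ is admissible for Lemma~\ref{lem: maximal inequality}, i.e.\ that $s \le \overline\sigma \le \Phi$. The bound $s \le \overline\sigma$ is immediate from the definition of the maximum. For $\overline\sigma \le \Phi$, observe that $|f| \le F$ pointwise on $E$ forces $|\check f(B)| \le \check F(B)$ for every block $B \in \check E$, hence $\|\check f\|_{\check Q,2} \le \Phi$ for each $f \in \calF$ and thus $s \le \Phi$; since also $\Phi/\sqrt n \le \Phi$ because $n \ge 1$, we conclude $\overline\sigma = s \vee (\Phi/\sqrt n) \le \Phi$.

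Next I would reduce the logarithmic argument $\sqrt{\sfA}\,\Phi/\overline\sigma$ appearing in Lemma~\ref{lem: maximal inequality} to $\sqrt{\sfA} \vee n$ up to a constant. Since $\overline\sigma \ge \Phi/\sqrt n$ we have $\Phi/\overline\sigma \le \sqrt n$; combined with $\sfA \ge 4$ (so $\sqrt{\sfA} \ge 2$) and $\Phi/\overline\sigma \ge 1$ (from the previous step), this gives $2 \le \sqrt{\sfA}\,\Phi/\overline\sigma \le \sqrt{\sfA}\,\sqrt n \le (\sqrt{\sfA} \vee \sqrt n)^2$, and taking logarithms (all the arguments exceed $1$) yields
\[
0 < \log\!\big(\sqrt{\sfA}\,\Phi/\overline\sigma\big) \le 2\log\!\big(\sqrt{\sfA} \vee \sqrt n\big) \le 2\log\!\big(\sqrt{\sfA} \vee n\big).
\]
Substituting this bound term by term into the conclusion of Lemma~\ref{lem: maximal inequality}, and absorbing the numerical factor (which is $\sqrt{2}$ in the first summand and $2$ in the second) into the $\lesssim$ symbol, produces exactly the asserted inequality.

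I do not anticipate any genuine obstacle: the analytic content is entirely carried by Lemma~\ref{lem: maximal inequality}, and the only points requiring a moment's care are the two elementary facts used above — that the envelope inequality $|f| \le F$ transfers to $|\check f| \le \check F$ (which is what makes the prescribed $\overline\sigma$ admissible), and that the standing hypothesis $\sfA \ge 4$ keeps the logarithmic arguments bounded below by $1$, which is precisely what licenses replacing $\log(\sqrt{\sfA}\,\Phi/\overline\sigma)$ by a constant multiple of $\log(\sqrt{\sfA} \vee n)$ without worrying about the behaviour of $\log$ near the origin.
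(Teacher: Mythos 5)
Your proposal is correct and matches the paper's own treatment: the paper obtains the corollary precisely by plugging $\overline\sigma = \sup_{f\in\calF}\|\check f\|_{\check Q,2}\vee(\|\check F\|_{\check Q,2}/\sqrt n)$ into Lemma \ref{lem: maximal inequality} and absorbing the resulting logarithmic factors, which is exactly your bookkeeping argument (admissibility of the choice plus $\log(\sqrt{\sfA}\,\|\check F\|_{\check Q,2}/\overline\sigma)\le 2\log(\sqrt{\sfA}\vee n)$).
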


\subsubsection{Concentration inequalities for block counts}
\label{sec: deviation inequality}

Define $n^{\sharp} := \left\lfloor{n/m-1}\right\rfloor$ and $i_{n} := \max\{ i \ge 0: \sigma_{\check{\alpha}} (i) \le n^{\sharp}\}$. For this section only, we set $\E_{\check \alpha}[\tau_{\check \alpha}]$ simply as $\beta$ (this is with abuse of notation, as $\beta$ is originally defined as $m\E_{\check \alpha}[\tau_{\check \alpha}]$).

\begin{lemma}[Concentration inequalities for block counts]
\label{lem : block concentration}
Suppose $\|\sigma_{\check \alpha}(0) \|_{\bbP, \psi_1} < \infty$ and $\| \tau_{\check \alpha} \|_{\bbPcheck, \psi_1} < \infty$ hold, then there exists a constant $c > 0$ such that
\begin{equation*}
    \begin{aligned}
            \bbP\Big( | i_n - \beta^{-1}n^\sharp | \geq t \sqrt{n^\sharp} \Big) &\leq 2\exp \bigg\{ -c \bigg( \frac{\beta(t \sqrt{n^\sharp} - 3)/2 - \E[\sigma_{\check \alpha}(0)]}{\| \tau_{\check \alpha} \|_{\bbPcheck, \psi_1}}\bigg)^2\bigg\}\\
            &+2 \exp \bigg\{ - c \min \bigg( \frac{\{ \beta (t \sqrt{n^\sharp} - 1) \}^2}{\lfloor \beta^{-1}n^\sharp + t \sqrt{n^\sharp}\rfloor \cdot \| \tau_{\check \alpha}\|_{\bbPcheck, \psi_1}^2}, \frac{\beta (t \sqrt{n^\sharp} - 1)}{\| \tau_{\check \alpha} \|_{\bbPcheck, \psi_1}} \bigg) \bigg\}
    \end{aligned}
\end{equation*}
for every $3/\sqrt{n^\sharp} < t \leq \beta^{-1}\sqrt{n^\sharp}$.
\end{lemma}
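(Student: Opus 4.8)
The plan is to relate the event $\{|i_n - \beta^{-1}n^\sharp| \geq t\sqrt{n^\sharp}\}$ to a sum of i.i.d.\ (under $\bbP_{\check\alpha}$) inter-regeneration times and apply a Bernstein-type tail bound for sums of sub-exponential random variables. Write $\sigma_{\check\alpha}(0)$ for the first hitting time of the atom and, for $j \geq 1$, let $T_j := \sigma_{\check\alpha}(j) - \sigma_{\check\alpha}(j-1)$ be the $j$-th inter-regeneration time; under $\bbP$ the $T_j$ are i.i.d.\ copies of $\tau_{\check\alpha}$ (independent of the initial segment $\sigma_{\check\alpha}(0)$), each with $\|\tau_{\check\alpha}\|_{\bbPcheck,\psi_1} < \infty$. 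The key renewal-theoretic identity is the duality between counts and partial sums: for any integer $k \geq 0$,
\[
\{i_n \geq k\} = \{\sigma_{\check\alpha}(k) \leq n^\sharp\} = \Big\{\sigma_{\check\alpha}(0) + \sum_{j=1}^{k} T_j \leq n^\sharp\Big\},
\]
and similarly $\{i_n \leq k\} = \{\sigma_{\check\alpha}(k+1) > n^\sharp\}$ (up to the edge behavior at $k=0$, which is handled by the $-1$ and $-3$ shifts appearing in the statement).

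The main steps are then: (i) \textbf{Upper deviation.} On $\{i_n \geq \beta^{-1}n^\sharp + t\sqrt{n^\sharp}\}$, setting $k := \lceil \beta^{-1}n^\sharp + t\sqrt{n^\sharp}\rceil$, we get $\sum_{j=1}^{k} T_j \leq n^\sharp - \sigma_{\check\alpha}(0) \leq n^\sharp$, so $\sum_{j=1}^k (T_j - \beta) \leq n^\sharp - k\beta \leq -\beta(t\sqrt{n^\sharp} - 1)$ (here $\beta = \E_{\check\alpha}[\tau_{\check\alpha}]$ by the section's convention). This is a lower-deviation event for a sum of $k$ centered i.i.d.\ sub-exponential variables; applying the standard Bernstein inequality for $\psi_1$ sums (e.g.\ via the sub-exponential tail of $\sum(T_j - \beta)$, noting $k \leq \lfloor \beta^{-1}n^\sharp + t\sqrt{n^\sharp}\rfloor$ after absorbing the ceiling) yields exactly the second exponential term, with the $\min$ of a Gaussian regime and a linear regime in the exponent. (ii) \textbf{Lower deviation.} On $\{i_n \leq \beta^{-1}n^\sharp - t\sqrt{n^\sharp}\}$, setting $k := \lfloor \beta^{-1}n^\sharp - t\sqrt{n^\sharp}\rfloor + 1$, the block $\sigma_{\check\alpha}(k)$ overshoots: $\sigma_{\check\alpha}(0) + \sum_{j=1}^k T_j > n^\sharp$, hence $\sum_{j=1}^k(T_j - \beta) > n^\sharp - \sigma_{\check\alpha}(0) - k\beta \geq \beta(t\sqrt{n^\sharp} - 1)/... $ — more carefully, one splits off $\sigma_{\check\alpha}(0)$ and uses that either $\sigma_{\check\alpha}(0)$ is atypically large (a $\psi_1$ tail bound on $\sigma_{\check\alpha}(0)$, giving the first exponential term, where the centering $\E[\sigma_{\check\alpha}(0)]$ and the factor $1/2$ and shift $-3$ come from splitting the deviation budget $t\sqrt{n^\sharp}$ between the two sources of fluctuation) or the centered sum $\sum_{j=1}^k(T_j-\beta)$ is atypically large, which is an upper-deviation sub-exponential bound contributing again to the second term. (iii) Combine the two directions by a union bound and collect constants; the constraint $3/\sqrt{n^\sharp} < t \leq \beta^{-1}\sqrt{n^\sharp}$ ensures the shifted quantities $t\sqrt{n^\sharp}-3$, $t\sqrt{n^\sharp}-1$ are positive and that $k$ stays in a controlled range so the Bernstein variance proxy $k\|\tau_{\check\alpha}\|_{\bbPcheck,\psi_1}^2 \asymp (\beta^{-1}n^\sharp + t\sqrt{n^\sharp})\|\tau_{\check\alpha}\|_{\bbPcheck,\psi_1}^2$ is valid.

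I expect the main obstacle to be bookkeeping rather than conceptual: precisely tracking the floor/ceiling operations and the allocation of the deviation budget $t\sqrt{n^\sharp}$ between $\sigma_{\check\alpha}(0)$ and the i.i.d.\ sum so that the constants $3$, $1$, the factor $1/2$, and the centering $\E[\sigma_{\check\alpha}(0)]$ land exactly as stated, while keeping the Bernstein variance parameter $\lfloor \beta^{-1}n^\sharp + t\sqrt{n^\sharp}\rfloor$ in the denominator of the Gaussian term. The only genuinely substantive ingredient is a Bernstein/B018sub-exponential concentration inequality for i.i.d.\ sums with finite $\psi_1$ norm, which is standard; the independence of the $T_j$ and their independence from $\sigma_{\check\alpha}(0)$ is exactly the regenerative structure recorded in the \textbf{Fact} of Section \ref{sec: setting}, so no new probabilistic input is needed beyond the renewal duality identity.
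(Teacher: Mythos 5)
Your proposal follows essentially the same route as the paper's proof: the renewal duality $\{i_n \geq k\} = \{\sigma_{\check\alpha}(k) \leq n^\sharp\}$, dropping $\sigma_{\check\alpha}(0)$ for the upper tail, splitting the deviation budget between $\sigma_{\check\alpha}(0)$ and the centered i.i.d.\ sum $\sum_i(\Delta\sigma_{\check\alpha}(i)-\beta)$ for the lower tail, and Bernstein's inequality for $\psi_1$ variables in each case. The floor/ceiling bookkeeping you flag is exactly what produces the paper's shifts $-1$ and $-3$, so the plan is correct as stated.
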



\begin{proof}[Proof of Lemma \ref{lem : block concentration}]
The proof is similar to the proof of Lemma 3.1 in \cite{bertail2018new}. 
Set $\Delta \sigma_{\check \alpha}(i) = \sigma_{\check \alpha}(i) - \sigma_{\check \alpha}(i-1)$ for $i=0,1,\dots$ with the convention of $\sigma_{\check \alpha}(-1)=0$.

\underline{Upper tail}. For any $k \leq n^{\sharp}$,
\begin{equation*}
    \bbP(i_{n}  \geq k) \leq \bbP (\sigma_{\check \alpha}(k)\le n^{\sharp}) \leq \bbP \bigg(\sum_{i=1}^{k}\Delta\sigma_{\check \alpha}(i)\le n^{\sharp} \bigg) =\bbP \bigg(\sum_{i=1}^{k}(\Delta\sigma_{\check \alpha}(i)
    -{\beta})\le n^{\sharp}-k {\beta} \bigg).
\end{equation*}
If $t>\sqrt{n^{\sharp}}(1-{\beta}^{-1})$, then $\bbP \big(i_{n} \ge {\beta}^{-1} n^{\sharp}+t\sqrt{n^{\sharp}}\big) =0$, 
while if $0 \leq t \leq \sqrt{n^{\sharp}}(1-{\beta}^{-1})$, then
\begin{equation*}
    \begin{split}
        &\bbP \Big(i_{n} \ge {\beta}^{-1} n^{\sharp}+t\sqrt{n}\Big) 
        \leq \bbP \Big(i_{n}\geq \lfloor {\beta}^{-1} n^{\sharp}+t\sqrt{n^{\sharp}} \rfloor \Big) \\
        &\leq \bbP \Bigg( \sum_{i=1}^{\lfloor {\beta}^{-1} n^{\sharp}+t\sqrt{n^{\sharp}}  \rfloor} (\Delta \sigma_{\check \alpha} (i) - {\beta}) \le n^{\sharp} - \lfloor {\beta}^{-1} n^{\sharp}+t\sqrt{n^{\sharp}} \rfloor {\beta} \Bigg). 
    \end{split}
\end{equation*}
Since
\begin{equation*}
    n^{\sharp} - \lfloor {\beta}^{-1} n^{\sharp}+t\sqrt{n^{\sharp}} \rfloor {\beta} 
    \leq n^{\sharp} - \big( {\beta}^{-1} n^{\sharp}+t\sqrt{n^{\sharp}} -1\big) {\beta} = - {\beta}(t \sqrt{n^{\sharp}} -1),
\end{equation*}
we have that
\begin{equation*}
    \bbP \Big(i_{n} \ge {\beta}^{-1} n^{\sharp}+t\sqrt{n}\Big) \leq \bbP \Bigg( -\sum_{i=1}^{\lfloor {\beta}^{-1} n^{\sharp}+t\sqrt{n^{\sharp}} \rfloor} \big(\Delta \sigma_{\check \alpha} (i) - {\beta} \big) \geq  {\beta}\big(t \sqrt{n^{\sharp}} - 1 \big) \Bigg).
\end{equation*}
Observe that $\Delta \sigma_{\check \alpha} (i), i=1,2,\dots$ are i.i.d. random variables, whose common distribution is the same as the $\bbP_{\check{\alpha}}$-distribution of $\tau_{\check{\alpha}}$. 


Under the condition $\| \tau_{\check \alpha} \|_{\bbPcheck, \psi_1} < \infty$, we have $\| \Delta \sigma_{\check \alpha}(i) - \beta \|_{\bbPcheck, \psi_1} \leq C\| \tau_{\check \alpha} \|_{\bbPcheck, \psi_1}< \infty$, so we may apply the Bernstein inequality; there exists some constant $c > 0$ such that
\begin{equation*}
    \bbP\Big( i_n \geq \beta^{-1}n^\sharp + t \sqrt{n} \Big) \leq 2 \exp \bigg\{ - c \min \bigg( \frac{\{ \beta (t \sqrt{n^\sharp} - 1) \}^2}{\lfloor \beta^{-1}n^\sharp + t \sqrt{n^\sharp}\rfloor \cdot \| \tau_{\check \alpha}\|_{\bbPcheck, \psi_1}^2}, \frac{\beta (t \sqrt{n^\sharp} - 1)}{\| \tau_{\check \alpha} \|_{\bbPcheck, \psi_1}} \bigg) \bigg\}.
\end{equation*}

\underline{Lower tail}. For any $k \le n^{\sharp}$, we have 
\begin{equation*}
    \begin{split}
    &\bbP\big(i_{n} \le k\big) =\bbP\big(\sigma_{\check \alpha}(k+1)> n^{\sharp}\big)=\bbP\bigg(\sum_{i=0}^{k+1}\Delta\sigma_{\check \alpha} (i) > n^{\sharp} \bigg) \\
    &=\Pr \Bigg(\sigma_{\check \alpha}(0) +\sum_{i=1}^{k+1}(\Delta\sigma_{\check \alpha} (i)-{\beta})> n^{\sharp}-(k+1){\beta} \Bigg).
    \end{split}
\end{equation*}
Hence, we observe
\begin{equation*}
    \begin{split}
    &\bbP \Big(i_{n} \leq {\beta}^{-1} n^{\sharp}-t\sqrt{n^{\sharp}}\Big)  
    \leq \bbP \Big(i_{n} \leq  \lfloor {\beta}^{-1} n^{\sharp}-t\sqrt{n^{\sharp}}  \rfloor + 1 \Big) \\
    &\leq \bbP \Bigg(\sigma_{\check \alpha} (0)+\sum_{i=1}^{\lfloor {\beta}^{-1} n^{\sharp}-t\sqrt{n^{\sharp}} \rfloor + 2}(\Delta\sigma_{\check \alpha} (i)-{\beta})> n^{\sharp}-\Big(\lfloor {\beta}^{-1} n^{\sharp}-t\sqrt{n^{\sharp}}  \rfloor + 2 \Big) \beta \Bigg).
\end{split}
\end{equation*}
Since 
\begin{equation*}
    n^{\sharp}-\Big(\lfloor {\beta}^{-1} n^{\sharp}-t\sqrt{n^{\sharp}} \rfloor + 2 \Big){\beta} \ge {\beta} \big(t\sqrt{n^{\sharp}} - 3\big),
\end{equation*}
we have that whenever $3/\sqrt{n^{\sharp}}< t \le {\beta}^{-1} \sqrt{n^{\sharp}}$, the following inequality holds
\begin{equation*}
    \begin{split}
    &\bbP \Big(i_{n} \leq {\beta}^{-1} n^{\sharp}-t\sqrt{n^{\sharp}} \Big) \leq \bbP \Bigg( \sigma_{\check \alpha} (0) +\sum_{i=1}^{\lfloor {\beta}^{-1} n^{\sharp}-t\sqrt{n^{\sharp}} \rfloor + 2}\big(\Delta\sigma_{\check \alpha} (i)-{\beta}\big) > {\beta} \big(t\sqrt{n^{\sharp}} - 3\big) \Bigg) \\
    &\leq \bbP \Big(\sigma_{\check \alpha} (0) > {\beta} (t\sqrt{n^{\sharp}} - 3)/2  \Big)  + \Pr \Bigg(\sum_{i=1}^{\lfloor {\beta}^{-1} n^{\sharp}-t\sqrt{n^{\sharp}} \rfloor + 2}\big(\Delta\sigma_{\check \alpha} (i)-{\beta}\big) > {\beta} \big(t\sqrt{n^{\sharp}} - 3\big)/2 \Bigg).
\end{split}
\end{equation*}


Under the conditions $\| \sigma_{\check{\alpha}}(0)\|_{\bbP, \psi_1} < \infty$ and $\| \tau_{\check \alpha} \|_{\bbPcheck, \psi_1} < \infty$, apply the Bernstein inequality. Then there exists universal constant $c > 0$ such that
\begin{equation*}
    \begin{aligned}
        \bbP\Big( i_n \leq \beta^{-1}n^\sharp - t \sqrt{n^\sharp} \Big) &\leq 2\exp \bigg\{ -c \bigg( \frac{\beta(t \sqrt{n^\sharp} - 3)/2 - \E[\sigma_{\check \alpha}(0)]}{\| \tau_{\check \alpha} \|_{\bbPcheck, \psi_1}}\bigg)^2\bigg\}\\
        &+2 \exp \bigg\{ - c \min \bigg( \frac{\{ \beta (t \sqrt{n^\sharp} - 3) \}^2}{(\lfloor \beta^{-1}n^\sharp - t \sqrt{n^\sharp}\rfloor + 2) \cdot \| \tau_{\check \alpha}\|_{\bbPcheck, \psi_1}^2}, \frac{\beta (t \sqrt{n^\sharp} - 3)}{\| \tau_{\check \alpha} \|_{\bbPcheck, \psi_1}} \bigg) \bigg\}.
    \end{aligned}
\end{equation*}
\end{proof}

\section{Proofs for Section \ref{sec:gaussian-approximation}}

\begin{proof}[Proof of Theorem \ref{thm:Gaussian-approximation}]

The proof is divided into several steps. In what follows, $c, c'$ and $C$ denote generic positive constants that depends only on $m, \beta, \underline{\sigma}, \underline c, \|\sigma_{\check \alpha}\|_{\bbP, \psi_1}, \|\tau_{\check \alpha}\|_{\bbPcheck, \psi_1}$; its value may change from place to place. Recall $\beta = m \E_{\check \alpha}[\tau_{\check \alpha}]$ and $f^\circ = f - \pi f$.


\medskip


\underline{Step 1} (Construction of a tight version of $G$).  Let $\{ W(\ccf) \}_{\ccf \in \check{\calF}^\circ}$ with $\check{\calF}^\circ = \{ \ccf : f \in \calF \}$ be a Gaussian process with mean zero and covariance function 
\begin{equation}\label{eq:W-cov}
\E\big[W(\ccf)W(\ccg)\big] = \E\big[\ccf(B_{1}) \ccg(B_{1})\big] + \E\big[\ccf(B_{1}) \ccg(B_{2})\big] + \E\big[\ccf(B_{2}) \ccg(B_{1})\big].
\end{equation}
Recall that (\ref{eq:block-mean-pi}) yields $\E[\ccf(B_i)]=0$ as $\E[\ell(B_i)] = m \E_{\check \alpha}[\tau_{\check \alpha}]$. So the above display is a scaled version of (\ref{eq:covariance}). The covariance function of the Gaussian process $W(\ccf)$ yields
\begin{equation*}
    \E\big[(W(\ccf) - W(\ccg))^2\big] = \|\ccf - \ccg\|_{\check Q, 2}^2 + 2\E\big[ (\ccf(B_1) - \ccg(B_1))(\ccf(B_2) - \ccg(B_2)) \big].
\end{equation*}
Note that the above display is the sum of covariances, as $\E[\ccf (B_1)] = \E[\brf(B_1)] - m\E[\ell(B_1)]$
By the Cauchy-Schwarz inequality and the fact that $B_1 \stackrel{d}{=} B_2$, we have the inequality
\begin{equation}
    \begin{aligned}
        \E\big[ ( W(\ccf) - W(\ccg))^2 \big] \leq 3\| \ccf  - \ccg\|_{\check{Q}, 2}^2.
    \end{aligned}
    \label{eq:d-continuous}
\end{equation}

Observe that the set of constant functions $\{ f : x \mapsto \pi f: f \in \calF \}$ is VC-type 
~(Lemma 2.6.15 of \cite{VW1996}); so the translated function class $\calF^\circ := \{ f - \pi f: f \in \calF \}$ is also VC-type~(Lemma 2.6.18 of \cite{VW1996}).
Without loss of generality and with abuse of notation\footnote{Recall that the original function class $\calF$ was VC-type with parameters $(\sfA_n, \sfV_n)$.}, we set the VC-type parameters of $\calF^\circ$ as $(\sfA_n, \sfV_n)$. Then by Lemma \ref{lem: covering number}, we observe $\check \calF^\circ$ is VC-type with parameters $(4\sqrt{\sfA_n}, 2\sfV_n)$. Additionally, Lemma \ref{lem: covering number} and Problem 2.5.1 of \cite{VW1996} yields
\begin{equation}
\log N(\check{\calF}^\circ,\| \cdot \|_{\check{Q},2}, \eta \| \check{F}^\circ\|_{\check Q, 2}) \le   C \sfV_n \log (\sfA_n/\eta), \quad 0 < \eta < 1,
\label{eq:entropy}
\end{equation}
where $\check F^\circ$ is the envelope of the class $\check \calF^\circ$, i.e. $\check F^\circ := \check F(\cdot) + \ell(\cdot) \pi F$.
Hence, Dudley's criterion on sample continuity of Gaussian processes~\cite{Dudley2014UCLT} yields that there exists a version of $\{ W(\ccf) \}_{\ccf \in \check{F}^\circ}$ that is tight in $\ell^\infty (\check{\calF}^\circ)$. The desired version for the process $G$ can be constructed by setting $G(f) = \beta^{-1/2}W(\check{f}^\circ)$ for $f \in \calF$. 

\underline{Step 2} (Decomposition). Observe the decomposition:
\begin{equation}
\sum_{t=0}^{n-1} f^\circ(X_{t}) = \sum_{i=1}^{i_{n}} \ccf(B_{i}) + \sum_{t=0}^{\{ m(\sigma_{\check{\alpha}}(0)+1)-1 \} \wedge (n-1)} f^\circ(X_{t})  + \sum_{t=m(\sigma_{\check{\alpha}}(i_{n})+1)}^{n-1} f^\circ(X_{t}).
\label{pre decomposition}
\end{equation}

Set $n^{*} := \beta^{-1}n^{\sharp}$ and $\beta_n = n/n^*$. Further observe that $|\beta_n - \beta| \le C/n$ and $|\beta_n^{-1}-\beta^{-1}| \le C/n$, while $\beta_n = \beta$ when $m = 1$.  The first term on the right-hand side of (\ref{pre decomposition}) can be further decomposed as
\[
\sum_{i=1}^{i_{n}} \ccf(B_{i}) = \sum_{i=1}^{n^{*}} \check{f}(B_{i}) + \big ( \mathbf 1 (i_n > n^*) - \mathbf 1 (i_n < n^*) \big) \times \sum_{i=i_{n} \wedge n^{*}}^{i_{n} \vee n^{*}} \ccf(B_{i}).
\]
Hence, the empirical process can be expressed as the sum of four parts,
\begin{equation}\label{eq:sum-decomp}
    \begin{aligned}
    \G_{n}(f) &=\sqrt{\frac{n^{*}}{n}} \frac{1}{\sqrt{n^{*}}} \sum_{i=1}^{n^{*}}\ccf(B_{i})  + \big ( \mathbf 1 (i_n > n^*) - \mathbf 1 (i_n < n^*) \big) \cdot \frac{1}{\sqrt{n}}\sum_{i=i_{n} \wedge n^{*}}^{i_{n} \vee n^{*}} \ccf(B_{i})  \\
    &\quad + \frac{1}{\sqrt{n}} \sum_{t=0}^{\{ m(\sigma_{\check{\alpha}}(0)+1)-1 \} \wedge {(n-1)}} f^\circ(X_{t})   + \frac{1}{\sqrt{n}} \sum_{t=m(\sigma_{\check{\alpha}}(i_{n})+1)}^{n-1}  f^\circ(X_{t})  \\
    &=:\beta_n^{-1/2}\G_{n}^{(1)} (\ccf) + \big ( \mathbf 1 (i_n > n^*) - \mathbf 1 (i_n > n^*) \big) \cdot \G_{n}^{(2)}(\ccf) + \G_{n}^{(3)}(f^\circ) + \G_{n}^{(4)}(f^\circ).    
    \end{aligned}
\end{equation}
Conclude that, for any $x \in \R$,
\begin{equation}
    \begin{split}
        \bbP \big( \bbG_{n,\calF}^{\vee} \le x\big) &\le \bbP \big( \beta_n^{-1/2} \bbG_{n,\check \calF^\circ}^{(1),\vee} \le x + 3\delta_n \big) + \bbP\big( \| \bbG_{n}^{(2)} \|_{\check \calF^\circ} > \delta_n \big) \\
        &\quad + \bbP\big( \| \bbG_{n}^{(3)} \|_{\calF^\circ} > \delta_n \big) + \bbP\big( \| \bbG_{n}^{(4)} \|_{\calF^\circ} > \delta_n \big). 
    \end{split}
\label{eqn : basic decomp}
\end{equation}
By Condition (A3) and inequality $\pi F \leq C \| \check F\|_{\check Q, \psi_1}$ from (\ref{eq:block-mean-pi}) yield $\E\big [ \| \bbG_{n}^{(3)} \|_{\calF^\circ} \big] \le n^{-1/2}D_n$, so that by Markov's inequality,
\begin{equation*}
    \bbP\big( \| \bbG_{n}^{(3)} \|_{\calF^\circ} > \delta_n \big) \leq \delta_n^{-1} n^{-1/2}D_n.
\end{equation*}
In addition, as $i_n \leq n$ by construction, we have
\begin{equation*}
    \sum_{t=m(\sigma_{\check{\alpha}}(i_{n})+1)}^{n-1}  f^\circ(X_{t}) \leq \check F(B_{i_n+1}) + \ell (B_{i_n + 1})\pi F \leq \max_{1 \leq i \leq n+1} \{\check F(B_i) + \ell(B_{i_n + 1})\pi F\},
\end{equation*}
so that $\E\big [ \| \bbG_{n}^{(4)} \|_{\calF^\circ} \big] \leq C n^{-1/2} D_n\log n$. Again, by Markov's inequality, 
\begin{equation*}
    \bbP\big( \| \bbG_{n}^{(4)} \|_{\calF^\circ} > \delta_n \big) \leq C \delta_n^{-1} n^{-1/2}D_n \log n.
\end{equation*}

In what follows, we will compare $ \bbP \big( \beta_n^{-1/2} \bbG_{n,\check \calF^\circ}^{(1),\vee} \leq x + 3\delta_n \big)$
 with $\bbP(G_{\calF}^{\vee} \leq x)$ in Step 3 and upper bound $\bbP(\| \bbG_{n}^{(2)} \|_{\check \calF^\circ} > \delta_n)$ in Step 4.

\underline{Step 3} (Discretization error). We first discretize the function class $\check \calF^\circ$. Let 
\begin{equation}\label{eq:disc-class}
    \check \calF_N^\circ := \{ \ccf_1,\dots, \ccf_N \}  \quad \text{and} \quad \check\calF_{\varepsilon}^\circ := \{ \ccf - \ccg : \| \ccf - \ccg \|_{\check{Q},2} < 2\varepsilon\| \check F^\circ\|_{\check Q,2} \} 
\end{equation}
where the first is the $(\varepsilon \| \check F^\circ\|_{\check Q,2})$-net of $\check\calF^\circ = \{\ccf : f \in \calF \}$ with respect to the norm $\| \cdot \|_{\check Q,2}$, with $\varepsilon = \varepsilon_n = n^{-1/2}$ and $N=N_n = N(\check{\calF}^\circ, \| \cdot \|_{\check{Q},2},\varepsilon \| \check F^\circ\|_{\check Q,2})$, i.e.,  for every $\ccf \in  \check \calF^\circ$ there exists $j=1,\dots,N$ such that $\| \ccf - \ccf_{j} \|_{\check{Q},2} \le \varepsilon \| \check F^\circ\|_{\check Q,2}$. The second term of (\ref{eq:disc-class}) encodes the discretization error. Then
\begin{equation*}
    \Big|  \G_{n,\check\calF^\circ}^{(1),\vee}  - \bbG_{n, \check \calF^\circ_N}^{(1), \vee} \Big| \le \|\bbG_{n}^{(1)}\|_{\check \calF^\circ_\varepsilon} \quad  \text{and} \quad \Big| W_{\check{\calF}^\circ}^{\vee}  - W_{\check \calF^\circ_N}^{\vee} \Big| \leq \| W\|_{ \check{\calF}^\circ_{\varepsilon}}.
\end{equation*}
where we define\footnote{$\| \bbG_n^{(1)}\|_{\check \calF^\circ_\varepsilon}$ is well defined due to the linearity of averages. However, $\|W \|_{\check \calF^\circ_\varepsilon}$ is with abuse of notation for simplicity.}
\begin{equation}\label{eq:gauss-disc-err}
    \|W\|_{\check \calF^\circ_{\varepsilon}} := \sup_{\ccf, \ccg: \|\ccf - \ccg\|_{\check Q, 2} < 2\varepsilon\|\check F^\circ\|_{\check Q, 2} } | W(\ccf) - W(\ccg) |.
\end{equation}
Notably $\{ W(\ccf) - W(\ccg)\}_{ (\ccf, \ccg): \| \ccf - \ccg\|_{\check Q, 2} < 2\varepsilon\|\check F^\circ \|_{\check Q, 2}} $ is a Gaussian process, as it is a linear transformation of the original Gaussian process $W$. 

Observe
\[
 \bbP \big( \beta_n^{-1/2} \bbG_{n,\check \calF^\circ}^{(1),\vee} \le x + 3\delta_n \big) \le \bbP \big ( \beta_n^{-1/2} \bbG_{n, \check \calF^\circ_N}^{(1), \vee} \le x + 4 \delta_n \big) + \bbP \big(  \|\bbG_{n}^{(1)}\|_{\check \calF^\circ_\varepsilon} > \beta_n^{1/2} \delta_n).
\]
Let $\tilde{W} =\big (\tilde W_j\big)_{1 \le j \le N}$ be a centered Gaussian vector with the same covariance matrix as $\big (\bbG_n^{(1)}(\ccf_j)\big)_{1 \le j \le N}$, i.e., 
\[
\E[\tilde{W}_j \tilde{W}_k] = \E[\ccf_j(B_1) \ccf_k(B_1)] + \frac{n^* - 1}{n^*}\E[\ccf_j(B_1) \ccf_k(B_2)] + \frac{n^* - 1}{n^*}\E[\ccf_j(B_2) \ccf_k(B_1)].
\]
Now under the variance lower bound condition in (A3) of Assumption \ref{assump : Gauss approx}, apply the high-dimensional central limit theorems for independent data when $m=1$, Theorem 2.1 in \cite{chernozhuokov2022improved}, and one-dependent data when $m > 1$, Corollary 1 in \cite{ChangChen2024}. As a result, for $\tilde{W}^\vee = \max_{1 \le j \le N}\tilde{W}_j$, we observe
\begin{equation*}
    \bbP \big ( \beta_n^{-1/2} \bbG_{n, \check \calF^\circ_N}^{(1), \vee} \le x + 4 \delta_n \big) \le \bbP \big( \beta_n^{-1/2} \tilde{W}^{\vee} \le x + 4\delta_n\big) + C\eta_{n}.
\end{equation*}

We shall compare $\beta_n^{-1/2} \tilde{W}^{\vee}$ with $\beta^{-1/2}W^{\vee}_{\check \calF^\circ_N} = \beta^{-1/2}\max_{1 \le j \le N}W(\ccf_j)$. From Proposition 2.1 in \cite{chernozhuokov2022improved}, it is sufficient to bound entry-wise maximum discrepency between the covariance matrices of $\big( W(\ccf_j) \big)_{j \in [N]}$ and $\big( \tilde W_j \big)_{j \in [N]}$ respectivley, which can be bounded as follows,
\begin{equation*}
    \max_{1 \le j,k \le N} \left |\beta_n^{-1}\E[\tilde{W}_j\tilde{W}_k] - \beta^{-1}\E[W(\ccf_j)W(\ccf_k)] \right| \le Cn^{-1}\E[\check F(B_1)^2] \le Cn^{-1}D_n^2.
\end{equation*}
Hence, by Proposition 2.1 in \cite{chernozhuokov2022improved} combined with the Nazarov inequality (Lemma \ref{lem: Nazarov}), one has
\[
\begin{split}
\bbP \big( \beta_n^{-1/2} \tilde{W}^{\vee} \le x + 4\delta_n\big) &\le \bbP\big( \beta^{-1/2}W^{\vee}_{\check \calF^\circ_N} \le x+4\delta_n\big) + Cn^{-1/2}D_n \log N \\
&\le \bbP\big( \beta^{-1/2}W^{\vee}_{\check \calF^\circ_N} \le x - \delta_n \big) + C \delta_n \sqrt{\log N} +  Cn^{-1/2}D_n \log N \\
&\le \bbP\big( \beta^{-1/2}W^{\vee}_{\check \calF^\circ} \le x \big) + \bbP\big( \| W \|_{\check \calF^\circ_{\varepsilon}} > \beta^{1/2}\delta_n \big) \\
&\qquad + C \delta_n \sqrt{K_n} +  Cn^{-1/2}D_n K_n, 
\end{split}
\]
where we used the fact that $\log N \le C K_n$.

It remains to upper bound the remainders for discretizing $\bbP \big(  \|\bbG_{n}^{(1)}\|_{\check \calF^\circ_\varepsilon} > \beta_n^{1/2} \delta_n)$ and $\bbP\big( \| W \|_{\check \calF^\circ_{\varepsilon}} > \beta^{1/2}\delta_n \big)$.

\newcommand{\calT}{\mathcal T}

The term $\bbP(\| W\|_{\check \calF^\circ_\varepsilon} > \beta^{1/2}\delta_n)$ is controlled using Markov inequality. The moment $\E[\|W\|_{\check \calF_\varepsilon^\circ}]$ is bounded by Dudley's entropy integral bound \cite{Dudley2014UCLT} and it is sufficient to bound the covering number of the function class $\{ (\check f^\circ, \check g^\circ): \| \check f^\circ - \check g^\circ \|_{\check Q, 2} < 2 \varepsilon \| \check F^\circ \|_{\check Q, 2} \}$ with respect to the $L_2$ norm of the Gaussian process $\{ W(\check f^\circ) - W(\check g^\circ)\}_{(\check f^\circ, \check g^\circ):\| \check f^\circ - \check g^\circ \|_{\check Q, 2} < 2\varepsilon \| \check F^\circ\|_{\check Q, 2}}$.

Note that (\ref{eq:d-continuous}) implies $\E\big[ ( W(\check f^\circ) - W(\check g^\circ) )^2 \big] \leq C' \varepsilon^2 D_n^2$ for any $\check f^\circ, \check g^\circ$ such that $\| \check f^\circ - \check g^\circ \|_{\check Q, 2} < 2 \varepsilon \| \check F^\circ \|_{\check Q, 2}$ and also implies that for any $f_1, f_2, g_1, g_2 \in\calF$, the inequality holds
\begin{equation*}
        \E\Big[\big\{\big(W(\check f^\circ_1) - W(\check g^\circ_1)\big) - \big(W(\check f^\circ_2) - W(\check g^\circ_2)\big)\big\}^2\Big] \leq 6 \big( \| \check f_1^\circ - \check f_2^\circ \|_{\check, Q, 2}^2 + \| \check g_1^\circ - \check g_2^\circ \|_{\check Q, 2}^2 \big);
\end{equation*}
this inequality allows us to bound the covering number of interest by the covering number of $\check \calF^\circ$ with respect to the $\|\cdot\|_{\check Q, 2}$ norm. Using (\ref{eq:entropy}), Dudley's entropy integral bound for Gaussian process~\cite{Dudley2014UCLT} yields
\begin{equation*}
    \begin{aligned}
        \E\big[ \|W\|_{\check{\calF}^\circ_\varepsilon} \big] &\leq C \int_0^{C' \varepsilon D_n} \sqrt{\sfV_n \log \left( \frac{\sfA_n D_n}{\eta} \right)}  d\eta \\
        &\leq C n^{-1/2}D_n \sqrt{\sfV_n \log (D_n \sfA_n)}   \leq C' n^{-1/2} D_n  K_n.
    \end{aligned}
\end{equation*}
So for some constant $C > 0$, we have
\begin{equation}
   \bbP\big(  \|W\|_{\check{\calF}^\circ_\varepsilon} > \delta_n \big) \leq C \delta_n^{-1} D_n n^{-1/2} K_n.
   \label{eq:gaussian-Markov}
\end{equation}

The term $\bbP( \| \bbG_n^{(1)} \|_{\check \calF^\circ_\varepsilon} > \beta_n^{1/2}\delta_n )$ can be further bounded by $\bbP( \|\bbG_n^{(1)} \|_{\check \calF^\circ_\varepsilon} > C'\delta_n )$ by using $| \beta_n/\beta - 1 |\leq c/n$. When blocks are one dependent,
\begin{equation}
    \begin{aligned}
    &\bbP\big( \|\bbG_{n}^{(1)}\|_{ \check\calF^\circ_\varepsilon}> C'\delta_n \big)\\
    \leq& \bbP\Bigg( \bigg\| \frac{1}{\sqrt{n^*}}\sum_{i \in [n^*]: \textrm{odd}}\check h^\circ(B_i)  \bigg\|_{\check{\calF}^\circ_\varepsilon} > \delta_n/2 \Bigg) + \bbP\Bigg(\bigg\| \frac{1}{\sqrt{n^*}}\sum_{i \in [n^*]: \textrm{even}}\check h^\circ (B_i) \bigg\|_{\check{\calF}^\circ_\varepsilon}> \delta_n/2 \Bigg)\\
    \leq&2\bbP\Bigg( \bigg\| \sum_{i \in [n^*]: \textrm{odd}}\check h^\circ(B_i)  \bigg\|_{\check{\calF}^\circ_\varepsilon} >\sqrt{n^*} \delta_n/2 \Bigg),
    \end{aligned}
    \label{gauss approx first concentration}
\end{equation}
where the last inequality implicitly assumes $|\{i \in [n^*]:\mathrm{odd}\}| \geq |\{i \in [n^*]:\mathrm{even}\}| $, which is without loss of generality. So bounding $\bbP (\| \bbG_n^{(1)}\|_{\check \calF^\circ_\varepsilon} > \delta_n)$ for both $m = 1$ and $m > 1$ are equivalent up a constant $C$.

We employ the Fuk-Nagaev type inequality~(Theorem \ref{thm : talagrand}) 
to bound the terms in (\ref{gauss approx first concentration}).
 The variance term satisfies $\sup_{\check h^\circ\in \check{\calF}^\circ_\varepsilon}\E\big[  \big(\check h^\circ(B_i)\big)^2 \big] \leq CD_n^2/n.$
 Define $Z = \| \sum_{i \in [n^*]: \textrm{odd}}\check h^\circ(B_i) \|_{\check{\calF}^\circ_\varepsilon},$ and recall that the summands $\check h^\circ(B_i)$ are centered. The inclusion $\check \calF^\circ_\varepsilon \subseteq \check \calF^\circ_{-}:=\{\ccf - \ccg : f, g \in \calF \}$ implies $N(\check \calF^\circ_\varepsilon, \|\cdot\|_{\discmeas, 2}, \eta \| 2\check F^\circ \|_{\discmeas, 2}) \leq N(\check \calF^\circ_{-}, \|\cdot\|_{\discmeas, 2}, \eta \| 2\check F^\circ \|_{\discmeas, 2})$ for any discrete measure $\discmeas$ on $\check E$. So observe
\begin{equation}\label{eq:VC-F-eps}
    N(\check \calF_{-}^\circ , \| \cdot \|_{\discmeas, 2}, \sqrt{2}\eta \| 2\check F^\circ \|_{\discmeas, 2}) \leq N^2(\check \calF^\circ, \| \cdot \|_{\discmeas, 2}, \eta\| \check F^\circ \|_{\discmeas, 2}) \leq \Big( \frac{4\sqrt{\mathsf{A}_n}}{\eta} \Big)^{2\mathsf{V}_n}, 
\end{equation}
and we conclude $\check \calF^\circ_\varepsilon$ is VC-type with $(2\sfV_n, 4\sqrt{2\sfA_n})$. Using Corollary \ref{cor: maximal inequality}, we have $ \E[Z] \leq C\sqrt{\log n}D_n K_n$. 

Note that the condition (\ref{assumption for Gauss approx}) yields 
\begin{equation*}
    \begin{aligned}
        \bbP\big( Z > \sqrt{n^*}\delta_n/2 \big) &\leq \bbP\big( Z > 3\E [Z]/2 + \big\{ \sqrt{n^*}\delta_n/2 - 3\sqrt{\log n}D_nK_n/2 \big\} \big) \\
        &\leq \bbP\big( Z > 3 \E [Z]/2 + c n^{\underline c} \sqrt{\log n} D_n K_n \big).
    \end{aligned}
\end{equation*}
Apply Fuk-Nagaev type concentration~(Theorem \ref{thm : talagrand}) and we have,
 \begin{equation*}
     \begin{aligned}
         \bbP\big( Z > \sqrt{n^*}\delta_n/2 \big) \leq \exp\left( -c n^{2\underline c} \log n K_n^2 \right) + 3\exp\left( -\frac{cn^{\underline c}K_n}{\sqrt{\log n}} \right).
     \end{aligned}
 \end{equation*}

\underline{Step 4} (Random index).  
Lastly we bound $\bbP(\| \bbG_n^{(2)}\|_{\check \calF^\circ} > \delta_n)$. We use the Montogomery-Smith inequality (see Lemma \ref{lem: Montgomery-Smith}) for random indices. The following set inclusion
\begin{equation}
    \begin{aligned}
        &\left\{ |i_n - n^*| \leq M_n \sqrt{n} \right\} \cap\bigg\{ \max_{n^* - M_n\sqrt{n} \leq k\leq n^* + M_n\sqrt{n}}  \bigg\| \sum_{i = n^* - M\sqrt{n}}^{k} \ccf(B_i) \bigg\|_{\check{\calF}^\circ}\leq \sqrt{n}\delta_n/2\bigg\}\\
        &\subset \bigg\{ \bigg\| \sum_{i = i_n \wedge n^*}^{i_n \vee n^*}\ccf(B_i) \bigg\|_{\check{\calF}^\circ} \leq \sqrt{n} \delta_n  \bigg\} \label{eq:set_inclusion}
    \end{aligned}
\end{equation}
implies the following decomposition
\begin{equation*}
    \begin{aligned}
         & \bbP\bigg( \max_{n^* - M_n\sqrt{n} \leq k\leq n^* + M_n\sqrt{n}}  \bigg\| \sum_{i = n^* - M_n\sqrt{n}}^{k} \ccf(B_i) \bigg\|_{\check{\calF}\circ} > \sqrt{n}\delta_n/2  \bigg) + \bbP\big(|i_n - n^*| > M_n \sqrt{n} \big)\\
        & = \bbP\bigg( \max_{1 \leq k \leq 2M_n\sqrt{n}} \bigg\|\sum_{i = 1}^{k} \ccf(B_i) \bigg\|_{\check{\calF}^\circ} > \sqrt{n}\delta_n/2 \bigg) + \bbP\big( |i_n - n^*| > M_n\sqrt{n} \big)\\
        &\leq 2 C \bbP\bigg(  \bigg\|  \sum_{i \in [2M_n \sqrt{n}]: \textrm{odd}}\ccf(B_i) \bigg\|_{\check{\calF}^\circ}  > c\sqrt{n}\delta_n \bigg) + \bbP\big( |i_n - n^*| > M_n \sqrt{n} \big) .
    \end{aligned}
\end{equation*}
Define $Z' :=  \| \sum_{i \in [2M_n\sqrt{n}]:\textrm{odd}}\ccf(B_i) \|_{\check{\calF}^\circ}$, so that Corollary \ref{cor: maximal inequality} yields
\begin{equation*}
     \E[Z'] \leq C M_n^{1/2}n^{1/4}D_n K_n \quad \text{and} \quad \sup_{\check f \in \check \calF} \E\big[ \big( \ccf(B_i)\big)^2 \big] \leq C' D_n^2.
\end{equation*}
Then the condition (\ref{assumption for Gauss approx}) yields
\begin{equation*}
    \begin{aligned}
        \bbP\big( Z' > c\sqrt{n}\delta_n\big) &\leq \bbP \big( Z' > 3 \E[Z']/2 + \big\{  c\sqrt{n}\delta_n - 3 M_n^{1/2}n^{1/4}D_nK_n/2  \big\} \big)\\
        &\leq \bbP \big( Z' > 3 \E[Z']/2 + c' n^{\underline c} M_n^{1/2}n^{1/4}D_n K_n \big).        
    \end{aligned}
\end{equation*}
Apply the Fuk-Nagaev type concentration~(Theorem \ref{thm : talagrand}) on $Z'$, and observe
\begin{equation*}
\begin{aligned}
    \bbP\big( Z' > c\sqrt{n}\delta_n \big) \leq \exp\big( -c' n^{2\underline c}K_n^2 \big) + 3 \exp \bigg( -\frac{c' n^{\underline c}M_n^{1/2}n^{1/4}}{\log n} \bigg).
\end{aligned}
\end{equation*}

Lastly, recall the assumption $\|\sigma_{\check{\alpha}}\|_{\bbP, \psi_1} \vee \|\tau_{\check{\alpha}}\|_{\bbPcheck, \psi_1} < \infty$ and the definition $n^* = \beta^{-1}n^\sharp$. Since $\sqrt{m}/2 \leq \sqrt{n}/\sqrt{n^\sharp} \leq \sqrt{m}$ hold, we observe the inequality
\begin{equation*}
    \begin{aligned}
        \bbP\big( |i_n - n^*| > M_n \sqrt{n} \big) \leq \bbP\Big( i_n > n^* + \{\sqrt{m} M_n / 2\} \cdot \sqrt{n^\sharp} \Big) + \bbP\Big( i_n > n^* - \{\sqrt{m}M_n/2\} \cdot \sqrt{n^\sharp}  \Big).
    \end{aligned}
\end{equation*}
Invoke Lemma \ref{lem : block concentration} so that
\begin{equation*}
    \bbP\big( |i_n - n^*| > M_n\sqrt{n} \big) \leq C \exp (-c n^{\underline \gamma}).
\end{equation*}
\end{proof}

\begin{proof}[{Proof of Corollary \ref{cor: Simplify Theorem 1}}] Set $M_n = n^{\underline \gamma/2}$ and observe that $D_n \leq n^{\varepsilon-\underline\gamma/2}$ implies $n^\varepsilon / (\sqrt{M_n}D_n) \geq n^{\underline \gamma/4}$. This is sufficient for the condition (\ref{assumption for Gauss approx}) as $K_n$ is assumed to grow at a logarithmic rate. Notice that when $m = 1$, $\eta_n \leq O(n^{-1/4 + (\varepsilon-\underline\gamma/2)})$ and $\xi_n \leq O(n^{-1/4 + \varepsilon})$.

\end{proof}

\section{Proofs for Section \ref{sec:bootstrap}}

\newcommand{\wB}{\widehat{B}}
\newcommand{\ccone}{\check{\mathbf{1}}}

\subsection{Technical tool} 

We provide a proof of a technical Lemma that is used for showing consistency of our proposed wild regenerative block bootstrap technique. 

\begin{proof}[Proof of Lemma \ref{lem:cov-est}] The proof is divided into several steps. {In what follows, the $c,C$ and $C'$ are generic positive constants that depend on $\underline c$, $\beta$, $\| \tau_{\check \alpha}\|_{\check Q, \psi_1}$, $q$, $\theta'$, $\sup_{x \in S}\E_x[\tau_S^k]$ for $k = 2, 4$ and $\E[\tau_S^4]$}; its value may vary from place to place. Recall $n^* = {\beta}^{-1}n^\sharp$ and $\beta_n = n/n^*$, where ${\beta} = m\E_{\check{\alpha}}[\tau_{\check{\alpha}}]$ and $n^\sharp = \lfloor n/m - 1\rfloor$. So $\beta_n = \beta = \E_{\check{\alpha}}[\tau_{\check{\alpha}}]$ since $m = 1$ is assumed here. Fix $\delta_n' = n^{-1/2 + \gamma}$ for some $\gamma \in (0, 1/2)$. Recall $\upperpif := \sup_{f \in \calF}|\pi f|$ and $\mathbf 1(x) := 1$ for any $x$. So $\ccone(B) = \ell(B)$ is simply the length of the block $B \in \check E$. Recall from (\ref{eq:emp-covariance}) that 
\begin{equation*}
    \what{\Gamma}(\ccf, \ccg) = \frac{1}{n}\sum_{i = 1}^{\hat i_n} \Big\{ \brf(\what B_i) - \ell(\what B_i) \pi f \Big\} \cdot \Big\{ \brg(\what B_i) - \ell(\what B_i) \pi g \Big\}\quad \textrm{and} \quad \what\Gamma(\ccone, \ccone) = \frac{1}{n} \sum_{i = 1}^{\hat i_n} \ell^2(\what{B}_i).
\end{equation*}
Observe the decomposition
\begin{equation*}
    \what \Gamma(\ccf, \ccg) = \what \Gamma(\brf, \brg) - \pi f \what \Gamma(\ccone, \brg) - \pi g \what \Gamma(\ccone, \brf) + \pi f \pi g \what \Gamma(\ccone, \ccone)    
\end{equation*}
and that $\beta^{-1}\Cov(\ccf(B_1) , \ccg(B_1)) = \beta^{-1}\E[\ccf(B_1) \ccg(B_1)]$ as $\E[\ccf(B_1)] = 0$ for any $f$. Using the decomposition in the above display, we observe
\begin{equation*}
    \begin{aligned}
        \Delta(\check \calF^\circ) &\leq \sup_{f, g\in \calF} \Big| \what{\Gamma}(\brf, \brg) - \beta^{-1}\E[\brf(B_1) \brg(B_1)] \Big|\\
        &+ 2\sup_{f \in \calF} \Big| \what{\Gamma}(\brf, \ccone) - \beta^{-1}\E[\brf(B_1)\ell(B_1)] \Big| \cdot \upperpif\\
        &+ \Big| \what{\Gamma}(\ccone, \ccone) - \beta^{-1}\E[\ell^2(B_1)] \Big| \cdot \upperpif^2,
    \end{aligned}
\end{equation*}
which in turn yields
\begin{equation*}
    \begin{aligned}
        \bbP\big( \Delta(\check \calF^\circ) > \delta_n' \big) &\leq \bbP \Big( \sup_{f, g\in \calF} \Big| \what{\Gamma}(\brf, \brg) - \beta^{-1}\E[\brf(B_1) \brg(B_1)] \Big| > c\delta_n'\Big) \\
        &+ \bbP \Big( \sup_{f \in \calF} \Big| \what{\Gamma}(\brf, \ccone) - \beta^{-1}\E[\brf(B_1)\ell(B_1)] \Big|  > c\delta_n'/u(\calF, \pi) \Big)\\
        &+ \bbP \Big( \Big| \what{\Gamma}(\ccone, \ccone) - \beta^{-1}\E[\ell^2(B_1)] \Big| > c\delta_n'/\upperpif^2 \Big)\\
        &=: \mathbf U_1 + \mathbf U_2 + \mathbf U_3.
    \end{aligned}
\end{equation*}
The techniques for bounding the terms $\mathbf U_j$, $j = 1, 2, 3$ are identical; so we bound the first term $\mathbf U_1$ with thorough details, while referring to those details when bounding the remaining two terms.

\underline{Step 1} (Bounding $\mathbf U_1$). Define
\begin{equation}\label{eq:cov-est-decomp}
    \begin{aligned}
        \mathbf U_{1, 1}&= \bbP\Bigg( \sup_{\brf,\brg \in \check\calF}\bigg| \beta^{-1} \E\left[ \brf(B_1) \brg(B_1) \right] - \frac{1}{n}\sum_{i = 1}^{n^*} \brf(B_i) \brg(B_i) \bigg| > c\delta_n' \Bigg)    \\
        \mathbf U_{1, 2}& = \bbP\Bigg( \sup_{\brf,\brg \in\check \calF}\bigg| \frac{1}{n}\sum_{i = 1}^{n^*} \brf(B_i) \brg(B_i) - \frac{1}{n}\sum_{i = 1}^{i_n} \brf(B_i) \brg(B_i) \bigg| > c\delta_n' \Bigg)\\
        \mathbf U_{1, 3}&= \bbP\Bigg(\sup_{\brf, \brg \in \check\calF}\bigg| \frac{1}{n}\sum_{i = 1}^{i_n}\brf(B_i)\brg(B_i) - \frac{1}{n} \sum_{i = 1}^{\hat i_n}\brf(\widehat{B}_i)\brg(\widehat{B}_i) \bigg| > c\delta_n' \Bigg)
    \end{aligned}
\end{equation}
so that $\mathbf U_1 \leq \mathbf U_{1, 1} + \mathbf U_{1, 2} + \mathbf U_{1, 3}$.

First we bound $\mathbf U_{1, 1}$. Recall $n^* = \beta^{-1}n$ when $m = 1$. Set
\begin{equation}\label{eq:Zone}
    Z_{1} = \bigg\| \sum_{i = 1}^{n^*} \left\{ \brh(B_i) - \E\left[ \brh(B_i) \right] \right\} \bigg\|_{\check \calF^2}
\end{equation}
where $\check\calF^2$ is the class of pointwise product of functions. Note that for any discrete measure $\check R$ on $\check E$, we have
\begin{equation*}
N(\check \calF^2, \| \cdot \|_{\discmeas, 2}, 2\eta \| \check F^2 \|_{\discmeas, 2} )
\leq N(\check \calF, \| \cdot \|_{\discmeas, 2}, \eta \| \check F \|_{\discmeas, 2} )^2 \leq \Big( \frac{4\sqrt{\sfA_n}}{\eta} \Big)^{2 \sfV_n},   
\end{equation*}
and we can see that $\check \calF^2$ is VC-type with $(8 \sqrt{\sfA}_n, 2 \sfV_n)$. Then by maximal inequality for blocks~(Corollary \ref{cor: maximal inequality}), we have
\begin{equation*}
    \begin{aligned}
        \E\left[ Z_1 \right] \leq C\cdot\big(\sqrt{n}\overline{\sigma}_1 \sqrt{K_n} +  \|\max_{i \in [n^*]}\check{F}^2(B_i)\|_{\bbP, 2} K_n\big)
    \end{aligned}
\end{equation*}
where $\overline{\sigma}_1 = \sup_{\brh\in \check{\calF}^2} \| \brh \|_{\check{Q}, 2} \vee ( \| \check{F}^2 \|_{\check{Q},2}/\sqrt{n^*} ) $. 
Note that $\| \check{F}^2 \|_{\check Q, 2} = \| \check F \|_{\check Q, 4}^2 \leq C \| \check{F} \|^2_{\check Q, \psi_1} \leq  C D^2_n < \infty$, as
\[ \| \check F \|_{\check Q, \psi_1} \leq \sup_{x \in E} | F(x) | \cdot \| \tau_{\check \alpha} \|_{\bbPcheck, \psi_1} \leq CD_n. \]
Further, observe the bound
\begin{equation*}
    \begin{aligned}
        \|\max_{i \in [n^*]} \check F^2(B_i) \|_{\bbP, 2} = \|(\max_{i \in [n^*]}\check F(B_i))^2\|_{\bbP, 2} &= \| \max_{i \in [n^*]} \check F(B_i)\|_{\bbP, 4}^2\\
        &\leq C \| \max_{i \in [n^*] } \check F(B_i) \|_{\bbP, \psi_1}^2 \leq C' (\log n)^2 D_n^2,
    \end{aligned}
\end{equation*}
which finalizes the bound for $\E[Z_1]$, which is 
\begin{equation}\label{rom1-maximal-sG}
    \E[Z_1] \leq C \big(\sqrt{n} D_n^2  \sqrt{K_n} + (\log n)^2D_n^2 K_n\big) \leq C'\sqrt{n} D_n^2 K_n.
\end{equation}
Observe the inequality
\begin{equation*}
    \begin{aligned}
        \bbP\big( Z_1  > cn\delta_n' \big) &\leq \bbP\Big( Z_1 > 3\E[Z_1]/2 + \big\{ cn\delta_n' - 3\sqrt{n}D_n^2K_n/2  \big\} \Big).
    \end{aligned}
\end{equation*}
and note that the variance satisfies $\sup_{\brh \in \check \calF^2} \E[ \brh^2(B_i) ] \leq \| \check F \|^4_{\check Q, 4} \leq C  D_n^4 $. As $\| \sup_{\check h \in \check \calF^2} | \check h(B_i) | \|_{\check Q, \psi_{1/2}} \leq C \| \check F \|_{\check Q, \psi_1}\leq D_n < \infty$ for all $i \in [n]$, we invoke Theorem \ref{thm : talagrand} with $a = 1/2$~(so that $\psi_{1/2}^{-1}(n) = \log^2n$). The condition (\ref{m1-bootstrap-cond}) implies $n^\gamma / (D_n^2 K_n) \geq n^{\underline c_1}$ for some $\underline c_1 > 0$~(that depends up to $\underline c$) which implies
\begin{equation}\label{eq:Zone-upper}
    \begin{aligned}
        \bbP\big( Z_1  > cn\delta_n' \Big) &\leq \bbP\big( Z_1 > 3 \E[Z_1]/2 + C\big\{ n^{1/2 + \underline c_1}D_n^2K_n \big\} \Big)\\
        &\leq \exp\Big( -c' n^{2\underline c_1}K_n^2 \Big) + 3 \exp \bigg\{ - \bigg( \frac{c' n^{1/2 + \underline c_1}D_n^2 K_n}{(\log n )^2\| \check F^2 \|_{\check Q, \psi_{1/2}} } \bigg)^{1/2} \bigg\}\\
        &\leq \exp\Big( -c' n^{2\underline c_1}K_n^2 \Big) + 3 \exp \bigg( - \frac{c' n^{1/4 + \underline c_1/2}\sqrt{D_n K_n}}{\log n} \bigg).
    \end{aligned}
\end{equation}

Next, we bound $\mathbf U_{1, 2}$. We invoke Lemma \ref{lem: Montgomery-Smith} to deal with random indices. Note that
\begin{equation*}
    \sum_{i = 1}^{i_n}\brh(B_i) = \sum_{i = 1}^{n^*} \brh(B_i) +  \big( \mathbf 1 (i_n > n^*) - \mathbf 1(i_n < n^*) \big) \cdot \sum_{i = i_n \wedge n^*}^{i_n \vee n^*}\brh(B_i) 
\end{equation*}
for $\brh \in \check \calF^2$, so that
\begin{equation*}
     \bbP\left( \bigg\| \sum_{i = 1}^{n^*} \brh(B_i) - \sum_{i =1 }^{i_n} \brh(B_i)\bigg\|_{\check{\calF}^2}> cn\delta_n' \right) \leq \bbP\left( \bigg\| \sum_{i = i_n \wedge n^*}^{i_n \vee n^*} \brh(B_i) \bigg\|_{\check{\calF}^2} > c n \delta_n'\right). 
\end{equation*}
We can further observe the following set inclusion,
\begin{equation*}
    \begin{aligned}
        &\left\{ |i_n - n^*| \leq M_n \sqrt{n} \right\} \cap\bigg\{ \max_{n^* - M_n\sqrt{n} \leq k\leq n^* + M_n\sqrt{n}} \bigg\| \sum_{i = n^* - M\sqrt{n}}^{k} \brh(B_i) \bigg\|_{\check{\calF}^2} \leq cn\delta_n'/2 \bigg\}\\
        &\subset \bigg\{ \bigg\| \sum_{i = i_n \wedge n^*}^{i_n \vee n^*}\brh(B_i) \bigg\|_{\check{\calF}^2} \leq cn \delta_n'\bigg\} 
    \end{aligned}
\end{equation*}
which further yields
\begin{equation}
    \begin{aligned}
        \mathbf U_{1, 2} & \leq \bbP\Bigg( \max_{n^* - M_n\sqrt{n} \leq k\leq n^* + M_n\sqrt{n}} \bigg\| \sum_{i = n^* - M_n\sqrt{n}}^{k} \brh(B_i) \bigg\|_{\check{\calF}^2} > cn\delta_n'/2  \Bigg) + \bbP\left(|i_n - n^*| > M_n \sqrt{n} \right)\\
        & = \bbP\left( \max_{1 \leq k \leq 2M_n\sqrt{n}} \bigg\| \sum_{i = 1}^{k} \brh(B_i) \bigg\|_{\check{\calF}^2} > cn\delta_n' \right) + \bbP\left( |i_n - n^*| > M_n\sqrt{n} \right)\\
        &\leq C \bbP\left(  \Bigg\|  \sum_{i \in [2M_n\sqrt{n}]:\textrm{odd}}\brh(B_i) \Bigg\|_{\check{\calF}^2}  > cn\delta_n'\right) + \bbP\left( |i_n - n^*| > M_n\sqrt{n} \right) = \mathbf U_{1, 2}^{(1)} + \mathbf U_{1, 2}^{(2)}
    \end{aligned}
    \label{eq:U_2-decomp}
\end{equation}
where the last inequality is due to Lemma \ref{lem: Montgomery-Smith}~($B_i$ are i.i.d.)
We bound the terms $\mathbf U_{1, 2}^{(1)}, \mathbf U_{1, 2}^{(2)}$ of (\ref{eq:U_2-decomp}) in order. Set the supremum of an empirical process
\begin{equation}\label{eq:Ztwo}
    Z_2 =  \Bigg\|\ \sum_{i \in [2M_n \sqrt{n}]:\textrm{odd}} \left\{ \brh(B_i) - \E\left[ \brh(B_i) \right]\right\}  \Bigg\|_{\check{\calF}^2} .
\end{equation}
Then by subtracting and adding $\E[ \brh(B_i) ]$, then applying triangle inequality, we have
\begin{equation*}
    \begin{aligned}
        \mathbf U_{1, 2}^{(1)} &\leq C\bbP\bigg( Z_2 > cn\delta_n' - \sup_{\brh \in \check \calF^2} \bigg| \sum_{i \in [2M_n \sqrt{n}]:\textrm{odd}} \E[ \brh(B_i) ] \bigg| \bigg)\\
        &\leq C\bbP\bigg( Z_2 > cn\delta_n' - M_n \sqrt{n}\E[\check F^2(B_i)] \bigg).
    \end{aligned}
\end{equation*}
Note that $\E[\check F^2(B_i)] \leq C D_n^2$. Further notice the bound $\E[Z_2] \leq C\big(\sqrt{M_n}n^{1/4}D_n^2\sqrt{K_n} + \log^2_nD_n^2K_n\big)$ so that we have $\E[Z_2] \leq C \sqrt{M_n}n^{1/4}D_n^2 K_n$. Condition (\ref{m1-bootstrap-cond}) implies $n^\gamma/(M_n D_n^2 K_n) \geq n^{\underline c_2}$ for some $\underline c_2 >0$~(that depends up to $\underline c$) so that we have
\begin{equation*}
    \begin{aligned}
        \mathbf U_{1, 2}^{(1)} &\leq C\bbP\Big( Z_2 > 3\E[Z_2]/2 + \Big\{cn\delta_n' - 4 M_n \sqrt{n} D_n^2 - 3\E[Z_2]/2 \Big\} \Big)\\
        &\leq C\bbP \Big( Z_2 > 3 \E[Z_2]/2 + C'\Big\{ n\delta_n' - M_n \sqrt{n}D_n^2K_n \Big\} \Big)\\
        &\leq C\bbP \Big( Z_2 > (1 + \alpha) \E[Z_2] + C' n^{1/2 + \underline c_2}M_n D_n^2 K_n\Big).
    \end{aligned}
\end{equation*}
Apply Theorem \ref{thm : talagrand} with $a = 1/2$, so that
\begin{equation}\label{eq:Ztwo-upper}
    \begin{aligned}
        \mathbf U_{1, 2}^{(1)} \leq C\exp\Big( -cn^{2\underline c_2}M_n^2 K_n^2 \Big) + 3C \exp\bigg( -\frac{cn^{1/4 + \underline c_2/2 }\sqrt{D_nM_nK_n}}{\log n} \bigg).
    \end{aligned}
\end{equation}

To bound $\mathbf U_{1, 2}^{(2)} = \bbP\left( |i_n - n^*| > M_n \sqrt{n} \right)$, we invoke Lemma \ref{lem : block concentration} givens assumptions $\| \sigma_{\check \alpha} \|_{\bbP, \psi_1} < \infty$ and $\| \tau_{\check \alpha} \|_{\bbPcheck, \psi_1} < \infty$. Whenever $3/\sqrt{n} < M_n < \beta^{-1} \sqrt{n}$, we have $\mathbf U_{1, 2}^{(2)} \leq C \exp(- c M_n^2)$.
Then condition $n^\gamma / (M_n D_n^2 K_n) \geq n^{\underline{c}}$ for some $\underline c > 0$, along with the condition $M_n \geq n^{\underline \gamma/2}$ implies
\begin{equation*}
    \begin{aligned}
        \mathbf U_{1, 2}^{(2)} \leq C \exp (-c n^{\underline \gamma}).
    \end{aligned}
\end{equation*}

Lastly, we bound $\mathbf U_{1, 3}$ using Markov inequality. The moment $\E\big[ \sup_{\check h \in \check \calF^2} | n^{-1}\sum_{i =1 }^{i_n}\check h(B_i) - n^{-1}\sum_{i = 1}^{\hat i_n} \check h(\widehat{B}_i) | \big]$ can be bounded by first pulling out the sup-norm of $\check h \in \check \calF^2$ and then directly applying the techniques of Lemma 4.2 \cite{BertailClemencon2006Bernoulli}, which immediately results in the bound (see Remark \ref{rem:bertail})
\begin{equation}\label{eq:bertail-coupling}
    \begin{aligned}
        \E\Bigg[ \sup_{\check{h}\in \check{\calF}^2}\bigg| \frac{1}{n} \sum_{i = 1}^{i_n}\check{h}(B_i) - \frac{1}{n} \sum_{i = 1}^{\hat{i}_n}\check{h}(\widehat{B}_i)\bigg| \Bigg] \leq C  D_n^2 \alpha_n.
    \end{aligned}
\end{equation}
Using the tensorization property (\ref{eq:tensor}) of the split chain as well as the conditions (B3) and (B4) of Assumption \ref{assump : bootstrap}, Theorem 3.1 of \cite{BertailClemencon2006Bernoulli} first constructs a coupling between the split chain $\{(X_t, Y_t)\}_{t \in \mathbb N_0}$ and the approximate split chain $\{(X_t, \hat{Y}_t)\}_{t \in \mathbb N_0}$. Under this coupling, Lemma 4.2 of \cite{BertailClemencon2006Bernoulli} shows that proximity between the two chains are governed by the transition density estimation error $\alpha_n$, where they bound an analogue of the left-hand side moment of (\ref{eq:bertail-coupling}). We refer the reader to Theorem 3.1 and Lemma 4.2 of \cite{BertailClemencon2006Bernoulli} for a more detailed exposition. Lastly, Markov inequality yields $\mathbf U_{1, 3} \leq C n^{1/2 - \gamma} {D}_n^2 \alpha_n$.


\underline{Step 2} (Bounding $\mathbf U_2$ and $\mathbf U_3$). We show that each terms $\mathbf U_2$ and $\mathbf U_3$ attain a bound identical (in terms of rate) to that of $\mathbf U_1$. First decompose $\mathbf U_2$ as done in (\ref{eq:cov-est-decomp}); this yields a tail probability of empirical process terms analogous to $Z_1$ and $Z_2$ defined in (\ref{eq:Zone}) and (\ref{eq:Ztwo}) respectively, but with function class as $\check \calF \cdot \ccone := \{ \brf \cdot \ccone: f \in \calF \}$, a pointwise product between the classes $\check \calF$ and $\{\ccone\}$ where the latter is a singleton, and not $\check \calF^2$. 
Then for any discrete probability measure $\check R$ on $\check E$, observe
\begin{equation*}
    N(\check \calF \cdot \ccone, \|\cdot\|_{\check R, 2}, 2\eta \|\ccone \cdot \check F\|_{\check R, 2} ) \leq N(\check \calF, \|\cdot \|_{\check R, 2}, \eta \|\check F\|_{\check R, 2}) \leq \bigg( \frac{4\sqrt{\sfA_n}}{\eta} \bigg)^{\sfV_n}.
\end{equation*}
Further note
\begin{equation*}
    \| \check F \cdot \ccone \|_{\check Q, 2}^2 \leq \E[\ell^4(B_1)] \cdot D_n^2. 
\end{equation*}
Set $Z_1'$ to be identically defined as in (\ref{eq:Zone}) but with the supremum taken over the function class $\check \calF \cdot \ccone$, then we observe 
\begin{equation*}
    \bbP( Z_1' > n\delta_n/\upperpif ) \leq \bbP( Z_1' > (1 + \alpha) \E[Z_1'] + C\{ n\delta_n/\upperpif - \sqrt{n}D_nK_n \} ).
\end{equation*}
Then the assumption $\upperpif \leq D_n$ implies that $\bbP(Z_1' > cn\delta_n'/\upperpif)$ has an identical upper bound as $\bbP(Z_1 > cn\delta_n')$ shown in (\ref{eq:Zone-upper}). 

Next, define $Z_2$ as in (\ref{eq:Ztwo}) but with the function class $\check \calF \cdot \ccone$. Then $\bbP( Z_2' > n\delta_n'/\upperpif - C M_n \sqrt{n} D_n)$ attains the same upper bound as shown in (\ref{eq:Ztwo-upper}). Lastly, invoking \cite{BertailClemencon2006Bernoulli}, we may observe
\begin{equation*}
    \begin{aligned}
        \E\Bigg[ \sup_{f \in \calF}\bigg| \frac{1}{n} \sum_{i = 1}^{i_n}\check{f}(B_i)\ccone(B_i) - \frac{1}{n} \sum_{i = 1}^{\hat{i}_n}\check{f}(\widehat{B}_i)\ccone({\what{B}_i})\bigg| \Bigg] \leq C  D_n \alpha_n
    \end{aligned}
\end{equation*}
and Markov's inequality proves that $\mathbf U_2$ attains the same upper bound as $\mathbf U_1$. One may show the same bound for $\mathbf U_3$ with the same line of reasoning, which we omit.


\end{proof}

\subsection{Bootstrap consistency}\label{subsec:boot-consistency}

Let $\underline c, \underline \gamma, \gamma > 0$ be the positive constants stated in Theorem \ref{thm: bootstrap consistency}. We introduce an event on which bootstrap consistency holds. Define the following events
\begin{equation}\label{eq:events}
\begin{aligned}
    \calE_0(\calF) &= \big\{ \| \bbG_n \|_{\calF} \leq n^\gamma/D_n \big\}, \quad
    \calE_1(\calF) = \big\{ \Delta(\check \calF^\circ) \leq n^{-1/2 + \gamma} \big\},\\
    \calE_2(\calF) &= \bigg\{ \bigg| n^{-1}\sum_{i = 1}^{\hat i_n}\big( \check F^\circ(\what B_i) \big)^2 - \beta^{-1}\E\big[ (\check F^\circ(B_1))^2 \big] \bigg| \leq n^{-1/2 + \gamma} \bigg\},\\
    \calE_3(\calF) &= \bigg\{ \bigg|n^{-1}\sum_{i = 1}^{\hat i_n}\ell^2(\what B_i) - \beta^{-1}\E[\ell^2(B_1)] \bigg| \leq n^{-1/2 + \gamma} \bigg\}.
\end{aligned}
\end{equation}
We omit the dependence of the events on $\gamma\in (0, 1/2)$, and define the intersected set
\begin{equation}\label{eq:event-sec3}
    \calE = \cap_{j = 0}^3 \calE_j(\calF).
\end{equation}
For the constant $c > 0$ present in Lemma \ref{lem:cov-est}, define the shorthand
\begin{equation*}
    \overline{\Delta}_n := n^{1/2 - \gamma} D_n^2 \alpha_n + \exp(-c n^{\underline{\gamma} \wedge \underline c}).
\end{equation*}
Recall the pivotal statistics $\bbG_n^\zeta(f)$ in (\ref{eq:multplier-boot}) is a function of the data $X_0^{n-1}$ and the Gaussian multipliers $\zeta_1^{\hat i_n}$. 



\begin{proof}[{Proof of Theorem \ref{thm: bootstrap consistency}}]
The proof is divided into several steps. In what follows $c, c', C, C'$ are generic positive constants that depend on $\|\sigma_{\check \alpha}\|_{\psi_1}, \|\tau_{\check \alpha}\|_{\check Q, \psi_1}, \underline{\sigma}$,$\beta$ and $\E[\ell^2(B_1)]$.

\underline{Step 1} (High-probability event). We first show that the conditions specified in Theorem \ref{thm: bootstrap consistency} imply the conditions of Theorem \ref{thm:Gaussian-approximation}. As $\overline \gamma > \gamma$ and as $K_n \leq n^{\underline c/2}$, we observe
\begin{equation*}
    \frac{n^{\overline \gamma/2}}{\sqrt{M_n} D_n K_n} \geq n^{\underline c/4}.
\end{equation*}
Let's set the sequence $\delta_n$ in Theorem \ref{thm:Gaussian-approximation} as $\delta_n = n^{-1/4 + \overline \gamma/2}$. Assumption \ref{assump : bootstrap} is sufficient for Assumption \ref{assump : Gauss approx} (see the dicussion that follows Assumption \ref{assump : bootstrap}), hence the conditions of Theorem \ref{thm:Gaussian-approximation} are met. So when $m = 1$, we observe the Gaussian approximation
\begin{equation}\label{eq:gauss-approx-bootstrap}
    \begin{aligned}
        \rho_n \leq C\cdot \tilde \xi_n \quad \text{where} \quad \tilde\xi_n := \bigg\{\frac{\sqrt{K_n}}{n^{1/4 - \overline \gamma/2}} + \frac{D_nK_n^{5/4}}{n^{1/4 + \overline \gamma/2}} + \exp \big(-c n^{\underline \gamma \vee {(\underline c/8)}}\big) \bigg\}.
    \end{aligned}
\end{equation}

So (\ref{eq:gauss-approx-bootstrap}) and the definition of $\rho_n$ implies
\begin{equation*}
    \begin{aligned}
        \bbP(\calE_0^c(\calF)) &= \bbP\big( \| \bbG_n \|_\calF > n^\gamma/D_n \big) \leq \bbP\big( \| G \|_{\calF} > n^\gamma/D_n \big) + C\cdot\tilde \xi_n.
    \end{aligned}
\end{equation*}
We now bound $\bbP\big( \| G \|_{\calF} > n^\gamma/D_n \big)$. We observe $\E[G^2(f)] = \beta^{-1}\E[ (\brf(B_1) - \ell(B_1) \pi f)^2 ] \leq 2\beta^{-1} \E[\brf^2(B_1)] + 2\beta^{-1}\E[\ell^2(B_1)](\pi f)^2 \leq C' D_n^2$ by inquiring (\ref{eq:block-mean-pi}). Recall that we set $(\sfA_n, \sfV_n)$ as the parameters for the VC-type function class $\calF^\circ$. Invoke inequality (\ref{eq:d-continuous}) and apply Lemma \ref{lem: covering number} to attain
\begin{equation*}
    \E[\|G\|_\calF] \leq C\int_0^{C'D_n} \sqrt{\sfV_n \log \big( \sfA_n/\eta \big)}d\eta \leq C D_n \sqrt{\sfV_n \log \big( \sfA_n/D_n \big)}
\end{equation*}
via Dudley's entropy integral bound~\cite{Dudley2014UCLT}. Condition (\ref{m1-bootstrap-cond}) implies $n^{\gamma}/D_n^2 \geq n^{\underline c}$, so that Borel-Sudakov-Tsirel'son inequality~\cite{ledoux2001concentration} yields
\begin{equation*}
    \bbP \big( \| G \|_{\calF} > n^\gamma/D_n \big) \leq \exp \big( - c n^{2\underline c} \big).
\end{equation*}

Next we show $\bbP(\calE_j^c(\calF)) \leq O(\overline{\Delta}_n)$ for $j = 1, 2, 3$. For a simpler presentation, we express some guarantees by asymptotics, where the hidden constants in the big-O notation coincide to the parameters appearing in Lemma \ref{lem:cov-est}: $\beta, \underline{c}, \vartheta_B, \|\sigma_{\check \alpha}\|_{\bbP, \psi_1}, \|\tau_{\check \alpha}\|_{\bbPcheck, \psi_1}$. As we are assuming the same conditions as in Lemma \ref{lem:cov-est}, we automatically have $\bbP(\calE_1^c(\calF)) \leq O(\overline \Delta_n)$. Next, notice that $\bbP(\calE_3^c(\calF)) \leq \mathbf U_3$ in the proof of Lemma \ref{lem:cov-est}, hence $\bbP(\calE_3^c(\calF)) \leq O(\overline \Delta_n)$. Lastly, to bound $\bbP(\calE_2^c(\calF))$, it is sufficient to check~(again see proof of Lemma \ref{lem:cov-est} on bounding the term $\mathbf U_1$) that the size and the envelope of the singleton $\{\check F(\cdot) + \ell(\cdot) \pi F\}$ matches or is dominated by that of $\calF$ respectively, which can be trivially verified.

The following steps go through the derivation of a non-asymptotic upper bound of $\rho_n^*$ on the event $\calE$ in (\ref{eq:event-sec3}). 

\underline{Step 2} (First decomposition). Set the sequence $\delta_n = n^{-1/4 + \overline \gamma/2}$. We rely on the discretization of the function class $\check \calF^\circ$ as done in (\ref{eq:disc-class}) which needs $\ccf$ to be apparent in the bootstrap statistics $\bbG_n^\zeta(f)$. For a simpler presentation, set the shorthand
\begin{equation}\label{eq:non-cent-boot}
    \tilde \bbG_n^\zeta(\brf) := n^{-1/2} \sum_{i = 1}^{\hat i_n} \zeta_i \brf(\what B_i) \quad \text{for any function $f$ on $E$.}
\end{equation}
Then observe 
\begin{equation*}
    \begin{aligned}
        \bbG_n^\zeta(f) - \tilde \bbG_n^\zeta(\ccf) = -\frac{1}{\sqrt{n}}\sum_{i = 1}^{\hat i_n}\zeta_i \ell(\what B_i) \cdot \big( \what \pi f - \pi f\big) = -\tilde \bbG_n^\zeta(\ccone) \cdot \bbG_n(f) /\sqrt{n}
    \end{aligned}
\end{equation*}
as $\bbG_n(f) = n^{-1/2}\sum_{i = 0}^{n-1}\{f(X_i) - \pi f\}$. Further observe that $\sup_{f \in \calF}|\pi f - \what{\pi f}| = n^{-1/2} \| \bbG_n \|_{\calF}$ holds, which yields the first decomposition
\begin{equation}\label{eq:boot-first-decomp}
\begin{aligned}
    \bbP\big( \bbG_{n, \calF}^{\zeta, \vee} \leq x | X_0^{n-1} \big) \leq& \bbP\big( \tilde \bbG_{n, \calF^\circ}^{\zeta, \vee} \leq x + \delta_n | X_0^{n-1}  \big) + \bbP\big( \| \bbG_n \|_{\calF} \cdot | \tilde \bbG_n^\zeta(\ccone) | > \sqrt{n}\delta_n | X_0^{n-1} \big).
\end{aligned}
\end{equation}
Here we first bound the second term on the right-hand side of (\ref{eq:boot-first-decomp}); we apply a second decomposition on the first term in the next step. 

Recall $\delta_n = n^{-1/4 + \overline \gamma/2}$, and the second term of the right-hand side of (\ref{eq:boot-first-decomp}) is bounded as
\begin{equation*}
    \begin{aligned}
        &\bbP\big( \| \bbG_n \|_\calF\cdot| \tilde \bbG_n^\zeta(\ccone) |   > \sqrt{n}\delta_n | X_0^{n-1} \big) \\
        &\leq \bbP\big( \| \bbG_n \|_\calF > n^{\overline \gamma} |X_0^{n-1} \big) + \bbP\big( | \tilde \bbG_n^\zeta(\check{\mathbf{1}}) | > n^{1/4 - \overline \gamma/2} | X_0^{n-1} \big).
    \end{aligned}
\end{equation*}
We observe $\bbP\big( \| \bbG_n \|_\calF > n^{\overline \gamma} |X_0^{n-1} \big) = 0$ on the event $\calE_0$. Further, notice that $\tilde \bbG_n^\zeta(\check{\mathbf{1}})$ is a centered Gaussian with variance $n^{-1}\sum_{i = 1}^{\hat i_n}\ell^2(\what B_i)$. On the event $\calE_3(\calF) = \big\{ \big|n^{-1}\sum_{i = 1}^{\hat i_n}\ell^2(\what B_i) - \beta^{-1}\E[\ell^2(B_1)] \big| \leq n^{-1/2 + \gamma} \big\}$, we see that the variance is bounded by a constant $C$, for large enough $n$. Recall $\overline \gamma \in (0, 1/2)$, thereby implying $1/4 - \overline\gamma/2 \in (0, 1/4)$. So the simple Gaussian tail inequality yields
\begin{equation*}
    \bbP\big( | \tilde \bbG_n^\zeta(\check{\mathbf{1}}) | > n^{1/4 - \overline \gamma/2} | X_0^{n-1} \big) \leq 2 \exp\big\{ -cn^{1/2 - \overline \gamma} \big\}
\end{equation*}
whenever $X_0^{n-1} \in \calE_3(\calF)$,

\underline{Step 3} (Second decomposition). Next we bound the first term on the right-hand side of (\ref{eq:boot-first-decomp}). We first discretize the process $\{\tilde \bbG_n^\zeta(\ccf)\}_{\ccf \in \calF^\circ}$ in (\ref{eq:boot-first-decomp}) via the $\big( \varepsilon \| \check F^\circ \|_{\check Q, 2} \big)$-net of $\check \calF^\circ$, $\check \calF_N^\circ = \{ \ccf_1, ..., \ccf_N \}$ where $N = N(\check \calF^\circ, \| \cdot\|_{\check Q, 2}, \varepsilon \| \check F^\circ\|_{\check Q, 2})$. 


Recall the Gaussian process $\{W(\ccf)\}_{\ccf \in \check \calF^\circ}$ that has the covariance function (\ref{eq:W-cov}). Applying item (i) of Lemma \ref{lem:Gaussian-comparison} and Lemma \ref{lem: Nazarov} successively, observe that for any $X_0^{n-1}$, the first term of the right-hand side of (\ref{eq:boot-first-decomp}) can further be bounded by 
\begin{equation*}
    \begin{aligned}
        &\bbP\big( \tilde \bbG_{n, \check \calF^\circ}^{\zeta, \vee} \leq x + \delta_n | X_0^{n-1}  \big)\\
        &\leq \bbP\big( \tilde \bbG_{n, \check \calF_N^\circ}^{\zeta, \vee} \leq x + 2\delta_n | X_0^{n-1}  \big) + \bbP \big( \| \tilde \bbG_n^\zeta \|_{\check \calF^\circ_\varepsilon} > \delta_n | X_0^{n-1} \big)\\
        &\leq \bbP \big( \beta^{-1/2} W^{\vee}_{\check \calF_N^\circ} \leq x + 2\delta_n \big) +  C\cdot \Delta(\check \calF_N^\circ)^{1/2}\log N + \bbP \big( \| \tilde \bbG_n^\zeta \|_{\check \calF^\circ_\varepsilon} > \delta_n | X_0^{n-1} \big)\\
        &\leq \bbP \big( \beta^{-1/2} W^\vee_{\check \calF_N^\circ} \leq x - \delta_n \big) + C \cdot \big\{ \delta_n \sqrt{\log N} + \Delta(\check \calF_N^\circ)^{1/2}\log N\big\} + \bbP \big( \| \tilde \bbG_n^\zeta \|_{\check \calF^\circ_\varepsilon} > \delta_n | X_0^{n-1} \big)\\
        &\leq \bbP \big( \beta^{-1/2}  W^\vee_{\check \calF^\circ} \leq x \big) + C \cdot \big\{ \delta_n \sqrt{\log N} + \Delta(\check \calF_N^\circ)^{1/2}\log N\big\}\\
        & + \bbP \big( \beta^{-1/2} \| W \|_{\check \calF^\circ_\varepsilon} > \delta_n \big) + \bbP \big( \| \tilde \bbG_n^\zeta \|_{\check \calF^\circ_\varepsilon} > \delta_n | X_0^{n-1} \big).
    \end{aligned}
\end{equation*}
Recall the definition of the discretization error (\ref{eq:gauss-disc-err}); we reuse the result (\ref{eq:gaussian-Markov}) to have
\begin{equation*}
    \bbP \big( \beta^{-1/2} \| W \|_{\check \calF^\circ_\varepsilon} > \delta_n \big) \leq C \delta_n^{-1} D_n n^{-1/2} K_n = C n^{-1/4 - \overline \gamma/2} D_n K_n.
\end{equation*}
On the event $\calE_1(\calF) = \{ \Delta(\check \calF^\circ) \leq n^{-1/2 + \gamma} \}$, we see $\Delta(\check \calF_N^\circ)^{1/2} \leq \Delta(\check \calF^\circ)^{1/2} \leq n^{-1/4 + \gamma/2}$. As $\delta_n = n^{-1/4 + \overline \gamma/2}$, we see
\begin{equation*}
    \delta_n \sqrt{\log N} + \Delta(\check \calF^\circ_N)^{1/2} \log N \leq \frac{2\log N}{n^{1/4 - \overline \gamma/2}}.
\end{equation*}
In what follows, we bound the remaining term $\bbP \big( \| \tilde \bbG_n^\zeta \|_{\check \calF^\circ_\varepsilon} > \delta_n | X_0^{n-1} \big)$.

\underline{Step 4} (Bounding discretization error). We use Borel-Sudakov-Tsirel'son inequality \cite{ledoux2001concentration} to bound $\bbP \big( \| \tilde \bbG_n^\zeta \|_{\check \calF^\circ_\varepsilon} > \delta_n | X_0^{n-1} \big)$ on the event $\calE$. To do so, we first bound the variance of $\tilde \bbG_n^\zeta(\check h^\circ)$ for $\check h ^\circ \in \check \calF_\varepsilon^\circ$. Recall $\check \calF_\varepsilon^\circ = \{ \ccf - \ccg: \|\ccf - \ccg\|_{\check Q, 2} \leq 2 \varepsilon\|\check F^\circ\|_{\check Q, 2} \}$, so by adding and subtracting $\beta^{-1}\E\big[\{\check h^\circ (B_1) \}^2\big]$, we observe that on $X_0^{n-1} \in \calE_1(\calF)$,
\begin{equation*}
    \E\big[ \{ \tilde \bbG_n^\zeta(\check h^\circ) \}^2 |X_0^{n-1}\big] \leq 4 \Delta(\check \calF^\circ) + CD_n/n \leq C \big\{ n^{-1/2 + \gamma} + D_n^2/n \big\}\leq C' \cdot n^{-1/2 + \gamma} ;
\end{equation*}
note that (\ref{m1-bootstrap-cond}) naturally implies that the order of $D_n^2/n$ is dominated by $O(n^{-1/2 + \gamma})$. 

Next we bound the mean $\E\big[ \| \tilde \bbG_n^\zeta\|_{\check \calF_\varepsilon^\circ} \big]$. We start by bounding the covering number of the class $\check \calF^\circ_\varepsilon$ with respect to the following metric: for any $\check h_1^\circ, \check h_2^\circ \in \check \calF_\varepsilon^\circ$ and $X_0^{n-1}$, define a metric $\rho_2$ as
\begin{equation*}
    \big(\rho_2(\check h_1^\circ, \check h_2^\circ)\big)^2 := \E\big[ \{ \tilde \bbG_n^\zeta(\check h_1^\circ) - \tilde \bbG_n^\zeta(\check h_2^\circ) \}^2 |X_0^{n-1}\big] = \frac{1}{n}\sum_{i = 1}^{\hat i_n} \big\{ \check h_1^\circ(\what B_i) - \check h_2^\circ(\what B_i) \big\}^2.
\end{equation*}
Let $\check Q_n$ be an empirical distribution $\hat i_n^{-1}\sum_{i = 1}^{\hat i_n}\delta_{\what B_i}$ on $\check E$, so that we observe 
\begin{equation*}
    \big(\rho_2(\check h_1^\circ, \check h_2^\circ)\big)^2 \leq \| \check h_1^\circ - \check h_2^\circ \|_{\check Q_n, 2}^2 \quad \text{almost surely.}
\end{equation*}
This implies $N(\check \calF_\varepsilon^\circ, \rho_2, \eta) \leq N(\check \calF_\varepsilon^\circ, \|\cdot \|_{\check Q_n, 2}, \eta)$. Next invoke the inclusion $\check \calF_\varepsilon^\circ \subseteq \check \calF_{-}^\circ$ so that we have
\begin{equation*}
    N(\check \calF_\varepsilon^\circ, \|\cdot \|_{\check Q_n, 2}, \eta) \leq N^2(\check \calF^\circ, \|\cdot\|_{\check Q_n, 2}, \eta'\|\check F^\circ\|_{\check Q_n, 2})
\end{equation*}
where $\eta' = \eta/\|2\sqrt{2}\check F^\circ\|_{\check Q_n, 2}$. Collecting the inequalities on the covering numbers, we have $\log N(\check \calF_\varepsilon^\circ, \rho_2, \eta) \leq C \sfV_n \log (\| \check F^\circ\|_{\check Q_n, 2} \sfA_n/\eta) $. Dudley's entropy integral~\cite{Dudley2014UCLT} yields
\begin{equation*}
    \E\big[ \| \tilde \bbG_n^\zeta \|_{\check \calF_\varepsilon^\circ} |X_0^{n-1}\big] \leq C \int_0^{C' n^{-1/4 + \gamma/2}} \sqrt{ \sfV_n \log (\|\check F^\circ\|_{\check Q_n, 2}\sfA_n/\eta) } d\eta,
\end{equation*}
and we further invoke the event $\calE_2(\calF)$ to bound the right-hand side of the inequality. Notice
\begin{equation*}
    \frac{\hat{i}_n}{n} \| \check F^\circ \|_{\check Q_n, 2}^2 = \frac{1}{n} \sum_{i = 1}^{\hat{i}_n}\big(\check F^\circ(\widehat B_i)\big)^2 \quad \text{where}\quad \hat i_n \geq 1,
\end{equation*}
and on the event $X_0^{n-1} \in \calE_2(\calF) = \big\{ n^{-1}\sum_{i = 1}^{\hat i_n}\big( \check F^\circ(\what B_i) \big)^2 - \beta^{-1}\E\big[ (\check F^\circ(B_1))^2 \big] \big| \leq n^{-1/2 + \gamma} \big\}$ we have $\|\check F^\circ\|_{\check Q_n, 2} \leq C (nD_n^2 + n^{1/2 + \gamma})^{1/2}$. For the sequence $a_n = (nD_n^2 + n^{1/2 + \gamma})^{1/2}\cdot n^{-1/4 + \gamma/2}$ (defined for concise presentation), we conclude
\begin{equation*}
    \E\big[ \| \tilde \bbG_n^\zeta \|_{\check \calF_\varepsilon^\circ} |X_0^{n-1}\big] \leq C n^{-1/4 + \gamma/2}  \sqrt{ \sfV_n \log (a_n\sfA_n) } \quad \text{on $\calE_1(\calF) \cap \calE_2(\calF)$.}
\end{equation*}
Note that $a_n$ and $\sfA_n$ are at most polynomially growing, $\sfV_n$ is at most logarithmically growing.

Recall that we set $\delta_n = n^{-1/4 + \overline \gamma/2}$ where $\overline \gamma > \gamma$. We apply Borel-Sudakov-Tsirel'son inequality~\cite{ledoux2001concentration} and observe
\begin{equation*}
    \begin{aligned}
        \bbP\big( \|\tilde \bbG_{n}^{\zeta}\|_{\check \calF^\circ_\varepsilon} > \delta_n| X_0^{n-1} \big) &= \bbP\big( \|\tilde\bbG_{n}^{\zeta}\|_{\check \calF^\circ_\varepsilon}> \E\big[ \|\tilde\bbG_{n}^{\zeta}\|_{\check \calF^\circ_\varepsilon} |X_0^{n-1}\big] + \big\{ \delta_n - \E\big[ \|\tilde\bbG_{n}^{\zeta}\|_{\check \calF^\circ_\varepsilon} |X_0^{n-1}\big] \big\} |X_0^{n-1} \big)\\
        &\leq C\exp\left\{ - c \frac{\delta_n^2}{n^{-1/2 + \gamma}}\right\} \leq C\exp \{ -c' n^{\overline \gamma - \gamma} \}.
    \end{aligned}
\end{equation*}

\end{proof}

\subsection{Drift conditions}

Here we show that the geometric drift condition is sufficient for satisfying Assumptions \ref{assump : Gauss approx} and \ref{assump : bootstrap}. 

\begin{proof}[Proof of Proposition \ref{prop:geom-drift}]

    First consider a generic random variable $X$ with distribution $\bbP_X$ and recall the definition of the $\psi$-Orlicz norm $\| X \|_{\bbP_X, \psi} = \inf_c \{ c > 0: \E[\psi(|X|/c)] \leq 1 \}$ (see \cite{ledoux2001concentration}). For some random variable $Y$ and the corresponding conditional distribution of $X|Y$ (say $\bbP_{X|Y}$), we have $\|X\|_{\bbP_{X|Y}, \psi} = \inf_c \{c>0:\E[\psi(|X|/c)|Y] \leq 1\}$. Assume that $\sup_{Y} \|X\|_{\bbP_{X|Y}, \psi} = C < \infty$. Then the following inequality
    \begin{equation*}
        \E[\psi(|X|/C)] = \E\big[\E[\psi(|X|/C)|Y]] \leq \E\big[ \E[\psi(|X|/\|X\|_{\bbP_{X|Y}, \psi})|Y] \big] \leq 1,
    \end{equation*}
    yields the inequality $\|X\|_{\bbP_X, \psi} \leq \sup_Y \|X\|_{\bbP_{X|Y}, \psi}$.

    Equipped with this general inequality of norms, we follow the discussion of \cite{adamczak}; note that our setting is a special case of their's, where $m = 1$ (i.e., independent blocks) with constant additive functionals. Following Proposition 1 of \cite{adamczak}, we observe 
    \begin{equation*}
        \| \sigma_{\check \alpha} + 1 \|_{\bbP_x, \psi_1} \leq C \cdot \big\{ \sup_{x \in S} \| \tau_S \|_{\bbP_x, \psi_1} + 1 \big\}.
    \end{equation*}
    where $C$ is a constant that depends on the minorization parameter $\theta$. So we observe $\|\sigma_{\check \alpha}\|_{\bbP, \psi_1} \leq \sup_{x}\|\sigma_{\check \alpha} + 1\|_{\bbP_x, \psi_1} \leq C \cdot \big\{ \sup_{x \in S} \|\tau_S\|_{\bbP_x, \psi_1} + 1\big\}$. So whenever the geometric drift condition (C2) is satisfied, Lemma \ref{lem:geom-drift} implies that $\|\sigma_{\check \alpha}\|_{\bbP, \psi_1} < \infty$.

    Further note the inequality $\| \tau_{\check \alpha }\|_{\bbPcheck, \psi_1} \leq r_\theta\big[\sup_{x \in S} \| \tau_S\|_{\bbP_x, \psi_1} + 1\big]$ from Proposition 2.2 of \cite{adamczak}, where $r_{\theta}$ depends only on the minorization parameter $\theta$. So again, whenever geometric drift condition (C2) is satisfied, Lemma \ref{lem:geom-drift} implies 
    \begin{equation*}
        \| \ell (B_1 ) \|_{\check Q, \psi_1}  = \| \tau_{\check \alpha }\|_{\bbPcheck, \psi_1} \leq r_\theta\Big[\sup_{x \in S} \| \tau_S\|_{\bbP_x, \psi_1} + 1\Big] \leq r_\theta\Big[\max \{ 1, ( b + \overline V )/ \log 2 \}\cdot c + 1 \Big].
    \end{equation*}
    Overall, the geometric drift condition {(C2)} implies that the moments $\|\sigma_{\check \alpha}\|_{\bbP, \psi_1}$, $\|\tau_{\check \alpha}\|_{\bbPcheck, \psi_1}$, $\sup_{x}\E_x[\tau_S^4]$ are all finite, and by marginalizing over the initial distribution $\lambda$, we have $\E[\tau_S^4]<\infty$. 
\end{proof}

\section{Proofs for  Section \ref{sec:application}}

\subsection{Checking the assumptions for the diffusion process}


The following Proposition states the uniform (over $x\in \calX$) order of the variance of $\check \kernel_{x, h}(B_1)$; recall the definition $\sigma^2(x) := \Var(\check \kernel_{x, h}(B_1))$. Note that the bandwidth $h$ depends on $n$, hence the variance $\sigma^2(x)$ depends on $n$.

\begin{proposition}\label{prop:block-var}
Let $\{X_t\}_{t \in \mathbb N_0}$ be the low-frequency observations from (\ref{eq:diffusion}) with $(\varrho, b)\in \Theta$ and for some constant $a>0$, $h = n^{-a}$ be the bandwidth for the kernel $\kernel_{x, h}$ defined in (\ref{eq:scale-center-kernel}). Let $n_0, c, C$ be positive constants that depend on $\calX, a, \pi_u, \kernel, \theta, \beta$. Then for $n \geq n_0$, 
\begin{equation}\label{eq:block-lower-upper}
    c h^{-1}  \leq \sigma^2(x) \leq C h^{-1} \quad \text{for all $x \in \calX$}.
\end{equation}
\end{proposition}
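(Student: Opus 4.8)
\textbf{Proof plan for Proposition \ref{prop:block-var}.}

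The plan is to reduce the variance of the block sum $\check\kernel_{x,h}(B_1)$ to a quantity that behaves like $\E_{\check\alpha}[\tau_{\check\alpha}]$ times a single-step variance, and then exploit the two-sided bounds $0 < K' \le p_\Delta(x,y) \le K < \infty$ together with the bounds $\pi_\ell \le \pi \le \pi_u$ on the stationary density to control that single-step object. Concretely, since $m=1$, the block $B_1$ equals $(X_1,\dots,X_{\tau_{\check\alpha}})$ under $\bbP_{\check\alpha}$, and $\check\kernel_{x,h}(B_1)=\sum_{t=1}^{\tau_{\check\alpha}}\kernel_{x,h}(X_t)$. I would first use the standard regenerative-block identity to relate $\E_{\check\alpha}[\check\kernel_{x,h}(B_1)]=\beta\,\pi\kernel_{x,h}$ and, more importantly, use the representation of the asymptotic variance for regenerative chains: $\sigma^2(x)=\Var_{\check\alpha}\big(\sum_{t=1}^{\tau_{\check\alpha}}\{\kernel_{x,h}(X_t)-\pi\kernel_{x,h}\}\big)$. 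A cleaner route is to write $\sigma^2(x)=\beta\,\tilde\sigma^2(x)$ where $\tilde\sigma^2(x)$ is the long-run variance per step, and then note $\tilde\sigma^2(x)=\Var_\pi(\kernel_{x,h}(X_0)) + 2\sum_{k\ge1}\Cov_\pi(\kernel_{x,h}(X_0),\kernel_{x,h}(X_k))$; the geometric ergodicity (which follows from the uniform minorization on $S=[0,1]$ with $m=1$) makes this series absolutely convergent.

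\textbf{Upper bound.} For the upper bound I would proceed directly on the block representation. Using $|a+b|^2\le 2a^2+2b^2$ and Cauchy–Schwarz on the random length, $\E_{\check\alpha}[\check\kernel_{x,h}^2(B_1)] \le \E_{\check\alpha}\big[\tau_{\check\alpha}\sum_{t=1}^{\tau_{\check\alpha}}\kernel_{x,h}^2(X_t)\big]$. Because the transition density is bounded above by $K$ and the chain regenerates, the expected occupation $\E_{\check\alpha}[\sum_{t=1}^{\tau_{\check\alpha}}g(X_t)] = \beta\,\pi g$ for nonnegative $g$; a weighted version with the extra $\tau_{\check\alpha}$ factor is controlled using $\|\tau_{\check\alpha}\|_{\bbP_{\check\alpha},\psi_1}<\infty$ (which holds here since $\tau_{\check\alpha}$ is geometrically tailed under the uniform minorization) via Cauchy–Schwarz or Hölder: $\E_{\check\alpha}[\tau_{\check\alpha}\sum_{t}\kernel_{x,h}^2(X_t)] \lesssim \|\tau_{\check\alpha}\|_{\bbP_{\check\alpha},q}\cdot(\text{something}\le C h^{-1})$, where the $h^{-1}$ comes from $\pi(\kernel_{x,h}^2) = \int \pi(y) h^{-2}\kernel^2((y-x)/h)\,dy \le \pi_u h^{-1}\int\kernel^2 \le C h^{-1}$ once $h$ is small enough that the support of $\kernel_{x,h}$ lies inside $[0,1]$ (this is where $\calX\subsetneq[0,1]$ compact and $n\ge n_0$ enter). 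This yields $\sigma^2(x)\le C h^{-1}$ uniformly in $x\in\calX$.

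\textbf{Lower bound.} The lower bound is the main obstacle, since it requires ruling out near-cancellation in the long-run variance. The cleanest approach: bound $\sigma^2(x)$ from below by the one-step block contribution. Write $\sigma^2(x) \ge \beta\,\Var_\pi(\kernel_{x,h}(X_0)) - 2\beta\sum_{k\ge1}|\Cov_\pi(\kernel_{x,h}(X_0),\kernel_{x,h}(X_k))|$. The leading term is $\beta\Var_\pi(\kernel_{x,h}(X_0)) = \beta(\pi(\kernel_{x,h}^2)-(\pi\kernel_{x,h})^2)$; since $\pi\kernel_{x,h}\to\pi(x)$ is $O(1)$ while $\pi(\kernel_{x,h}^2)\ge \pi_\ell h^{-1}\int\kernel^2$, this is $\gtrsim h^{-1}$. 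For the covariance terms, using the two-step (or $k$-step) transition density which is bounded above by $K$ (it stays bounded since $p_\Delta\le K$ implies $p_\Delta^{(k)}\le K$), one gets $|\Cov_\pi(\kernel_{x,h}(X_0),\kernel_{x,h}(X_k))| \le \E_\pi[\kernel_{x,h}(X_0)\kernel_{x,h}(X_k)] + (\pi\kernel_{x,h})^2 \le K(\pi\kernel_{x,h})^2 + (\pi\kernel_{x,h})^2 = O(1)$ for every $k\ge1$, and geometric $\beta$-mixing (rate $\gamma^k$ for some $\gamma<1$, from the minorization) makes $\sum_{k\ge1}|\Cov_\pi(\cdots)| \le \sum_{k\ge1}\gamma^k\cdot O(1) = O(1)$ — crucially uniform in $x$ and independent of $h$. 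Hence for $h=n^{-a}$ small enough (i.e. $n\ge n_0$), the $h^{-1}$ leading term dominates the $O(1)$ correction, giving $\sigma^2(x)\ge c h^{-1}$. The author's proof sketch (mentioned in the blue note: first take an arbitrary $h_0$ independent of $n$, argue asymptotically, then substitute $h_0=h_n$) suggests an equivalent but slightly different bookkeeping; I would follow whichever makes the uniform-in-$x$ control of the mixing series most transparent. The delicate point throughout is that all implied constants must be uniform over $x\in\calX$, which is why one needs $\calX$ bounded away from the boundary $\{0,1\}$ and $\pi$ bounded above and below on $[0,1]$.
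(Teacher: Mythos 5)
Your plan swaps the paper's two arguments (mixing for the lower bound, a direct block-moment computation for the upper bound), and the upper bound as sketched has a genuine gap. The weighted-occupation step $\E_{\check\alpha}[\tau_{\check\alpha}\sum_{t\le\tau_{\check\alpha}}\kernel_{x,h}^2(X_t)]\lesssim \|\tau_{\check\alpha}\|_{q}\cdot(\text{something}\le Ch^{-1})$ cannot deliver the rate $h^{-1}$: since the first coordinate of $B_1$ has law $\nu$ (uniform on $[0,1]$), for every $p>1$ one has $\E_{\check\alpha}\big[\big(\sum_{t}\kernel_{x,h}^2(X_t)\big)^p\big]\ge \int\kernel_{x,h}^{2p}\,d\nu \asymp h^{1-2p}$, so $\big\|\sum_t\kernel_{x,h}^2(X_t)\big\|_{p}\gtrsim h^{1/p-2}$, which is of strictly larger order than $h^{-1}$; taking $p=1$ would force $q=\infty$, which $\|\tau_{\check\alpha}\|_{\bbP_{\check\alpha},\psi_1}<\infty$ does not provide. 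Hence Hölder/Cauchy–Schwarz on the random length gives at best $h^{-1-\delta}$, not $Ch^{-1}$. To save a direct block computation you would have to expand the square and control the off-diagonal sum $\E_{\check\alpha}[\sum_{s<t\le\tau_{\check\alpha}}\kernel_{x,h}(X_s)\kernel_{x,h}(X_t)]$ via the geometric tail of the residual regeneration time combined with the bound $p_\Delta^{(k)}\le K$ — exactly the cross-term argument your sketch skips. The simplest repair is to use the long-run-variance decomposition you already set up for the lower bound: this is what the paper does, invoking Baxendale's $L^2$ contraction (\ref{eq:rho-mixing}) and the identity that the asymptotic variance of $n^{-1/2}\sum_t\kernel_{x,h_0}(X_t)$ is $\beta^{-1}\Var(\check{\kernel}_{x,h_0}(B_1))$, which yields $\sigma^2(x)\le \frac{1+\rho}{1-\rho}\,\beta\,\pi(\kernel_{x,h}^2)\le Ch^{-1}$ uniformly over $x\in\calX$.

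For the lower bound, note the paper's argument is far more elementary than yours and needs no mixing at all: the first coordinate of $B_1$ is $\nu$-distributed and $\kernel\ge 0$, so $\E[\check{\kernel}_{x,h}^2(B_1)]\ge\int\kernel_{x,h}^2\,d\nu\ge h^{-1}\int\kernel^2$ once $h$ is small enough that $\kernel_{x,h}$ is supported in $[0,1]$ for all $x\in\calX$, and the squared mean $(\beta\,\pi\kernel_{x,h})^2\le(\beta\pi_u)^2=O(1)$ is then negligible. Your mixing route can be made rigorous, but the step ``$\sum_{k\ge1}|\Cov_\pi(\kernel_{x,h}(X_0),\kernel_{x,h}(X_k))|\le\sum_k\gamma^k\cdot O(1)$'' needs justification beyond generic mixing: Davydov-type covariance inequalities pay $\|\kernel_{x,h}\|_p^2\asymp h^{-2+2/p}$ with $p>2$ and thus swamp the $h^{-1}$ main term, while the $L^2$-contraction bound gives $\rho^k h^{-1}$, whose sum is again of order $h^{-1}$ with a constant that need not be small. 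What works is either the pointwise estimate $\sup_{u,v}|p_\Delta^{(k)}(u,v)-\pi(v)|\le C\gamma^k$ (available here from Doeblin together with $p_\Delta\le K$), which makes each covariance $\le C\gamma^k$ with $h$-free constants, or taking the minimum of the $O(1)$ and $\rho^kh^{-1}$ bounds, which gives $O(\log(1/h))=o(h^{-1})$; either suffices, but as written this point is glossed over.
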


\begin{proof}[Proof of Proposition \ref{prop:block-var}] 
The proof proceeds in two steps and $n_0, c, C$ are generic constants that depend on $\calX, a, \pi_u, \kernel, \theta, \beta$, which may vary from place to place.

\underline{Step 1} (Lower bound). By definition, the length of blocks are lower bounded by $\ell(B_1) \geq 1$, and the kernels are non-negative, i.e. $\kernel_{x, h} \geq 0$. Also the first component of each blocks $B_i$ follow the regeneration distribution $\nu$. So we can lower bound the second moment of $\check \kernel_{x, h}(B_i)$ by the following
    \begin{equation*}
        \begin{aligned}
            \E\big[ \check \kernel_{x, h}^2(B_1) \big] &= \E\Bigg[ \bigg\{ \sum_{t = 1}^{\ell(B_1)}\kernel_{x, h}(X_t) \bigg\}^2 \Bigg] \geq \int \kernel_{x, h}^2(y)d \nu(y)
        \end{aligned}
    \end{equation*}
The density $\nu'$ of $\nu$ is uniform over the interval $[0, 1]$ (see discussion preceding (\ref{eq:minorization-diff})), hence uniform over the strictly smaller compact subset $\mathcal X \subsetneq [0, 1]$. So fixing any small enough bandwidth $h$~(i.e. for large sample size $n$ as bandwidth is set to decrease with $n$), we have $\nu'(hz + x) = 1$ for all $z \in [0, 1]$ and for $x \in \calX$. The size of $\calX$ and the rate at which $h = o(1), n\to \infty$ determines how large we should take $n$ so as to well define $\nu'(hz + x)$. Say there exists a positive constant $n_0$ depending on $h$ and $\calX$ such that $\nu'(hz + x) = 1$ for any $n\geq n_0$ and $x, z$. Then we observe
\begin{equation*}
    \begin{aligned}
        \int \kernel_{x, h}^2(y)d \nu(y)& = h^{-1}\int_0^1 h^{-1}\kernel^2( (y - x)/h ) d\nu(y)\\
        &= h^{-1}\int_0^1 \kernel^2(z) \nu'(h z + x) dz \geq h^{-1} \int_0^1 \kernel^2(z) dz.
    \end{aligned}
\end{equation*}
Recall $\E[\check \kernel_{x, h}(B_1)] = m \E_{\check \alpha}[\tau_{\check \alpha}]\cdot \pi \kernel_{x, h}$ from (\ref{eq:block-mean-pi}). We can observe
\begin{equation*}
    \begin{aligned}
        \pi \kernel_{x, h} = \int \kernel(z) \pi( h z + x)dz \leq \pi_u \int \kernel(z) dz = \pi_u,
    \end{aligned}
\end{equation*}
as $w$ integrates to $1$. So we have
\begin{equation*}
    \sigma^2(x) = \Var(\check \kernel_{x, h}(B_1)) \geq h^{-1}\int_0^1 \kernel^2(z) dz - \big( \beta \pi_u \big)^2.
\end{equation*}
Reusing $n_0$, assume that the constant $n_0$ further depends on $\beta, \pi_u, \kernel$ such that the $h^{-1}\int \kernel^2(z)dz - \big(\beta\pi_u\big)^2 > h^{-1}$ whenever $n\geq n_0$ (which should exist as $h = o(1), n\to\infty$). Then $\sigma^2(x) \geq c h^{-1}$ for $n \geq n_0$. 

\underline{Step 2} (Upper bound). From \cite{nickl2017nonparametric}, the reflected diffusion process (\ref{eq:diffusion}) is shown to satisfy the minorization condition, drift condition, and the strong aperiodicity condition. Further employing Corollary 6.1 in \cite{baxendale2005renewal}, there exists a constant $\rho\in (0, 1)$ depending on the minorization parameter $\theta$, such that
\begin{equation}\label{eq:rho-mixing}
    \bigg\Vert P^n f - \int f d\pi \bigg\Vert_{\bbP_\pi, 2} \leq \rho^n \bigg\Vert f - \int f d\pi \bigg\Vert_{\bbP_\pi, 2} \quad \text{ for $f \in L^2(\pi)$.}
\end{equation}
where $\| \cdot \|_{\bbP_\pi, 2}$ implies that integration occurs over the split chain initiated with the stationary distribution $\pi$, not $\lambda$. 

Fix $f = \kernel_{x,h_0}$ for any $x \in \calX$ and $h_0 > 0$. Apply Cauchy-Schwarz inequality\footnote{$\Var_\pi, \Cov_\pi$ refers to the variance and covariance with respect to the probability $\bbP_\pi$.} and the mixing condition (\ref{eq:rho-mixing}) to observe
\begin{equation*}
    \begin{aligned}
        \Var_{\pi} \Big( \sum_{t = 0}^{n-1}\kernel_{x, h_0}(X_t) \Big) &= \sum_{t =0}^{n-1}\Var_\pi(\kernel_{x, h_0}(X_t)) + 2 \sum_{j < k} \Cov_\pi ( \kernel_{x, h_0}(X_j) \kernel_{x, h_0}(X_k) )\\
        &\leq n \bigg( 1 + 2 \sum_{\ell = 1}^{n-1}\rho^\ell \bigg) \Var_\pi (\kernel_{x, h_0}(X_0)).
    \end{aligned}
\end{equation*}
Take the limit $n\to\infty$ and invoke the fact that the asymptotic variance of $n^{-1/2}(\kernel_{x, h_0}(X_0) + ... + \kernel_{x, h_0}(X_{n-1}))$ is $\beta^{-1}\Var(\check \kernel_{x, h_0} (B_1))$~(see the remark after Theorem 6 in \cite{adamczak2008tail}), which yields 
\begin{equation}\label{eq:var-mixing-bound}
    \beta^{-1}\Var(\check \kernel_{x, h_0}(B_1)) \leq \frac{1 + \rho}{1 - \rho}\|\kernel_{x, h_0}\|_{\bbP_\pi,2}^2 \quad \text{for any $x$ and $h_0$.}
\end{equation}
Further note that
\begin{equation*}
    \begin{aligned}
        \|\kernel_{x, h_0} \|_{\bbP_\pi, 2}^2 = \int_0^1 \kernel_{x, h_0}^2(y) \pi(y) dy &= h_0^{-1} \int h_0^{-1}\kernel^2((x - y)/h_0)\pi(y) dy\\
        &= h_0^{-1}\int\kernel^2(z)\pi( x - h_0 z ) dy \leq C h_0^{-1},
    \end{aligned}
\end{equation*}
where $C =  \pi_u \int\kernel^2(z) dz$. As a last step, plug in the asymptotic bandwidth $h_0 = h$ into (\ref{eq:var-mixing-bound}) to derive our desired upper bound.

\end{proof}

Equipped with the proof of Proposition \ref{prop:block-var}, we are ready to prove Lemma \ref{lem:diff-cond-check}. 
\begin{proof}[Proof of Lemma \ref{lem:diff-cond-check}]
Here, $C, n_0$ are generic positive constants that can differ in value from place to place, while depending on the parameters $\beta, \pi, \kernel, \theta, \|\tau_{\check\alpha}\|_{\bbPcheck, \psi_1}, a$ and $\calX$.

Condition (A1) is satisfied since the function class $\classkernel$ is indexed by $x \in \calX \subset \R$ where $\R$ is separable. Choose a separable~(hence countable) subset of $\calX$, say $\calX_0$ and let the countable sub-class be $\calG = \{ \kernel_{x_k, h} : x_k \in \calX_0\}$. For any chosen $\kernel_{x, h}$, select a sequence of functions from $\calG$, $\kernel_{x_k, h}$ such that $x_k \to x$. Then by lipschitz continuity of $\kernel$, with lipschitz constant $L$, we have 
\begin{equation*}
    | \kernel_{x_k, h}(y) - \kernel_{x, h}(y) | \leq h^{-2}L |x - x_k| \quad \text{for all $y \in \calX$.}
\end{equation*}
Lastly apply the lower bound of $\sigma(x)$ stated in Proposition \ref{prop:block-var} and we can show (A1) holds for $\classkernel$. 

Condition (A2) is satisfied by exploiting the kernel's symmetric property. Recall $\{ax + b : a, b \in \R\}$ is parametrized by finite vector space~(dimension $2$) so that it is a VC-type~(see Lemma 2.6.15 of \cite{VW1996}). As symmetric $\kernel$ is expressed as the sum of monotone functions, the class of functions $\{\kernel_{x, h} : x \in \calX\}$ is VC-type~(see Lemma 2.6.18 of \cite{VW1996}). The class of constant functions $\{1/(\beta^{-1/2}\sigma(x)): x\in \calX\}$ is contained in the class of functions parametrized by dimension $1$, so again it is a VC-type. Taking the point-wise product of functions $\kernel_{x, h}$ and $1/(\beta^{-1/2}\sigma(x))$, we see that the class $\classkernel$ is VC-type~(see Lemma 2.6.18 of \cite{VW1996}). So condition (A2) is satisfied with parameters that do not depend on $n$. So we may regard $K_n \leq C \log n$. 

We know that condition (A3) of Assumption \ref{assump : Gauss approx} is satisfied once (B1) and (B2) of Assumption \ref{assump : bootstrap} hold (see discussion following Assumption \ref{assump : bootstrap}). Hence, given that we shown (A1) and (A2) holds, it now suffices to check the conditions in Assumption \ref{assump : bootstrap}.

Note that $\Var(\brf_{x, h}(B_1)) = \beta$ and using Proposition \ref{prop:block-var} as well as $\|\pi\|_\infty < \infty$, $\int \kernel (z) dz  = 1$, change of variables yields
\begin{equation}\label{eq:bias-hn}
    \begin{aligned}
        \pi f_{x, h} = \frac{1}{\beta^{-1/2}\sigma(x)}\int_0^1 h^{-1}\kernel\big( (z - x)/h \big)\pi(z) dz \leq C h^{1/2} \quad \text{for $n \geq n_0$.}
    \end{aligned}
\end{equation}
Then notice
\begin{equation*}
    \begin{aligned}
        \big|\Var(\check f_{x, h}(B_1)) - \Var(\check f_{x, h}^\circ(B_1))\big| \leq 2\pi f_{x, h} \cdot \|\tau_{\check \alpha}\|_{\bbPcheck, 2} \Var^{1/2}(\check f_{x, h} (B_1)) + (\pi f_{x, h})^2 \cdot \|\tau_{\check \alpha}\|_{\bbPcheck, 2}^2
    \end{aligned}
\end{equation*}
where $\|\tau_{\check \alpha}\|_{\bbPcheck, 2} < \infty$, thereby yielding
\begin{equation*}
    \Var(\ccf_{x, h} (B_1)) \geq \beta - C h^{1/2}.
\end{equation*}
Then set $\underline \sigma = \beta/2$ so that there exists $n_0$ such that $n\geq n_0$ implies $h^{1/2} < \beta/2C$, thereby implying $\Var(\ccf_{x, h}(B_1)) \geq \underline \sigma > 0$. So the variance lower bound in (B1) of Assumption \ref{assump : bootstrap} holds. By Proposition \ref{prop:block-var}, we observe $\|F\|_\infty \leq D_n = O(h^{-1/2})$, so condition (B1) is satisfied with $D_n \leq Ch^{-1/2}$. 

Next, we check that the drift condition (C2) in (\ref{eq:drift-cond-two}) holds for the low-frequency chain of interest, so as to verify (B2) of Assumption \ref{assump : bootstrap} via Proposition \ref{prop:geom-drift}. Set $V: [0, 1]\to \R_+$ to be any constant function, then we can find positive constants $c, b$ satisfying $b \geq 1/c$, so that
\begin{equation*}
    e^{-V(x)} \int e^{V(y)} p_\Delta(x, y) dy = 1 \leq e^{-1/c + b}.
\end{equation*}
As the small set $S = [0, 1]$ is the whole argument space, setting $b = 1/c$ will satisfy condition (C2) in (\ref{eq:drift-cond-two}); so Proposition \ref{prop:geom-drift} implies that all the moment conditions of stopping times $\tau_S$, $\tau_{\check \alpha}$, $\sigma_{\check \alpha}$ in Assumptions \ref{assump : Gauss approx} and \ref{assump : bootstrap} are satisfied. 

Condition {(B3)} is automatically satisfied by (\ref{eq:transition-est}). The uniform density $\nu'$~(of $\nu$ in (\ref{eq:minorization-diff})) is sufficient for {(B4)}. Lastly, the fact that $p_\Delta$ is uniformly upper bounded gives a way of constructing an estimator that is uniformly upper bounded, thereby implying the existence of $R$ in (B5).



\end{proof}

\subsection{Studentization and coupling results}\label{sec:coupling}
Here we present how the estimator $\hat\sigma_n(x)$ approximates $\beta^{-1/2}\sigma(x)$ uniformly over $x \in \calX$. Then we provide the two coupling results that is used later to prove the validity of our confidence band.

First we present a version of Theorem \ref{thm: bootstrap consistency} and Lemma \ref{lem:cov-est} tailored for the samples $\{X_t\}_{t \in \mathbb N_0}$ of the diffusion process (\ref{eq:diffusion}) over the function class $\classkernel$. Note that in Proposition \ref{prop:block-var} and Lemma \ref{lem:diff-cond-check}, we set $h = n^{-a}$ for some $a>0$; using these results, we can claim that there exists a positive constant $n_0$ depending on $\calX, a, \pi, \kernel, \theta, \beta$, such that for $n \geq n_0$ we have $D_n \leq O(h^{-1/2})$ and $K_n \leq O(\log n)$ (up to constants depending on $\kernel$). The following result actively uses this fact.

\begin{corollary}\label{cor:Delta-bound}
Suppose the chain $\{X_t\}_{t \in \mathbb N_0}$ consists of low-frequency samples from the diffusion (\ref{eq:diffusion}) with $(\varrho, b)\in \Theta$, while satisfying all the conditions listed in Lemma \ref{lem:diff-cond-check}. Given positive constants $\underline c, \gamma, \underline{\gamma}, \overline \gamma > 0$ such that $\underline{\gamma} < \gamma < \overline \gamma < 1/2$, let $M_n$ and $h = n^{-a}$ be sequences satisfying $M_n \geq n^{\underline{\gamma}/2}$ and
\begin{equation}\label{eq:G-bootstrap-cond}
    n^\gamma h \geq n^{\underline{c}} M_nK_n.
\end{equation}
Given positive constants $n_0, c, c', C, C'$, we have the following for any $n \geq n_0$.
\begin{itemize}
    \item[(i)] For given $c, C$, we have
    \begin{equation}\label{eq:G-Delta-bound}
    \bbP\big( \Delta(\check{\classkernel^\circ}) > n^{-1/2 + \gamma} \big) \leq C\cdot \big\{ n^{1/2 - \gamma} h^{-1}{\alpha_n} + \exp ( -cn^{\underline \gamma \wedge \underline c} ) \big\} 
    \end{equation}    
    \item[(ii)] For given $c, c', C, C'$, we have
    \begin{equation*}
        \rho_n^* \leq C'\cdot \bigg\{\frac{\log N}{n^{1/4 - \overline{\gamma}/2}} + \frac{h^{-1/2} \log n}{n^{1/4 + \overline{\gamma}/2}} +  \exp\big( -c'n^{(1/2 - \overline \gamma)\wedge(\overline{\gamma} - \gamma)} \big) \bigg\}
    \end{equation*}
    with probability at least $1 - C\cdot \xi_n^*$ where $\xi_n^*$ satisfies
    \begin{equation}\label{eq:xi_n_star}
        \xi_n^* \leq \frac{\sqrt{\log n}}{n^{1/4 - \overline \gamma/2}} + \frac{h^{-1/2}\log^{5/4} n}{n^{1/4 + \overline \gamma/2}} + n^{1/2 - \gamma} h^{-1} \alpha_n  + \exp\big(-c n^{\underline{\gamma} \wedge (\underline{c}/8)}\big).
    \end{equation}
\end{itemize}
The constants $n_0, c, c', C, C'$ depend only on $\calX, a, \pi, \kernel, \theta, \underline{c}, \vartheta_B, \|\sigma_{\check \alpha}\|_{\bbP, \psi_1}, \|\tau_{\check \alpha}\|_{\bbPcheck, \psi_1}$.
\end{corollary}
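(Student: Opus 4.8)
The plan is to obtain Corollary \ref{cor:Delta-bound} by specializing the two abstract results already in hand — Lemma \ref{lem:cov-est} for part (i) and Theorem \ref{thm: bootstrap consistency} for part (ii) — to the concrete diffusion setting, where the generic complexity parameters become explicit functions of the bandwidth $h = n^{-a}$. First I would invoke Lemma \ref{lem:diff-cond-check} to produce a threshold $n_0$ beyond which the low-frequency chain $\{X_t\}_{t\in\mathbb N_0}$ and the studentized kernel class $\classkernel$ satisfy Assumptions \ref{assump : Gauss approx} and \ref{assump : bootstrap}. Combining this with Proposition \ref{prop:block-var} and the computations inside the proof of Lemma \ref{lem:diff-cond-check}, one may take the envelope and the VC characteristics (which are $n$-independent here) so that $D_n \lesssim h^{-1/2}$ and $K_n = \sfV_n\log(\sfA_n\vee n)\lesssim \log n$; and the bias estimate \eqref{eq:bias-hn} gives $\upperpif = \sup_{x\in\calX}|\pi f_{x,h}|\lesssim h^{1/2}\le 1\le D_n$ for $n\ge n_0$, so the hypothesis $\upperpif\le D_n$ shared by both abstract results holds automatically.

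Second, I would translate the rate condition \eqref{eq:G-bootstrap-cond}. Since $D_n^2\lesssim h^{-1}$, the inequality $n^\gamma h\ge n^{\underline c}M_nK_n$ implies $n^\gamma/(M_nD_n^2K_n)\gtrsim n^{\underline c}$, hence is $\ge n^{\underline c'}$ for any fixed $\underline c'\in(0,\underline c)$ once $n_0$ is enlarged to absorb the constant; together with $K_n\lesssim\log n\le n^{\underline c'/2}$ for large $n$, this verifies condition \eqref{m1-bootstrap-cond} (with exponent $\underline c'$) and $K_n\le n^{\underline c/2}$. The structural hypotheses $M_n\ge n^{\underline\gamma/2}$ and $\underline\gamma<\gamma<\overline\gamma<1/2$ are assumed outright.

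Third, I would apply the abstract results and substitute. For (i), Lemma \ref{lem:cov-est} yields the bound $C\{n^{1/2-\gamma}D_n^2\alpha_n+\exp(-cn^{\underline\gamma\wedge\underline c'})\}$, and inserting $D_n^2\lesssim h^{-1}$ gives \eqref{eq:G-Delta-bound}. For (ii), Theorem \ref{thm: bootstrap consistency} (with the choice $\delta_n=n^{-1/4+\overline\gamma/2}$ used in its proof) furnishes the event of probability $\ge 1-C\xi_n^*$ on which $\rho_n^*$ is controlled; substituting $D_n\lesssim h^{-1/2}$ and $K_n\lesssim\log n$ into the displayed bound for $\rho_n^*$ and into $\xi_n^*$ produces exactly the stated estimates, including \eqref{eq:xi_n_star}. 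Finally I would note that the new constants depend only on the quantities listed in the corollary, the extra dependence entering through the constants in Proposition \ref{prop:block-var} and Lemma \ref{lem:diff-cond-check}, which depend only on $\calX, a, \pi, \kernel, \theta, \beta$.

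I expect no genuinely deep obstacle: this is essentially bookkeeping. The one point needing care is the second step — making sure \eqref{eq:G-bootstrap-cond} truly implies the abstract rate condition after the $h^{-1}$ and $\log n$ factors are absorbed, which costs a mild loss in the exponent (from $\underline c$ to some $\underline c'<\underline c$) that must then be re-absorbed into $n_0$ and the constants $c,c'$ so that the exponentially small terms in the statement can still be written with $\underline c$ (respectively $\underline c/8$). A secondary, harmless subtlety is that $\log N\lesssim\log n$ for this class, but since the corollary keeps the term as $\log N$ there is nothing to simplify there.
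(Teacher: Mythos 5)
Your proposal is correct and follows essentially the same route as the paper: invoke Lemma \ref{lem:diff-cond-check} to verify Assumption \ref{assump : bootstrap} for $\classkernel$, use Proposition \ref{prop:block-var} and the bias bound to get $D_n \lesssim h^{-1/2}$, $K_n \lesssim \log n$, $\upperpif \lesssim h^{1/2} \le D_n$, and then apply Lemma \ref{lem:cov-est} and Theorem \ref{thm: bootstrap consistency} with these substitutions. Your extra care in translating \eqref{eq:G-bootstrap-cond} into \eqref{m1-bootstrap-cond} (absorbing the constant from $D_n^2\lesssim h^{-1}$ into a slightly smaller exponent or into $n_0$) is a detail the paper passes over silently, but it does not change the argument.
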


With abuse of notation (see discussion in \ref{subsec:boot-consistency}), we set
\begin{equation}\label{eq:over-Delta}
    \overline \Delta_n = n^{1/2-\gamma}h^{-1}\alpha_n + \exp(-cn^{\underline \gamma \wedge \underline c})
\end{equation}
so as to simplify the presentation of (\ref{eq:G-Delta-bound}).

\begin{proof}[Proof of Corollary \ref{cor:Delta-bound}]
    Here, $c, C, n_0$ are generic positive constants that depend on the parameters stated in Corollary \ref{cor:Delta-bound}; its values may differ from place to place. Using Lemma \ref{lem:diff-cond-check}, we see that $\{X_t\}_{t \in \mathbb N_0}$ satisfies conditions in Assumption \ref{assump : bootstrap}. Also we can observe $D_n$ has the order of $O(h^{-1/2})$ as $\sigma(x) \geq c h^{-1/2} > 0$ by Proposition \ref{prop:block-var} whenever $n \geq n_0$. Using change of variables, Proposition \ref{prop:block-var} implies 
\begin{equation*}
    \pi f_{x, h} = \frac{1}{\beta^{-1/2}\sigma(x)} \int_0^1 h^{-1} \kernel\big( (z - x)/h \big) \pi(z) dz \leq C h^{1/2}.
\end{equation*}
So $u(\classkernel, \pi) \leq D_n$. Further, $K_n \leq C\log n$ by Lemma \ref{lem:diff-cond-check}. Then directly apply Lemma \ref{lem:cov-est} and Theorem \ref{thm: bootstrap consistency}.
\end{proof}

\underline{Estimation of $\beta^{-1/2}\sigma(x)$}: We present a concentration result for the relative error between $\hat{\sigma}_n(x)$ and $\beta^{-1/2}\sigma(x)$ that is uniform over $x \in \calX$. Recall that the estimation of the scaled variance $\beta^{-1}\sigma^2(x) = \beta^{-1}\Var(\check \kernel_{x, h}(B_1))$ is done via
\begin{equation}\label{eq:hat-sigma}
    \hat{\sigma}^2(x) = \E\big[\{ \bbG_n^\zeta(\kernel_{x, h})\}^2 |X_0^{n-1} \big] = \frac{1}{n}\sum_{i = 1}^{\hat i_n} \Big\{ \check\kernel_{x, h}(\what B_i) - \ell(\what B_i) \what{\pi} \kernel_{x, h} \Big\}^2.
\end{equation}

\begin{lemma}[Relative error]\label{lem:studentizing-rel-error}
    Assume the identical setting and conditions presented in Corollary \ref{cor:Delta-bound}. Suppose the sequences $n^{1/2 - \gamma}h^{-1}{\alpha_n}$ and $\sqrt{h}n^{1/4 + \overline{\gamma}/2}$ decrease and increase at a polynomial rate respectively, and the sequence $h$ further satisfies $\sqrt{n}h^{1/2} \leq n^\gamma$.
    Then there exist polynomially decaying sequences $\varepsilon_{2, n}, \delta_{2, n}$ such that
    \begin{equation*}\label{eq:normalizer-est}
        \bbP\Bigg( \sup_{x \in \calX} \bigg| \frac{\hat{\sigma}_n(x)}{\beta^{-1/2}\sigma(x)} - 1 \bigg| > \varepsilon_{2, n}\Bigg) \leq \delta_{2, n}.
    \end{equation*}
\end{lemma}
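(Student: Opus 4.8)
The plan is to control the relative error $\sup_{x\in\calX}|\hat\sigma_n(x)/(\beta^{-1/2}\sigma(x)) - 1|$ by first passing to the squared relative error and using the elementary inequality $|a-1|\le |a^2-1|$ valid for $a\ge 0$ (so it suffices to bound $\sup_x |\hat\sigma_n^2(x)/(\beta^{-1}\sigma^2(x)) - 1|$), and then exploiting the uniform lower bound $\beta^{-1}\sigma^2(x)\ge c h^{-1}$ from Proposition \ref{prop:block-var}. Concretely, the squared relative error is at most $h\cdot c^{-1}\sup_{x\in\calX}|\hat\sigma_n^2(x) - \beta^{-1}\sigma^2(x)|$ (valid for $n\ge n_0$ so that the lower bound of Proposition \ref{prop:block-var} applies), so the whole problem reduces to an absolute-error bound on $\sup_x|\hat\sigma_n^2(x)-\beta^{-1}\sigma^2(x)|$ that wins against $h^{-1}$.

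First I would decompose $\hat\sigma_n^2(x) - \beta^{-1}\sigma^2(x)$ along the lines already used in the proof of Lemma \ref{lem:cov-est}. Writing $\hat\sigma_n^2(x) = \what\Gamma(\check\kernel_{x,h},\check\kernel_{x,h}) - 2\,\what\pi\kernel_{x,h}\,\what\Gamma(\ccone,\check\kernel_{x,h}) + (\what\pi\kernel_{x,h})^2\,\what\Gamma(\ccone,\ccone)$, and recalling $\beta^{-1}\sigma^2(x) = \beta^{-1}\Var(\check\kernel_{x,h}(B_1)) = \beta^{-1}\E[(\check\kernel_{x,h}(B_1) - \ell(B_1)\pi\kernel_{x,h})^2]$, I would split the error into (a) a ``centered'' covariance-estimation piece $\Delta(\check\classkernel^\circ)$, which is exactly what Corollary \ref{cor:Delta-bound}(i) bounds by $n^{-1/2+\gamma}$ with probability at least $1 - C\overline\Delta_n$; (b) cross terms involving the mean-estimation error $\what\pi\kernel_{x,h} - \pi\kernel_{x,h} = n^{-1/2}\bbG_n(\kernel_{x,h})$, multiplied by $\what\Gamma$-type quantities; and (c) terms where $\pi\kernel_{x,h}$ itself appears and must be controlled via the bias bound $\pi\kernel_{x,h}\le C h^{1/2}$ (which follows from $\|\pi\|_\infty<\infty$, $\int\kernel = 1$, and change of variables, as in \eqref{eq:bias-hn} and the proof of Corollary \ref{cor:Delta-bound}). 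For (b), I would use the event $\calE_0(\classkernel) = \{\|\bbG_n\|_{\classkernel}\le n^{\gamma}/D_n\}$ together with $D_n\asymp h^{-1/2}$, so that $\sup_x|\what\pi\kernel_{x,h} - \pi\kernel_{x,h}|\le n^{-1/2+\gamma}h^{1/2}$ on $\calE_0$, an event of probability $1-\exp(-cn^{2\underline c})$ by the argument in Step 1 of the proof of Theorem \ref{thm: bootstrap consistency}. For the $\what\Gamma$-factors multiplying these errors I would use the events $\calE_2(\classkernel),\calE_3(\classkernel)$ of \eqref{eq:events}, on which $n^{-1}\sum_i\ell^2(\what B_i)$ and $n^{-1}\sum_i(\check F^\circ(\what B_i))^2$ concentrate around their ($O(1)$ and $O(D_n^2)=O(h^{-1})$ respectively) population counterparts; these events also hold with probability at least $1 - C\overline\Delta_n$ as established in Step 1 of the proof of Theorem \ref{thm: bootstrap consistency}.

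Assembling the pieces on the intersection $\calE = \cap_{j=0}^3\calE_j(\classkernel)$, I expect a bound of the rough shape $\sup_x|\hat\sigma_n^2(x) - \beta^{-1}\sigma^2(x)| \le C\{ n^{-1/2+\gamma} + h^{-1}\cdot n^{-1/2+\gamma}h^{1/2} + (h^{1/2}\cdot h^{-1/2})\cdot(\cdots) + h\cdot n^{-1+2\gamma}\}$, where the dominant contributions are of order $n^{-1/2+\gamma}h^{-1/2}$ (or $n^{-1/2+\gamma}$). Multiplying by $h\cdot c^{-1}$ from the variance lower bound gives a squared-relative-error bound of order $h^{1/2}n^{-1/2+\gamma}$, hence $\varepsilon_{2,n}\asymp h^{1/4}n^{-1/4+\gamma/2}$; the hypotheses that $\sqrt n h^{1/2}\le n^\gamma$ and that $n^{1/2-\gamma}h^{-1}\alpha_n$, $\sqrt h n^{1/4+\overline\gamma/2}$ are polynomial are exactly what is needed to certify that $\varepsilon_{2,n}\downarrow 0$ polynomially and that the failure probability $\delta_{2,n}:=C(\overline\Delta_n + \exp(-cn^{2\underline c}))$ is polynomially small. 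The main obstacle is bookkeeping rather than a genuine difficulty: one must track the $h$-powers carefully through the cross terms (because $D_n\asymp h^{-1/2}$ blows up, and it is only the product with the small mean-error $n^{-1/2+\gamma}h^{1/2}$ and the bias $h^{1/2}$ that tames it), and one must make sure the variance lower bound $\sigma^2(x)\ge ch^{-1}$ is applied with the right constant so that the division in forming the relative error does not overwhelm the absolute-error gains. I would also double-check that the event $\calE_2(\classkernel)$ is stated for the envelope of the centered class $\check\classkernel^\circ$ and that its envelope behaves like $O(h^{-1/2})$, which is needed to keep term (c) under control.
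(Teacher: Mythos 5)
Your proposal follows essentially the same route as the paper's proof: the paper likewise reduces to the squared ratio via $|a-1|\le|a^2-1|$, splits the error into the covariance-estimation piece $\Delta(\check\classkernel^\circ)$ (bounded via Corollary \ref{cor:Delta-bound}(i)), cross terms driven by $\what{\pi} f_{x,h}-\pi f_{x,h}$ controlled on the events $\calE_j(\classkernel)$, and a bias-type term coming from $\pi f_{x,h}\le Ch^{1/2}$ (the hypothesis $\sqrt{n}h^{1/2}\le n^{\gamma}$ is used exactly to make this term negligible against $\delta_n'=n^{-1/2+\gamma}$). The one structural difference is that the paper studentizes first: since $\Var(\check f_{x,h}(B_1))=\beta$ for the studentized class, the ratio $\hat\sigma_n^2(x)/(\beta^{-1}\sigma^2(x))$ is literally the conditional second moment $\E[\{\bbG_n^\zeta(f_{x,h})\}^2\mid X_0^{n-1}]$, so no division by $\sigma^2(x)$ is ever carried out; you instead bound the unstudentized absolute error and divide by the lower bound $ch^{-1}$ from Proposition \ref{prop:block-var} at the end. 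This is algebraically equivalent, but it makes the $h$-bookkeeping more delicate, and three of your intermediate claims need correcting (none affects the conclusion, which only asks for some polynomially decaying sequences). First, on $\calE_0(\classkernel)$ one has $\sup_x|\what{\pi}\kernel_{x,h}-\pi\kernel_{x,h}|=n^{-1/2}\beta^{-1/2}\sigma(x)\,|\bbG_n(f_{x,h})|\le Cn^{-1/2+\gamma}$, not $n^{-1/2+\gamma}h^{1/2}$: the gain $h^{1/2}$ in $n^{\gamma}/D_n$ is cancelled by $\sigma(x)\asymp h^{-1/2}$. Second, Corollary \ref{cor:Delta-bound}(i) concerns the studentized class, so transferring it to the raw kernels reintroduces the factor $\beta^{-1}\sigma^2(x)\asymp h^{-1}$; the dominant raw absolute error is of order $h^{-1}n^{-1/2+\gamma}$, and after multiplying by $h$ you should get $\varepsilon_{2,n}\asymp n^{-1/2+\gamma}$ (the paper's choice of $\delta_n'$), rather than the over-optimistic $h^{1/4}n^{-1/4+\gamma/2}$. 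Third, $\bbP(\calE_0^c(\classkernel))$ is not exponentially small: bounding $\|\bbG_n\|_{\classkernel}$ goes through the Gaussian approximation and therefore contributes $C\tilde\xi_n$ from (\ref{eq:gauss-approx-bootstrap}) in addition to $\exp(-cn^{2\underline c})$; this is precisely where the hypothesis that $\sqrt{h}\,n^{1/4+\overline{\gamma}/2}$ grows polynomially is consumed, and the paper's failure probability is accordingly $\delta_{2,n}=C\{\tilde\xi_n+\overline{\Delta}_n\}$. With these adjustments your argument delivers exactly the paper's statement.
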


\begin{proof}[Proof of Lemma \ref{lem:studentizing-rel-error}]
Here, $c, C, n_0$ are generic positive constants that depend on the parameters specified in Corollary \ref{cor:Delta-bound}; its values can differ from place to place.

    Observe 
\begin{equation*}
    \begin{aligned}
        \frac{\hat\sigma^2(x)}{\beta^{-1}\sigma^2(x)} = \frac{1}{n}\sum_{i = 1}^{\hat i_n} \Big\{ \check f_{x, h}(\what B_i) - \ell(\what B_i) \what{\pi} f_{x, h} \Big\}^2 \quad \text{and}\quad \frac{\beta^{-1}\sigma^{2}(x)}{\beta^{-1}\sigma^{2}(x)} = \beta^{-1}\Var(\check f_{x, h}(B_1))
    \end{aligned}
\end{equation*}
where we emphasize 
\begin{equation*}
    \begin{aligned}
        \big|\Var(\check f_{x, h}(B_1)) - \Var(\check f_{x, h}^\circ(B_1))\big| \leq 2\pi f_{x, h} \cdot \|\tau_{\check \alpha}\|_{\bbPcheck, 2} \Var^{1/2}(\check f_{x, h} (B_1)) + (\pi f_{x, h})^2 \cdot \|\tau_{\check \alpha}\|_{\bbPcheck, 2}^2. 
    \end{aligned}
\end{equation*}
Note that $\Var(\check f_x(B_1)) = \beta$. Using change of variables, Proposition \ref{prop:block-var} implies 
\begin{equation*}
    \pi f_{x, h} = \frac{1}{\beta^{-1/2}\sigma(x)} \int_0^1 h^{-1} \kernel\big( (z - x)/h \big) \pi(z) dz \leq C h^{1/2}.
\end{equation*}
So collecting what we have, we observe
\begin{equation*}
    \begin{aligned}
        \bigg| \frac{\hat \sigma^2(x)}{\beta^{-1}\sigma^2(x)} - 1 \bigg|  \leq& \big| \E\big[\{ \bbG_n^\zeta(f_{x, h})\}^2 |X_0^{n-1} \big] - \beta^{-1}\Var(\check f_{x, h}^\circ(B_1)) \big|\\
        &+ \beta^{-1}\big| \Var(\check f_{x, h}(B_1)) - \Var(\check f_{x, h}^\circ(B_1)) \big|
    \end{aligned}
\end{equation*}
where the second term of the right-hand side of the above display is further bounded by $Ch^{1/2}$ whenever $n \geq n_0$. Note that $\E\big[ \{\bbG_n^\zeta(f_{x, h})\}^2 |X_0^{n-1}\big]$ is different from $\what \Gamma(\ccf_{x, h}, \ccf_{x, h})$. Observe the bound $\sup_{x\in \calX}\big|\what \Gamma(\ccf_{x, h}, \ccf_{x, h}) - \beta^{-1}\Var(\ccf_{x, h}(B_1))\big| \leq \Delta(\check \classkernel^\circ)$ and the decomposition 
\begin{equation*}
    \begin{aligned}
    &\E\big[ \{ \bbG_n^\zeta(f_{x, h})\}^2|X_0^{n-1} \big]\\
    &=\what \Gamma(\ccf_{x, h}, \ccf_{x, h}) + (\what \pi f_{x, h} - \pi f_{x, h})^2\E\big[ \{ \bbG_n^\zeta(\ccone) \}^2 |X_0^{n-1} \big] - 2(\what \pi f_{x, h} - \pi f_{x, h})\E\big[ \tilde \bbG_n^\zeta(\ccf_{x, h}) \tilde \bbG_n^\zeta(\ccone) |X_0^{n-1}\big] .
    \end{aligned}
\end{equation*}
So observe the bound
\begin{equation*}
    \begin{aligned}
        \big| \E\big[\{ \bbG_n^\zeta(f_{x, h})\}^2 |X_0^{n-1} \big] - \beta^{-1}\Var(\check f_{x, h}^\circ(B_1)) \big| \leq& \Delta (\check \classkernel^\circ) + (\pi f_{x, h} - \what{\pi} f_{x, h})^2 \cdot \E\big[\{\tilde \bbG_n^\zeta(\ccone)\}^2|X_0^{n-1}\big]\\
        &+2 \big|\pi f_{x, h} - \what{\pi} f_{x, h} \big| \cdot \big| \E\big[ \tilde \bbG_n^\zeta(\check f_{x, h}^\circ)\cdot \tilde \bbG_n^\zeta(\ccone) |X_0^{n-1}\big] \big|.
    \end{aligned}
\end{equation*}
Notice that as long as $x \geq 0$, $|x - 1| \leq |x^2 - 1|$, which implies $| \hat\sigma_n(x)/\beta^{-1/2}\sigma(x) - 1  | \leq | \hat{\sigma}_n^2(x)/\beta^{-1}\sigma^2(x) - 1 |$. So for some sequence $\delta_n'$, we have
\begin{equation*}
    \begin{aligned}
        &\bbP\bigg( \sup_{x\in \calX}\bigg| \frac{\hat \sigma(x)}{\beta^{-1/2}\sigma(x)} - 1 \bigg| > \delta_n' \bigg) \\
        &\leq \bbP \big( \Delta(\check \classkernel^\circ) > c\delta_n'  \big) + \bbP\big( h^{1/2} > c\delta_n' \big)\\
        &+ \bbP\Big( \|\bbG_n\|_{\classkernel}^2 \cdot \beta^{-1}\E[\ell^2(B_1)] > cn\delta_n' \Big)
        + \bbP\Big( \|\bbG_n\|_{\classkernel} \cdot \big|\beta^{-1}\E[\ccf_{x, h}(B_1)\ell(B_1)] \big| > c\sqrt{n}\delta_n' \Big)\\
        &+ \bbP\bigg( \|\bbG_n\|_{\classkernel}^2 \cdot \bigg| \frac{1}{n}\sum_{i = 1}^{\hat i_n} \ell^2(\what B_i) - \beta^{-1}\E[\ell^2(B_1)] \bigg| > cn\delta_n' \bigg)\\
        &+ \bbP\bigg( \| \bbG_n \|_{\classkernel}\cdot \sup_{x} \bigg| \frac{1}{n}\sum_{i = 1}^{\hat i_n} \check f_{x, h}(\what B_i) \ell(\what B_i) - \beta^{-1}\E[\check f_{x, h}(B_1) \ell(B_1)] \bigg| >c \sqrt{n}\delta_n' \bigg)\\
        &+ \bbP\bigg( u(\classkernel, \pi) \cdot \| \bbG_n \|_{\classkernel}\cdot \bigg| \frac{1}{n}\sum_{i = 1}^{\hat i_n} \ell^2(\what B_i) - \beta^{-1}\E[\ell^2(B_1)] \bigg| > c\sqrt{n}\delta_n' \bigg)
    \end{aligned}
\end{equation*}

Set $\delta_n' = n^{-1/2 + \gamma}$, then Corollary \ref{cor:Delta-bound} implies $\bbP\big( \Delta(\check \classkernel^\circ) > c\delta_n' \big) \leq C\overline \Delta_n$ (see (\ref{eq:over-Delta}) for details on $\overline\Delta_n$). The assumption $\sqrt{n}h^{1/2} \leq n^\gamma$ further implies $\bbP\big( h^{1/2} > c\delta_n' \big) = 0$. 
Recall that $u(\classkernel, \pi) = \sup_{f_{x, h} \in \classkernel}|\pi f_{x, h}|\leq D_n$ as $u(\classkernel, \pi) \leq h^{1/2}$ and $D_n$ scales as $O(h^{-1/2})$ whenever $n\geq n_0$. Further observe $| \E[\ccf_x(B_1) \ell(B_1)] | \leq C\E[\ell^2(B_1)]D_n$. So by Corollary \ref{cor:Delta-bound}, when $n\geq n_0$, the last three terms in the above display can be bounded by
\begin{equation*}
    \bbP\Big( \| \bbG_n \|_\classkernel^2 > c \sqrt{n\delta_n'} \Big) + \bbP \Big( \| \bbG_n \|_\classkernel > c\sqrt{n}\delta_n'/D_n \Big) + C\overline \Delta_n.
\end{equation*}

Here $K_n \leq C\log n$ and $\tilde \xi_n$ defined in (\ref{eq:gauss-approx-bootstrap}) can be adopted to this setting, where $D_n$ in $\tilde \xi_n$ is substituted by $Ch^{-1/2}$. Note that $\sqrt{n\delta_n'} = n^{1/4 - \gamma/2}n^\gamma \geq n^\gamma/D_n$, so again refer to Corollary \ref{cor:Delta-bound}~(or the proof of Theorem \ref{thm: bootstrap consistency}) and observe
\begin{equation*}
    \begin{aligned}
        \bbP\big( \| \bbG_n \|_{\classkernel} > n^{\gamma}/D_n \big) \leq \bbP\big( \| G \|_{\classkernel} > n^{\gamma}/D_n \big) + C  \tilde \xi_n \leq C \cdot \{ \exp (-c n^{2\underline c}) + \tilde \xi_n\}, 
    \end{aligned}
\end{equation*}
where the last inequality follows from the fact that $\E[\|G\|_\classkernel] \leq C \sqrt{\log n}h^{-1/2}$~(see the proof of Theorem \ref{thm: bootstrap consistency}) and that the condition here implies $n^\gamma h \geq n^{\underline c}$. 
Hence we have 
\begin{equation*}
    \bbP\bigg( \sup_{x\in \calX}\bigg| \frac{\hat \sigma(x)}{\beta^{-1/2}\sigma(x)} - 1 \bigg| > \delta_n' \bigg) \leq C \big\{ \tilde \xi_n + \overline \Delta_n \big\}.
\end{equation*}
The conditions assumed on the sequences $n^{1/2 - \gamma}h^{-1}{\alpha_n}$ and $\sqrt{h}n^{1/4 + \overline{\gamma}/2}$~(decreasing and increasing respectively at a polynomial rate) imply that the sequences $\overline \Delta_n$ and $\tilde \xi_n$ vanish polynomially.

\end{proof}

\newcommand{\cnormkernel}{\check{f}}

\underline{\textit{Coupling between $\beta^{-1/2}\| \bbG_n^{(1)} \|_{\check{\classkernel}^\circ}$ and $\| G \|_{{\classkernel}}$}}: 
From the definition (\ref{eq:sum-decomp}), we know that 
\begin{equation*}
    \beta^{-1/2}\bbG_n^{(1)}(f_{x, h}^\circ) = n^{-1/2} \sum_{i = 1}^{n^*}\ccf_{x, h}(B_i)
\end{equation*} 
has center zero, i.e. $\E[\ccf_{x, h}(B_i)] = \E[\brf_{x, h}(B_i)] - \E_{\check \alpha}[\tau_{\check \alpha}]\pi f_{x, h} = 0$ by (\ref{eq:block-mean-pi}).
Then for a centered Gaussian process $G(f_{x, h})$ with covariance
\begin{equation*}
    \beta^{-1} \E[ (\brf_{x, h}(B_1) - \ell(B_1) \pi f_{x, h})(\brf_{y, h}(B_1) - \ell(B_1) \pi f_{y, h}) ],
\end{equation*}
apply Theorem A.1 of \cite{CCK2014bAoS}: there exists an absolute constant $c>0$ and a random variable $W_0$ satisfying $W_0 \stackrel{d}{=} \| G \|_{{\classkernel}}$, such that the following holds for any $\gamma_0 \in (0, 1)$,
\begin{equation}\label{eq:first-coupling}
    \begin{aligned}
        \bbP\left( \Big| \beta^{-1/2} \|\bbG_n^{(1)} \|_{\check{\classkernel}^\circ} - W_0 \Big| > \frac{\log n}{\sqrt{h\gamma_0 n}} + \frac{\log n}{\sqrt{h\gamma_0}n^{1/4}} + \frac{\log n}{\sqrt{h} \gamma_0^{1/3} n^{1/6}} \right) \leq c \left( \gamma_0 + \frac{\log n}{n} \right).
    \end{aligned}
\end{equation}
So if there exists sequences $\gamma_0, h \downarrow 0$~(we omit dependence of $\gamma_0$ on $n$ as well) that vanishes at a polynomial rate such that
    \begin{equation*}
    \sqrt{h\gamma_0 n}, \ \sqrt{h\gamma_0}n^{1/4}, \ \sqrt{h} \gamma_0^{1/3}n^{1/6} \ \text{increases polynomially} ,
\end{equation*}
then there exists polynomially decreasing sequences $\varepsilon_{0,n}, \delta_{0,n}$ satisfying 
\begin{equation*}
 \bbP\Big( \big| \beta^{-1/2}\| \bbG_n^{(1)}\|_{\check \classkernel^\circ} - W_0 \big| > \varepsilon_{0,n}\Big) \leq \delta_{0,n}.
 \label{second coupling}
\end{equation*}

\underline{\textit{Coupling between $\| \bbG_n^\zeta \|_{\classkernel^{(n)}}$ and $\| G \|_{\classkernel}$}}:  
For any fixed constant $\gamma \in (0, 1/2)$, define the events $\calE_j(\classkernel), j = 0, 1, 2, 3$ as was done in (\ref{eq:events}) but with the function class $\calF = \classkernel$. Motivated by Lemma \ref{lem:studentizing-rel-error}, we define an additional event,
\begin{equation*}
\begin{aligned}
    \calE_4(\classkernel) &:= \Big\{ \sup_{x\in \calX} \big| \hat \sigma(x) / (\beta^{-1/2}\sigma(x)) - 1 \big| \leq n^{-1/2 + \gamma} \Big\}.
\end{aligned}
\end{equation*}
The event of interest is
\begin{equation}\label{eq:calE-prime}
    \calE' := \cap_{j = 0}^4 \calE_j(\classkernel).
\end{equation}
Further recall the multiplier bootstrap statistics of interest
\begin{equation*}
    \bbG_n^\zeta\big(f_{x, h}^{(n)}\big) = \frac{1}{\sqrt{n}} \sum_{i = 1}^{\hat i_n} \zeta_i \cdot \Big\{ \check f_{x, h}^{(n)}(\what B_i) - \ell(\what B_i) \what{\pi} f_{x, h}^{(n)} \Big\} \quad \text{for $f_{x, h}^{(n)} \in \classkernel^{(n)}$.}
\end{equation*}

\begin{lemma}[Coupling]\label{lem: coupling}
Let $\{X_t\}_{t \in \mathbb N_0}$ be the low-frequency samples from (\ref{eq:diffusion}) with $(\varrho, b)\in \Theta$, while satisfying all the conditions listed in Corollary \ref{cor:Delta-bound}. Suppose the constant $\gamma$ and the bandwidth $h$ defined in Corollary \ref{cor:Delta-bound}
satisfies $\sqrt{n}h^{1/2} \leq n^\gamma$ and the sequence $h \alpha_n^{-2/3}$ increases polynomially. Then for any $X_{0}^{n-1}\in \calE'$,
there exists a random variable $W_1 \stackrel{d}{=} \| G \|_{\classkernel}$ such that
\[ \bbP\Big( \big|\|\bbG^\zeta_{n}\|_{\classkernel^{(n)}} - W_1 \big| > \varepsilon_{1,n}|X_0^{n-1}\Big) \leq \delta_{1,n} \]
where $\varepsilon_{1,n}, \delta_{1,n}$ vanishes at a polynomial rate. Further $\bbP(\calE') \geq 1 - O(\xi_n^*),$ where $\xi_n^*$ defined in (\ref{eq:xi_n_star}) vanishes at a polynomial rate. 
\end{lemma}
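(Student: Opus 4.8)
\textbf{Proof plan for Lemma \ref{lem: coupling}.}

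The plan is to bound the conditional Kolmogorov distance between $\|\bbG_n^\zeta\|_{\classkernel^{(n)}}$ and $\|G\|_{\classkernel}$ on the good event $\calE'$, and then invoke Strassen's theorem (Theorem \ref{thm: Strassen}) to produce the coupling $W_1$. First I would relate the studentized bootstrap process to the centered, non-studentized one: since $f_{x,h}^{(n)} = \kernel_{x,h}/\hat\sigma_n(x)$ while $f_{x,h}^\circ = \kernel_{x,h} - \pi\kernel_{x,h}$ is what enters the covariance of $G$, on the event $\calE_4(\classkernel)$ the relative error $|\hat\sigma_n(x)/(\beta^{-1/2}\sigma(x)) - 1| \le n^{-1/2+\gamma}$ is uniformly small, so multiplying through by $\beta^{-1/2}\sigma(x)/\hat\sigma_n(x)$ converts $\bbG_n^\zeta(f_{x,h}^{(n)})$ into $\beta^{1/2}\sigma(x)^{-1}\bbG_n^\zeta(\kernel_{x,h}) = \beta^{1/2}\bbG_n^\zeta(\kernel_{x,h}/\sigma(x))$ up to a uniform multiplicative error of order $n^{-1/2+\gamma}$; and the additive centering $\ell(\what B_i)\what\pi\kernel_{x,h}$ versus $\ell(\what B_i)\pi\kernel_{x,h}$ contributes a further error governed by $\|\bbG_n\|_{\classkernel}$ on $\calE_0(\classkernel)$, exactly as in the ``first decomposition'' step of the proof of Theorem \ref{thm: bootstrap consistency}. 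Using that $\hat\sigma_n(x)$ and $\sigma(x)$ are both of order $h^{-1/2}$ (Proposition \ref{prop:block-var}) and $u(\classkernel,\pi) \le Ch^{1/2}$, these errors are polynomially small in probability, conditionally on $X_0^{n-1} \in \calE'$, by the Gaussian tail and anti-concentration (Lemma \ref{lem:levy}) bounds already assembled.

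Next, conditionally on the data, $\|\bbG_n^\zeta\|_{\classkernel^{(n)}}$ (or rather its de-studentized, de-centered version) is the supremum of the Gaussian process $\brf \mapsto n^{-1/2}\sum_{i=1}^{\hat i_n}\zeta_i\brf(\what B_i)$ over $\check\classkernel^\circ$, whose covariance is the empirical covariance $\what\Gamma$; the process $G$ has covariance $\beta^{-1}\Cov(\ccf(B_1),\ccg(B_1))$. Their maximal entrywise covariance discrepancy is exactly $\Delta(\check\classkernel^\circ)$, which is $\le n^{-1/2+\gamma}$ on $\calE_1(\classkernel)$. Applying the Gaussian-to-Gaussian comparison (Lemma \ref{lem:Gaussian-comparison}(i)) after discretizing $\classkernel$ over an $\varepsilon$-net with $\varepsilon = n^{-1/2}$, together with Nazarov's inequality (Lemma \ref{lem: Nazarov}) to absorb the discretization slack and the Gaussian discretization-error bound (\ref{eq:gaussian-Markov}) with $D_n = Ch^{-1/2}$, $K_n \le C\log n$, yields
\[
\sup_{t\in\R}\Big| \bbP\big( \|\bbG_n^\zeta\|_{\classkernel^{(n)}} \le t \mid X_0^{n-1}\big) - \bbP\big( \|G\|_{\classkernel} \le t \big)\Big| \le \varepsilon_{1,n}'
\]
for a polynomially decaying $\varepsilon_{1,n}'$, valid for every $X_0^{n-1}\in\calE'$. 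Combined with the multiplicative/additive studentization errors from the previous paragraph and Gaussian anti-concentration (Lemma \ref{lem:levy}) to turn those into a Kolmogorov-distance contribution, this gives a bound of the form $\bbP(\|\bbG_n^\zeta\|_{\classkernel^{(n)}} \le t \mid X_0^{n-1}) \le \bbP(\|G\|_{\classkernel} \le t + \varepsilon_{1,n}) + \delta_{1,n}$ and the symmetric inequality, for suitable polynomially small $\varepsilon_{1,n},\delta_{1,n}$. Strassen's theorem (Theorem \ref{thm: Strassen}) then manufactures $W_1 \stackrel{d}{=} \|G\|_{\classkernel}$ with $\bbP(|\|\bbG_n^\zeta\|_{\classkernel^{(n)}} - W_1| > \varepsilon_{1,n}\mid X_0^{n-1}) \le \delta_{1,n}$.

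Finally, for the probability bound on $\calE'$: $\calE_0(\classkernel)$ is controlled via the Gaussian approximation $\rho_n \le C\tilde\xi_n$ of Corollary \ref{cor:Delta-bound}(ii) together with the Borel–Sudakov–Tsirel'son bound on $\bbP(\|G\|_{\classkernel} > n^\gamma/D_n)$, exactly as in Step 1 of the proof of Theorem \ref{thm: bootstrap consistency}; $\calE_1(\classkernel)$ is $1 - O(\overline\Delta_n)$ by Corollary \ref{cor:Delta-bound}(i); $\calE_2(\classkernel)$ and $\calE_3(\classkernel)$ follow from the same argument applied to the singleton envelope and length classes (as in Step 1 of Theorem \ref{thm: bootstrap consistency}'s proof); and $\calE_4(\classkernel)$ is $1 - O(\tilde\xi_n + \overline\Delta_n)$ by Lemma \ref{lem:studentizing-rel-error}. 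A union bound gives $\bbP(\calE') \ge 1 - O(\xi_n^*)$ with $\xi_n^*$ as in (\ref{eq:xi_n_star}), where the hypotheses $\sqrt n h^{1/2} \le n^\gamma$ and the polynomial growth of $h\alpha_n^{-2/3}$ ensure all the constituent sequences decay polynomially. The main obstacle I anticipate is the careful bookkeeping of the two distinct studentization errors — the multiplicative one from replacing $\hat\sigma_n$ by $\beta^{-1/2}\sigma$ and the additive one from replacing $\what\pi\kernel_{x,h}$ by $\pi\kernel_{x,h}$ — and converting each into a Kolmogorov-distance bound via anti-concentration without losing the polynomial rate; the Gaussian comparison on $\check\classkernel^\circ$ itself is essentially a reprise of machinery already developed for Theorem \ref{thm: bootstrap consistency}.
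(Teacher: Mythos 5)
Your high-level architecture (peel off the studentization error on $\calE_0\cap\calE_4$, compare the conditional Gaussian law of the bootstrap process with that of $G$, invoke Strassen, and control $\bbP(\calE')$ as in Theorem \ref{thm: bootstrap consistency} and Lemma \ref{lem:studentizing-rel-error}) is the same as the paper's, and your Step on $\bbP(\calE')$ is essentially the paper's Step 1. But there is a genuine gap at the Strassen step. Theorem \ref{thm: Strassen} requires the enlargement inequality $\mu(A) \le \nu(A^{\delta}) + \varepsilon$ for \emph{every} Borel set $A \subset \R$, whereas your plan only produces a Kolmogorov-type bound, i.e. the inequality for half-lines $(-\infty,t]$ (via Lemma \ref{lem:Gaussian-comparison}(i) and Nazarov). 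CDF closeness does not imply the all-Borel-sets condition at a comparable rate: for $A$ a union of $k$ disjoint intervals, each endpoint contributes an error of order the CDF discrepancy, so the bound degrades by a factor $k$, and anti-concentration of $\|G\|_{\classkernel}$ (an upper bound on interval masses) cannot repair this, since what one would need is a lower bound controlling how short a $\nu$-small interval must be. This is precisely why the paper's Step 3 does not go through the Kolmogorov metric at all: it fixes an arbitrary Borel set $B$, tracks explicit enlargements $B^{\delta_n}, B^{2\delta_n}, B^{2\delta_n+3\kappa}$, and uses the smooth indicator surrogate of Lemma \ref{lem : soft approx} together with the smooth-function Gaussian comparison, Lemma \ref{lem:Gaussian-comparison}(ii), to obtain $\bbP\big(\|\bbG_n^\zeta\|_{\classkernel^{(n)}} \in B \mid X_0^{n-1}\big) \le \bbP\big(\|G\|_{\classkernel} \in B^{5\delta_n}\big) + \delta_{1,n}$ for all Borel $B$, which is exactly the hypothesis Strassen needs.

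A second, smaller divergence: you propose to convert the studentization errors into a Kolmogorov-distance contribution via anti-concentration, but this again only yields half-line information and would feed into the same problem. The paper instead writes $\bbG_n^\zeta(f^{(n)}_{x,h}) = \bbG_n^\zeta(f_{x,h}) + \bbG_n^\zeta(\empkernel_{x,h})$ with $\empkernel_{x,h} = f_{x,h}\,\hat e(x)$, observes that conditionally on the data $\{\bbG_n^\zeta(\empkernel_{x,h})\}_x$ is a centered Gaussian process over a data-dependent VC class with conditional variance $\le n^{-1+2\gamma}$ on $\calE_4$, and kills it with Dudley's entropy bound plus the Borel--Sudakov--Tsirel'son inequality, giving an exponentially small conditional tail at level $\delta_n' = n^{-1/2+\overline\gamma}$; this keeps the error as an event-level (set-enlargement) perturbation compatible with the Borel-set comparison. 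Also note that the conditional covariance of $\tilde\bbG_n^\zeta$ on the net is $\widehat\Gamma$, whose distance to $\beta^{-1}\Cov(\ccf(B_1),\ccg(B_1))$ is $\Delta(\check\classkernel^\circ)$ only after the centering by $\pi f$ has been restored via the decomposition $\bbG_n^\zeta(f) = \tilde\bbG_n^\zeta(\ccf) - \tilde\bbG_n^\zeta(\ccone)\,\bbG_n(f)/\sqrt n$ on $\calE_0\cap\calE_3$, which you acknowledge but should carry through explicitly. To repair your proof, replace the Kolmogorov-distance comparison by the paper's Borel-set argument (or otherwise verify the Prokhorov-type hypothesis of Theorem \ref{thm: Strassen} directly); as written, the step ``Kolmogorov bound $\Rightarrow$ Strassen coupling'' does not follow.
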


\begin{proof}[Proof of Lemma \ref{lem: coupling}]
The proof is divided into several steps. {Here, $c, C, C', n_0$} are generic positive constants that depend on the parameters specified in Corollary \ref{cor:Delta-bound} and its vlaues can differ from place to place. 

\underline{Step 1} (High-probability event). The condition $\sqrt{n}h^{1/2} \leq n^\gamma$ yields $n^{1/2 - \gamma} h^{-1} \alpha_n \leq h^{-3/2}\alpha_n$, so the polynomially growing sequence $h \alpha_n^{-2/3}$ implies that the sequence $n^{1/2 - \gamma}h^{-1}\alpha_n$ is polynomially vanishing. Also we can see that $\sqrt{h}n^{1/4 + \overline \gamma/2} \geq (nh^{3/2})^{1/2}$ and the fact that $\alpha_n$ has order strictly smaller than $O(n^{-1})$ implies that the sequence $\sqrt{h}n^{1/4 + \overline \gamma/2}$ grows polynomially. So we see that the conditions of Lemma \ref{lem:studentizing-rel-error} is satisfied, thereby implying 
$\bbP(\calE_4^c(\classkernel)) \leq C\big\{ \tilde \xi_n + \overline \Delta_n \big\}$, the order being polynomial. Further, we are assuming the conditions in Corollary \ref{cor:Delta-bound}~(hence the conditions in Lemma \ref{lem:diff-cond-check}), then we may apply the proof reasoning of Theorem \ref{thm: bootstrap consistency} so as to observe $\bbP(\calE_j^c(\classkernel))\leq C\overline \Delta_n$ for $j = 1, 2, 3$ and $\bbP(\calE_0^c(\classkernel)) \leq C\tilde \xi_n$.

Define the error
\[
\hat e(x) := \bigg( \frac{\beta^{-1/2}\sigma(x)}{\hat \sigma_n(x)} - 1 \bigg),
\]
and define a function class that depends on data, 
\begin{equation*}
    \classemp = \Big\{ \empkernel_{x, h} := f_{x, h}\cdot \hat e(x) : x \in \calX\Big\} \quad \text{with its envelope $V$}.
\end{equation*}
For a sequence $\delta_n'$, consider the decomposition $\bbG_n^\zeta(f_{x, h}^{(n)}) = \bbG_n^\zeta(f_{x, h}) + \bbG_n^\zeta( \empkernel_{x, h})$ which yields
\begin{equation*}
    \begin{aligned}
        \bbP\Big( \big| \| \bbG_n^\zeta \|_{\classkernel^{(n)}} - W_1 \big| > \delta_n' \big| X_0^{n-1}  \Big) \leq \bbP\Big( \| \bbG_n^\zeta \|_{\classemp} > \delta_n'/2 \big| X_0^{n-1} \Big) + \bbP\Big( \big| \| \bbG_n^\zeta \|_{\classkernel} - W_1 \big|  > \delta_n'/2 \big| X_0^{n-1} \Big).
    \end{aligned}
\end{equation*}
For what follows, we bound the two terms on the right-hand side of the above display.

\underline{Step 2}. On the event $\calE_4$ notice that $\E\big[ \{ \bbG_n^\zeta(\empkernel_{x, h}) \}^2 | X_0^{n-1}\big] = \hat e(x)^2 \hat{\sigma}^2(x)/\beta^{-1}\sigma^2(x) \leq n^{-1 + 2\gamma}$ for all $x \in \calX$. For a generic function $f$ on $E$, we define its (empirically) centered version for notational convenience,
\begin{equation*}
    f^{\diamond} := f - \what{\pi} f.    
\end{equation*}
Accordingly, define a function class for functions $\empkernel_{x, h}^\diamond = \empkernel_{x, h} - \what \pi \empkernel_{x, h} = \big( f_{x, h} - \what{\pi} f_{x, h} \big) \cdot \hat e(x) $,
\begin{equation}\label{eq:intermediate-ftn}
    \classemp^{\diamond} := \Big\{ \empkernel_{x, h}^{\diamond} =  \big( f_{x, h} - \what{\pi} f_{x, h} \big) \cdot \hat e(x) : x \in \calX \Big\}.
\end{equation}
Refer to (\ref{eq:non-cent-boot}) and observe the equivalence
\begin{equation*}
    \bbG_n^\zeta(\empkernel_{x, h}) = \frac{1}{\sqrt{n}} \sum_{i = 1}^{\hat i_n}\zeta_i \big( \check \empkernel_{x, h}(\what B_i) - \ell(\what B_i) \what \pi \empkernel_{x, h} \big) = \frac{1}{\sqrt{n}} \sum_{i = 1}^{\hat i_n} \zeta_i \check \empkernel^\diamond_{x, h} (\what B_i) = \tilde \bbG_n^\zeta (\check \empkernel_{x, h}^\diamond);
\end{equation*}
so we are dealing with a conditional centered Gaussian process indexed by $\check \empkernel_{x, h}^\diamond \in \check \classemp^\diamond = \{ \check \empkernel_{x, h}^{\diamond} : \empkernel_{x, h}^{ \diamond} \in \classemp^{\diamond} \}$.



On the event $\calE_0(\classkernel)$, we have $\sup_{x \in \calX} | \pi f_{x, h} - \what{\pi} f_{x, h} | \leq n^{-1/2 + \gamma}$. Further note that $\sup_{x \in \calX} | \pi f_x | \leq Ch^{1/2}$ for $n \geq n_0$. So on the event $\calE_0(\classkernel)$, we see that $\sup_{x \in \calX}\big| \what{\pi} f_{x, h}\big| \leq 1$ for $n\geq n_0$. Also observe $|\hat e(x)|\leq 1$ for all $x \in \calX$ on $\calE_4(\classkernel)$. Hence on the event $\calE_0(\classkernel)\cap\calE_4(\classkernel)$, we observe $\classemp^\diamond \subset \{ a\cdot (f_{x, h} - b): a, b \in [-1, 1] \}$, thereby implying $\classemp^{\diamond}$ is of VC-type, and further $\check \classemp^{\diamond}$ is VC-type via Lemma \ref{lem: covering number} (see proof of Lemma \ref{lem:diff-cond-check} for details). 


For an empirical discrete measure $\hat Q_n := \frac{1}{\hat i_n} \sum_{i = 1}^{\hat i_n} \delta_{\what B_i}$ on $\check E$, we see that the semi-metric $\big(\rho_2(\check \empkernel_{x, h}^{\diamond}, \check \empkernel_{y, h}^{\diamond}) \big)^2:= E\big[ ( \bbG_n^\zeta(f_{x, h}^\dagger) - \bbG_n^\zeta(f_{y, h}^\dagger) )^2 |X_0^{n-1}\big] = \E\big[ ( \tilde \bbG_n^\zeta(\check \empkernel_{x, h}^\diamond - \tilde \bbG_n^\zeta(\check \empkernel_{y, h}^\diamond) )^2 |X_0^{n-1} \big]$ on $\check \classemp^{\diamond}$ satisfies
\begin{equation*}
    \begin{aligned}
        \rho_2^2(\check \empkernel_{x, h}^{\diamond}, \check \empkernel_{y, h}^{\diamond}) = \frac{1}{n} \sum_{i = 1}^{\hat i_n} \Big\{ \check \empkernel_{x, h}^{\diamond}(\what B_i) - \check \empkernel_{y, h}^{\diamond}(\what B_i) \Big\}^2 \leq \| \check \empkernel_{x, h}^{\diamond} - \check \empkernel_{y, h}^{\diamond} \|_{\check Q_n, 2}^2.
    \end{aligned}
\end{equation*}
Further observe that on the event of interest, 
\begin{equation*}
    \sup_{x \in \calX}| \empkernel_{x, h}^{ \diamond} | \leq \sup_{x \in \calX}|\hat e(x)|\cdot (|f_{x, h}| + n^{-1}\sum_{i = 1}^{n}|f_{x, h}(X_i)|)  \leq C h^{-1/2}    ,
\end{equation*}
which implies that the envelope of the class $\classemp^{\diamond}$, say $V^{\diamond}$, scales as $h^{-1/2}$. Hence we observe
\begin{equation*}
    \| \check V^{\diamond} \|_{\check Q_n, 2}^2 \leq Cn h^{-1}\cdot \bigg( \frac{1}{n}\sum_{i =1}^{\hat i_n}\ell^2(\what B_i) - \beta^{-1}\E[\ell^2(B_1)] + \beta^{-1}\E[\ell^2(B_1)] \bigg) \leq C' nh^{-1}
\end{equation*}
on the event $\calE_3(\classkernel)$.

So for $X_0^n \in \calE_0(\classkernel) \cap \calE_3(\classkernel) \cap \calE_4(\classkernel)$, the class of functions $\check \classemp^{\diamond}$ on $\check E$ satisfies
\begin{equation*}
    N(\check\classemp^{\diamond}, \rho_2, \eta) \leq N\big(\check \classemp^{\diamond}, \| \cdot \|_{\check Q_n, 2}, \| \check V^{\diamond}\|_{\check Q_n, 2} \cdot \{\eta / ( \sqrt{n} h^{-1/2} )\}\big) \leq \bigg( \frac{c \sqrt{n}h^{-1/2}}{\eta} \bigg)^{C}.
\end{equation*}
Maximal inequality \cite{Dudley2014UCLT} implies a bound on the center of the supremum of the Gaussian empirical process, 
\begin{equation*}
    \begin{aligned}
        \E\big[ \| \bbG_n^\zeta \|_{\classemp} |X_0^{n-1}\big] = \E\big[ \|\tilde \bbG_n^\zeta\|_{\check \classemp^\diamond} |X_0^{n-1}\big] \leq \int_0^{n^{-1/2 + \gamma}} \sqrt{\log N(\check \classemp^{\diamond}, \rho_2, \eta) } d\eta \leq C\sqrt{\log n}\cdot n^{-1/2 + \gamma}.
    \end{aligned}
\end{equation*}
Setting $\delta_n' = n^{-1/2 + \overline \gamma}$ with $\overline \gamma > \gamma$, Borel-Sudakov-Tsirel'son inequality~\cite{ledoux2001concentration}
yields
\begin{equation*}
    \begin{aligned}
        \bbP\Big( \| \bbG_n^\zeta \|_{\classemp} > \delta_n'/2 \big| X_0^{n-1} \Big) &= \bbP\Big( \| \tilde \bbG_n^\zeta \|_{\check \classemp^\diamond} > \E\big[\| \tilde \bbG_n^\zeta \|_{\check \classemp^\diamond}|X_0^{n-1}\big] + \big\{ \delta_{n}'/2 - \E\big[\| \tilde \bbG_n^\zeta \|_{\check \classemp^\diamond}|X_0^{n-1}\big] \big\} \big| X_0^{n-1}\Big)\\
        &\leq C\exp(-cn^{2\overline \gamma - 2\gamma}).
    \end{aligned}
\end{equation*}

\underline{Step 3}. We emphasize that the generic constants $C, C' > 0$ in Step 3 depend only up to the parameter $\beta$ and $\|\tau_{\check \alpha}\|_{\bbPcheck, \psi_1}$. This is to highlight that we can use the conditional Strassen's theorem on the event of interest. 

Let $\delta_n = n^{-1/4 + \overline{\gamma}/2}$ for some $1/2 > \overline \gamma > \gamma > 0$. Then under the condition (\ref{eq:G-bootstrap-cond}), $K_n$ is at most logarithmic as the VC-type parameters for $\classkernel$ are constants independent to $n$. The conditions specified here suffices the conditions for bootstrap consistency, so we refer to the proof of Theorem \ref{thm: bootstrap consistency} and replicate the reasoning therein. Recall the decomposition $\bbG_n^\zeta(f_{x, h}) = \tilde \bbG_n^\zeta(\check f_{x, h}^\circ) - \tilde \bbG_n^\zeta(\ccone) \cdot \bbG_n(f_{x, h})/\sqrt{n}$ and the definition of $\tilde\bbG_n^\zeta(f_{x,h})$ in (\ref{eq:non-cent-boot}). Step 2 in the proof of Theorem \ref{thm: bootstrap consistency} directly yields
\begin{equation*}
    \bbP\big( \| \bbG_n \|_{\classkernel} \cdot | \tilde \bbG_n^\zeta(\ccone) | > \sqrt{n}\delta_n | X_0^{n-1} \big) \leq C \exp \big( -c n^{1/2-\overline \gamma} \big)
\end{equation*}
whenever $X_0^{n-1} \in \calE_0(\classkernel) \cap \calE_3(\classkernel)$. Hence for any measurable set $B$ on the real line, we observe
\begin{equation*}
    \begin{aligned}
        \bbP\big( \| \bbG_n^\zeta \|_{\classkernel} \in B |X_0^{n-1}\big) \leq \bbP\big( \| \tilde \bbG_n^\zeta \|_{\check \classkernel^\circ} \in B^{\delta_n} |X_0^{n-1}\big) + C \exp \big( -c n^{1/2-\overline \gamma} \big).
    \end{aligned}
\end{equation*}
Refer to Step 3 of the proof of Theorem \ref{thm: bootstrap consistency}, and we take the same discretization strategy over the class $\check \classkernel^\circ$, so as to attain
\begin{equation*}
    \begin{aligned}
        \bbP \big( \| \tilde\bbG_n^\zeta \|_{\check \classkernel_\varepsilon^\circ} > \delta_n | X_0^{n-1} \big)  \leq C\exp \{ -c n^{\overline \gamma - \gamma} \} \quad \text{and} \quad \bbP \big( \beta^{-1/2} \| W \|_{\check \classkernel^\circ_\varepsilon} > \delta_n \big) \leq C n^{-1/4 - \overline \gamma/2} h^{-1/2} K_n.
    \end{aligned}
\end{equation*}
on $\calE_1(\classkernel)\cap\calE_2(\classkernel)$. The first inequality in the above display implies 
\begin{equation*}
    \bbP\big( \|\tilde \bbG_n^\zeta \|_{\check \classkernel^\circ} \in B^{\delta_n} | X_0^{n-1} \big) \leq \bbP\big( \|\tilde \bbG_n^\zeta \|_{\check \classkernel_N^\circ} \in B^{2\delta_n} | X_0^{n-1} \big) + C \exp\{-cn^{\overline{\gamma} - \gamma}\} 
\end{equation*}


For a fixed $\kappa>0$, define $\psi: = 2\sqrt{\log n}/\kappa$ and $\varepsilon_{\psi, \kappa} := \sqrt{e^{1 - \psi^2\kappa^2}\psi^2 \kappa^2}$ so that $\psi^{-1} = \frac{\kappa}{2\sqrt{\log n}} < \kappa$ for $n \geq 2$ and
\begin{equation*}
    \frac{1}{1 - \varepsilon_{\psi, \kappa}} \leq \frac{1}{1 - 2\sqrt{\log n}/n} \leq 1 + \frac{4}{n}
\end{equation*}
for $n$ satisfying $2\sqrt{\log n} \leq n/2$. Choose $f$ according to Lemma \ref{lem : soft approx}, to be the smooth function corresponding to $\psi, \kappa$ and $B^{2\delta_n}$.

Recall that $\big(\tilde \bbG_n^\zeta(\check f_{x_j, h}^\circ)\big)_{j \in [N]}$ is conditionally centered Gaussian random vector with covariance $\E\big[ \tilde \bbG_n^\zeta(\check f_{x_j, h}^\circ) \tilde \bbG_n^\zeta(\check f_{x_k, h}^\circ) | X_0^{n-1} \big] = \what\Gamma(\check f_{x_j, h}^\circ, \check f_{x_k, h}^\circ)$. We compare $\big(\tilde \bbG_n^\zeta(\check f_{x_j, h}^\circ)\big)_{j \in [N]}$ with the discretized version of the reference Gaussian process $G(f_{x, h}) = \beta^{-1/2}W(f_{x, h}^\circ)$. As $\Delta(\check \classkernel^\circ_N) \leq \Delta(\check \classkernel^\circ)$, Lemma \ref{lem:Gaussian-comparison} item (ii) on the event $\calE_1(\classkernel)$ yields
\begin{equation*}\label{eq:coupling-second}
    \begin{aligned}
        &\E\big[ f\big(\| \tilde \bbG_n^\zeta\|_{\check\classkernel^\circ_N}\big)| X_0^{n-1} \big] - \E\big[ f(\beta^{-1/2}\| W \|_{\check \classkernel^\circ_N}) \big] \\
        &\leq C\cdot \left\{ \frac{1}{\kappa^2} \sqrt{\log n} \cdot n^{-1/2 + \gamma} + \frac{1}{\kappa} n^{-1/4 + \gamma /2} \sqrt{\log N} \right\}.
    \end{aligned}
\end{equation*}
Overall, the choice of the smooth function $f$ and Gaussian-to-Gaussian comparison yields
\begin{equation*}
    \begin{aligned}
        & \bbP\left( \| \tilde \bbG_{n}^{\zeta}\|_{\check\classkernel^\circ_N} \in B^{2\delta_n} |X_0^{n-1}\right) \\
        &\leq \E\big[ f\big( \| \tilde \bbG_{n}^{\zeta}\|_{\check\classkernel^\circ_N} \big) | X_0^{n-1}\big]/(1 - \varepsilon_{\psi, \kappa})  \\
        &\leq \E\big[ f(\beta^{-1/2}\|  W \|_{\check \classkernel^\circ_N}) \big]/(1 - \varepsilon_{\psi, \kappa}) + C\cdot \left\{ \frac{1}{\kappa^2} \sqrt{\log n} \cdot n^{-1/2 + \gamma}  + \frac{1}{\kappa} n^{-1/4 + \gamma /2} \sqrt{\log N} \right\}.
    \end{aligned}
\end{equation*}
As a last step, (i) invoke the upper bound of $f$ specified in Lemma \ref{lem : soft approx} with the fact $\varepsilon_{\psi, \kappa}/(1 - \varepsilon_{\psi, \kappa}) \leq C \sqrt{\log n / n}$ and (ii) take care of the discretization error of the reference Gaussian process $G(f_{x, h}) = \beta^{-1/2}W(\check f_{x, h}^\circ), f_{x, h} \in \classkernel$, so that we have
\begin{equation*}
    \begin{aligned}
        \E\big[ f(\beta^{-1/2 }\| W \|_{\check\classkernel^\circ_N}) \big]/(1 - \varepsilon_{\psi, \kappa}) &\leq \bbP \Big(  \beta^{-1/2 }\| W \|_{\check\classkernel^\circ_N} \in B^{2\delta_n + 3\kappa} \Big) + C\sqrt{\frac{\log n}{n}}\\
        &\leq \bbP\left( \| G \|_{\classkernel} \in B^{3\delta_n + 3\kappa} \right) +  C' \bigg\{ n^{-1/4 - \overline{\gamma}/2 } h^{-1/2} K_n + \sqrt{\frac{\log n}{n}} \bigg\}.
    \end{aligned}
\end{equation*}

Now choose $\kappa = \delta_n$ so that $n^{-1/4 + \gamma / 2} \kappa^{-1} = n^{\gamma/2 - \overline\gamma/2}$ and $\kappa^{-2}n^{-1/2 + \gamma} = n^{\gamma - \overline \gamma}$, which vanishes at a polynomial rate.
Also, set $\overline \gamma > 0$ and the order of $h$ such that $n^{1/2 - \overline \gamma}h^{1/2}$ vanishes polynomially, e.g. set $h = n^{-r}$ for some $r > 0$ and $\overline{\gamma} > 1/2 - r/2$. 
Note that we require $n^\gamma h$ to grow polyomially~(implied from (\ref{eq:G-bootstrap-cond})) so that $n^{-1/4-\overline \gamma / 2}h^{-1/2} = n^{-1/4}( n^{\overline \gamma }h )^{-1/2}$ naturally vanishes polynomially as well. Lastly as $\overline{\gamma } \in (0, 1/2)$,  $2\delta_n + 3\kappa = 5\delta_n$ vanishes polynomially. Set $\varepsilon_{1, n} =\delta_n$, so for each $X_0^{n-1} \in \calE'$, applying Strassen~(Theorem \ref{thm: Strassen}) gives us the $\varepsilon_{1, n}, \delta_{1, n}$ that vanish at a polynomial rate. Notice that such $\varepsilon_{1, n} = \delta_n$ dominates the order of $\delta_n' = n^{-1/2 + \overline \gamma}$, thereby implying
\begin{equation*}
    \bbP\Big( \| \bbG_n^\zeta \|_{\classemp} > \delta_n/2 \big|X_0^{n-1}\Big) \leq \bbP\Big( \| \bbG_n^\zeta \|_{\classemp} > \delta_n'/2 \big|X_0^{n-1}\Big) \leq C \exp (-c n^{2\overline \gamma - 2\gamma}) \leq C'\delta_{1, n}.
\end{equation*}

\end{proof}

\subsection{Polynomial guarantee of uniform confidence band}\label{sec : final analysis of band}
\newcommand{\poly}{o_{\mathrm{poly}}(1)}
Define $c(\alpha)$ to be the $(1 - \alpha)$th quantile of $\| G \|_{\classkernel}$
\begin{equation*}
    c(\alpha) := \inf \{ z: \bbP(\|G\|_{\classkernel} > z) \leq \alpha \},
\end{equation*}
where $G(f_{x, h})$ over $f_{x, h} \in \classkernel$ is the centered reference Gaussian process with covariance function 
\begin{equation*}
    \beta^{-1}\Cov\big(\ccf_{x, h}(B_1), \ccf_{y, h}(B_1)\big) = \beta^{-1}\E\big[ (\brf_{x, h}(B_1) - \ell(B_1) \pi f_{x, h})(\brf_{y, h}(B_1) - \ell(B_1) \pi f_{y, h}) \big].
\end{equation*}
We use the random variables $W_0$ and $W_1$ interchangeably~(each appearing in the two coupling results in Section \ref{sec:coupling}) whose distributions are identical to that of $\|G\|_{\classkernel}$.

For fixed data $X_0^{n-1} \in \calE'$ (see Lemma \ref{lem: coupling} and (\ref{eq:calE-prime})), we leverage another event
\begin{equation*}
    \calE'' := \Big\{ \big|\| \bbG_n^\zeta\|_{\classkernel^{(n)}} - W_1 \big| \leq \varepsilon_{1,n} \Big\}
\end{equation*}
and the fact that $W_1 \stackrel{d}{=}\|G\|_{\classkernel}$ to observe the inequality
\begin{equation*}
    \begin{aligned}
        &\bbP\Big( \| \bbG_n^\zeta \|_{\classkernel^{(n)}}  > c(\alpha - \delta_{1,n}) + \varepsilon_{1, n} \big| X_0^{n-1} \Big)\\
        &\leq \bbP\Big( \big\{ \| \bbG_n^\zeta \|_{\classkernel^{(n)}}  > c(\alpha - \delta_{1,n}) + \varepsilon_{1, n} \big\} \cap \calE'' \big|X_0^{n-1} \Big) + \delta_{1, n}\\
        &\leq \bbP\Big( W_1 > c(\alpha - \delta_{1, n}) \Big) + \delta_{1, n} \leq \alpha.
    \end{aligned}
\end{equation*}
So by the definition of quantiles, for every $X_0^{n-1} \in \calE'$ we have $\hat{c}_n(\alpha) \leq c(\alpha - \delta_{1, n}) + \varepsilon_{1, n}$. For the constant $C$ and the sequence $\xi_n^*$ specified in Corollary \ref{cor:Delta-bound}, repeating a similar argument results in
\begin{equation}
\begin{aligned}
    \bbP\left( \hat{c}_n(\alpha) < c(\alpha + \delta_{1,n}) - \varepsilon_{1,n} \right) \leq C\xi_n^* \quad \text{and} \quad \bbP\left( \hat{c}_n(\alpha) > c(\alpha - \delta_{1,n}) + \varepsilon_{1,n} \right) \leq C\xi_n^*.
    \end{aligned}
    \label{eq:quantile-est}
\end{equation}


\begin{proof}[Proof of Theorem \ref{thm : uniform confidence}] 
The proof is divided into several steps. Here, $c,c', C, C', n_0$ are generic positive constants that depend on the parameters specified in Corollary \ref{cor:Delta-bound}; its vlaues can differ from place to place.  

\underline{Step 1} (Lower bound coverage probability). Define the bias
\begin{equation*}
    c_n' := \sup_{x \in \calX}\sqrt{n}\big| \pi \kernel_{x, h} - \pi(x) \big| \big/ (\beta^{-1/2}\sigma(x)).
\end{equation*}
By Proposition \ref{prop:block-var}, we observe $c_n' \leq C \cdot \sup_{x \in \calX} \sqrt{nh}|\pi\kernel_{x, h} - \pi(x)|$. Recall that $\|\pi''\|_\infty = \sup_{x\in \calX} |\pi''(x)| < \infty$ and that $\kernel$ has a finite second moment. Then Taylor expand the smooth density $\pi$ to observe
\begin{equation*}
    \begin{aligned}
        c_n' &\leq C\cdot\sqrt{nh} \sup_{x \in \calX}\bigg| \frac{1}{2}h^2 \pi''(x) \int y^2 \kernel(y) dy + o(h^2) \bigg|
        &\leq C' h^{5/2} \sqrt{n}
    \end{aligned}
\end{equation*}

First invoke Lemma \ref{lem:studentizing-rel-error} and then isolate the bias $c_n'$ to observe
\begin{equation*}
    \begin{aligned}
        \bbP\big( \forall x \in \calX : \pi(x) \in \calI_\alpha(x) \big) \geq \bbP\big( \| \bbG_n \|_{\classkernel} \leq (1 - \varepsilon_{2, n}) \hat c_n(\alpha)  - c_n' \big) - \delta_{2, n}.
    \end{aligned}   
\end{equation*}
For some decreasing sequence $\delta_n > 0$, refer to the proof of Theorem \ref{thm:Gaussian-approximation} and set the remainder terms $R(\delta_n) := \bbP\big( \| \bbG_n^{(2)}\|_{\check\classkernel^\circ} > \delta_n/3 \big) + \bbP\big( \| \bbG_n^{(3)}\|_{\classkernel^\circ} > \delta_n/3 \big) + \bbP\big( \| \bbG_n^{(4)}\|_{\classkernel^\circ} > \delta_n/3 \big)$. Recall $n^\sharp = n$ when $m = 1$, so $\beta_n = \beta$ here.
So we observe
\begin{equation*}
    \begin{aligned}
        \bbP\big( \| \bbG_n \|_{\classkernel} \leq (1 - \varepsilon_{2, n}) \hat c_n(\alpha) - c_n' \big) &\geq \bbP \big( \beta^{-1/2} \| \bbG_n^{(1)} \|_{\check \classkernel^\circ} \leq (1 - \varepsilon_{2, n}) \hat c_n(\alpha) - c_n' - \delta_n \big) - R(\delta_n).
    \end{aligned}
\end{equation*}
By the coupling between $\beta^{-1/2} \| \bbG_n^{(1)} \|_{\check \classkernel^\circ}$ and $\|G\|_{\classkernel}$ established in (\ref{eq:first-coupling}), there exists $W_0 \stackrel{d}{=} \|G\|_\classkernel$ such that
\begin{equation*}
    \bbP \big( \beta^{-1/2} \| \bbG_n^{(1)} \|_{\check \classkernel^\circ} \leq (1 - \varepsilon_{2, n}) \hat c_n(\alpha) - c_n' - \delta_n \big) \geq \bbP\big( W_0 \leq (1 - \varepsilon_{2, n}) \hat c_n(\alpha) - \varepsilon_{0, n} - c_n' - \delta_n \big) - \delta_{0, n}.
\end{equation*}

Now set the short-hand for concise presentation
\begin{equation*}
    s_n := \varepsilon_{2, n}c(\alpha+\delta_{1, n}) + (1 - \varepsilon_{2, n})\varepsilon_{1, n} + \varepsilon_{0, n} + c_n' + \delta_n.
\end{equation*}
Then invoking (\ref{eq:quantile-est}) derived from the coupling between $\|\bbG_n^\zeta\|_{\classkernel^{(n)}}$ and $W_0 \stackrel{d}{=} \| G \|_{\classkernel}$ via Lemma \ref{lem: coupling}, we observe
\begin{equation*}
    \begin{aligned}
        \bbP\big( W_0 \leq (1 - \varepsilon_{2, n}) \hat c_n(\alpha) - \varepsilon_{0, n} - c_n' - \delta_n \big) \geq \bbP\big( W_0\leq  c(\alpha + \delta_{1, n}) - s_n \big) - C \xi_n^*.
    \end{aligned}
\end{equation*}
Note that the reference Gaussian process $\{G(f_{x, h})\}_{x \in \calX}$ is separable and also has variance $\Var(G(f_{x, h})) = \beta^{-1}\Var(\ccf_{x, h}(B_1))$. Recall $\Var(\brf_{x, h}(B_1)) = \beta$ and that $\sup_x \big| \Var(\brf_{x, h}(B_1)) - \Var(\ccf_{x, h}(B_1)) \big| \leq Ch^{1/2}$ for $n\geq n_0$ by (\ref{eq:bias-hn}); as long as $h_n = o(1)$ as $n \to \infty$, we observe $c \leq \Var(G(f_{x, h})) \leq C$ uniformly over $x \in \calX$ for large enough $n \geq n_0$.
Then we apply Levy's Gaussian anti-concentration~(Lemma \ref{lem:levy}) for $n\geq n_0$ and observe
\begin{equation*}\label{eq:ineq-five}
    \begin{aligned}
    &\bbP\big(W_0 \leq c(\alpha + \delta_{1, n}) - s_n \big)  \\
        &\geq  \bbP\left( W_0 \leq c(\alpha + \delta_{1,n}) \right) - cs_n \cdot \Big\{ \E\big[ \sup_{x \in \calX} |G(f_{x, h})/\Var^{1/2}(G(f_{x, h})) \big] + \sqrt{\log (1 \vee (1/s_n))} \Big\}\\
        &\geq 1 - \alpha - \delta_{1, n} - c'(\log^{1/2} n) s_n.
    \end{aligned}
\end{equation*}
By maximal inequality of standardized Gaussian process on the compact subset $\calX$~\cite{Dudley2014UCLT}, we have $\E\big[ \sup_{x \in \calX} |G(f_{x, h})/\Var^{1/2}(G(f_{x, h})) \big] \leq C$. Collecting the inequalities, observe
\begin{equation}\label{eq:near-final-lower}
    \bbP\big( \forall x \in \calX: \pi(x) \in \calI_\alpha(x) \big) \geq 1- \alpha - C \{\delta_{0, n} + \delta_{1, n} + \delta_{2, n} +  \xi_n^* + (\log^{1/2}n) s_n + R_n\} .
\end{equation}

\underline{Step 2} (Specifying the order). 
We show that the sequences appearing on the right-hand side of the inequality (\ref{eq:near-final-lower}) can be taken to vanish polynomially. For simplicity we set $h = n^{-1/5-\eta}$ for some fixed $\eta > 0$, which automatically implies polynomially vanishing bias $c_n'$. 

Next consider $\gamma$ such that $\gamma - 1/5 - \eta > 0$. Then the condition $n^\gamma h  \geq n^{\underline c}M_nK_n$ is met, and it further implies $n^{\overline \gamma /2} \geq n^{\underline c/ 4}\sqrt{M_n} h^{-1/2} K_n$~(refer to Step 1 of the proof of Theorem \ref{thm: bootstrap consistency}), hence implying 
\begin{equation*}
    \frac{h^{-1/2}}{n^{1/4 + \overline \gamma /2}} \leq \frac{1}{n^{\underline c / 4 + 1/4}}. 
\end{equation*}
Refer to the proof of Theorem \ref{thm:Gaussian-approximation}, then by setting $\delta_n = n^{-1/4 + \overline \gamma}$ here, we observe $R_n \leq Cn^{-\underline{c}/4 - 1/4}$ whenever $M_n \geq n^{\underline \gamma/2}$. 

Next the condition that $h\alpha_n^{-2/3} = n^{-1/5 - \eta}\alpha_n^{-2/3}$ increase polynomially implies that the sequences $n^{1/2-\gamma}h^{-1}\alpha_n$ and $\sqrt{h}n^{1/4 + \overline \gamma/2}$ decrease and increase polynomially respectively. So $\xi_n^*$ vanishes polynomially. Further setting $\gamma, \eta$ such that $\gamma > 2/5 - \eta/2$ implies the additional condition $\sqrt{n}h^{1/2} \leq n^\gamma$ is met---then we may directly refer to Lemma \ref{lem:studentizing-rel-error} and \ref{lem: coupling} to observe $\varepsilon_{1, n}, \varepsilon_{2, n}, \delta_{1, n}, \delta_{2, n}$ vanishes polynomially. 

Lastly, Set the sequence $\gamma_0$ defined in (\ref{eq:first-coupling}) to vanish polynomially, and set constant $\eta > 0$ such that $n^{1/15 - \eta/2}\gamma_0^{1/3}$ increases polynomially. Then by Theorem A.1 of \cite{CCK2014bAoS}~(see (\ref{eq:first-coupling})), the sequences $\varepsilon_{0,n}, \delta_{0,n}$ vanish polynomially. As $c_n'$ and $\delta_n$ is set to vanish polynomially, we observe $s_n$ to decrease polynomially. 

Overall, for the polynomial convergence of the uniform confidence level of the interval $\calI_\alpha(x)$, it is sufficient to find constants $\gamma, \eta$ and the sequences $\alpha_n, \gamma_0$ that satisfy the following conditions:
\begin{equation}\label{eq:final-growth-cond}
    \begin{aligned}
        &\gamma > (1/5 + \eta) \vee (2/5 - \eta/2) \quad \textrm{and}\\
        &n^{-1/5 - \eta}\alpha_n^{-2/3} \wedge n^{1/15 - \eta/2}\gamma_0^{1/3} \quad \text{increase polynomially}.
    \end{aligned}
\end{equation}
Set $\alpha_n = (\log n /n)^{s/(s + 1)}$~(see Remark \ref{rem:lem-cov}) for the smoothness parameter $s$. Then set $\gamma_0 = n^{-1/10}$ and fix $\eta \in (0, 1/15), \gamma$ such that $s/(s + 1) > 3/10 + 3\eta/2$ and $\gamma > 2/5 - \eta/2$. Then conditions in (\ref{eq:final-growth-cond}) are satisfied. Note that these conditions are mildly satisfied under the smoothness parameter $s = 1$, which is the case for the diffusion process of interest. 

\end{proof}

\end{document}